\numberwithin{equation}{section}
\newcommand{\arxiv}[1]{\href{https://arxiv.org/abs/#1}{\small  arXiv:#1}}
\theoremstyle{definition}
\newtheorem{thm}{Theorem}[section]
\newtheorem{cor}[thm]{Corollary}
\newtheorem{por}[thm]{Porism}
\newtheorem{lem}[thm]{Lemma}
\newtheorem{rem}[thm]{Remark}
\newtheorem{conv}[thm]{Convention}
\newtheorem{prop}[thm]{Proposition}
\newtheorem{defn}[thm]{Definition}
\newtheorem{example}[thm]{Example}
\newtheorem*{ack}{Acknowledgements}
\tikzset{anchorbase/.style={baseline={([yshift=-0.5ex]current bounding box.center)}},
tinynodes/.style={font=\tiny,text height=0.75ex,text depth=0.15ex},
smallnodes/.style={font=\scriptsize,text height=0.75ex,text depth=0.15ex},
>={Latex[length=1mm, width=1.5mm]},
overcross/.style={line width=4pt,color=white},
}
\tikzstyle directed=[postaction={decorate,decoration={markings,
    mark=at position #1 with {\arrow{>}}}}]
\tikzstyle rdirected=[postaction={decorate,decoration={markings,
    mark=at position #1 with {\arrow{<}}}}]
\colorlet{green}{black!30!green}
\definecolor{ourblue}{RGB}{109, 156, 179}
\newcommand{\Hom}{{\rm Hom}}
\newcommand{\End}{{\rm End}}
\renewcommand{\to}{\rightarrow}
\newcommand{\id}{{\rm id}}
\newcommand{\TL}{{\mathbf{TL}}}
\newcommand{\BMW}{\mathrm{BMW}}
\let\tilde=\widetilde
\def\C{{\mathbb C}}
\def\D{{\mathbb D}}
\def\N{{\mathbb N}}
\def\R{{\mathbb R}}
\def\Z{{\mathbb Z}}
\newcommand{\sln}[1][n]{\mathfrak{sl}_{#1}}
\newcommand{\gln}[1][n]{\mathfrak{gl}_{#1}}
\newcommand{\spn}[1][2n]{\mathfrak{sp}_{#1}}
\newcommand{\Rep}{\mathbf{Rep}}
\newcommand{\FRep}{\mathbf{FundRep}}
\newcommand{\SRep}{\mathbf{StdRep}}
\newcommand{\Web}{\mathbf{Web}}
\newcommand{\SWeb}{\mathbf{StdWeb}}
\newcommand{\FWeb}{\mathbf{FreeWeb}}
\newcommand{\HT}{\mathbf{HT}}
\newcommand\mydots{\makebox[1em][c]{$\cdot$\hfil$\cdot$\hfil$\cdot$}}
\newcommand{\ot}{\otimes}
\newcommand{\HB}{\mathbb{H}}
\DeclareMathOperator{\qdim}{qdim}
\DeclareMathOperator{\Kar}{Kar}
\newcommand{\CS}{\mathcal{C}}
\newcommand{\SG}{\mathfrak{S}}
\newcommand{\ZA}{\mathcal{A}}
\begin{document}
%

\title[Type $C$ Webs]{Type $C$ Webs}

\author{Elijah Bodish}
\address{Department of Mathematics, University of Oregon,
Fenton Hall, Eugene, OR 97403-1222, USA}
\email{\{ebodish,belias\}@uoregon.edu}

\author{Ben Elias}

\author{David E. V. Rose}
\address{Department of Mathematics, University of North Carolina, 
Phillips Hall, CB \#3250, UNC-CH, 
Chapel Hill, NC 27599-3250, USA}
\email{davidrose@unc.edu, ltatham@live.unc.edu}

\author{Logan Tatham}

\begin{abstract}
We define a $\C(q)$-linear pivotal category $\Web(\spn)$ and prove that it 
is equivalent to the full subcategory of finite-dimensional representations of 
$U_q(\spn)$ tensor-generated by the fundamental representations. 
This answers the type $C$ case of the main open problem from 
Kuperberg's 1996 paper \emph{Spiders for rank {$2$} Lie algebras}.
\end{abstract}

\maketitle

%
\section{Introduction}
%

In his seminal 1996 paper, Kuperberg \cite{Kup} gives a diagrammatic presentation for a category that encodes the representation theory of any rank two simple complex Lie algebra,  and its associated quantum group.
More precisely, Kuperberg gives a presentation of the \emph{fundamental subcategory}, the full subcategory whose objects are iterated tensor products of fundamental representations, as a braided pivotal category. 
Further, he poses the following ``main open problem'': to give analogous presentations of the fundamental subcategories for simple complex Lie algebras and quantum groups of rank larger than two 
(see \cite[\S 8.1]{Kup} and also \cite[Problem 12.18]{Probs} for a restatement).
In 2012, Cautis-Kamnitzer-Morrison\footnote{Cautis-Kamnitzer-Morrison \cite{CKM} define a category of webs which is equivalent to the fundamental subcategory for $U_q(\sln)$. 
Implicit in their work is a very similar (and even simpler) web category for $U_q(\gln)$.
For the sake of simplicity in some discussions, we will talk about $\gln$ webs rather than $\mathfrak{sl}_n$ webs.} 
resolved the type $A$ case of this problem \cite{CKM}.
In the present paper, we solve this problem in type $C$.

\subsection{Statement of results}

We begin by introducing the titular diagrammatic category.

\begin{defn}\label{def:webs}
Let $\Web(\spn)$ be the $\C(q)$-linear pivotal category defined by the following presentation. 
The objects are generated monoidally by self-dual objects $\{1,\ldots,n\}$. In addition to the cap/cup (co)unit morphisms implicit in the pivotal structure, the morphisms are generated by
\begin{equation}\label{eq:WebGen}
\begin{tikzpicture}[scale =.5, smallnodes,anchorbase]
	\draw[very thick] (0,0) node[below]{$1$} to [out=90,in=210] (.5,.75);
	\draw[very thick] (1,0) node[below]{$k$} to [out=90,in=330] (.5,.75);
	\draw[very thick] (.5,.75) to (.5,1.5) node[above=-2pt]{$k{+}1$};
\end{tikzpicture}
\qquad , \qquad
\begin{tikzpicture}[scale =.5, smallnodes,anchorbase]
	\draw[very thick] (0,0) node[below]{$k$} to [out=90,in=210] (.5,.75);
	\draw[very thick] (1,0) node[below]{$1$} to [out=90,in=330] (.5,.75);
	\draw[very thick] (.5,.75) to (.5,1.5) node[above=-2pt]{$k{+}1$};
\end{tikzpicture}
\end{equation}
for $k \in \{1,\ldots,n-1\}$. One then takes the quotient by the tensor ideal generated by the following (local) relations:
\begin{equation}\label{eq:spn}
\begin{gathered}
(\text{\ref{eq:spn}a}) \;\;
\begin{tikzpicture}[scale =.75, tinynodes,anchorbase]
	\draw[very thick] (0,0) node[left,xshift=-8pt]{$1$} circle (.5);
\end{tikzpicture}
= -\frac{[n][2n+2]}{[n{+}1]}
\quad , \quad
(\text{\ref{eq:spn}b}) \;\;
\begin{tikzpicture}[scale=.175,tinynodes,anchorbase]
	\draw [very thick] (0,-2.75) to [out=30,in=0] (0,.75);
	\draw [very thick] (0,-2.75) to [out=150,in=180] (0,.75);
	\draw [very thick] (0,-4.5) node[below,yshift=2pt]{$2$} to (0,-2.75);
\end{tikzpicture}
= 0
\quad , \quad
(\text{\ref{eq:spn}c}) \;\;
\begin{tikzpicture}[scale=.175,tinynodes, anchorbase]
	\draw [very thick] (0,.75) to (0,2.5) node[above,yshift=-3pt]{$k$};
	\draw [very thick] (0,-2.75) to [out=30,in=330] node[right,xshift=-2pt]{$k{-}1$} (0,.75);
	\draw [very thick] (0,-2.75) to [out=150,in=210] node[left,xshift=2pt]{$1$} (0,.75);
	\draw [very thick] (0,-4.5) node[below,yshift=2pt]{$k$} to (0,-2.75);
\end{tikzpicture}
= [k]
\begin{tikzpicture}[scale=.175, tinynodes, anchorbase]
	\draw [very thick] (0,-4.5) node[below,yshift=2pt]{$k$} to (0,2.5);
\end{tikzpicture} \\
(\text{\ref{eq:spn}d}) \;\;
\begin{tikzpicture}[scale=.2, xscale=-1,tinynodes, anchorbase]
	\draw [very thick] (-1,-1) node[below,yshift=2pt]{$1$} to [out=90,in=210] (0,.75);
	\draw [very thick] (1,-1) node[below,yshift=2pt]{$k$} to [out=90,in=330] (0,.75);
	\draw [very thick] (3,-1) node[below,yshift=2pt]{$1$} to [out=90,in=330] (1,2.5);
	\draw [very thick] (0,.75) to [out=90,in=210] (1,2.5);
	\draw [very thick] (1,2.5) to (1,4.25) node[above,yshift=-3pt]{$k{+}2$};
\end{tikzpicture}
=
\begin{tikzpicture}[scale=.2, tinynodes, anchorbase]
	\draw [very thick] (-1,-1) node[below,yshift=2pt]{$1$} to [out=90,in=210] (0,.75);
	\draw [very thick] (1,-1) node[below,yshift=2pt]{$k$} to [out=90,in=330] (0,.75);
	\draw [very thick] (3,-1) node[below,yshift=2pt]{$1$} to [out=90,in=330] (1,2.5);
	\draw [very thick] (0,.75) to [out=90,in=210] (1,2.5);
	\draw [very thick] (1,2.5) to (1,4.25) node[above,yshift=-3pt]{$k{+}2$};
\end{tikzpicture}
\quad , \quad
(\text{\ref{eq:spn}e}) \;\;
\begin{tikzpicture}[scale=.4, rotate=90, tinynodes, anchorbase]
	\draw[very thick] (-1,0) node[below,yshift=2pt,xshift=2pt]{$k$} to (0,1);
	\draw[very thick] (1,0) node[above,yshift=-4pt,xshift=2pt]{$1$} to (0,1);
	\draw[very thick] (0,2.5) to (-1,3.5) node[below,yshift=2pt,xshift=-2pt]{$k$};
	\draw[very thick] (0,2.5) to (1,3.5) node[above,yshift=-4pt,xshift=-2pt]{$1$};
	\draw[very thick] (0,1) to node[below,yshift=2pt]{$k{+}1$} (0,2.5);
\end{tikzpicture}
=
\begin{tikzpicture}[scale=.4, tinynodes, anchorbase]
	\draw[very thick] (-1,0) node[below,yshift=2pt,xshift=-2pt]{$k$} to (0,1.5);
	\draw[very thick] (1,0) node[below,yshift=2pt,xshift=2pt]{$k$} to (0,1.5);
	\draw[very thick] (-.7,.5) to node[below,yshift=4pt]{$_{k{-}1}$} (.7,.5);
	\draw[very thick] (0,2.5) to (-1,3.5) node[above,yshift=-4pt,xshift=-2pt]{$1$};
	\draw[very thick] (0,2.5) to (1,3.5) node[above,yshift=-4pt,xshift=2pt]{$1$};
	\draw[very thick] (0,1.5) to node[right, xshift=-2pt]{$2$} (0,2.5);
\end{tikzpicture} \!\!
-\frac{[n{-}k]}{[n{-}k{+}1]}
\begin{tikzpicture}[scale=.4, rotate=90, tinynodes, anchorbase]
	\draw[very thick] (-1,0) node[below,yshift=2pt,xshift=2pt]{$k$} to (0,1);
	\draw[very thick] (1,0) node[above,yshift=-4pt,xshift=2pt]{$1$} to (0,1);
	\draw[very thick] (0,2.5) to (-1,3.5) node[below,yshift=2pt,xshift=-2pt]{$k$};
	\draw[very thick] (0,2.5) to (1,3.5) node[above,yshift=-4pt,xshift=-2pt]{$1$};
	\draw[very thick] (0,1) to node[below,yshift=2pt]{$k{-}1$} (0,2.5);
\end{tikzpicture}
+
\frac{[n{-}k]}{[n]}
\begin{tikzpicture}[scale=.4, tinynodes, anchorbase]
	\draw[very thick] (-1,0) node[below,yshift=2pt]{$k$} to [out=90,in=180] (0,1) 
		to [out=0,in=90] (1,0);
	\draw[very thick] (-1,3) node[above,yshift=-3pt]{$1$} to [out=270,in=180] (0,2)
		to [out=0,in=270] (1,3);
\end{tikzpicture}
\end{gathered}
\end{equation}
\end{defn}

Graphs built from the generators in \eqref{eq:WebGen} (and identity and (co)unit morphisms) are called $\spn$ \emph{webs}. 
In \S\ref{sec:more} we will expand the allowable webs, using these generators to define other kinds of trivalent vertices which we also allow in a web.

\begin{rem} We will allow web edges to be labeled by any integers to streamline some formulae.
By convention, any $0$-labeled edges should be erased, and any web containing labels not in $\{0, 1, \ldots, n\}$ is set equal to zero.
In particular, we impose the $k=n$ case of (\text{\ref{eq:spn}e}) as a defining relation, wherein the left-hand side is zero by convention.
Meanwhile, the $k=0$ case of (\text{\ref{eq:spn}e}) already holds tautologically.
\end{rem}

Next, we introduce the representation-theoretic categories of interest.

\begin{defn} Let $\FRep(U_q(\spn))$ denote the \emph{fundamental subcategory} of representations of the quantum group $U_q(\spn)$ over $\C(q)$. This is the full (monoidal)
subcategory whose objects are iterated tensor products of fundamental representations. We let $V_{\varpi_k}$ denote the $k$th fundamental representation, 
i.e. the irreducible representation of $U_q(\spn)$ with highest weight $\varpi_k$.
For example, $V_{\varpi_1}$ is the standard representation, having dimension $2n$, and $V_{\varpi_k}$ is a direct summand of $V_{\varpi_1}^{\otimes k}$, for $1 \le k \le n$. \end{defn}

The main result of this paper is the following.

\begin{thm} \label{thm:main}
There is an equivalence of $\C(q)$-linear ribbon categories 
\[
\Phi \colon \Web(\spn) \to \FRep(U_q(\spn))
\]
sending $k \mapsto V_{\varpi_k}$.
\end{thm}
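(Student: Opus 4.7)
The plan is to define the functor $\Phi$ explicitly on generators, verify the defining relations \eqref{eq:spn}, and then separately establish essential surjectivity, fullness, and faithfulness. First I would set $\Phi(k) = V_{\varpi_k}$ and send the merge/split generators in \eqref{eq:WebGen} to suitably normalized $U_q(\spn)$-intertwiners $V_{\varpi_1} \otimes V_{\varpi_k} \to V_{\varpi_{k+1}}$ and $V_{\varpi_k} \otimes V_{\varpi_1} \to V_{\varpi_{k+1}}$, which exist and are unique up to scalar by Clebsch--Gordan. The normalizations are pinned down by demanding compatibility with the pivotal duality, with the trace/bigon evaluation (\ref{eq:spn}c), and with the circle value (\ref{eq:spn}a). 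Each of the relations in \eqref{eq:spn} is then a direct representation-theoretic check; the particular scalars in (\ref{eq:spn}e) arise from the symplectic branching rule combined with our chosen normalizations.

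Essential surjectivity is tautological from the definition of $\FRep$. For fullness, the plan is to handle first the morphisms between tensor powers of $V_{\varpi_1}$, whose spaces are controlled by the type $C$ BMW/symplectic Brauer category. Writing the BMW generators (braiding and cap/cup) as explicit linear combinations of $1$-labeled webs produces a full functor from BMW into the $1$-labeled part of $\Web(\spn)$. Since each $V_{\varpi_k}$ is cut from $V_{\varpi_1}^{\otimes k}$ by an idempotent constructible from the merge/split generators (a type $C$ analog of the Jones--Wenzl projector, whose existence and uniqueness follows from the bigon relation (\ref{eq:spn}b) together with (\ref{eq:spn}c)), absorbing such idempotents shows that fullness for all-$1$-labeled objects upgrades to fullness on arbitrary tensor products of fundamentals.

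Faithfulness is the main obstacle. The plan is to construct a ``double ladder'' basis of every $\Hom$ space in $\Web(\spn)$, indexed by combinatorial data that simultaneously labels a basis of the corresponding space in $\FRep(U_q(\spn))$ (e.g.\ oscillating tableaux or LS-type paths adapted to $\spn$). This requires (i) defining the candidate double-ladder webs; (ii) proving via the local relations that an arbitrary web reduces to a linear combination of them; and (iii) matching the cardinality with $\dim \Hom_{\FRep}$ computed via Weyl character and iterated branching. Spanning together with the cardinality match then delivers linear independence, and hence faithfulness, simultaneously.

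The bulk of the technical work is in the reduction step (ii): proving a confluent rewriting procedure on webs built out of (\ref{eq:spn}d) and especially the type-$C$ square relation (\ref{eq:spn}e), whose $k$-dependent coefficients are critical for the calculus to close up. The expanded generating set of trivalent vertices introduced in \S\ref{sec:more} is presumably essential here, both to make the reduction algorithm tractable and to define the double-ladder basis in a clean form. A secondary subtlety is controlling the normalizations throughout, so that the basis webs on the $\Web$ side match, under $\Phi$, a natural basis of intertwiners on the representation-theoretic side (e.g.\ those coming from a chain of projections onto fundamental summands of $V_{\varpi_1}^{\otimes N}$).
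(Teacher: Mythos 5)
Your outline for constructing $\Phi$, establishing essential surjectivity, and proving fullness via BMW/symplectic Schur--Weyl duality and the idempotents $\pi_k\circ\iota_k = \id_k$ built from (\text{\ref{eq:spn}c}) matches the paper's structure. The genuine divergence is in faithfulness. You propose building a double ladders basis for every $\Hom$-space in $\Web(\spn)$ (indexed by oscillating/up-down tableaux) via a confluent rewriting system, then matching cardinalities with $\FRep$. The paper explicitly declines to do this — it is deferred to the sequel \cite{BERT2} — precisely because it is the hard route: you would need to control reduction for webs carrying \emph{arbitrary} labels, including all the generalized trivalent vertices and the $k$-dependent coefficients in (\text{\ref{eq:spn}e}), and verify confluence globally. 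Instead, the paper reduces everything to the all-ones subcategory $\SWeb(\spn)$ (Lemma \ref{lem:reduction}, Corollary \ref{cor:reduction}), then changes generators to a rotationally invariant \emph{quadrivalent vertex}, proves the resulting subcategory $\Web^\times(\spn)$ equals $\SWeb(\spn)$ via the half-twist identity (Lemma \ref{lem:HT}), shows that reduced quadrivalent graphs span by a topological big-curl/big-bigon argument together with graph Reidemeister moves that hold modulo lower-order terms, and finally invokes pattern avoidance: $\HT_{n+1}$ collapses to lower terms because the object $n+1$ is zero, so reduced graphs with $(n+1)$-avoiding matchings span, and the count matches Sundaram's enumeration of $n$-symplectic up-down tableaux. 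This avoids constructing any basis for $\Hom$-spaces with higher labels, trading the double-ladder machinery for skein-theoretic control of $1$-labeled diagrams. Both routes are valid; yours buys the much stronger double-ladder basis (and an integral form over the ring $\Bbbk$), but at considerable combinatorial cost that this paper deliberately sidesteps, and the ``direct representation-theoretic check'' of (\text{\ref{eq:spn}e}) you invoke is itself nontrivial — in the paper it requires the ribbon structure and a $q=1$ degeneration argument to pin down the sign $\gamma = -1$, not merely Clebsch--Gordan.
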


Every (type I) finite-dimensional
irreducible representation\footnote{As is typical in the field, we tacitly ignore non-type-I representations throughout this paper.}
of $U_q(\spn)$ is a direct summand of some object in $\FRep(U_q(\spn))$, 
therefore the Karoubi envelope of $\Web(\spn)$ is equivalent to the category of all finite-dimensional representations of $U_q(\spn)$.
Hence, we view $\Web(\spn)$ as providing a diagrammatic description of this latter category.
Implicit in Theorem \ref{thm:main} is
a description of a braiding on $\Web(\spn)$ that is compatible with the ribbon structure on the category $\Rep(U_q(\spn))$.
This allows for the use of $\Web(\spn)$ in the study of the $\spn$ link invariant and its associated TQFT.

\subsection{The goal of this paper}

This paper grew out of an ongoing study of diagrammatic presentations of categories of quantum group representations. 
Along the way, we noticed that it was possible to prove Theorem \ref{thm:main} with surprising efficiency. 
By pairing previous work (of many authors) on quantum Brauer-Schur-Weyl duality with a handful of crucial skein-theoretic arguments, 
we are able to provide a basis for $\Hom_{\Web(\spn)}(1^{\otimes k}, 1^{\otimes l})$ for all $k, l \ge 0$. From this we can deduce that $\Phi$ is fully faithful.
The goal of this paper is to give this efficient proof, and not much more.
In particular, here are some things we shamelessly do not provide in this paper.
\begin{itemize}
	\item an explicit description of where the functor $\Phi$ sends the generating morphisms,
	\item a coherent presentation (i.e. additional useful formulas) that further facilitates the simplification of diagrams,
	\item a basis of morphisms for $\Hom$-spaces in $\Web(\spn)$, aside from the $\Hom$-spaces mentioned above,
	\item cellular bases for $\Hom$-spaces, adapted to the monoidal structure as in \cite{EliasLL},
	\item a proof that $\Web(\spn)$ is equivalent to $\FRep(U_q(\spn))$ in their integral forms, rather than over $\C(q)$.
\end{itemize}
However, all of these useful tools will be provided in the sequel to this paper \cite{BERT2}. 
In particular, we will construct a \emph{double ladders basis} of morphism spaces in $\Web(\spn)$
that is analogous to the basis constructed in type $A$ by the second-named author in \cite{EliasLL}, 
and which extends the $C_2$ double ladders basis constructed by the first-named author \cite{Bodish}. 

Despite our attempts at minimalism, we felt it was worthwhile to comment further on our diagrammatic calculus;
thus, we do so in \S\ref{sec:more}. 
Although we will not use them in the proof of our main theorem, 
we introduce additional trivalent vertices and provide some relations that are useful for computations.
We also introduce some convenient pieces of notation that are used in our proofs.

In \S\ref{sec:background}, we record standard facts about the representation theory of $U_q(\spn)$ and recall various combinatorial results on type $C$ representation theory
that we will use. In \S\ref{sec:outline}, we present an outline of the proof of our main result. The expert reader is invited to skip ahead to this section, which gets right ``to
the point.'' Finally, \S\ref{sec:proof} contains the proof of Theorem \ref{thm:main}.

\begin{ack}
E.B. and B.E. were supported on this project by NSF CAREER grant DMS-1553032, and B.E. was supported by the Institute of Advanced Study via NSF grant DMS-1926686.
D.E.V.R. and L.T. were partially supported by Simons Collaboration Grant 523992: \emph{Research on knot invariants, representation theory, and categorification}. 
The authors would like to thank Jon Brundan, Victor Ostrik, Noah Snyder, Daniel Tubbenhauer, and Geordie Williamson for generally useful conversations.
\end{ack}

%
\section{Background}\label{sec:background}
%

We begin with some background, mostly focusing on quantum group representation theory in type $C$.
In \S\ref{subsec:QGtypeC} we recall standard facts about the quantum group $U_q(\spn)$. 
In \S\ref{subsec:qdim} we recall the essential features of the Snyder-Tingley ribbon element from \cite{ST}, 
and give a simple formula for the quantum dimensions of fundamental representations. 
In \S\ref{subsec:previousC} we discuss Brauer-Schur-Weyl duality.
Finally, in \S\ref{subsec:connections} we discuss the relation between $\Web(\spn)$ and various diagrammatic categories 
that have previously appeared in the literature.

\subsection{Quantum groups (in type $C$)} \label{subsec:QGtypeC}

We review the definition of the quantum group $U_q(\spn)$. Standard references are \cite{CP} and \cite{JantzenQgps}.

The Lie algebra $\spn$ gives rise to a root system $\Phi_{C_n}$ and weight lattice 
$X= \oplus_{i=1}^n \mathbb{Z}\epsilon_i$. 
We let $\alpha_i = \epsilon_i- \epsilon_{i+1}$, for $i= 1, \ldots, n-1$, and $\alpha_n = 2\epsilon_n$ be our simple roots.
The positive roots are then $\Phi_+ = \lbrace \epsilon_i- \epsilon_j, \epsilon_i + \epsilon_j, \text{for} \ i< j, \ 2\epsilon_i\rbrace$. 
We denote by $(-, -)$, the usual symmetric bilinear form given by  $(\epsilon_i, \epsilon_j) = \delta_{ij}$, 
and write $\langle\alpha_i^{\vee}, \lambda\rangle= (2\alpha_i/(\alpha_i, \alpha_i), \lambda)$ for the pairing between co-roots and weights. 
The Cartan matrix is $(a_{ij})$ for $a_{ij} = \langle \alpha_i^{\vee}, \alpha_j\rangle$.
The fundamental weights are $\varpi_k = \epsilon_1 +\cdots + \epsilon_k$, and $\rho$ denotes the sum of the fundamental weights 
(or, equivalently, the half-sum of the positive roots). 

The quantum group of type $C_n$, denoted $U_q(\spn)$, is defined as the $\mathbb{C}(q)$-algebra given by generators $E_i, F_i$ and $K_i^{\pm 1}$ modulo the following relations:
\[  K_iK_i^{-1} = 1= K_i^{-1}K_i, \quad K_iK_j = K_jK_i, \quad  K_iE_j= q^{(\alpha_i, \alpha_j)}E_jK_i, \quad  K_iF_j = q^{-(\alpha_i, \alpha_j)}F_jK_i, \]
\[ E_iF_j - F_jE_i = \delta_{ij}\dfrac{K_i- K_i^{-1}}{q_i - q_i^{-1}}, \]
\[ \sum_{s= 0}^{1- a_{ij}} (-1)^s{1- a_{ij}\brack s}_{q_i} E_i^{1- a_{ij}- s}E_jE_i^s = 0, \quad \sum_{s= 0}^{1- a_{ij}} (-1)^s{1- a_{ij}\brack s}_{q_i} F_i^{1- a_{ij}- s}F_jF_i^s = 0. \]
Here, we use the conventions
\[ q_i= q^{(\alpha_i, \alpha_i)/2}, \quad \text{so } q_i = q \text{ for } 1 \le i < n \text{ and } q_n = q^2, \]
\[ [n]_{q_i}= \frac{q_i^n- q_i^{-n}}{q_i - q_i^{-1}}, \quad [n]_{q_i}! = [n]_{q_i}[n-1]_{q_i}\cdots[2]_{q_i}[1]_{q_i},  \quad {n \brack k}_{q_i}= \frac{[n]_{q_i}!}{[n-k]_{q_i}![k]_{q_i}!}. \]
Undecorated quantum integers $[n]$ should be taken to be quantum integers in $q$. 

The quantum group $U_q(\spn)$ is a Hopf algebra with coproduct, counit, and antipode defined on generators by
\[ \Delta(F_i) = F_i\ot K_i^{-1} + 1+ F_i,\quad \Delta(E_i) = E_i \ot 1+ K_i\ot E_i,\quad \Delta(K_i) = K_i\ot K_i ,\]
\[ \epsilon(F_i) = 0,\quad \epsilon(E_i) = 0,\quad \epsilon(K_i) = 1, \]
\[ S(F_i) = - F_iK_i,\quad S(E_i) = - K_i^{-1}E_i,\quad S(K_i) = K_i^{-1}.\]

Let $\Rep(U_q(\spn))$ denote the category of finite dimensional, type I representations of $U_q(\spn)$ over $\C(q)$, and let $\Rep(\spn)$ denote the category of finite-dimensional
representations of the lie algebra $\spn$ over $\C$. All representations in either category are completely reducible, and the irreducible objects (up to isomorphism) in either
category are classified by the set $X_+$ of dominant integral weights. For $\lambda \in X_+$ we write $L_{\lambda}$ for the corresponding irreducible representation of $\spn$, and
$V_{\lambda}$ for the irreducible representation of $U_q(\spn)$. The representations $V_{\lambda}$ and $L_{\lambda}$ have the same formal characters; 
i.e. if $V[\mu]$ denotes the $\mu$ weight space of $V$, then $\dim(V_{\lambda}[\mu]) = \dim(L_{\lambda}[\mu])$.  
The categories $\Rep(U_q(\spn))$ and $\Rep(\spn)$ also share the same rules for decomposing tensor products.
For the remainder of this paper we write $L_k$ for $L_{\varpi_k}$ and $V_k$ for $V_{\varpi_k}$. 
We also write $L_0$ (resp. $V_0$) for the trivial representation.
  
\begin{rem} 
For the Lie algebra $\gln$, the $k$-th fundamental representation is isomorphic to the exterior product $\Lambda^k \C^n$. 
For $\spn$, the exterior products $\Lambda^k L_1$ are not irreducible, 
e.g. the symplectic form embeds the trivial representation $L_0$ as a summand of $\Lambda^2 L_1$. 
Instead, $L_k$ is the unique direct summand of $\Lambda^k L_1$ that does not appear in a smaller exterior product. 
This construction of fundamental representations is less explicit for the quantum group $U_q(\spn)$; 
its category of representations is braided but not symmetric, so more care must be taken in defining the exterior 
product\footnote{In \cite{BerZwick} a symmetric product and exterior product is defined for quantum group representations. 
It appears that the exterior product $\Lambda^k_q V_1$ for $U_q(\spn)$ does have the same character as $\Lambda^k L_1$ for $\spn$, 
so that the same description of fundamental representations should work in the quantum case.}.
\end{rem}

\subsection{On quantum dimensions and pivotal structures} \label{subsec:qdim}

The following discussion is based on \cite{ST}.
It is common in the study of quantum groups to define braidings and related structures using an $R$-matrix. To discuss the $R$-matrix one must first take some sort of completion of the quantum group; 
let $\tilde{U_q(\spn)}$ (resp. $\tilde{U_q(\spn)\ot U_q(\spn)}$) denote the completions from \cite[Definition 3.1]{ST}.
It is shown in \cite[Theorem 5.2]{ST} that there is an element $X\in \tilde{U_q(\spn)}$ so that $R= (X^{-1} \ot X^{-1})\Delta(X)$ is the universal $R$-matrix, and $\nu = X^{-2}$ is a (non-standard) ribbon
element. This means that $\Rep(U_q(\spn)$ is a ribbon category, so in particular is a braided pivotal category. There is also an element $u$, satisfying $\nu^2= uS(u)$, so that if $g=
\nu^{-1}u$ then the corresponding pivotal structure $V\xrightarrow{\cong}V^{**}$ is given by $v\mapsto (f\mapsto (f(gv)))$. A consequence is that the \emph{quantum dimension} of $V$ is 
\[ \qdim(V) = \text{tr}(g|_{V}). \] The element $g$ acts on $V_{\lambda}$ by $(-1)^{\langle \rho^{\vee}, 2\lambda \rangle} K^{2\rho}$, and $K^{2\rho}$ acts on the $\mu$ weight space of a
representation as $q^{(2\rho, \mu)}$. From this we deduce the \emph{quantum Weyl dimension formula}
\begin{equation}
\qdim(V_{\lambda}) = (-1)^{\langle \rho^{\vee}, 2\lambda \rangle} \sum_{\mu} \dim (V_{\lambda})[\mu] q^{(2\rho, \mu)} 
= (-1)^{\langle \rho^{\vee}, 2\lambda \rangle} \prod_{\alpha\in \Phi_+}\dfrac{[\langle \alpha^{\vee}, \lambda + \rho\rangle]_{q_{\alpha}}}{[\langle \alpha^{\vee}, \rho\rangle]_{q_{\alpha}}}
\end{equation}
where we write $q_{\alpha}= q^{(\alpha, \alpha)/2}$.

There is a simpler formula for the quantum dimension of the fundamental representations in type $C$, which appears in the PhD thesis of the fourth-named author.
\begin{prop}
We have
\begin{equation}\label{dimVk}
\qdim (V_k) = (-1)^k \dfrac{[n+ 1- k]}{[n+1]}{2n+2 \brack k}
\end{equation}
\end{prop}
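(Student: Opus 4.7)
The plan is to apply the quantum Weyl dimension formula to $\lambda = \varpi_k$ and simplify by exploiting the fact that $\varpi_k + \rho$ differs from $\rho = \sum_{i=1}^n (n+1-i)\epsilon_i$ only in its first $k$ coordinates, each shifted by $+1$. Consequently, for each positive root $\alpha$ the factor $[\langle \alpha^\vee, \varpi_k + \rho\rangle]_{q_\alpha}/[\langle \alpha^\vee, \rho\rangle]_{q_\alpha}$ is trivial unless $\alpha$ has nonzero pairing with some $\epsilon_i$ for $i \le k$, and the surviving factors are arranged to telescope.

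First I would verify the sign. A direct computation using $2\rho^\vee = \sum_{\alpha>0}\alpha^\vee$ gives $\rho^\vee = \sum_i(n-i+\tfrac{1}{2})\epsilon_i$ under the standard identification, whence $\langle \rho^\vee, 2\varpi_k\rangle = 2nk - k^2 \equiv k \pmod{2}$, matching the claimed sign $(-1)^k$.

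Next I would split the product over $\Phi_+$ into three families. The long roots $2\epsilon_i$ (for which $q_\alpha = q^2$) give a telescoping product that collapses to $[n+1]_{q^2}/[n+1-k]_{q^2} = [2n+2]/[2n+2-2k]$, via the identity $[m]_{q^2} = [2m]/[2]$. The short roots $\epsilon_i - \epsilon_j$ ($i<j$) contribute nontrivially only when $i \le k < j$; for each such $i$ the ratio is $[j-i+1]/[j-i]$, which telescopes in $j$ and then multiplies over $i$ to yield ${n \brack k}$. The short roots $\epsilon_i + \epsilon_j$ are the most delicate part: both the cases $i < j \le k$ and $i \le k < j$ contribute. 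For the latter, the inner product in $j$ telescopes to $[2n+2-i-k]/[n+2-i]$; for the former, no single telescoping is available, and the cleanest approach is the reindexing $(i,j) \mapsto (i+1, j+1)$, under which the bulk of numerator and denominator cancel, leaving only boundary factors indexed by $\{(1,j) : 2 \le j \le k\}$ and $\{(i,k+1) : 2 \le i \le k\}$. Combining the two subcases gives a contribution of the form $\tfrac{[2n+1]!\,[2n+2-2k]\,[n+1-k]!}{[2n+2-k]!\,[n+1]!}$.

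Finally, multiplying the three contributions and using $[2n+2][2n+1]! = [2n+2]!$, $[n+1-k]!/[n-k]! = [n+1-k]$, and $[n]!/[n+1]! = 1/[n+1]$, the spurious factors of $[2n+2-2k]$ cancel and the expression collapses to $(-1)^k\tfrac{[n+1-k]}{[n+1]}{2n+2 \brack k}$. The main obstacle is the bookkeeping for the short $+$-root contribution, where no single telescoping applies and one must instead exploit the symmetry of the product under the shift $(i,j) \mapsto (i+1,j+1)$; the other families collapse by one-step telescoping.
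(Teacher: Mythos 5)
Your proposal is correct, and it takes a genuinely different route from the paper. You compute $\qdim(V_k)$ directly from the quantum Weyl dimension formula, splitting the product over $\Phi_+$ into the three families $\{2\epsilon_i\}$, $\{\epsilon_i - \epsilon_j\}$, $\{\epsilon_i + \epsilon_j\}$ and telescoping each. I checked your bookkeeping: the long roots give $[2n+2]/[2n+2-2k]$, the roots $\epsilon_i - \epsilon_j$ with $i\le k<j$ give ${n \brack k}$, the roots $\epsilon_i + \epsilon_j$ with $i \le k < j$ give $\frac{[2n+1-k]!\,[n+1-k]!}{[2n+1-2k]!\,[n+1]!}$, and the roots $\epsilon_i + \epsilon_j$ with $i<j\le k$ give $\frac{[2n+1]!\,[2n+2-2k]!}{[2n+2-k]!\,[2n+1-k]!}$ by your shift $(i,j)\mapsto(i+1,j+1)$ argument, so that the combined $\epsilon_i+\epsilon_j$ contribution is indeed $\frac{[2n+1]!\,[2n+2-2k]\,[n+1-k]!}{[2n+2-k]!\,[n+1]!}$ as you state, and the total collapses to the claimed formula. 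The paper instead verifies the $k=1$ case directly and then computes the telescoping ratio $\qdim(V_k)/\qdim(V_{k-1})$, where $\varpi_k - \varpi_{k-1}=\epsilon_k$ ensures that only the positive roots involving $\epsilon_k$ contribute nontrivially. That inductive approach requires an explicit base case but avoids the two-dimensional cancellation in the $\epsilon_i+\epsilon_j$ ($i<j\le k$) family that you handle by reindexing; conversely, your direct computation is self-contained (no base case, no induction) at the cost of heavier bookkeeping in that one family. Either approach is fine.
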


\begin{proof}
Since $2\rho^{\vee} = \sum_{\alpha\in \Phi_+}\alpha^{\vee} = (2n-1)\epsilon_1 + (2n-3)\epsilon_2+ \cdots+ 3 \epsilon_{n-1} + \epsilon_n$, 
we find that $\langle \rho^{\vee}, 2\varpi_{k} \rangle = 2nk- k^2$, so $(-1)^{\langle \rho^{\vee}, 2\varpi_{k} \rangle} = (-1)^{k^2} = (-1)^k$.

Expanding the quantum Weyl dimension formula results in
\begin{equation}\label{dimV1a}
\qdim(V_{1}) = -\prod_{2\epsilon_i}\dfrac{[(\epsilon_i, \epsilon_1 + \rho)]_{q^2}}{[( \epsilon_i, \rho)]_{q^2}}
	\prod_{\substack{ \epsilon_i- \epsilon_j \\ i< j}}\dfrac{[( \epsilon_i - \epsilon_j, \epsilon_1 + \rho)]_{q}}{[(\epsilon_i- \epsilon_j,  \rho)]_{q}}
	\prod_{\substack{ \epsilon_i+\epsilon_j \\ i< j}}\dfrac{[(\epsilon_i + \epsilon_j, \epsilon_1 + \rho)]_{q}}{[(\epsilon_i + \epsilon_j, \rho)]_{q}}.
\end{equation}
In each term for which $i \ne 1$, $\epsilon_i$ and $\epsilon_1$ are orthogonal (as are $\epsilon_j$ and $\epsilon_1$ since $i < j$), and the numerator and denominator cancel. Thus \eqref{dimV1a} becomes
\begin{equation}\label{dimV1b}
\qdim(V_{1}) = -\dfrac{[(\epsilon_1, \epsilon_1 + \rho)]_{q^2}}{[( \epsilon_1, \rho)]_{q^2}}
	\prod_{ \epsilon_1- \epsilon_j}\dfrac{[( \epsilon_1 - \epsilon_j, \epsilon_1 + \rho)]_{q}}{[(\epsilon_1- \epsilon_j,  \rho)]_{q}}
	\prod_{\epsilon_1 + \epsilon_j}\dfrac{[(\epsilon_1 + \epsilon_j, \epsilon_1 + \rho)]_{q}}{[(\epsilon_1+ \epsilon_j, \rho)]_{q}}.
\end{equation}
To continue we note that
\[ \rho = n\epsilon_1 + (n-1) \epsilon_2 + \cdots + \epsilon_n \]
and
\[ [n]_{q^{2}}= [2n]_q/[2]_q, \quad \text{ so } \quad \dfrac{[a]_{q^2}}{[b]_{q^2}} = \dfrac{[2a]}{[2b]}. \] Then \eqref{dimV1b} evaluates to be
\begin{equation}\label{dimV1c}
\qdim(V_{1}) = - \dfrac{[2n+2]}{[2n]} \cdot \dfrac{[2][3]\cdots[n]}{[1][2]\cdots[n-1]} \cdot \dfrac{[2n][2n-1]\cdots [n+2]}{[2n-1][2n-2]\cdots [n+1]} = - \dfrac{[2n+2][n]}{[n+1]},
\end{equation}
proving the $k=1$ case of \eqref{dimVk}.

We now consider the general case. Instead of directly computing $\qdim V_k$, we compute the ratio of two successive dimensions. Using the quantum Weyl dimension formula we
see that \begin{equation} \qdim(V_{k})/\qdim(V_{{k-1}}) = (-1)^{\langle \rho^{\vee}, 2(\varpi_k - \varpi_{k-1}) \rangle} \prod_{\alpha\in \Phi_+}\dfrac{[\langle
\alpha^{\vee}, \varpi_k + \rho\rangle]_{q_{\alpha}}}{[\langle \alpha^{\vee}, \varpi_{k-1} + \rho\rangle]_{q_{\alpha}}}. \end{equation} Since $\varpi_k - \varpi_{k-1} = \epsilon_k$,
the only differences between the numerator and denominator occur when pairing against positive roots involving $\epsilon_k$, just as before we only considered positive roots
involving $\epsilon_1$. Computing, one obtains \begin{equation} \label{dimratio} \qdim(V_{k})/\qdim(V_{{k-1}}) = - \dfrac{[2(n+2-k)]}{[2(n+1-k)]} \cdot
\dfrac{[n-k+1]}{[k]} \cdot \dfrac{[2n+3-k][2n+2-2k]}{[2n+4-2k][n+2-k]}. \end{equation} Meanwhile, the ratio of two successive terms from \eqref{dimVk} is
\begin{equation} - \dfrac{[n-k+1][2n+3-k]}{[n-k+2][k]} \end{equation} which matches \eqref{dimratio}. Thus the result follows by induction.
\end{proof}

The main result of this paper is a diagrammatic, generators and relations, presentation of the category $\Rep(U_q(\spn))$, as a pivotal category. 
The choice of pivotal structure for our presentation is the one coming from Snyder and Tingley's ribbon element $\nu = X^{-2}$. Let us motivate this choice.

Recall that a pivotal structure is a coherent identification of $V$ and $(V^{*})^{*}$ for every object $V$.
Since every pivotal category is equivalent (as a pivotal category) to a strict pivotal category wherein $V = (V^{*})^{*}$,
in general, a monoidal presentation of a pivotal category with generating object $V$ will not require a generating object $V^{**}$.
However, even when $V \cong V^*$, it is not always possible to make this latter identification sufficiently canonical that one can remove the objects $V^*$ from the presentation 
(using unoriented strands rather than oriented strands in the diagrammatics). 
A paper of Selinger \cite{Sel2} (in Sections 1.2, 1.3, and 5) gives a condition where duals can be removed from the graphical description: when all objects have Frobenius-Schur indicator equal to one. 
We will refer to such objects as \emph{naturally self-dual}.
In \cite[Lemma 5.7]{ST}, Snyder and Tingley prove that this condition is satisfied for the pivotal structure associated with the ribbon element $\nu = X^{-2}$.
(See also the earlier discussion on \cite[p. 123]{Kup} where a related $\Z/2$-grading is introduced, to the same end.)

\subsection{Brauer-Schur-Weyl duality} \label{subsec:previousC}

The representation theory of $\spn$ has been studied somewhat extensively in the context of (quantum) Brauer-Schur-Weyl duality. The latter is the type $C$ analogue of quantum
Schur-Weyl duality, and can be interpreted as the study of a full subcategory of $\FRep(\spn)$. 
These results will play an important supporting role in our proof of \ref{thm:main}. 
We now recall the pertinent facts, beginning with a recollection about quantum Schur-Weyl duality in type $A$.

\begin{rem}
In this section, we will use the notation $V_k$ to denote the $k$th fundamental representation of either $U_q(\gln)$ or $U_q(\spn)$.
The quantum group under discussion will be clear from the context.
For $\mathfrak{g}=\spn$ or $\gln$, 
we will use the notation $\SRep(U_q(\mathfrak{g}))$ to denote the full monoidal subcategory of $\Rep(U_q(\mathfrak{g}))$
whose objects are iterated tensor products of the standard representation $V_1$. 
\end{rem}

In type $A$, Schur-Weyl duality states that $\End_{U_q(\gln)}(V_1^{\otimes k})$ is a quotient of the Hecke algebra $\HB(\mathfrak{S}_k)$ of the symmetric group. 
This description is compatible with the monoidal structure for the usual inclusion of $\HB(\mathfrak{S}_k) \ot \HB(\mathfrak{S}_l) \hookrightarrow \HB(\mathfrak{S}_{k+l})$. 
The monoidal Hecke algebra
\[
\HB := \bigoplus_{k \ge 0} \HB(\mathfrak{S}_k)
\]
is a deformation of the corresponding direct sum of group algebras $\C[\mathfrak{S}_k]$ and has a familiar diagrammatic interpretation. 
Namely, $\HB$ can be presented as a monoidal category diagrammatically, 
with one generating object $1$ and one generating morphism, the braiding map 
$
\begin{tikzpicture} [scale=.35,anchorbase]
	\draw[very thick] (1,0) to [out=90,in=270] (0,1.5);
	\draw[overcross] (0,0) to [out=90,in=270] (1,1.5);
	\draw[very thick] (0,0) to [out=90,in=270] (1,1.5);
\end{tikzpicture}
\colon 1 \ot 1 \to 1 \ot 1.
$
The kernel of the map $\HB(\mathfrak{S}_k) \to \End(V_1^{\otimes k})$ depends on $n$, but it has a reasonably explicit description: 
it is generated monoidally by a single morphism, the (smooth) Kazhdan-Lusztig basis element corresponding to the longest element of $\mathfrak{S}_{n+1}$ 
(in particular, it is trivial when $k \leq n$).
It is easy to write this element as a linear combination of crossing diagrams\footnote{Explicitly, one takes the sum over all $w \in \mathfrak{S}_{n+1}$ of a positive braid lift of $w$, times $(-q)^{\binom{n+1}{2} - \ell(w)}$.},
thus one obtains an explicit presentation of a diagrammatic category, which is generated by the braiding, and is equivalent to $\SRep(U_q(\gln))$.

In type $C$, $V_1$ has a canonical isomorphism to its dual (classically, this isomorphism comes from the symplectic structure), 
and most of the extra features of type $C$ representation theory come from this self-duality of $V_1$. 
In this case, the algebra $\End_{U_q(\spn)}(V_1^{\otimes k})$ is instead a quotient of the $k$-strand BMW algebra $\BMW_k$, named after Birman-Murakami-Wenzl \cite{BW,Murakami}.
The latter admits a diagrammatic description akin to $\HB(\mathfrak{S}_k)$, but with additional (non-invertible) generators
that are typically depicted as cup-caps:
$
\begin{tikzpicture}[scale =.3, smallnodes,anchorbase]
	\draw[very thick] (0,0) to [out=90,in=180] (.5,.75) to [out=0,in=90] (1,0);
	\draw[very thick] (0,2.25) to [out=270,in=180] (.5,1.5) to [out=0,in=270] (1,2.25);
\end{tikzpicture}.
$

We now give the precise definition. Below, $g_i$ represents the $i$-th braiding map (overcrossing), and $e_i$ represents the $i$-th cup-cap.

\begin{defn}\label{def:BMW}
The \emph{$k$-strand BMW algebra} $\BMW_k(r,z)$ is the unital associative $\Z[r^{\pm},z^{\pm}]$-algebra 
generated by $e_i,g_i,g_i^{-1}$ for $1\leq i \leq k-1$, with relations:
\begin{enumerate}
\item $g_i-g_i^{-1}=z(1-e_i), \quad g_i g_i^{-1} = 1 = g_i^{-1} g_i,$
\item $e_i^2=\left(1+\frac{r-r^{-1}}{z}\right)e_i,$
\item $g_ig_{i+1}g_i=g_{i+1}g_ig_{i+1} \text{ for } 1 \leq i \leq k-2,$
\item $g_ig_j=g_jg_i \text{ for } |i-j|>1,$
\item $e_ie_{i+1}e_i=e_i, \quad e_{i+1}e_ie_{i+1}=e_{i+1} \quad \text{ for } 1 \leq i \leq k-2,$
\item $g_ig_{i+1}e_i=e_{i+1}e_i, \quad  g_{i+1}g_ie_{i+1}=e_ie_{i+1} \quad \text{ for }1 \leq i \leq k-2,$
\item $e_ig_i=g_ie_i=r^{-1} e_i,$
\item $e_ig_{i+1}e_i=re_i, \quad e_{i+1}g_ie_{i+1}=re_{i+1} \quad \text{ for } 1 \leq i \leq k-2.$
\end{enumerate}
\end{defn}

Taking the sum over all $k$, one again obtains a monoidal category (the BMW category) with one generating object $1$, 
and generating morphisms $g_1,e_1 \in \BMW_2$. 
The BMW category admits a full functor to $\SRep(U_q(\spn))$, as we recall below in \S \ref{sec:full}.

The kernel of the map
\begin{equation}\label{eq:BMW}
\BMW_k \to \End_{U_q(\spn)}(V_{1}^{\otimes k}) 
\end{equation}
is generated monoidally by a single morphism in $\BMW(n+1)$, 
a fact proved in \cite{HuXiaoTensorBMW} and further explored in \cite{RubWes,LZbrauer} 
(see also \cite{LZorthogonal} for the analogous result for the orthogonal group).
This morphism has a less explicit description than in type $A$; 
classically (i.e. at $q=1$) it is a \emph{Pfaffian} \cite{RubWes}, 
but, to our knowledge, an explicit formula for the analogous \emph{quantum Pfaffian} in terms of BMW generators
does not appear in the literature (however, see \cite[Theorem 8.2]{LZbrauer} for a characterization).
Given this, an approach to Theorem \ref{thm:main} using these results seems more laborious than the one taken in \S \ref{sec:proof}.
Indeed, beyond explicitly identifying the \emph{quantum Pfaffian}, it would require analogues of many of our spanning results obtained in \S \ref{sec:RGS}--\ref{sec:PA}
in the setting of the \emph{quantized Brauer category}.
The latter is the category obtained from the BMW category by adjoining cap and cup morphisms 
(as defined above, the BMW category only contains the composition of cup with cap).
We instead skirt these difficulties via a change of generators; 
see Remarks \ref{rem:LZ1} and \ref{rem:LZ2} for further discussion.

One useful takeaway from the present work is that the kernel of \eqref{eq:BMW} is easier to describe in the language of webs 
than in the language of tangled planar matchings. 
The generator of the kernel is the composition of natural webs $1^{\ot n+1} \to (n+1) \to 1^{\ot n+1}$.
In fact, this result is paralleled in type $A$, where the skew Howe duality approach of Cautis-Kamnitzer-Morrison 
shows that the kernel of the map
$\HB(\mathfrak{S}_k) \to \End(V_1^{\otimes k})$ is generated by a single web of the same form.

\subsection{Other diagrammatic categories}\label{subsec:connections}

We now review a number of existing diagrammatic descriptions of categories of quantum group representations appearing in the literature, 
with an eye towards comparing to (and contrasting with) our category $\Web(\spn)$.

The primordial example of a diagrammatic description of a representation category is the equivalence 
between the Temperley-Lieb category $\TL_q$ and the category $\FRep(U_q(\sln[2]))$ \cite{RTW,TL,Kup}.
Recall that $\TL_q$ is the $\C(q)$-linear pivotal category freely generated by a single self-dual object, 
modulo the ideal generated by the relation
\begin{equation}\label{eq:TLcircle}
\begin{tikzpicture}[scale =.75, tinynodes,anchorbase]
	\draw[very thick] (0,0) circle (.5);
\end{tikzpicture}
= -[2] = -(q^{-1} + q).
\end{equation}
In other words, morphisms in $\TL_q$ are $\C(q)$-linear combinations of planar tangles, 
modulo isotopy (rel boundary) and the relation \eqref{eq:TLcircle}.
As above, $\FRep(U_q(\sln[2]))$ is the full subcategory of $\Rep(U_q(\sln[2]))$ monoidally generated 
by the fundamental representation, which in this case is simply the (quantum analogue of the) vector representation $V$. 
The equivalence is given by sending the generating object of $\TL_q$ to $V$, 
and uses the non-standard pivotal structure on $\FRep(U_q(\sln[2]))$ from \cite{ST} 
(the $\sln[2]$ case of the pivotal structure discussed in \S\ref{subsec:qdim}).
See also \cite{Tingley}.

Since there is an isomorphism of algebras $U_q(\mathfrak{sp}_2) \cong U_{q^2}(\mathfrak{sl}_2)$, 
the following is not surprising.

\begin{prop} When $n=1$, the category $\Web(\spn)$ is equivalent to $\TL_{q^2}$. \end{prop}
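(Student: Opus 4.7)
The plan is to show directly that when $n=1$, the definition of $\Web(\spn)$ degenerates into the defining data of $\TL_{q^2}$, so the equivalence is essentially tautological once one unwinds the conventions.

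First I would observe that for $n=1$ the only allowable edge labels are $0$ and $1$, and by convention $0$-labeled edges are erased. Consequently, the range $k \in \{1, \ldots, n-1\}$ indexing the generating trivalent vertices in \eqref{eq:WebGen} is \emph{empty}, so $\Web(\mathfrak{sp}_2)$ has no generating morphisms beyond the identity on the self-dual object $1$ and the cup/cap (co)unit morphisms supplied by the pivotal structure. Thus, before imposing relations, the morphism spaces of $\Web(\mathfrak{sp}_2)$ are spanned by isotopy classes of planar tangles on a single strand type, which is precisely the underlying data of $\TL_q$ (with a formal parameter for the circle).

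Next I would verify that the relations \eqref{eq:spn} either become vacuous or reduce to the Temperley-Lieb circle relation \eqref{eq:TLcircle} with $q$ replaced by $q^2$. Relations (b), (c), (d), (e) all involve webs containing either a $2$-labeled edge or a trivalent vertex, neither of which exists once $n=1$, so they are vacuous. The only surviving content is relation (\text{\ref{eq:spn}a}), which reads
\[
\begin{tikzpicture}[scale =.6, tinynodes,anchorbase]
\draw[very thick] (0,0) circle (.4);
\end{tikzpicture}
= -\frac{[1]\,[4]}{[2]} = -\frac{[4]}{[2]} = -(q^2 + q^{-2}) = -[2]_{q^2},
\]
which is exactly the circle relation of $\TL_{q^2}$.

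Putting these together, one obtains a canonical functor $\Web(\mathfrak{sp}_2) \to \TL_{q^2}$ sending the object $1$ to the generating self-dual object of $\TL_{q^2}$ and cups/caps to cups/caps; this functor is well-defined by the observation above, and it is manifestly essentially surjective and fully faithful since both categories are the $\C(q)$-linear pivotal category freely generated by a self-dual object modulo the single relation that the circle equals $-[2]_{q^2}$. There is no real obstacle here: the content is entirely bookkeeping with the labeling conventions, with the only non-formal step being the simplification $[1][4]/[2] = [2]_{q^2}$ identifying the two circle values.
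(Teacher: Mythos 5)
Your proof is correct and takes essentially the same approach as the paper: observe that relations (\text{\ref{eq:spn}b})--(\text{\ref{eq:spn}e}) become vacuous when $n=1$ (no trivalent generators, no $2$-labeled edges), and that (\text{\ref{eq:spn}a}) reduces to the Temperley--Lieb circle value $-[2]_{q^2}$ via $[1][4]/[2] = q^2 + q^{-2}$. The paper states this more tersely but the argument is identical.
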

	
\begin{proof}
The relations (\text{\ref{eq:spn}b}), (\text{\ref{eq:spn}c}), (\text{\ref{eq:spn}d}), and (\text{\ref{eq:spn}e}) all become trivial, since the object $k$ is zero whenever $k> 1$. The relation (\text{\ref{eq:spn}a}) says that 
\[  
\begin{tikzpicture}[scale =.75, tinynodes,anchorbase]
	\draw[very thick] (0,0) circle (.5);
\end{tikzpicture}
= -\frac{[4]}{[2]} = -(q^{-2}+q^2). \qedhere
\] 
\end{proof}

Note that $q^2$ appears both in the isomorphism of quantum groups, and in the corresponding equivalence of categories, 
since the unique positive root in type $C_1$ is a long root, i.e. $\alpha_1 = 2\epsilon_1$ so $(\alpha_1, \alpha_1)= 4$. 

Given the diagrammatic description of $\FRep(U_q(\sln[2]))$, 
the next natural question is to find a presentation via generators and relations for $\FRep(\mathfrak{g})$ for other simple Lie algebras.
This problem was solved in rank $2$ by Kuperberg in \cite{Kup}, where he defined diagrammatic categories that we denote by $\Web_q(A_2)$, $\Web_q(B_2)$, and $\Web_q(G_2)$,
and proved that they are equivalent to $\FRep(U_{\sqrt{q}}(\mathfrak{sl}_3))$, $\FRep(U_{\sqrt{q}}(\mathfrak{so}_5))$, and $\FRep(U_{\sqrt{q}}(\mathfrak{g}_2))$, respectively.
The low rank coincidence $\spn[4] \cong \mathfrak{so}_5$ leads one to ask how Kuperberg's category $\Web_q(B_2)$ 
(as defined on \cite[p. 126]{Kup}) relates to the $n=2$ version of $\Web(\spn)$.

\begin{prop}\label{prop:n=2} 
When $n=2$, the category $\Web(\spn)$ is equivalent (as a $\C(q)$-linear ribbon category) to $\Web_{q^2}(B_2)$.
\end{prop}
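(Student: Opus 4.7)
The plan is to deduce the equivalence by chaining together three ribbon equivalences that are already available. First, Theorem~\ref{thm:main} applied at $n=2$ yields an equivalence $\Web(\spn[4]) \simeq \FRep(U_q(\spn[4]))$ of $\C(q)$-linear ribbon categories, with the Snyder--Tingley ribbon structure on the right (see \S\ref{subsec:qdim}). Second, Kuperberg's main theorem in \cite{Kup} gives $\Web_q(B_2) \simeq \FRep(U_{\sqrt{q}}(\mathfrak{so}_5))$, which after substituting $q^2$ for $q$ becomes $\Web_{q^2}(B_2) \simeq \FRep(U_q(\mathfrak{so}_5))$. Third, the classical Lie-algebra isomorphism $\spn[4] \cong \mathfrak{so}_5$ quantizes to a Hopf-algebra isomorphism $U_q(\spn[4]) \cong U_q(\mathfrak{so}_5)$, inducing a monoidal equivalence of the fundamental subcategories. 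Composing the three will yield $\Web(\spn[4]) \simeq \Web_{q^2}(B_2)$ as $\C(q)$-linear monoidal categories.

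To promote this to an equivalence of ribbon (not merely monoidal) categories, I would appeal to \cite[Lemma 5.7]{ST}: the Snyder--Tingley ribbon element is characterized by the property that the fundamental representations become naturally self-dual, i.e. have Frobenius--Schur indicator $+1$. This property is intrinsic to the monoidal category, so any monoidal equivalence arising from a Hopf-algebra isomorphism transports it automatically, and Kuperberg likewise employs this pivotal structure on his side (so that his strands are unoriented). Hence the ribbon data is preserved through each step of the chain.

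The main obstacle is careful bookkeeping of the root-length conventions. The isomorphism $C_2 \cong B_2$ interchanges long and short simple roots: in $C_2$ the long root $\alpha_2$ has squared length $4$, so its associated quantization parameter is $q_{\alpha_2} = q^2$, whereas in $B_2$ the long root has squared length $2$. Tracking this swap through the identification $U_q(\spn[4]) \cong U_q(\mathfrak{so}_5)$ is precisely what forces Kuperberg's parameter to appear as $q^2$ (rather than $q$) in the final statement, and verifying that this reparameterization is compatible with Kuperberg's explicit diagrammatic relations is the delicate point. An alternative, more direct route would be to write down a functor on generators and check Kuperberg's bigon, triangle, and square relations directly from the $n=2$ specializations of relations~(\ref{eq:spn}a)--(\ref{eq:spn}e); however, given the limited additional content in such a computation, the chain-of-equivalences approach is the more natural one and is in keeping with the minimalist philosophy of this paper.
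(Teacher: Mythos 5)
Your proposal is correct in outline but takes a genuinely different route from the paper, and there are a couple of places where the reasoning is thinner than it appears.

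The paper's proof is direct: it constructs an explicit functor on generators (sending Kuperberg's $1 \ot 1 \to 2$ vertex to $\sqrt{-1/[2]}$ times the trivalent vertex, after extending scalars, or a rescaling over $\C(q)$) and then verifies that the relations \cite[Equation (3)]{Kup} are equivalent to \eqref{eq:spn}. This gives an explicit dictionary between the two presentations and requires neither Theorem~\ref{thm:main}, nor Kuperberg's theorem, nor any representation theory. Your chain-of-equivalences argument instead treats both web categories as black boxes for $\FRep$, and leans on the quantum-group isomorphism in the middle. This buys you less work at the cost of an explicit equivalence, but it also creates a logical ordering issue: this proposition appears in the background section, well before Theorem~\ref{thm:main} is proved, and though there is no actual circularity (Proposition~\ref{prop:n=2} is not used in the proof of Theorem~\ref{thm:main}), a proof invoking the main theorem cannot serve the purpose of an independent sanity check.

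Two steps in your chain deserve more scrutiny than you give them. First, the middle step $\FRep(U_q(\spn[4])) \simeq \FRep(U_q(\mathfrak{so}_5))$ needs the Hopf isomorphism to respect the $R$-matrix \emph{and} the specific Snyder--Tingley ribbon element $\nu = X^{-2}$. Your appeal to \cite[Lemma 5.7]{ST} correctly observes that the Frobenius--Schur indicator criterion pins down the \emph{pivotal} structure, but a ribbon equivalence is a braided pivotal equivalence; one still needs the braiding to transport correctly, and then the twist (which involves both) to match. The standard argument is that everything is built canonically from the Cartan datum, which the isomorphism preserves, but this is precisely the claim that warrants verification, not assumption. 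Second, you assume Kuperberg's equivalence $\Web_q(B_2) \simeq \FRep(U_{\sqrt{q}}(\mathfrak{so}_5))$ is an equivalence of \emph{ribbon} categories with the Snyder--Tingley pivotal structure; Kuperberg's 1996 paper predates \cite{ST}, and while the paper cites Kuperberg's $\Z/2$-grading on p.~123 as accomplishing the same end, verifying this compatibility precisely is nontrivial. You acknowledge the root-length bookkeeping as ``the delicate point'' but do not carry it through. In sum: the strategy is sound and would work, but the key verifications are deferred to roughly the same amount of work as the paper's direct check of Kuperberg's relations, and the resulting equivalence is less explicit.
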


\begin{proof}[Proof (sketch).] 
The equivalence identifies Kuperberg's single and doubled edges with (the identity morphisms of) the objects $1$ and $2$ from $\Web(\spn)$, respectively.
The simplest way to define the functor on morphisms is to extend scalars to $\C(q,\sqrt{-1/[2]})$,
and then send Kuperberg's generating morphism $1 \ot 1 \to 2$ to $\sqrt{-1/[2]}$ times our generating trivalent vertex. 
It is a pleasant exercise\footnote{For assistance in proving Kuperberg's penultimate relation, the reader should consult \eqref{eq:triangle} and its proof.} 
to show that the relations in \cite[Equation (3)]{Kup} imply our relations in \eqref{eq:spn}, and vice versa. 

Alternatively, this equivalence can be defined over $\C(q)$. 
In this case, the functor rescales both the generating trivalent vertex and the $2$-labeled cap/cup morphisms.
(Compare to the automorphisms of $\Web(\spn)$ as described in Porism \ref{por:auto}.)
\end{proof}

The higher-rank analogue of $\Web_q(A_2)$ was studied by several authors \cite{Kim,Sikora,Mor} 
and culminated in landmark work of Cautis-Kamnitzer-Morrison \cite{CKM}.
They use a quantized version of skew Howe duality, i.e. the duality arising from the 
commuting actions of $\gln[m]$ and $\gln$ on $\Lambda^\bullet(\C^m \otimes \C^n)$, 
to construct a category of $\sln$ webs and show that it is equivalent to $\FRep(U_q(\sln))$.
Unfortunately, work of Sartori-Tubbenhauer \cite{SarTub} suggests that this elegant approach is limited to type $A$. 
Although Howe dualities exist in other classical types, \cite{SarTub} shows that
they do not quantize to give dualities between pairs of quantum groups, 
but rather between a quantum group and an associated coideal subalgebra of $U_q(\gln)$. 
To obtain web categories for quantum groups using this approach would require new ideas.

Thus, far less is known outside of type $A$ . 
In \cite{WesSpin}, Westbury gives a diagrammatic presentation for the subcategory of $\Rep(\mathfrak{so}_7)$ 
generated by the vector and the spin representation (note that these are not all the fundamental representations, so there is still work to do here). 
More recently, the third- and fourth-named authors defined a diagrammatic category that surjects onto $\FRep(U_q(\mathfrak{sp}_6))$ \cite{RoseTatham}. 
In fact, the $\spn[6]$ web category in \cite{RoseTatham} was the starting point for the definition of $\Web(\spn)$, and some of the ideas in this article have antecedents in that work. 
The following result is therefore not surprising.

\begin{prop}\label{prop:n=3} 
When $n=3$, the category $\Web(\spn)$ is equivalent (as a $\C(q)$-linear ribbon category) to the $\spn[6]$ web category of \cite{RoseTatham}.
\end{prop}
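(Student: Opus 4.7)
The plan is to mirror the strategy used for Proposition \ref{prop:n=2}: construct mutually inverse $\C(q)$-linear ribbon functors between the two presentations by identifying generating objects and rescaling generating morphisms so that the defining relations match. On objects, send $k \mapsto k$ for $k \in \{1,2,3\}$, using that both categories are pivotal with self-dual generators labeled by the fundamental weights. On morphisms, define the functor by sending each generating trivalent vertex of one presentation to a scalar multiple of the corresponding vertex in the other. The rescaling scalars are constrained by requiring that the $k=1$ and $k=2$ bigon-type relations of the form (\text{\ref{eq:spn}c}) match on the nose; as in the $n=2$ case, it may be convenient to first extend scalars to a suitable radical extension of $\C(q)$, verify the equivalence there, and then observe that the rescaling factors descend to $\C(q)$ by rescaling both trivalent vertices and cap/cup morphisms (compare Porism \ref{por:auto}).

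Next, I would verify that each defining relation of $\Web(\spn[6])$---namely the $n=3$ specializations of (\text{\ref{eq:spn}a})--(\text{\ref{eq:spn}e})---is a consequence of the relations defining the \cite{RoseTatham} category, and conversely. The circle relation (\text{\ref{eq:spn}a}) is a numerical identity matching the quantum dimension of $V_1$ computed in \eqref{dimVk}. Relation (\text{\ref{eq:spn}b}) encodes the vanishing of the bigon with internal label $2$, which holds in either presentation by semisimplicity once one checks $V_2$ does not appear in $V_1$. Relation (\text{\ref{eq:spn}c}) is a standard bigon evaluation arising from the Clebsch--Gordan rule, and (\text{\ref{eq:spn}d}) is associativity of the trivalent vertex, which holds up to isotopy in both presentations. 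The square-switch relation (\text{\ref{eq:spn}e}) for $k=1$ and $k=2$ carries the main nontrivial content; here one uses that in both presentations the relevant four-boundary Hom-space is finite-dimensional with an explicitly identifiable basis, so once the generator images are chosen, the coefficients in (\text{\ref{eq:spn}e}) are forced. An analogous inspection in the opposite direction establishes the defining relations of \cite{RoseTatham}.

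Finally, one checks that the two functors so constructed are mutually inverse. This reduces to verifying that their composition fixes each generating object and each generating trivalent vertex, which is automatic once the rescaling scalars are chosen consistently. Compatibility with the ribbon structure follows from uniqueness of the ribbon element associated with the Snyder--Tingley pivotal structure on $\FRep(U_q(\spn[6]))$ (see \S\ref{subsec:qdim}), combined with the fact that both presentations give rise to the same quantum dimensions and twist values on the generating objects.

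The main obstacle will be the verification of the square-switch relation (\text{\ref{eq:spn}e}) at $k=2$ and its counterpart in the \cite{RoseTatham} presentation, which likely employs a different choice of generating morphisms or basis for the relevant four-boundary Hom-spaces. Matching the coefficients carefully, and pinning down the overall scalars needed to make the translation internally consistent over $\C(q)$, is where most of the bookkeeping lies; the remaining relations reduce to purely local computations of the same flavor as those in \S\ref{sec:proof}.
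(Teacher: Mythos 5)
Your proposal is correct and follows essentially the same strategy the paper uses: construct the equivalence as a rescaling of generating trivalent vertices and cap/cup morphisms (most cleanly after a radical extension of scalars, then descending to $\C(q)$), and check that the two sets of defining relations correspond under the rescaling. The paper's sketch is a bit more specific — it gives the explicit scalars $\sqrt{1/[3]}$ and $\sqrt{1/[2]}$, and pinpoints that the nontrivial direction is showing the extra relations in the \cite{RoseTatham} presentation (vanishing of the triangle web in $\Hom(3\otimes3,2)$ and an alternate square-switch) all follow from (\text{\ref{eq:spn}e}) — but this is the content you flag as the ``main obstacle,'' so the approaches agree.
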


\begin{proof}[Proof (sketch).]
As in the proof of Proposition \ref{prop:n=2}, the equivalence is given by a rescaling.
Again, this is most-easily accomplished by extending scalars to $\C(q,\sqrt{1/[2]},\sqrt{1/[3]})$ and rescaling the  $1\otimes 1 \to 2$ 
trivalent vertex in \cite{RoseTatham} by $\sqrt{1/[3]}$ and the $1\otimes 2 \to 3$ trivalent vertex by $\sqrt{1/[2]}$.
However, as above, it is also possible to define the rescaling over $\C(q)$ by rescaling both trivalent vertices and cap/cup morphisms
(see the related discussion on \cite[pp. 11-12]{RoseTatham} concerning the parameters $\delta$ and $\delta'$).

Note that the second line of the relations in \cite[Definition 2.1]{RoseTatham} contains two relations that do not have immediate 
analogues in $\Web(\spn)$ when $n=2$.
However, it is straightforward to see that both follow from (\text{\ref{eq:spn}e}) and further, 
that both have natural generalizations for general $n$.
The first of these relations asserts that a ``triangle web'' in $\Hom(3 \otimes 3,2)$ is zero; this follows from the $k=n$, $n=3$ case of (\text{\ref{eq:spn}e}).
The same argument shows that the corresponding web in $\Hom(n \otimes n,2)$ in $\Web(\spn)$ is always zero.
The second of these relations is is a consequence of the $k=2$ case of (\text{\ref{eq:spn}e}), and is philosophically an alternative form of this relation.
One can check that after the appropriate rescaling, the coefficient of the identity on the right hand side is now $1$.
This should be compared with the ```elementary neutral ladders'' from \cite{EliasLL, Bodish}. 
Finally, the second equation on the third line of the relations in \cite[Definition 2.1]{RoseTatham} is the analogue of our (\text{\ref{eq:spnOther}e})
(and is thus redundant).
\end{proof}

Lastly, we comment on some diagrammatic categories in the literature that describe (classical) type $C$ representation theory in higher rank.

\begin{rem} The previously-mentioned work of Sartori-Tubbenhauer \cite{SarTub} defines web categories that describe maps between Lie algebra representations in types $B$, $C$, and $D$, 
and between representations of associated coideal subalgebras. 
Since the latter are alternative quantizations that \textbf{are not} equal to the corresponding quantum groups, 
we do not expect a relation to $\Web(\spn)$, except when $q=1$. 
Under this specialization, it is an interesting problem to relate their work to ours.
One should note that the representations considered in \cite{SarTub} are tensor products of exterior and symmetric powers of the standard representation $L_1$. 
Hence, the comparison would involve interpreting exterior and symmetric powers as objects in the Karoubi envelope $\Kar(\Web(\spn)|_{q=1})$
and explicitly computing the corresponding idempotents.
\end{rem}

\begin{rem} \label{rem:LZ1} 
The aforementioned work of Lehrer-Zhang \cite{LZbrauer} proves that a quotient of the Brauer category is equivalent to $\SRep(\spn)$, 
the full subcategory of the category of finite-dimensional representations of the Lie algebra $\spn$ monoidally generated by the standard representation $L_1=\C^{2n}$.
By contrast, we show that $\Web(\spn)$ is equivalent to the entire fundamental subcategory $\FRep(U_q(\spn))$ of the category of finite-dimensional representations of the quantum group $U_q(\spn)$.
Nevertheless, it is natural to ask whether the results in \cite{LZbrauer} could be adapted to the quantum setting to give an alternative approach to our main result.
In fact, such an approach would require analogues of many of the combinatorial steps in our proof, and would further require an explicit description of the quantum Pfaffian.
We will comment further on these issues in Remark \ref{rem:LZ2}, after outlining the main steps in our proof.
In the meantime, we warn the reader of a potential source of confusion. 
Recall that morphisms in the Brauer category are described by generically immersed planar curves, i.e. quadrivalent graphs.
Our proof similarly makes use of quadrivalent graphs (see Definition \ref{def:quad}), 
but our quadrivalent vertex \textbf{does not} match the corresponding vertices in the Brauer category, even at $q=1$.
Rather, the vertices in the Brauer category are equal to the $q=1$ specialization of the braiding \eqref{eq:braiding},
which the reader should observe is distinct from the $q=1$ specialization of our quadrivalent vertices.
\end{rem}

%
\section{Expanded diagrammatics}\label{sec:more}
%

In this section, we define several other useful morphisms in $\Web(\spn)$ and present some relations that they satisfy.
Along the way we establish some related results.
We hope that the additional relations increase the usability of our diagrammatic calculus, 
and serve as a preview for further relations that will be found in the sequel to this paper \cite{BERT2}.

\subsection{Flow vertices}\label{subsec:flow}

For $\gln$, the direct sum of the fundamental representations (together with the trivial representation, denoted $L_{\varpi_0}$)
\[ \Lambda := \bigoplus_{k=0}^n L_{\varpi_k} \]
can be equipped with the structure of a (graded) algebra object in the category of $\gln$ representations. 
Since $L_{\varpi_k} \cong \Lambda^k L_{\varpi_1}$, this is just the exterior algebra of $L_{\varpi_1}$. 
In particular, there is a wedge-product multiplication map $L_{\varpi_k} \ot L_{\varpi_\ell} \to L_{\varpi_{k+\ell}}$ which is associative.
Moreover, the results in \cite{CKM} imply that the analogous result holds in the quantum setting.

Meanwhile, for $\spn$ and $U_q(\spn)$, the fundamental representations are not exterior products. 
Nonetheless, $V_{k+\ell}$ appears with multiplicity one as a direct summand of $V_k \ot V_\ell$, so there is a nonzero morphism (unique up to rescaling) which could serve as a multiplication map. 
In fact, in Lemma \ref{lem:extrarels} below we show that it is possible to choose these maps so that they satisfy the necessary associativity relation. 
This immediately implies the following.

\begin{thm} \label{thm:algebra} 
The direct sum of the fundamental $U_q(\spn)$ representations $\bigoplus_{k=0}^n V_k$ is a (graded) algebra object in $\Rep(U_q(\spn))$.
\end{thm}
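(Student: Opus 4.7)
The plan is to work inside the category $\Web(\spn)$ and use the equivalence of Theorem \ref{thm:main} to transport the resulting algebra structure to $\FRep(U_q(\spn))$. Set $A := \bigoplus_{k=0}^{n} V_k$, graded so that $V_k$ lies in degree $k$. To exhibit a graded algebra object, I must specify a unit $\eta\colon V_0 \to A$ and a multiplication $m\colon A \otimes A \to A$ whose graded components $m_{k,\ell}\colon V_k \otimes V_\ell \to V_{k+\ell}$ vanish automatically when $k+\ell > n$, then check associativity and the unit axioms.

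For the unit, I take the canonical inclusion of the $V_0$ summand, using that $V_0$ is the trivial representation and hence the monoidal unit. For the nonzero components $m_{k,\ell}$ with $k+\ell \le n$, I take the generating trivalent vertices of \eqref{eq:WebGen} when $\min(k,\ell) = 1$, and define the remaining $m_{k,\ell}$ inductively as composites of these generators with the maps $m_{k',\ell'}$ of smaller total degree. The unit axioms then hold tautologically given the convention that $0$-labeled strands are erased, which makes $m_{0,k}$ and $m_{k,0}$ literally the identity on $V_k$.

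The substantive axiom is associativity,
\[
m_{k+\ell,m}\circ(m_{k,\ell}\otimes \id_{V_m}) \;=\; m_{k,\ell+m}\circ(\id_{V_k}\otimes m_{\ell,m}),
\]
for triples with $k+\ell+m \le n$ (both sides vanish otherwise). This is precisely the statement of the forthcoming Lemma \ref{lem:extrarels}, so I would invoke it directly. The reason a consistent choice of scalings exists at all is that $\Hom(V_k \otimes V_\ell \otimes V_m,\, V_{k+\ell+m})$ is one-dimensional in this range, a standard character-theoretic fact for $\spn$ (shared by $U_q(\spn)$ via the quantum Weyl character formula); both composites above are therefore automatically proportional, and the lemma pins the proportionality constant at $1$. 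The main obstacle is thus entirely concentrated in Lemma \ref{lem:extrarels}, which I expect will be proved by induction on $k+\ell+m$, reducing to diagrammatic identities that ultimately come from (\ref{eq:spn}c) and (\ref{eq:spn}e). Given the lemma, the theorem follows in a single sentence.
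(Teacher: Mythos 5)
Your high-level structure matches the paper: the real content is concentrated in Lemma \ref{lem:extrarels}, and once the flow vertices are known to satisfy the associativity relation \eqref{eq:generalassoc}, the algebra-object statement drops out. Two remarks on where your route diverges.

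First, you propose to transport the algebra structure from $\Web(\spn)$ via the equivalence of Theorem \ref{thm:main}. The paper does not do this and does not need to: Lemma \ref{lem:extrarels} is stated for an arbitrary pivotal category $\CS$ equipped with trivalent vertices satisfying (\ref{eq:spn}c) and (\ref{eq:spn}d), and the paper applies it directly with $\CS = \FRep(U_q(\spn))$. Such vertices exist there because $V_{k+1}$ appears with multiplicity one in $V_1 \otimes V_k$ (and $V_k \otimes V_1$), so the relevant $\Hom$-spaces are lines and one can rescale to arrange (\ref{eq:spn}c) and (\ref{eq:spn}d) --- this is exactly what is verified in the course of proving Theorem \ref{thm:functor}. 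Your route is not wrong, but it reaches for the heaviest theorem in the paper to do a job that the lighter abstract lemma already does; it also obliges you to check that Theorem \ref{thm:main} does not circularly depend on Theorem \ref{thm:algebra} (it does not --- its proof cites Lemma \ref{lem:extrarels} but never Theorem \ref{thm:algebra} --- but this is a check the paper's route avoids entirely).

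Second, a small but concrete slip: you say the induction in Lemma \ref{lem:extrarels} ``ultimately comes from (\ref{eq:spn}c) and (\ref{eq:spn}e).'' The lemma's hypotheses and its proof use (\ref{eq:spn}c) and (\ref{eq:spn}d); relation (\ref{eq:spn}e) plays no role there. This matters because (\ref{eq:spn}d) (associativity of the length-one vertices) is exactly the base case that seeds the recursion defining the flow vertices, whereas (\ref{eq:spn}e) is the more delicate ``H--I'' relation used elsewhere. Correcting this also clarifies why the well-definedness of the two recursions in \eqref{eq:flowvertex} holds: it is (\ref{eq:spn}d), not a one-dimensionality argument, that reconciles them.
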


More generally, suppose $\CS$ is a $\C(q)$-linear pivotal category containing naturally self-dual objects $\{1,\ldots,n\}$ 
and a collection of distinguished morphisms in $\Hom_{\CS}(1\ot k, k+1)$ and $\Hom_{\CS}(k\ot 1, k+1)$, 
that we depict as trivalent vertex morphisms as in \eqref{eq:WebGen}.
Further, suppose that they satisfy (\text{\ref{eq:spn}c}) and (\text{\ref{eq:spn}d}).
Theorem \ref{thm:algebra} will follow by using these morphisms to build more-general multiplication morphisms:
\begin{equation} \label{eq:flowvertfirst}
\begin{tikzpicture}[scale =.5, smallnodes,anchorbase]
	\draw[very thick] (0,0) node[below]{$k$} to [out=90,in=210] (.5,.75);
	\draw[very thick] (1,0) node[below]{$\ell$} to [out=90,in=330] (.5,.75);
	\draw[very thick] (.5,.75) to (.5,1.5) node[above=-2pt]{$k{+}\ell$};
\end{tikzpicture}
\end{equation}
for all $0 \le k, \ell \le n$ that satisfy the general associativity relation:
\begin{equation} \label{eq:generalassoc} 
\begin{tikzpicture}[scale=.2, xscale=-1,tinynodes, anchorbase]
	\draw [very thick] (-1,-1) node[below,yshift=2pt]{$m$} to [out=90,in=210] (0,.75);
	\draw [very thick] (1,-1) node[below,yshift=2pt]{$\ell$} to [out=90,in=330] (0,.75);
	\draw [very thick] (3,-1) node[below,yshift=2pt]{$k$} to [out=90,in=330] (1,2.5);
	\draw [very thick] (0,.75) to [out=90,in=210] node[right=-2pt]{$\ell{+}m$} (1,2.5);
	\draw [very thick] (1,2.5) to (1,4.25) node[above,yshift=-3pt]{$k{+}\ell{+}m$};
\end{tikzpicture}
=
\begin{tikzpicture}[scale=.2, tinynodes, anchorbase]
	\draw [very thick] (-1,-1) node[below,yshift=2pt]{$k$} to [out=90,in=210] (0,.75);
	\draw [very thick] (1,-1) node[below,yshift=2pt]{$\ell$} to [out=90,in=330] (0,.75);
	\draw [very thick] (3,-1) node[below,yshift=2pt]{$m$} to [out=90,in=330] (1,2.5);
	\draw [very thick] (0,.75) to [out=90,in=210] node[left=-2pt]{$k{+}\ell$} (1,2.5);
	\draw [very thick] (1,2.5) to (1,4.25) node[above,yshift=-3pt]{$k{+}\ell{+}m$};
\end{tikzpicture}.
\end{equation}
As usual, any diagram with an index $> n$ is considered to be the zero morphism.
Further, when $k=0$ or $\ell=0$, we let \eqref{eq:flowvertfirst} be the relevant identity morphism.

We now motivate the definition of the morphisms in \eqref{eq:flowvertfirst}.
Suppose we wish to construct the multiplication map $k \ot 2 \to (k+2)$ using the generating vertices 
(and the (co)unit morphisms in $\CS$). 
A naive attempt is as follows:
\[
\begin{tikzpicture}[scale=.3,smallnodes,anchorbase]
	\draw[very thick] (-1,0) to node[below=-1pt]{$1$} (1,0);
	\draw[very thick] (-1,0) to node[left,xshift=2pt]{$k{+}1$} (0,1.732);
	\draw[very thick] (1,0) to node[right,xshift=-2pt]{$1$} (0,1.732);
	\draw[very thick] (0,1.732) to (0,3.232) node[above=-2pt]{$k{+}2$};
	\draw[very thick] (-2.3,-.75) node[below=-2pt,xshift=-2pt]{$k$} to (-1,0);
	\draw[very thick] (2.3,-.75) node[below=-2pt,xshift=2pt]{$2$} to (1,0);
\end{tikzpicture}.
\]
This indeed gives a morphism in $\Hom_{\CS}(k \ot 2, k+2)$;
however, if \eqref{eq:generalassoc} is to hold, then we would compute:
\begin{equation} \label{eq:naiveattemptresolves}
\begin{tikzpicture}[scale=.3,smallnodes,anchorbase]
	\draw[very thick] (-1,0) to node[below=-1pt]{$1$} (1,0);
	\draw[very thick] (-1,0) to node[left,xshift=2pt]{$k{+}1$} (0,1.732);
	\draw[very thick] (1,0) to node[right,xshift=-2pt]{$1$} (0,1.732);
	\draw[very thick] (0,1.732) to (0,3.232) node[above=-2pt]{$k{+}2$};
	\draw[very thick] (-2.3,-.75) node[below=-2pt,xshift=-2pt]{$k$} to (-1,0);
	\draw[very thick] (2.3,-.75) node[below=-2pt,xshift=2pt]{$2$} to (1,0);
\end{tikzpicture}
= 
\begin{tikzpicture}[scale=.2,anchorbase,smallnodes,xscale=-1]
	\draw [very thick] (0,-2) node[below=-1pt]{$2$} to (0,-1);
	\draw [very thick] (0,-1) to [out=150,in=210] node[right=-2pt]{$1$} (0,.75);
	\draw [very thick] (0,-1) to [out=30,in=330] node[left=-2pt]{$1$} (0,.75);
	\draw [very thick] (3,-2) node[below=-1pt]{$k$} to [out=90,in=330] (1,2.5);
	\draw [very thick] (0,.75) to [out=90,in=210] (1,2.5);
	\draw [very thick] (1,2.5) to (1,4.25) node[above=-2pt]{$k{+}2$};
\end{tikzpicture}
\stackrel{(\text{\ref{eq:spn}c})}{=}
[2]
\begin{tikzpicture}[scale =.5, smallnodes,anchorbase]
	\draw[very thick] (0,-.25) node[below=-1pt]{$k$} to [out=90,in=210] (.5,.75);
	\draw[very thick] (1,-.25) node[below=-1pt]{$2$} to [out=90,in=330] (.5,.75);
	\draw[very thick] (.5,.75) to (.5,1.5) node[above=-2pt]{$k{+}2$};
\end{tikzpicture}
\end{equation} 
Dividing both sides of the equation by $[2]$ suggests the correct definition of the trivalent vertex $k \ot 2 \to (k+2)$. 
We now make this precise.

\begin{defn} \label{defn:flowvertex}
Given $k,\ell$ satisfying $k+\ell \leq n$, 
let the \emph{flow vertices} be recursively defined by
\begin{equation}\label{eq:flowvertex}
\begin{tikzpicture}[scale =.5, smallnodes,anchorbase]
	\draw[very thick] (0,0) node[below=-1pt]{$k$} to [out=90,in=210] (.5,.75);
	\draw[very thick] (1,0) node[below=-1pt]{$\ell$} to [out=90,in=330] (.5,.75);
	\draw[very thick] (.5,.75) to (.5,1.5) node[above=-2pt]{$k{+}\ell$};
\end{tikzpicture}
:=
\frac{1}{[k]}
\begin{tikzpicture}[scale=.3,smallnodes,anchorbase]
	\draw[very thick] (-1,0) to node[below=-1pt]{$k{-}1$} (1,0);
	\draw[very thick] (-1,0) to node[left,xshift=2pt]{$1$} (0,1.732);
	\draw[very thick] (1,0) to node[right,xshift=-2pt]{$k{+}l{-}1$} (0,1.732);
	\draw[very thick] (0,1.732) to (0,3.232) node[above=-2pt]{$k{+}l$};
	\draw[very thick] (-2.3,-.75) node[below=-2pt,xshift=-2pt]{$k$} to (-1,0);
	\draw[very thick] (2.3,-.75) node[below=-2pt,xshift=2pt]{$l$} to (1,0);
\end{tikzpicture}
=
\frac{1}{[l]}
\begin{tikzpicture}[scale=.3,smallnodes,anchorbase]
	\draw[very thick] (-1,0) to node[below=-1pt]{$l{-}1$} (1,0);
	\draw[very thick] (-1,0) to node[left,xshift=2pt]{$k{+}l{-}1$} (0,1.732);
	\draw[very thick] (1,0) to node[right,xshift=-2pt]{$1$} (0,1.732);
	\draw[very thick] (0,1.732) to (0,3.232) node[above=-2pt]{$k{+}l$};
	\draw[very thick] (-2.3,-.75) node[below=-2pt,xshift=-2pt]{$k$} to (-1,0);
	\draw[very thick] (2.3,-.75) node[below=-2pt,xshift=2pt]{$l$} to (1,0);
\end{tikzpicture}.
\end{equation}
\end{defn}

\begin{rem} Equation \eqref{eq:flowvertex} is really two recursive definitions at once. 
If $k=1$ then the first equality is trivial and if $\ell = 1$ then the second equality is trivial, but at least one recursion can be used to define the flow vertex for any pair $(k,\ell)$ with $\max(k,\ell)>1$. 
We will prove the two recursions agree below. 
\end{rem}

\begin{conv}[Local orientation on flow vertices]\label{conv:flow}
We will occasionally orient the edge carrying the largest label in a flow vertex away from the vertex,
which will allow us to only specify labels on two of the three edges, e.g.
\[
\begin{tikzpicture}[scale =.5, smallnodes,anchorbase]
	\draw[very thick] (0,0) node[below]{$k$} to [out=90,in=210] (.5,.75);
	\draw[very thick] (1,0) node[below]{$l$} to [out=90,in=330] (.5,.75);
	\draw[very thick,->] (.5,.75) to (.5,1.5);
\end{tikzpicture}
=
\begin{tikzpicture}[scale =.5, smallnodes,anchorbase]
	\draw[very thick] (0,0) to [out=90,in=210] (.5,.75);
	\draw[very thick] (1,0) node[below]{$l$} to [out=90,in=330] (.5,.75);
	\draw[very thick,->] (.5,.75) to (.5,1.5) node[above]{$k{+}l$};
\end{tikzpicture}
=
\begin{tikzpicture}[scale =.5, smallnodes,anchorbase]
	\draw[very thick] (0,0) node[below]{$k$} to [out=90,in=210] (.5,.75);
	\draw[very thick] (1,0) to [out=90,in=330] (.5,.75);
	\draw[very thick,->] (.5,.75) to (.5,1.5) node[above]{$k{+}l$};
\end{tikzpicture}
=
\begin{tikzpicture}[scale =.5, smallnodes,anchorbase]
	\draw[very thick] (0,0) node[below]{$k$} to [out=90,in=210] (.5,.75);
	\draw[very thick] (1,0) node[below]{$l$} to [out=90,in=330] (.5,.75);
	\draw[very thick] (.5,.75) to (.5,1.5) node[above]{$k{+}l$};
\end{tikzpicture}
\]
Note that this orientation is only local, 
i.e. it only relates to the vertex away from which it points.
For example, using this convention the last relation in \eqref{eq:spn} takes the form
\begin{equation}\label{eq:flowHI}
\begin{tikzpicture}[scale=.4, rotate=90, tinynodes, anchorbase]
	\draw[very thick] (-1,0) node[below,yshift=2pt,xshift=2pt]{$k$} to (0,1);
	\draw[very thick] (1,0) node[above,yshift=-4pt,xshift=2pt]{$1$} to (0,1);
	\draw[very thick] (0,2.5) to (-1,3.5) node[below,yshift=2pt,xshift=-2pt]{$k$};
	\draw[very thick] (0,2.5) to (1,3.5) node[above,yshift=-4pt,xshift=-2pt]{$1$};
	\draw[very thick,directed=.3,rdirected=.85] (0,1) to (0,2.5);
\end{tikzpicture}
=
\begin{tikzpicture}[scale=.4, tinynodes, anchorbase]
	\draw[very thick,rdirected=.05] (-1,0) node[below,yshift=2pt,xshift=-2pt]{$k$} 
		to node[pos=.7,left,xshift=2pt]{$1$} (0,1.5);
	\draw[very thick,rdirected=.05] (1,0) node[below,yshift=2pt,xshift=2pt]{$k$} 
		to node[pos=.7,right,xshift=-2pt]{$1$} (0,1.5);
	\draw[very thick] (-.7,.5) to (.7,.5);
	\draw[very thick] (0,2.5) to (-1,3.5) node[above,yshift=-4pt,xshift=-2pt]{$1$};
	\draw[very thick] (0,2.5) to (1,3.5) node[above,yshift=-4pt,xshift=2pt]{$1$};
	\draw[very thick, directed=.4,rdirected=.8] (0,1.5) to (0,2.5);
\end{tikzpicture}
-\frac{[n{-}k]}{[n{-}k{+}1]}
\begin{tikzpicture}[scale=.4, rotate=90, tinynodes, anchorbase]
	\draw[very thick,rdirected=.05] (-1,0) node[below,yshift=2pt,xshift=2pt]{$k$} to (0,1);
	\draw[very thick] (1,0) node[above,yshift=-4pt,xshift=2pt]{$1$} to (0,1);
	\draw[very thick,rdirected=.05] (-1,3.5) node[below,yshift=2pt,xshift=-2pt]{$k$} to (0,2.5);
	\draw[very thick] (0,2.5) to (1,3.5) node[above,yshift=-4pt,xshift=-2pt]{$1$};
	\draw[very thick] (0,1) to (0,2.5);
\end{tikzpicture}
+
\frac{[n{-}k]}{[n]}
\begin{tikzpicture}[scale=.4, tinynodes, anchorbase]
	\draw[very thick] (-1,0) node[below,yshift=2pt]{$k$} to [out=90,in=180] (0,1) 
		to [out=0,in=90] (1,0);
	\draw[very thick] (-1,3) node[above,yshift=-3pt]{$1$} to [out=270,in=180] (0,2)
		to [out=0,in=270] (1,3);
\end{tikzpicture}
\end{equation}

This orientation of flow vertices makes it easy to determine whether \eqref{eq:generalassoc} can be applied within a diagram. Regardless of the labels, \eqref{eq:generalassoc} implies that
\[
\begin{tikzpicture}[scale=.2, tinynodes, anchorbase,xscale=-1]
	\draw [very thick] (-1,-1) to [out=90,in=210] (0,.75);
	\draw [very thick] (1,-1) to [out=90,in=330] (0,.75);
	\draw [very thick] (3,-1) to [out=90,in=330] (1,2.5);
	\draw[very thick,rdirected=.7] (1,2.5) to [out=210,in=90] (0,.75);
	\draw [very thick,->] (1,2.5) to (1,4.25);
\end{tikzpicture}
=
\begin{tikzpicture}[scale=.2, tinynodes, anchorbase]
	\draw [very thick] (-1,-1) to [out=90,in=210] (0,.75);
	\draw [very thick] (1,-1) to [out=90,in=330] (0,.75);
	\draw [very thick] (3,-1) to [out=90,in=330] (1,2.5);
	\draw[very thick,rdirected=.7] (1,2.5) to [out=210,in=90] (0,.75);
	\draw [very thick,->] (1,2.5) to (1,4.25);
\end{tikzpicture}
\]
However, one cannot apply \eqref{eq:generalassoc} to a diagram if the orientations are not convergent;
e.g. it cannot be applied to the left-hand side of \eqref{eq:flowHI}.
\end{conv}

We now have the following.

\begin{lem}\label{lem:extrarels}
Let $\CS$ be a pivotal category containing naturally self-dual objects $\{1,\ldots,n\}$ and distinguished morphisms depicted as in \eqref{eq:WebGen} 
that satisfy (\text{\ref{eq:spn}c}) and (\text{\ref{eq:spn}d}).
Then the formulae \eqref{eq:flowvertex} are well-defined, and these morphisms satisfy the general associativity relation in \eqref{eq:generalassoc}.
Further, they also satisfy the general bigon relation given below in (\text{\ref{eq:spnOther}c}).
\end{lem}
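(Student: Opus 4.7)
I would set $\mu_{k,\ell}$ equal to the morphism defined by the first (left) recursion in (\ref{eq:flowvertex}) and then establish all three claims through a chain of interlocking inductions that reduces everything to the base relations (\text{\ref{eq:spn}c}) and (\text{\ref{eq:spn}d}). The central intermediate result is the restricted associativity
\[
\mu_{k+\ell,1}\circ(\mu_{k,\ell}\otimes\id_1) \;=\; \mu_{k,\ell+1}\circ(\id_k\otimes\mu_{\ell,1}),
\]
which I would prove by induction on $k+\ell$; the base case $k=\ell=1$ is precisely (\text{\ref{eq:spn}d}). For the inductive step, I would expand $\mu_{k,\ell}$ on the left via the left recursion, apply (\text{\ref{eq:spn}d}) (with parameter $k+\ell-1$) to the outermost $(1,k+\ell-1,1)$-shaped fragment, and then invoke the inductive hypothesis for $(k-1,\ell,1)$ on the smaller block that appears. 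Re-folding via the left-recursion definition of $\mu_{k,\ell+1}$ produces the right-hand side.

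With the restricted associativity in hand, the two recursions in (\ref{eq:flowvertex}) agree almost immediately: applying restricted associativity to the right-recursion expression rewrites it as $\mu_{k,\ell}\circ(\id_k\otimes(\mu_{\ell-1,1}\circ\Delta_{\ell\to\ell-1,1}))$, and the resulting interior bubble collapses to $[\ell]\cdot\id_\ell$ by the pivotal-reflected version of (\text{\ref{eq:spn}c}), cancelling the prefactor $\tfrac{1}{[\ell]}$. This step simultaneously disposes of the edge cases $k=1,\ell\ge 2$ and $k\ge 2,\ell=1$ flagged in the Remark after Definition \ref{defn:flowvertex}. Full general associativity (\ref{eq:generalassoc}) then follows by a secondary induction on $m$: expand $\mu_{\ell,m}$ via the now-available right recursion, use restricted associativity to slide the outermost $(k,\ell+m-1,1)$-fragment, apply the inductive hypothesis for $(k,\ell,m-1)$ in the middle, and re-collapse via the right recursion of $\mu_{k+\ell,m}$.

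For the general bigon (\text{\ref{eq:spnOther}c}), I would induct on $\min(k,\ell)$, taking (\text{\ref{eq:spn}c}) as the base case. In the inductive step, full associativity allows one to peel a single ``$1$''-strand off one of the two flow vertices in the bigon, producing a $\mu_{1,\ell-1}\circ\Delta_{\ell\to 1,\ell-1}$ bubble (which collapses by (\text{\ref{eq:spn}c})) and a smaller bigon (which collapses by the inductive hypothesis), with the quantum factorials assembling into the expected quantum binomial coefficient. The main obstacle will be the very first step: one must carefully align the local application of (\text{\ref{eq:spn}d}) with the recursive unfoldings on both sides, keeping track of which trivalent vertices are splits (i.e., pivotal rotations of the generators) and which are merges, and ensuring that the pivotal rotations match up after the local move. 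Once that step is handled, the remaining inductions are essentially formal bookkeeping.
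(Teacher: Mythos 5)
Your proof is correct, and the individual computations are close to those in the paper, but the logical architecture is reorganized in a way worth noting. The paper runs a single induction on $N=k+\ell+m$ which, at each stage, proves in order: agreement of the two recursions in \eqref{eq:flowvertex} for $k+\ell=N$ (by a direct diagrammatic manipulation using (\text{\ref{eq:spn}d})); then the $m=1$ case of \eqref{eq:generalassoc} for $k+\ell+m=N$; then the general case. You instead commit to the left recursion as \emph{the} definition of the flow vertex and prove the $m=1$ associativity directly from it by induction on $k+\ell$, using only (\text{\ref{eq:spn}d}) and the recursion---a computation that is essentially the paper's $m=1$ step, but with no need to have first certified recursion-agreement. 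Agreement of the two recursions then becomes a corollary: your derivation collapses an $(\ell-1,1)$-bubble via the $180^\circ$ rotation of (\text{\ref{eq:spn}c}), which is a move that does not appear in the paper's argument for this step. Your secondary induction on $m$ for general associativity is then essentially the paper's. For the bigon (\text{\ref{eq:spnOther}c}), you induct on $\min(k,\ell)$ where the paper inducts on $k+\ell$; both work, and the arithmetic $\frac{[k+\ell]}{[k]}{k+\ell-1\brack k-1}={k+\ell\brack k}$ behind your inductive step is correct. The net effect is a somewhat more modular argument: one asymmetric definition suffices, and the symmetric formula emerges as a theorem rather than something to be checked during the construction. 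The cost, which you correctly flag, is the bookkeeping involving pivotal rotations of the generating merges (to form the splits $\Delta$ and the rotated form of (\text{\ref{eq:spn}c})), but this is unproblematic in a pivotal category and is also implicitly present in the paper's proof.
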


\begin{proof}
Let $N > 0$. We begin by showing that the flow vertices are well-defined and satisfy the general associativity relation.
Suppose that we have defined all trivalent vertices \eqref{eq:flowvertfirst} for $k+\ell < N$, and that we have verified \eqref{eq:generalassoc} whenever $k+\ell+m < N$.
For example, when $N = 3$, the only nontrivial case of \eqref{eq:generalassoc} is (\text{\ref{eq:spn}d}) -- this is our base case. 
We now define the trivalent vertices when $k + \ell = N$ using \eqref{eq:flowvertex}, checking that the two definitions agree. 
We then check \eqref{eq:generalassoc} for $k+\ell+m = N$ and (\text{\ref{eq:spnOther}c}), and the result will follow by induction.
	
First we prove the equality of the two diagrams in \eqref{eq:flowvertex}. We compute:
\begin{equation}
\frac{1}{[k]}
\begin{tikzpicture}[scale=.35,smallnodes,anchorbase]
	\draw[very thick] (-1,0) to node[below]{$k{-}1$} (1,0);
	\draw[very thick] (-1,0) to node[left,xshift=2pt]{$1$} (0,1.732);
	\draw[very thick,directed=.4] (1,0) to node[right,xshift=-1pt]{$k{+}\ell{-}1$} (0,1.732);
	\draw[very thick,->] (0,1.732) to (0,3.232) node[above=-2pt]{$k{+}\ell$};
	\draw[very thick,<-] (-2.3,-.75) node[below=-2pt,xshift=-2pt]{$k$} to (-1,0);
	\draw[very thick] (2.3,-.75) node[below=-2pt,xshift=2pt]{$\ell$} to (1,0);
\end{tikzpicture}
\stackrel{\eqref{eq:flowvertex}}{=} 
\frac{1}{[k][\ell]} 
\begin{tikzpicture}[scale=.25,tinynodes,anchorbase,xscale=-1]
	\draw[very thick,<-] (-2,-2) node[below=-2pt]{$\ell$} to [out=90,in=210] (-1,0);
	\draw[very thick,->] (1,0) to (2,-.5) to (2,-2) node[below=-2pt]{$k$};
	\draw[very thick] (-1,0) to node[below=-1pt]{$\ell{-}1$} (1,0);
	\draw[very thick] (-1,0) to node[pos=.7,right=-2pt]{$1$} (0,1.732);
	\draw[very thick,directed=.6] (1,0) to (0,1.732);
	\draw[very thick,directed=.4] (0,1.732) to [out=90,in=210] node[right=-1pt]{$k{+}\ell{-}1$} (1.25,3.5);
	\draw[very thick] (2,-.5) to [out=30,in=330] node[pos=.7,left=-1pt]{$1$} (1.25,3.5);
	\draw[very thick,->] (1.25,3.5) to (1.25,5) node[above=-2pt]{$k{+}\ell$};
\end{tikzpicture}
\stackrel{(\text{\ref{eq:spn}d})}{=} 
\frac{1}{[k][\ell]} 
\begin{tikzpicture}[scale=.25,tinynodes,anchorbase]
	\draw[very thick,<-] (-2,-2) node[below=-2pt]{$k$} to [out=90,in=210] (-1,0);
	\draw[very thick,->] (1,0) to (2,-.5) to (2,-2) node[below=-2pt]{$\ell$};
	\draw[very thick] (-1,0) to node[below=-1pt]{$k{-}1$} (1,0);
	\draw[very thick] (-1,0) to node[pos=.7,left=-2pt]{$1$} (0,1.732);
	\draw[very thick,directed=.6] (1,0) to (0,1.732);
	\draw[very thick,directed=.4] (0,1.732) to [out=90,in=210] node[left=-1pt]{$k{+}\ell{-}1$} (1.25,3.5);
	\draw[very thick] (2,-.5) to [out=30,in=330] node[pos=.7,right=-1pt]{$1$} (1.25,3.5);
	\draw[very thick,->] (1.25,3.5) to (1.25,5) node[above=-2pt]{$k{+}\ell$};
\end{tikzpicture}
\stackrel{\eqref{eq:flowvertex}}{=} 
\frac{1}{[l]}
\begin{tikzpicture}[scale=.35,smallnodes,anchorbase]
	\draw[very thick] (-1,0) to node[below]{$\ell{-}1$} (1,0);
	\draw[very thick,directed=.4] (-1,0) to node[left,xshift=2pt]{$k{+}\ell{-}1$} (0,1.732);
	\draw[very thick] (1,0) to node[right,xshift=-2pt]{$1$} (0,1.732);
	\draw[very thick,->] (0,1.732) to (0,3.232) node[above=-2pt]{$k{+}\ell$};
	\draw[very thick] (-2.3,-.75) node[below=-2pt,xshift=-2pt]{$k$} to (-1,0);
	\draw[very thick,<-] (2.3,-.75) node[below=-2pt,xshift=2pt]{$\ell$} to (1,0);
\end{tikzpicture} .
\end{equation}
(Note that the local orientations determine the labels on all unlabeled edges.)

Next, we prove the $m=1$ case of \eqref{eq:generalassoc}. We compute:
\begin{equation}\label{eq:genassocm=1}
\begin{tikzpicture}[scale=.2, tinynodes, anchorbase]
	\draw [very thick] (-1,-1) node[below,yshift=2pt]{$k$} to [out=90,in=210] (0,.75);
	\draw [very thick] (1,-1) node[below,yshift=2pt]{$\ell$} to [out=90,in=330] (0,.75);
	\draw [very thick] (3,-1) node[below,yshift=2pt]{$1$} to [out=90,in=330] (1,2.5);
	\draw [very thick] (0,.75) to [out=90,in=210] node[left=-1pt]{$k{+}\ell$} (1,2.5);
	\draw [very thick] (1,2.5) to (1,4.25) node[above,yshift=-3pt]{$k{+}\ell{+}1$};
\end{tikzpicture}
\stackrel{\eqref{eq:flowvertex}}{=} 
\frac{1}{[k]}
\begin{tikzpicture}[scale=.25,tinynodes,anchorbase]
	\draw[very thick,<-] (-2,-2) node[below=-2pt]{$k$} to [out=90,in=210] (-1,0);
	\draw[very thick] (1,0) to [out=330,in=90] (2,-2) node[below=-2pt]{$\ell$};
	\draw[very thick] (-1,0) to node[below=-1pt]{$k{-}1$} (1,0);
	\draw[very thick] (-1,0) to node[pos=.7,left=-1pt]{$1$} (0,1.732);
	\draw[very thick,directed=.6] (1,0) to (0,1.732);
	\draw[very thick,directed=.4] (0,1.732) to [out=90,in=210] node[left=-1pt]{$k{+}\ell$} (1.25,3.5);
	\draw[very thick] (4,-2) node[below=-2pt]{$1$} to [out=90,in=330] (1.25,3.5);
	\draw[very thick,->] (1.25,3.5) to (1.25,5) node[above=-2pt]{$k{+}\ell{+}1$};
\end{tikzpicture}
\stackrel{(\text{\ref{eq:spn}d})}{=} 
\frac{1}{[k]}
\begin{tikzpicture}[scale=.25,tinynodes,anchorbase]
	\draw[very thick,<-] (-2,-2) node[below=-2pt]{$k$} to [out=90,in=210] (-1,0);
	\draw[very thick] (1,.5) to [out=330,in=90] (2,-2) node[below=-2pt]{$\ell$};
	\draw[very thick] (-1,0) to node[below=-1pt,xshift=1pt]{$k{-}1$} (1,.5);
	\draw[very thick,directed=.7] (2.5,2) to [out=90,in=330] (1.25,3.5);
	\draw[very thick,directed=.7] (1,.5) to [out=90,in=210] (2.5,2);
	\draw[very thick] (-1,0) to [out=90,in=210] node[left=-1pt]{$1$} (1.25,3.5);
	\draw[very thick] (4,-2) node[below=-2pt]{$1$} to [out=90,in=330] (2.5,2);
	\draw[very thick,->] (1.25,3.5) to (1.25,5) node[above=-2pt]{$k{+}\ell{+}1$};
\end{tikzpicture}
\stackrel{\eqref{eq:generalassoc}}{=} 
\frac{1}{[k]}
\begin{tikzpicture}[scale=.25,tinynodes,anchorbase]
	\draw[very thick,<-] (-1,-2) node[below=-2pt]{$k$} to [out=90,in=210] (.25,1.768);
	\draw[very thick] (3,0) to [out=210,in=90] (2,-2) node[below=-2pt]{$\ell$};
	\draw[very thick,directed=.5] (3,0) to [out=90,in=330] node[right=-1pt]{$\ell{+}1$} (2.25,1.768);
	\draw[very thick] (4,-2) node[below=-2pt]{$1$} to [out=90,in=330] (3,0);
	\draw[very thick] (0.25,1.768) to node[below=-1pt]{$k{-}1$} (2.25,1.768);
	\draw[very thick] (0.25,1.768) to node[pos=.7,left=-2pt]{$1$} (1.25,3.5);
	\draw[very thick,directed=.6] (2.25,1.768) to (1.25,3.5);
	\draw[very thick,->] (1.25,3.5) to (1.25,5) node[above=-2pt]{$k{+}\ell{+}1$};
\end{tikzpicture}
\stackrel{\eqref{eq:flowvertex}}{=} 
\begin{tikzpicture}[scale=.2, xscale=-1,tinynodes, anchorbase]
	\draw [very thick] (-1,-1) node[below,yshift=2pt]{$1$} to [out=90,in=210] (0,.75);
	\draw [very thick] (1,-1) node[below,yshift=2pt]{$\ell$} to [out=90,in=330] (0,.75);
	\draw [very thick] (3,-1) node[below,yshift=2pt]{$k$} to [out=90,in=330] (1,2.5);
	\draw [very thick] (0,.75) to [out=90,in=210] node[right=-1pt]{$\ell{+}1$} (1,2.5);
	\draw [very thick] (1,2.5) to (1,4.25) node[above,yshift=-3pt]{$k{+}\ell{+}1$};
\end{tikzpicture}.
\end{equation}
The general case of \eqref{eq:generalassoc} follows from a similar computation:
\begin{equation}
\begin{tikzpicture}[scale=.2, tinynodes, anchorbase]
	\draw [very thick] (-1,-1) node[below,yshift=2pt]{$k$} to [out=90,in=210] (0,.75);
	\draw [very thick] (1,-1) node[below,yshift=2pt]{$\ell$} to [out=90,in=330] (0,.75);
	\draw [very thick] (3,-1) node[below,yshift=2pt]{$m$} to [out=90,in=330] (1,2.5);
	\draw [very thick] (0,.75) to [out=90,in=210] node[left=-1pt]{$k{+}\ell$} (1,2.5);
	\draw [very thick] (1,2.5) to (1,4.25) node[above,yshift=-3pt]{$k{+}\ell{+}m$};
\end{tikzpicture}
\stackrel{\eqref{eq:flowvertex}}{=} 
\frac{1}{[m]}
\begin{tikzpicture}[scale=.25,tinynodes,anchorbase,xscale=-1]
	\draw[very thick,<-] (-1,-2) node[below=-2pt]{$m$} to [out=90,in=210] (.25,1.768);
	\draw[very thick] (3,0) to [out=210,in=90] (2,-2) node[below=-2pt]{$\ell$};
	\draw[very thick,directed=.5] (3,0) to [out=90,in=330] node[left=-1pt]{$k{+}\ell$} (2.25,1.768);
	\draw[very thick] (4,-2) node[below=-2pt]{$k$} to [out=90,in=330] (3,0);
	\draw[very thick] (0.25,1.768) to node[below=-1pt]{$m{-}1$} (2.25,1.768);
	\draw[very thick] (0.25,1.768) to node[pos=.7,right=-2pt]{$1$} (1.25,3.5);
	\draw[very thick,directed=.6] (2.25,1.768) to (1.25,3.5);
	\draw[very thick,->] (1.25,3.5) to (1.25,5) node[above=-2pt]{$k{+}\ell{+}m$};
\end{tikzpicture}
\stackrel{\eqref{eq:generalassoc}}{=} 
\frac{1}{[m]}
\begin{tikzpicture}[scale=.25,tinynodes,anchorbase,xscale=-1]
	\draw[very thick,<-] (-2,-2) node[below=-2pt]{$m$} to [out=90,in=210] (-1,0);
	\draw[very thick] (1,.5) to [out=330,in=90] (2,-2) node[below=-2pt]{$\ell$};
	\draw[very thick] (-1,0) to node[below=-1pt,xshift=-1pt]{$m{-}1$} (1,.5);
	\draw[very thick,directed=.7] (2.5,2) to [out=90,in=330] (1.25,3.5);
	\draw[very thick,directed=.7] (1,.5) to [out=90,in=210] (2.5,2);
	\draw[very thick] (-1,0) to [out=90,in=210] node[right=-1pt]{$1$} (1.25,3.5);
	\draw[very thick] (4,-2) node[below=-2pt]{$k$} to [out=90,in=330] (2.5,2);
	\draw[very thick,->] (1.25,3.5) to (1.25,5) node[above=-2pt]{$k{+}\ell{+}m$};
\end{tikzpicture}
\stackrel{\eqref{eq:genassocm=1}}{=}
\frac{1}{[m]}
\begin{tikzpicture}[scale=.25,tinynodes,anchorbase,xscale=-1]
	\draw[very thick,<-] (-2,-2) node[below=-2pt]{$m$} to [out=90,in=210] (-1,0);
	\draw[very thick] (1,0) to [out=330,in=90] (2,-2) node[below=-2pt]{$\ell$};
	\draw[very thick] (-1,0) to node[below=-1pt]{$m{-}1$} (1,0);
	\draw[very thick] (-1,0) to node[pos=.7,right=-1pt]{$1$} (0,1.732);
	\draw[very thick,directed=.6] (1,0) to (0,1.732);
	\draw[very thick,directed=.4] (0,1.732) to [out=90,in=210] node[right=-1pt]{$\ell{+}m$} (1.25,3.5);
	\draw[very thick] (4,-2) node[below=-2pt]{$k$} to [out=90,in=330] (1.25,3.5);
	\draw[very thick,->] (1.25,3.5) to (1.25,5) node[above=-2pt]{$k{+}\ell{+}1$};
\end{tikzpicture}
\stackrel{\eqref{eq:flowvertex}}{=} 
\begin{tikzpicture}[scale=.2, xscale=-1,tinynodes, anchorbase]
	\draw [very thick] (-1,-1) node[below,yshift=2pt]{$m$} to [out=90,in=210] (0,.75);
	\draw [very thick] (1,-1) node[below,yshift=2pt]{$\ell$} to [out=90,in=330] (0,.75);
	\draw [very thick] (3,-1) node[below,yshift=2pt]{$k$} to [out=90,in=330] (1,2.5);
	\draw [very thick] (0,.75) to [out=90,in=210] node[right=-1pt]{$\ell{+}m$} (1,2.5);
	\draw [very thick] (1,2.5) to (1,4.25) node[above,yshift=-3pt]{$k{+}\ell{+}m$};
\end{tikzpicture}.
\end{equation}

Finally, suppose that we have verified (\text{\ref{eq:spnOther}c}) when $k+\ell \leq N$ 
(the non-trivial $N=2$ base case is an instance of (\text{\ref{eq:spn}c})). 
We then compute
\begin{equation}
\begin{tikzpicture}[scale=.25,tinynodes, anchorbase]
	\draw[very thick] (0,.75) to (0,2.5) node[above,yshift=-3pt]{$k{+}\ell{+}1$};
	\draw[very thick] (0,-2.75) to [out=30,in=330] node[right,xshift=-2pt]{$\ell$} (0,.75);
	\draw[very thick] (0,-2.75) to [out=150,in=210] node[left,xshift=2pt]{$k{+}1$} (0,.75);
	\draw[very thick] (0,-4.5) node[below,yshift=2pt]{$k{+}\ell{+}1$} to (0,-2.75);
\end{tikzpicture}
\stackrel{(\text{\ref{eq:spn}c})}{=}
\frac{1}{[k+1]}
\begin{tikzpicture}[scale=.25,tinynodes, anchorbase,xscale=1.25]
	\draw[very thick] (0,.75) to (0,2.5) node[above,yshift=-3pt]{$k{+}\ell{+}1$};
	\draw[very thick] (0,-2.75) to [out=30,in=330] node[right,xshift=-2pt]{$\ell$} (0,.75);
	\draw[very thick] (0,-2.75) to [out=150,in=210] node[left,xshift=2pt]{$1$} (0,.75);
	\draw[very thick] (0,-4.5) node[below,yshift=2pt]{$k{+}\ell{+}1$} to (0,-2.75);
	\draw[very thick] (-.375,-2.5) to [out=30,in=330] node[left=-3pt]{$k$} (-.375,.5);
\end{tikzpicture}
\stackrel{\eqref{eq:generalassoc}}{=}
\frac{1}{[k+1]}
\begin{tikzpicture}[scale=.25,tinynodes, anchorbase,xscale=-1.25]
	\draw[very thick] (0,.75) to (0,2.5) node[above,yshift=-3pt]{$k{+}\ell{+}1$};
	\draw[very thick] (0,-2.75) to [out=30,in=330] node[left=-2pt]{$1$} (0,.75);
	\draw[very thick] (0,-2.75) to [out=150,in=210] node[right=-2pt]{$\ell$} (0,.75);
	\draw[very thick] (0,-4.5) node[below,yshift=2pt]{$k{+}\ell{+}1$} to (0,-2.75);
	\draw[very thick] (-.375,-2.5) to [out=30,in=330] node[right=-3pt]{$k$} (-.375,.5);
\end{tikzpicture}
\stackrel{(\text{\ref{eq:spnOther}c})}{=}
\frac{1}{[k+1]}
{k+\ell \brack k}
\begin{tikzpicture}[scale=.25,tinynodes, anchorbase]
	\draw[very thick] (0,.75) to (0,2.5) node[above,yshift=-3pt]{$k{+}\ell{+}1$};
	\draw[very thick] (0,-2.75) to [out=30,in=330] node[right,xshift=-2pt]{$k{+}\ell$} (0,.75);
	\draw[very thick] (0,-2.75) to [out=150,in=210] node[left,xshift=2pt]{$1$} (0,.75);
	\draw[very thick] (0,-4.5) node[below,yshift=2pt]{$k{+}\ell{+}1$} to (0,-2.75);
\end{tikzpicture}
\stackrel{(\text{\ref{eq:spn}c})}{=}
{k+\ell+1 \brack k+1}
\begin{tikzpicture}[scale=.175, tinynodes, anchorbase]
	\draw [very thick] (0,-4.5) node[below,yshift=2pt]{$k{+}\ell$} to (0,2.5);
\end{tikzpicture}.
\end{equation}
\end{proof}

\subsection{Another presentation}\label{subsec:morerelns}

We now record some important additional relations between $\spn$ webs. Each is given in analogy with one of the relations in \eqref{eq:spn}.

\begin{thm}\label{thm:spnOther}
The following relations hold in $\Web(\spn)$.
\begin{equation}\label{eq:spnOther}
\begin{gathered}
(\text{\ref{eq:spnOther}a}) \;\;
\begin{tikzpicture}[scale =.75, tinynodes,anchorbase]
	\draw[very thick] (0,0) node[left,xshift=-8pt]{$k$} circle (.5);
\end{tikzpicture}
= (-1)^k\frac{[n{-}k{+}1]}{[n{+}1]}{2n{+}2 \brack k}
\quad , \quad
(\text{\ref{eq:spnOther}b}) \;\;
\begin{tikzpicture}[scale=.175,tinynodes, anchorbase]
	\draw [very thick] (0,.75) to (0,2.5) node[above,yshift=-3pt]{$k{+}2$};
	\draw [very thick] (0,-2.75) to [out=30,in=330] node[right,xshift=-2pt]{$k{+}1$} (0,.75);
	\draw [very thick] (0,-2.75) to [out=150,in=210] node[left,xshift=2pt]{$1$} (0,.75);
	\draw [very thick] (0,-4.5) node[below,yshift=2pt]{$k$} to (0,-2.75);
\end{tikzpicture}
= 0
\quad , \quad
(\text{\ref{eq:spnOther}c}) \;\;
\begin{tikzpicture}[scale=.175,tinynodes, anchorbase]
	\draw [very thick] (0,.75) to (0,2.5) node[above,yshift=-3pt]{$k{+}\ell$};
	\draw [very thick] (0,-2.75) to [out=30,in=330] node[right,xshift=-2pt]{$\ell$} (0,.75);
	\draw [very thick] (0,-2.75) to [out=150,in=210] node[left,xshift=2pt]{$k$} (0,.75);
	\draw [very thick] (0,-4.5) node[below,yshift=2pt]{$k{+}\ell$} to (0,-2.75);
\end{tikzpicture}
= {k+\ell \brack k}
\begin{tikzpicture}[scale=.175, tinynodes, anchorbase]
	\draw [very thick] (0,-4.5) node[below,yshift=2pt]{$k{+}\ell$} to (0,2.5);
\end{tikzpicture} \\
(\text{\ref{eq:spnOther}d}) \;\;
\begin{tikzpicture}[scale=.2, xscale=-1,tinynodes, anchorbase]
	\draw [very thick] (-1,-1) node[below,yshift=2pt]{$m$} to [out=90,in=210] (0,.75);
	\draw [very thick] (1,-1) node[below,yshift=2pt]{$\ell$} to [out=90,in=330] (0,.75);
	\draw [very thick] (3,-1) node[below,yshift=2pt]{$k$} to [out=90,in=330] (1,2.5);
	\draw [very thick] (0,.75) to [out=90,in=210] node[right=-2pt]{$\ell{+}m$} (1,2.5);
	\draw [very thick] (1,2.5) to (1,4.25) node[above,yshift=-3pt]{$k{+}\ell{+}m$};
\end{tikzpicture}
=
\begin{tikzpicture}[scale=.2, tinynodes, anchorbase]
	\draw [very thick] (-1,-1) node[below,yshift=2pt]{$k$} to [out=90,in=210] (0,.75);
	\draw [very thick] (1,-1) node[below,yshift=2pt]{$\ell$} to [out=90,in=330] (0,.75);
	\draw [very thick] (3,-1) node[below,yshift=2pt]{$m$} to [out=90,in=330] (1,2.5);
	\draw [very thick] (0,.75) to [out=90,in=210] node[left=-2pt]{$k{+}\ell$} (1,2.5);
	\draw [very thick] (1,2.5) to (1,4.25) node[above,yshift=-3pt]{$k{+}\ell{+}m$};
\end{tikzpicture}
\quad , \quad
(\text{\ref{eq:spnOther}e}) \;\;
\begin{tikzpicture}[scale=.35, rotate=90, tinynodes, anchorbase]
	\draw[very thick] (-1,0) node[below,yshift=2pt,xshift=2pt]{$k$} to (0,1);
	\draw[very thick] (1,0) node[above,yshift=-4pt,xshift=2pt]{$1$} to (0,1);
	\draw[very thick] (0,2.5) to (-1,3.5) node[below,yshift=2pt,xshift=-2pt]{$1$};
	\draw[very thick] (0,2.5) to (1,3.5) node[above,yshift=-4pt,xshift=-2pt]{$k$};
	\draw[very thick] (0,1) to node[below,yshift=2pt]{$k{+}1$} (0,2.5);
\end{tikzpicture}
=
\begin{tikzpicture}[scale=.3, tinynodes, anchorbase]
	\draw[very thick] (-1,0) node[below,yshift=2pt,xshift=-2pt]{$1$} to (0,1);
	\draw[very thick] (1,0) node[below,yshift=2pt,xshift=2pt]{$k$} to (0,1);
	\draw[very thick] (0,2.5) to (-1,3.5) node[above,yshift=-4pt,xshift=-2pt]{$k$};
	\draw[very thick] (0,2.5) to (1,3.5) node[above,yshift=-4pt,xshift=2pt]{$1$};
	\draw[very thick] (0,1) to node[left,xshift=2pt]{$k{+}1$} (0,2.5);
\end{tikzpicture}
+
\frac{[n{-}k]}{[n{-}k{+}1]}
\left(
\begin{tikzpicture}[scale=.3, tinynodes, anchorbase]
	\draw[very thick] (-1,0) node[below,yshift=2pt,xshift=-2pt]{$1$} to (0,1);
	\draw[very thick] (1,0) node[below,yshift=2pt,xshift=2pt]{$k$} to (0,1);
	\draw[very thick] (0,2.5) to (-1,3.5) node[above,yshift=-4pt,xshift=-2pt]{$k$};
	\draw[very thick] (0,2.5) to (1,3.5) node[above,yshift=-4pt,xshift=2pt]{$1$};
	\draw[very thick] (0,1) to node[left,xshift=2pt]{$k{-}1$} (0,2.5);
\end{tikzpicture}
-
\begin{tikzpicture}[scale=.35, rotate=90, tinynodes, anchorbase]
	\draw[very thick] (-1,0) node[below,yshift=2pt,xshift=2pt]{$k$} to (0,1);
	\draw[very thick] (1,0) node[above,yshift=-4pt,xshift=2pt]{$1$} to (0,1);
	\draw[very thick] (0,2.5) to (-1,3.5) node[below,yshift=2pt,xshift=-2pt]{$1$};
	\draw[very thick] (0,2.5) to (1,3.5) node[above,yshift=-4pt,xshift=-2pt]{$k$};
	\draw[very thick] (0,1) to node[below,yshift=2pt]{$k{-}1$} (0,2.5);
\end{tikzpicture}
\right)
\end{gathered}
\end{equation}
Here, the general flow vertices are defined as in \eqref{eq:flowvertex}.
Moreover, these equations could instead be taken as the defining relations in $\Web(\spn)$. 
\end{thm}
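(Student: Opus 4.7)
The plan is to establish each relation in \eqref{eq:spnOther} as a consequence of the defining relations \eqref{eq:spn}, and then to observe that the defining relations can in turn be recovered from the new ones. The general bigon (\text{\ref{eq:spnOther}c}) and general associativity (\text{\ref{eq:spnOther}d}) are already established in Lemma \ref{lem:extrarels}, where they are handled together by induction on the label sum using the flow vertex definition \eqref{eq:flowvertex}.

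For (\text{\ref{eq:spnOther}a}), I would proceed by induction on $k$, with base case $k=1$ given by (\text{\ref{eq:spn}a}). The key step is to close the relation (\text{\ref{eq:spn}e}) by pre-composing with a $k$-cup and post-composing with a $1$-cap, noting that (after the $\pi/2$ rotation built into the diagram) both sides of (\text{\ref{eq:spn}e}) are morphisms $k \ot k \to 1 \ot 1$. The closed LHS becomes a theta graph with edges labeled $k{+}1, k, 1$, which equals $[k{+}1]$ times the $(k{+}1)$-labeled circle by (\text{\ref{eq:spnOther}c}). The first closed RHS term vanishes by (\text{\ref{eq:spn}b}), since the capped $(1,1)$ pair atop the $2 \to (1,1)$ vertex there is exactly the null bigon; the second closed term contributes $-\frac{[n-k]}{[n-k+1]}[k]$ times the $k$-circle; and the third contributes $\frac{[n-k]}{[n]}$ times the product of the $k$-circle and the $1$-circle. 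Combining these yields a recursion for the $(k{+}1)$-circle that, using the $q$-integer identity
\[ [n+1]([2n+2-k]-[k]) = [2n+2][n-k+1], \]
(which reduces immediately via $q^{n+1}+q^{-n-1} = [2n+2]/[n+1]$), confirms the claimed closed form.

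For (\text{\ref{eq:spnOther}b}), I would induct on $k$. The $k=0$ case is the pivotal transpose of (\text{\ref{eq:spn}b}), which holds because for naturally self-dual objects the cup and cap are each other's transposes. For the inductive step, expand the top merging vertex $(k{+}1, 1) \to (k{+}2)$ using \eqref{eq:flowvertex} as a triangle, and then reassociate using (\text{\ref{eq:spnOther}d}); the resulting diagram contains a sub-configuration matching either a smaller case of (\text{\ref{eq:spnOther}b}) or (eventually) (\text{\ref{eq:spn}b}) itself. For (\text{\ref{eq:spnOther}e}), I expect to derive the relation from (\text{\ref{eq:spn}e}) by a symmetric manipulation: bending one side of the H-configuration via cups and caps to effectively swap the external labels, applying (\text{\ref{eq:spn}e}) to the transformed diagram, and then reassembling the resulting terms with the aid of (\text{\ref{eq:spnOther}c}) and the associativity relations.

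For the converse direction, each of (\text{\ref{eq:spn}a})--(d) is a special case of the corresponding new relation: (\text{\ref{eq:spn}a}) is $k=1$ of (\text{\ref{eq:spnOther}a}); (\text{\ref{eq:spn}b}) is, up to pivotal transpose, the $k=0$ case of (\text{\ref{eq:spnOther}b}); (\text{\ref{eq:spn}c}) is the $\ell=1$ case of (\text{\ref{eq:spnOther}c}) via ${k \brack 1} = [k]$; and (\text{\ref{eq:spn}d}) is a specialization of (\text{\ref{eq:spnOther}d}). Finally, (\text{\ref{eq:spn}e}) can be recovered from (\text{\ref{eq:spnOther}e}) by reversing the pivotal manipulation described above. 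The main obstacle I foresee is the closure computation for (\text{\ref{eq:spnOther}a}), which requires both the topological identification of the theta graph and the $q$-integer identity, together with the bookkeeping in (\text{\ref{eq:spnOther}e}), where tracking signs and rescalings through the pivotal manipulations will require some care.
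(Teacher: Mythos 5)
You take a genuinely different route from the paper. The paper proves (\text{\ref{eq:spnOther}c}) and (\text{\ref{eq:spnOther}d}) via Lemma \ref{lem:extrarels} (as you do), but for (\text{\ref{eq:spnOther}a}), (\text{\ref{eq:spnOther}b}), (\text{\ref{eq:spnOther}e}) it declines to give direct derivations from \eqref{eq:spn} and instead deduces them from Theorem \ref{thm:main}: once the equivalence $\Phi$ is established, the $k$-circle must equal $\qdim(V_k)$ as computed in \eqref{dimVk}, the bigon in (\text{\ref{eq:spnOther}b}) must vanish because the corresponding $\Hom$-space in $\FRep(U_q(\spn))$ is zero, and (\text{\ref{eq:spnOther}e}) follows from the $\Hom$-space bookkeeping in the proof of Theorem \ref{thm:functor}. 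Your direct approach is more self-contained and sidesteps the circularity the paper must explicitly rule out. In particular, your closure argument for (\text{\ref{eq:spnOther}a}) is correct and complete: closing (\text{\ref{eq:spn}e}) with a $k$-cup below and $1$-cap above turns the LHS into a theta graph with edges $(k{+}1,k,1)$, equal to $[k{+}1]$ times the $(k{+}1)$-circle via (\text{\ref{eq:spn}c}), while the three RHS terms contribute $0$ by (\text{\ref{eq:spn}b}), then $-\frac{[n-k]}{[n-k+1]}[k]$ times the $k$-circle, then $\frac{[n-k]}{[n]}$ times the product of a $k$-circle and a $1$-circle. The resulting recursion is indeed solved by the claimed formula using $[n{+}1](q^{n+1}+q^{-n-1})=[2n{+}2]$ and $(q^a+q^{-a})[b]=[a{+}b]-[a{-}b]$, and this is a clean computation the paper omits entirely.

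Your treatments of (\text{\ref{eq:spnOther}b}) and (\text{\ref{eq:spnOther}e}) are much thinner, which is consistent with the paper (it explicitly calls these ``exercises of varying difficulty''). For (\text{\ref{eq:spnOther}e}) in particular I would caution against expecting a one-shot ``bend and apply (\text{\ref{eq:spn}e})'' argument: (\text{\ref{eq:spn}e}) lives in $\Hom(k\ot k, 1\ot 1)$ with four terms including the square through a $2$-edge and a cap-cup, whereas (\text{\ref{eq:spnOther}e}) lives in $\Hom(1\ot k, k\ot 1)$ with only $H$-diagrams carrying $(k{\pm}1)$-edges. Bending will not by itself collapse the $2$-edge square and the cap-cup into those three terms; the ``reassembly'' step you flag is where the real work lies (likely requiring further applications of (\text{\ref{eq:spn}e}) and (\text{\ref{eq:spnOther}c}) after partial closure) and it needs to be made precise before one could call this a proof. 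Your observations for the converse direction match the paper's, including the small point that recovering (\text{\ref{eq:spn}b}) from (\text{\ref{eq:spnOther}b}) requires a pivotal transpose at $k=0$, which the paper glosses over.
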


\begin{proof}[Proof (sketch).]
Relations (\text{\ref{eq:spnOther}d}) and (\text{\ref{eq:spnOther}c}) are established in Lemma \ref{lem:extrarels}.
Since we do not use the other relations in this paper, we leave the task of deducing them from \eqref{eq:spn} as exercises (of varying difficulty!).

However, we also note that Theorem \ref{thm:main} provides a roundabout means to deduce these relations.
Indeed, it immediately implies that (\text{\ref{eq:spnOther}a}) and (\text{\ref{eq:spnOther}b}) hold.
Further, it pairs with the techniques used in the proof of Theorem \ref{thm:functor} to give (\text{\ref{eq:spnOther}e}).
(Note: this argument is not circular, since only the relations established in Lemma \ref{lem:extrarels} are used in the proof of Theorem \ref{thm:main}.)

Finally, the equivalence to the presentation given in Definition \ref{def:webs} follows since the first four relations in \eqref{eq:spn} are special cases of the first four relations in \eqref{eq:spnOther}.
Deducing (\text{\ref{eq:spn}e}) from (\text{\ref{eq:spnOther}e}) (and the other relations) is again left as a (difficult) exercise.
\end{proof}

\begin{rem} We have chosen to start this paper with the relations of \eqref{eq:spn} because they appear to be a minimal set of relations. 
We conjecture that none of the relations in \eqref{eq:spn} can be removed,
aside from trivial special cases (e.g. the $k=1$ case of (\text{\ref{eq:spn}c}) or the $k=0$ case of (\text{\ref{eq:spn}e})).
\end{rem}

\begin{rem}\label{rem:Z1}
A presentation of a category equips the diagrammatic category with an integral form, defined over the smallest ring containing all the coefficients appearing in the relations. 
In the case of $\Web(\spn)$ (for either the presentation given in Definition \ref{def:webs} or Theorem \ref{thm:spnOther}),
that ring $\Bbbk$ is the extension of $\Z[q,q^{-1}]$ where the quantum number $[k]$ is inverted for all $1 \le k \le n$. 
(The circle relation (\text{\ref{eq:spn}a}) appears to have $[n+1]$ in the denominator, but this is cancelled by $[2n+2]$ in the numerator.) 
Meanwhile, the condition that $[k]$ is invertible for all $1 \le k \le n$ is also necessary and sufficient for the fundamental representations $V_k$ to all be self-dual and tilting. 
This provides an integral form of $\FRep(U_q(\spn))$ over $\Bbbk$, whose Karoubi envelope agrees with the category of all tilting modules, after base change to a field. 

Thus, we are in the situation of having two categories, both of which are defined over $\Bbbk$: 
our diagrammatic category $\Web(\spn)$ and a combinatorial subcategory of the category of representations. 
A priori, it is not at all clear whether these two categories are equivalent; however, we do expect this to be the case. 
The only obstruction to proving this stronger result comes from the indirect methods we use to build the functor $\Phi \colon \Web(\spn) \to \FRep(U_q(\spn))$. 
The argument that $\Phi$ exists requires the ability to invert various unknown scalars $c\in \C(q) \smallsetminus \{0\}$ (which we may not be able to invert in $\Bbbk$), 
and the complete reducibility of tensor products in $\Rep(U_q(\spn))$ (which fails over arbitrary fields). 
In our follow-up work \cite{BERT2}, we will construct this evaluation functor over $\Bbbk$, and prove that it is an equivalence. 
In particular, this will extend our results to give a diagrammatic description of tilting modules in type $C_n$ when the characteristic (or quantum characteristic) is larger than $n$. 
\end{rem}

\subsection{More trivalent vertices}\label{subsec:evenmore}

We've defined flow vertices, which will eventually be proven to span the one-dimensional morphism space $V_k \ot V_\ell \to V_{k+\ell}$. 
However, tensor product decomposition rules in type $C$ imply that $V_{m}$ is a direct summand of $V_k \ot V_\ell$ (with multiplicity one) only if
\begin{equation} \label{eq:inequalitiesfortri} m \le k + \ell, \quad k \le \ell + m, \quad \ell \le k + m, \quad k + \ell + m \text{ is even}. \end{equation}
We can use flow vertices to define trivalent vertices that span the morphism space $V_k \ot V_\ell \to V_m$ as well.

\begin{defn} Let $k,\ell,m \in \N$ satisfy \eqref{eq:inequalitiesfortri}. 
The \emph{general trivalent vertices} are defined using the flow vertices via:
\begin{equation} \label{eq:generaltri}
\begin{tikzpicture}[scale =.5, smallnodes,anchorbase]
	\draw[very thick] (0,0) node[below=-1pt]{$k$} to [out=90,in=210] (.5,.75);
	\draw[very thick] (1,0) node[below=-1pt]{$\ell$} to [out=90,in=330] (.5,.75);
	\draw[very thick] (.5,.75) to (.5,1.5) node[above=-2pt]{$m$};
\end{tikzpicture}
:=
\begin{tikzpicture}[scale=.35,smallnodes,anchorbase]
	\draw[very thick] (-1,0) to node[below,yshift=-2pt]{$\frac{k{+}\ell{-}m}{2}$} (1,0);
	\draw[very thick] (-1,0) to node[left,xshift=2pt]{$\frac{k{+}m{-}\ell}{2}$} (0,1.732);
	\draw[very thick] (1,0) to node[right,xshift=-2pt]{$\frac{\ell{+}m{-}k}{2}$} (0,1.732);
	\draw[very thick,->] (0,1.732) to (0,3.232) node[above=-2pt]{$m$};
	\draw[very thick,<-] (-2.3,-.75) node[below=-2pt,xshift=-2pt]{$k$} to (-1,0);
	\draw[very thick,<-] (2.3,-.75) node[below=-2pt,xshift=2pt]{$\ell$} to (1,0);
\end{tikzpicture}
\end{equation}
\end{defn}

Conversely, easy linear algebra on the indices implies that any triangle of the form:
\begin{equation} \label{eq:pointingout}
\begin{tikzpicture}[scale=.35,smallnodes,anchorbase]
	\draw[very thick] (-1,0) to node[below=-2pt]{$a$} (1,0);
	\draw[very thick] (-1,0) to node[left,xshift=2pt]{$b$} (0,1.732);
	\draw[very thick] (1,0) to node[right,xshift=-2pt]{$c$} (0,1.732);
	\draw[very thick,->] (0,1.732) to (0,3.232) node[above=-2pt]{$m$};
	\draw[very thick,<-] (-2.3,-.75) node[below=-2pt,xshift=-2pt]{$k$} to (-1,0);
	\draw[very thick,<-] (2.3,-.75) node[below=-2pt,xshift=2pt]{$\ell$} to (1,0);
\end{tikzpicture}
\end{equation}
must be a general trivalent vertex as in \eqref{eq:generaltri}, i.e. $a = \frac{k+\ell - m}{2}$, etcetera.

We permit ourselves to use generalized trivalent vertices in an $\spn$ web, though in this paper we typically use only flow vertices. 
For example, we can rewrite (\text{\ref{eq:spn}e}) because it contains a general trivalent vertex with boundary $(k,k,2)$:
\begin{equation} \label{eq:HIredux}
\begin{tikzpicture}[scale=.4, rotate=90, tinynodes, anchorbase]
	\draw[very thick] (-1,0) node[below,yshift=2pt,xshift=2pt]{$k$} to (0,1);
	\draw[very thick] (1,0) node[above,yshift=-4pt,xshift=2pt]{$1$} to (0,1);
	\draw[very thick] (0,2.5) to (-1,3.5) node[below,yshift=2pt,xshift=-2pt]{$k$};
	\draw[very thick] (0,2.5) to (1,3.5) node[above,yshift=-4pt,xshift=-2pt]{$1$};
	\draw[very thick,directed=.3,rdirected=.85] (0,1) to node[below=0pt]{$k{+}1$} (0,2.5);
\end{tikzpicture}
=
\begin{tikzpicture}[scale=.4, tinynodes, anchorbase]
	\draw[very thick] (-1,0) node[below,yshift=2pt,xshift=-2pt]{$k$} to (0,1);
	\draw[very thick] (1,0) node[below=-2pt,xshift=2pt]{$k$} to (0,1);
	\draw[very thick] (0,2.5) to (-1,3.5) node[above=-3pt,xshift=-2pt]{$1$};
	\draw[very thick] (0,2.5) to (1,3.5) node[above=-3pt,xshift=2pt]{$1$};
	\draw[very thick,rdirected=.85] (0,1) to node[right=-2pt]{$2$} (0,2.5);
\end{tikzpicture}
-\frac{[n{-}k]}{[n{-}k{+}1]}
\begin{tikzpicture}[scale=.4, rotate=90, tinynodes, anchorbase]
	\draw[very thick,<-] (-1,0) node[below,yshift=2pt,xshift=2pt]{$k$} to (0,1);
	\draw[very thick] (1,0) node[above,yshift=-4pt,xshift=2pt]{$1$} to (0,1);
	\draw[very thick,->] (0,2.5) to (-1,3.5) node[below,yshift=2pt,xshift=-2pt]{$k$};
	\draw[very thick] (0,2.5) to (1,3.5) node[above,yshift=-4pt,xshift=-2pt]{$1$};
	\draw[very thick] (0,1) to node[below,yshift=2pt]{$k{-}1$} (0,2.5);
\end{tikzpicture}
+
\frac{[n{-}k]}{[n]}
\begin{tikzpicture}[scale=.4, tinynodes, anchorbase]
	\draw[very thick] (-1,0) node[below,yshift=2pt]{$k$} to [out=90,in=180] (0,1) 
		to [out=0,in=90] (1,0);
	\draw[very thick] (-1,3) node[above,yshift=-3pt]{$1$} to [out=270,in=180] (0,2)
		to [out=0,in=270] (1,3);
\end{tikzpicture}
\;\; . \;\;
\end{equation}

There are many interesting relations involving general trivalent vertices, which we will explore in the sequel. It may be helpful to consider some of these here.

We said that \eqref{eq:inequalitiesfortri} is a necessary condition for $V_m$ to be a direct summand of $V_k \ot V_\ell$, but it is not a sufficient condition. Decomposition
patterns change when the values of $k$ and $\ell$ are large relative to $n$. For example, the tensor product $V_n \ot V_n$ does not have $V_k$ as a summand for any $k$, and $V_n \ot
V_{n-1}$ only has $V_1$ as a summand. Consequently, the trivalent vertices with boundary $(n,n,2)$ or $(n,n-1,3)$ are zero, among others. One can deduce that the trivalent vertex
with boundary $(n,n,2)$ is zero directly from the $k=n$ case of \eqref{eq:HIredux}.

Although the morphism space $V_k \ot V_\ell \to V_m$ is at most one dimensional, there are many ways to make diagrams of the form 
\[
\begin{tikzpicture}[scale=.25,smallnodes,anchorbase]
	\draw[very thick] (-1,0) to node[below=-2pt]{$a$} (1,0);
	\draw[very thick] (-1,0) to node[left,xshift=2pt]{$b$} (0,1.732);
	\draw[very thick] (1,0) to node[right,xshift=-2pt]{$c$} (0,1.732);
	\draw[very thick] (0,1.732) to (0,3.232) node[above=-2pt]{$m$};
	\draw[very thick] (-2.3,-.75) node[below=-2pt,xshift=-2pt]{$k$} to (-1,0);
	\draw[very thick] (2.3,-.75) node[below=-2pt,xshift=2pt]{$\ell$} to (1,0);
\end{tikzpicture}.
\]
The trivalent vertices might not all be flow vertices, or they might be flow vertices which do not point out of the triangle, unlike the vertices in \eqref{eq:pointingout}. 
Any such triangle must be a scalar multiple of the general trivalent vertex, and the formula for this scalar in terms of $(a,b,c,k,\ell,m)$ is surprisingly messy (the proof is even worse). 
We postpone a comprehensive discussion until \cite{BERT2}, teasing the appetite with a few examples.

For example, by placing a $(1,1,2)$ trivalent vertex on top of \eqref{eq:HIredux} we can prove that
\begin{equation}
\begin{tikzpicture}[scale=.25,smallnodes,anchorbase]
	\draw[very thick,directed=.4,rdirected=.75] (-1,0) to node[below=1pt]{$k{+}1$} (1,0);
	\draw[very thick] (-1,0) to (0,1.732);
	\draw[very thick] (1,0) to (0,1.732);
	\draw[very thick,->] (0,1.732) to (0,3.232) node[above=-2pt]{$2$};
	\draw[very thick] (-2.3,-.75) node[below=-2pt, xshift=-2pt]{$k$} to (-1,0);
	\draw[very thick] (2.3,-.75) node[below=-2pt,xshift=2pt]{$k$} to (1,0);
\end{tikzpicture}
= \frac{[n+2-k]}{[n+1-k]}
\begin{tikzpicture}[scale =.5, smallnodes,anchorbase]
	\draw[very thick] (0,0) node[below=-1pt]{$k$} to [out=90,in=210] (.5,.75);
	\draw[very thick] (1,0) node[below=-1pt]{$k$} to [out=90,in=330] (.5,.75);
	\draw[very thick] (.5,.75) to (.5,1.5) node[above=-2pt]{$2$};
\end{tikzpicture}.
\end{equation}
As another example, in the sequel we will prove:
\begin{equation}
\begin{tikzpicture}[scale=.25,smallnodes,anchorbase]
	\draw[very thick] (-1,0) to node[below=-2pt]{$a$} (1,0);
	\draw[very thick] (-1,0) to node[left,xshift=2pt]{$b$} (0,1.732);
	\draw[very thick] (1,0) to node[right,xshift=-2pt]{$c$} (0,1.732);
	\draw[very thick] (0,1.732) to (0,3.232) node[above=-2pt]{$m$};
	\draw[very thick,<-] (-2.3,-.75) node[below=-2pt,xshift=-2pt]{$k$} to (-1,0);
	\draw[very thick,<-] (2.3,-.75) node[below=-2pt,xshift=2pt]{$\ell$} to (1,0);
\end{tikzpicture}
=
{\frac{k+\ell-m}{2} \brack a}
\begin{tikzpicture}[scale =.5, smallnodes,anchorbase]
	\draw[very thick] (0,0) node[below=-1pt]{$k$} to [out=90,in=210] (.5,.75);
	\draw[very thick] (1,0) node[below=-1pt]{$\ell$} to [out=90,in=330] (.5,.75);
	\draw[very thick] (.5,.75) to (.5,1.5) node[above=-2pt]{$m$};
\end{tikzpicture} .
\end{equation}

\begin{rem} For our presentation of $\FRep(U_q(\spn))$ to be correct, it must be possible to simplify all triangles directly from the relations \eqref{eq:spn}, which is a
surprisingly arduous task. We were pleased to find a proof that our presentation is correct without needing to do any of this work! \end{rem}

\section{Outline of the proof}\label{sec:outline}

The details of the proof of Theorem \ref{thm:main} will occupy the entirety of \S \ref{sec:proof}.
For the reader's benefit, we give a condensed and informal account of the pertinent steps.

\subsection{Step 1: Produce the functor.}
First, we show the existence of a pivotal, $\C(q)$-linear, essentially surjective functor $\Phi \colon \Web(\spn) \to \FRep(U_q(\spn))$. 
We do this in a roundabout fashion in \S \ref{sec:thefunctor}, rather than constructing it directly. 
One can compute the dimensions of certain morphism spaces between representations with ease, 
e.g. the morphism spaces $\Hom_{U_q(\spn)}(V_{1} \ot V_{k}, V_{k+1})$ and $\Hom_{U_q(\spn)}(V_{k} \ot V_{1}, V_{k+1})$ are one-dimensional.
Thus, the choice of where $\Phi$ sends the generating trivalent vertices \eqref{eq:WebGen} is unique up to scalar. 
Choosing these morphisms gives an essentially surjective monoidal functor from the pivotal category freely generated by our trivalent vertices. 
We then use properties of $\FRep(U_q(\spn))$ to show that, after appropriate rescalings, our relations must be satisfied in the image; see Theorem \ref{thm:functor}. 
This construction of $\Phi$ is analogous to the argument used in \cite{RoseTatham} for type $C_3$.

For example, the bigon $k \to 1 \ot (k-1) \to k$ (the first diagram in (\text{\ref{eq:spn}c})) must be sent to some morphism in the one-dimensional space $\End_{U_q(\spn)}(V_{k})$, 
so its image in $\Rep (U_q(\spn))$ is a scalar multiple of the identity. Initially, this scalar is unknown.
Similarly, other relations must hold (with unknown coefficients) using $\Hom$-space dimension computations. 
Some of these coefficients (e.g. the value of the circle) are determined from properties inherent to the pivotal/ribbon structure on $\FRep(U_q(\spn))$. 
Formulas relating the coefficients can further be determined using coherence requirements, since resolving a diagram using two different relations must produce the same answer. 
In the end, we show that our relations must be satisfied in the image of the functor, and further are the only ones that work (up to equivalences that rescale the generators).

\subsection{Step 2: Reduce to a full subcategory and use the BMW algebra to show fullness}

Next, in Lemma \ref{lem:reduction} we provide a quick argument that shows that it suffices to show that $\Phi$ gives 
an isomorphism on $\Hom$-spaces of the form $\Hom_{\Web(\spn)}(1^{\ot k}, 1^{\ot l})$, for various $k$ and $l$. 
The main point is that there exist webs $k \xrightarrow{\iota} 1^{\ot k}$ and $1^{\ot k} \xrightarrow{\pi} k$ so that $\pi \circ \iota = \id$.
This parallels the fact that, over $\C(q)$, $V_k$ is a summand of $V_1^{\otimes k}$. 
This allows us to use earlier work on $\SRep(U_q(\spn))$ to assist in the proof (recall that $\SRep(U_q(\spn))$ is the full subcategory whose objects are tensor products of $V_1$). 
In particular, we show that the surjective map from the BMW algebra to endomorphism spaces in $\SRep(U_q(\spn))$ factors through $\Web(\spn)$, 
and deduce that the functor $\Phi \colon \Web(\spn) \to \FRep(U_q(\spn))$ is full using the pivotal structure.
This fullness places a lower bound on the dimension of the $\Hom$-space $\Hom_{\Web(\spn)}(1^{\ot k}, 1^{\ot l})$.
The dimension of the corresponding $\Hom$ space in $\SRep(U_q(\spn))$ was computed by Sundaram \cite{SundaramThesis,SundaramTableaux}; 
see \S \ref{sec:faithful} for more details. 

\subsection{Step 3: change basis to show faithfulness}

Our remaining and most-arduous task is to prove that $\Phi$ is faithful on $\Hom_{\Web(\spn)}(1^{\ot k},1^{\ot l})$. 
By our fullness result, it suffices to place an upper bound on the dimension of this Hom space in $\Web(\spn)$ that matches the known dimension of $\Hom_{U_q(\spn)}(V_{1}^{\ot k}, V_{1}^{\ot l})$.
Our approach to studying $\SRep(U_q(\spn))$ differs from previous works in that we choose a different generating morphism. 
The BMW algebra uses the braiding map $1 \ot 1 \to 1 \ot 1$ as one of its generators, but this braiding map is not rotationally invariant: rotating the braiding gives the inverse braiding. 
We instead choose a rotationally-invariant planar crossing map, defined in terms of type $C$ webs as follows.
\[
\begin{tikzpicture}[scale=.35, tinynodes, anchorbase]
	\draw[very thick] (-1,-1) to (1,1);
	\draw[very thick] (-1,1) to (1,-1);
\end{tikzpicture}
\; :=\; 
\begin{tikzpicture}[scale=.35, tinynodes, anchorbase]
	\draw[very thick] (-1,0) node[below,yshift=2pt,xshift=-2pt]{$1$} to (0,1);
	\draw[very thick] (1,0) node[below,yshift=2pt,xshift=2pt]{$1$} to (0,1);
	\draw[very thick] (0,2.5) to (-1,3.5) node[above,yshift=-4pt,xshift=-2pt]{$1$};
	\draw[very thick] (0,2.5) to (1,3.5) node[above,yshift=-4pt,xshift=2pt]{$1$};
	\draw[very thick] (0,1) to node[right,xshift=-2pt]{$2$} (0,2.5);
\end{tikzpicture}
\; + \;
\frac{[n-1]}{[n]} \;
\begin{tikzpicture}[scale=.3, tinynodes, anchorbase]
	\draw[very thick] (-1,0) node[below,yshift=2pt]{$1$} to [out=90,in=180] (0,1) 
		to [out=0,in=90] (1,0);
	\draw[very thick] (-1,3) node[above,yshift=-3pt]{$1$} to [out=270,in=180] (0,2)
		to [out=0,in=270] (1,3);
\end{tikzpicture}
\; \stackrel{(\text{\ref{eq:spn}e})}{=} \; 
\begin{tikzpicture}[scale=.35, rotate=90, tinynodes, anchorbase]
	\draw[very thick] (-1,0) node[below,yshift=2pt,xshift=2pt]{$1$} to (0,1);
	\draw[very thick] (1,0) node[above,yshift=-4pt,xshift=2pt]{$1$} to (0,1);
	\draw[very thick] (0,2.5) to (-1,3.5) node[below,yshift=2pt,xshift=-2pt]{$1$};
	\draw[very thick] (0,2.5) to (1,3.5) node[above,yshift=-4pt,xshift=-2pt]{$1$};
	\draw[very thick] (0,1) to node[below,yshift=2pt]{$2$} (0,2.5);
\end{tikzpicture}
\; + \;
\frac{[n-1]}{[n]} \;
\begin{tikzpicture}[scale=.35, tinynodes, anchorbase]
	\draw[very thick] (-1,-1) node[below,yshift=2pt]{$1$} to (-1,1) node[above,yshift=-2pt]{$1$};
	\draw[very thick] (1,-1) node[below,yshift=2pt]{$1$} to (1,1) node[above,yshift=-2pt]{$1$};
\end{tikzpicture}
\]
This generalizes a morphism defined in \cite[p. 127]{Kup}.
Diagrams built from this crossing, together with cups and caps, will be called \emph{quadrivalent graphs}. 
Let $\Web^\times(\spn)$ denote the subcategory of $\Web(\spn)$ whose objects are $1^{\ot k}$ for various $k\ge 0$, and whose morphisms are spanned by quadrivalent graphs. 
This is our new variant on (the $\spn$ quotient of) the quantized Brauer category. 
Note that $\Web^\times(\spn)$ is not explicitly defined by generators and relations; rather, the relations between quadrivalent graphs are implicitly determined from those in $\Web(\spn)$.

The first step in using $\Web^\times(\spn)$ to study the dimensions of $\Hom$-spaces in $\Web(\spn)$ is to confirm that $\Web^\times(\spn)$ is 
equal to $\SWeb(\spn)$, the full subcategory of $\Web(\spn)$ with objects $1^{\ot k}$.
This is accomplished using a diagrammatic argument in Lemma \ref{lem:HT}, that further proves useful in simplifying the set of quadrivalent graphs we need consider.
Next, we prove that $\Web^\times(\spn)$ possesses analogues of the Reidemeister moves from knot theory, which agree with the familiar topological relations modulo diagrams with fewer crossings. 
We emphasize that we work with quadrivalent graphs, which do not distinguish between over- and under-crossings.
For example, the analogue of Reidemeister III in $\Web^\times(\spn)$ is the relation:
\begin{equation} \label{eq:RIIItrue}
\begin{tikzpicture}[scale=.225, tinynodes, anchorbase]
	\draw[very thick] (-2,-3) to [out=90,in=270] (2,3);
	\draw[very thick] (0,-3) to [out=90,in=270] (-2,0) to [out=90,in=270] (0,3);
	\draw[very thick] (2,-3) to [out=90,in=270] (-2,3);	
\end{tikzpicture}
=
\begin{tikzpicture}[scale=.225, tinynodes, anchorbase]
	\draw[very thick] (-2,-3) to [out=90,in=270] (2,3);
	\draw[very thick] (0,-3) to [out=90,in=270] (2,0) to [out=90,in=270] (0,3);
	\draw[very thick] (2,-3) to [out=90,in=270] (-2,3);	
\end{tikzpicture}
+
\begin{tikzpicture}[scale=.15, tinynodes, anchorbase,yscale=-1]
	\draw[very thick] (-2,0) to [out=90,in=180] (-1,2) to [out=0,in=90] (0,0);
	\draw[very thick] (-2,9) to [out=270,in=180] (0,6) to [out=0,in=270] (2,9);
	\draw[very thick] (2,0) to [out=90,in=270] (0,9);
\end{tikzpicture}
-
\begin{tikzpicture}[scale=.15, tinynodes, anchorbase,yscale=-1,rotate=180]
	\draw[very thick] (-2,0) to [out=90,in=180] (-1,2) to [out=0,in=90] (0,0);
	\draw[very thick] (-2,9) to [out=270,in=180] (0,6) to [out=0,in=270] (2,9);
	\draw[very thick] (2,0) to [out=90,in=270] (0,9);
\end{tikzpicture}
+
\begin{tikzpicture}[scale=.15, tinynodes, anchorbase]
	\draw[very thick] (-2,0) to [out=90,in=180] (-1,2) to [out=0,in=90] (0,0);
	\draw[very thick] (-2,9) to [out=270,in=180] (0,6) to [out=0,in=270] (2,9);
	\draw[very thick] (2,0) to [out=90,in=270] (0,9);
\end{tikzpicture}
-
\begin{tikzpicture}[scale=.15, tinynodes, anchorbase,rotate=180]
	\draw[very thick] (-2,0) to [out=90,in=180] (-1,2) to [out=0,in=90] (0,0);
	\draw[very thick] (-2,9) to [out=270,in=180] (0,6) to [out=0,in=270] (2,9);
	\draw[very thick] (2,0) to [out=90,in=270] (0,9);
\end{tikzpicture}
+
\begin{tikzpicture}[scale=.15, tinynodes, anchorbase]
	\draw[very thick] (-2,0) to [out=90,in=270] (0,9);
	\draw[very thick] (0,0) to [out=90,in=270] (-2,9);
	\draw[very thick] (2,0) to [out=90,in=270] (2,9);
\end{tikzpicture}
-
\begin{tikzpicture}[scale=.15, tinynodes, anchorbase,rotate=180]
	\draw[very thick] (-2,0) to [out=90,in=270] (0,9);
	\draw[very thick] (0,0) to [out=90,in=270] (-2,9);
	\draw[very thick] (2,0) to [out=90,in=270] (2,9);
\end{tikzpicture}
-
[2n-2]
\left( \
\begin{tikzpicture}[scale=.15, tinynodes, anchorbase]
	\draw[very thick] (-2,0) to [out=90,in=180] (-1,2) to [out=0,in=90] (0,0);
	\draw[very thick] (-2,9) to [out=270,in=180] (-1,7) to [out=0,in=270] (0,9);
	\draw[very thick] (2,0) to [out=90,in=270] (2,9);
\end{tikzpicture}
-
\begin{tikzpicture}[scale=.15, tinynodes, anchorbase,rotate=180]
	\draw[very thick] (-2,0) to [out=90,in=180] (-1,2) to [out=0,in=90] (0,0);
	\draw[very thick] (-2,9) to [out=270,in=180] (-1,7) to [out=0,in=270] (0,9);
	\draw[very thick] (2,0) to [out=90,in=270] (2,9);
\end{tikzpicture}
\ \right)
\end{equation}

These relations allow us to reduce to a more-manageable spanning set for $\Hom$-spaces in $\Web^\times(\spn)$.
To give an analogy, a standard argument in knot theory pairs with Reidemeister's theorem to imply that any tangle diagram simplifies to a reduced diagram 
(no extraneous crossings or closed components) using the following transformations:
\begin{itemize} 
	\item Reidemeister moves RI, RII, and RIII,
	\item switching over- and under-crossings, and
	\item removing circles. 
\end{itemize}
Our results in \S \ref{sec:RGS} imply a strengthening of this result: the Reidemeister moves RI and RII need only be applied in the direction that reduces the number of crossings. 
See Porism \ref{por:tangle} below.
Rather than quote this result as ``folklore,'' we have provided detailed proofs. 
Using this, we prove that reduced graphs (one for each planar matching) span $\Web^\times(\spn)$.

Finally, we use pattern avoidance techniques to trim down this spanning set even further, to a set of elements of cardinality equal to the corresponding dimension in $\SRep(U_q(\spn))$.
In the aforementioned Lemma \ref{lem:HT}, we define the \emph{half twist} $\HT_k$ to be the crossing diagram associated to a particular reduced expression of the longest element of $\mathfrak{S}_k$,
and prove that the merge-split web $1^{\ot k} \xrightarrow{\pi} k \xrightarrow{\iota} 1^{\ot k}$ is equal to the half twist $\HT_{k}$ modulo quadrivalent graphs with fewer crossings, for all $k > 0$. 
Since the object $n+1$ is the zero object, this shows that $\HT_{n+1}$ can be rewritten using diagrams with fewer crossings in $\Web^\times(\spn)$. 
In Proposition \ref{prop:patterngivesHT}, we make an additional diagrammatic/combinatorial argument to deduce that reduced diagrams for $(n+1)$-avoiding planar matchings will span $\Web^\times(\spn)$. 
Sundaram's dimension formula is exactly the number of $(n+1)$-avoiding planar matchings (we justify this known claim in \S \ref{sec:faithful}), which concludes the proof of the faithfulness of $\Phi$.

\subsection{Additional remarks}

First, let us discuss the extent to which the same overall proof outline might work \emph{mutatis mutandis} to give a reproof of
Cautis-Kamnitzer-Morrison's result in type $A$: that their web category is equivalent to $\FRep(U_q(\gln))$ over $\C(q)$. 
There are several differences between type $C$ and type $A$ that should be mentioned. 

\begin{rem} \label{rmk:itischeatingthough} In our proof outline, we define a category $\Web^\times(\spn)$ spanned by quadrivalent graphs 
(wherein we can control the size of morphism spaces), and we have to prove that it is a full subcategory of $\Web(\spn)$, see Corollary \ref{cor:QuadFull}. 
In the context of type $A$, we might use the braiding maps coming from the action of the Hecke algebra as a replacement for quadrivalent vertices. 
However, there are also cup and cap morphisms to account for. 
Thus, one can envision two different subcategories of $\Web(\gln)$:
the subcategory spanned by braid diagrams (where one can control the size of morphism spaces), and the subcategory spanned by oriented tangle diagrams. 
(In both, objects take the form $1^{\otimes k}$, and the only $\Hom$-spaces are endomorphism spaces.)
An analogue of our half twist Lemma could be adapted to show that tangle diagrams span, but one would still need additional arguments 
(e.g. using Porism \ref{por:tangle} and analogues of Propositions \ref{prop:Matsumoto} and \ref{lem:webReidemeister}) to deduce that braid-like diagrams span. 
Cautis-Kamnitzer-Morrison skirt these issues via the ladderization technology afforded by skew Howe duality.
We anticipate that a careful adaptation of our techniques to an appropriate subcategory of $\Web(\sln)$ would give yet another reproof of 
the main result of \cite{CKM}, over $\C(q)$ (but we do not cross all the t's and dot the lower-case j's here). \end{rem}

\begin{rem} \label{rmk:reductiontoonesisntcheating} 
While Cautis-Kamnitzer-Morrison (CKM) \cite{CKM} only proved that their category of webs was equivalent to $\FRep(U_q(\gln))$ over $\C(q)$, 
they intended to provide a presentation which might be correct over $\Z[q,q^{-1}]$ (see the Remark immediately preceding \cite[Lemma 2.2.1]{CKM}). 
This was proved by the second-named author in \cite{EliasLL}. 
The CKM presentation has many ``redundant'' relations\footnote{For example, consider the ``square flop'' relation \cite[Equation (2.10)]{CKM}. 
Only the special case where the rung labels $r$ and $s$ both equal $1$ is a required relation over $\C(q)$; the remaining cases are redundant.}, 
which can be derived from other relations over $\C(q)$, though not necessarily over $\Z[q,q^{-1}]$.
Our presentation \eqref{eq:spn} looks more like their presentation with the ``redundant'' relations removed, 
and this version of the category in type $A$ would be incorrect over $\Z[q,q^{-1}]$, having torsion webs in the kernel of $\Phi$. 
These redundant relations are an important part of the legacy of \cite{CKM}, and are seemingly missing from this paper (but will appear in the sequel).
When working over $\Z[q,q^{-1}]$ in type $A$, the analogue of our proof that $\Phi$ is fully faithful will break down at Step 2: 
we cannot reduce to $\Hom(1^{\ot k},1^{\ot l})$. 
This is because $k$ is not a direct summand of $1^{\otimes k}$ (in the Karoubi envelope): 
the idempotent $\iota \circ \pi \colon 1^{\otimes k} \to 1^{\otimes k}$ can only be defined when $[k]!$ is invertible. 
Proving that type $A$ webs are correct over $\C(q)$ by reduction to $\SRep(U_q(\gln))$ abuses a loophole; 
it works, but the proof is sure to fail for the integral form.

However, the situation in type $C$ is different. 
Recall that $\Bbbk$ is the extension of $\Z[q,q^{-1}]$ obtained by inverting $[n]!$. 
The fundamental representations $V_k$ are not even tilting when $[n]!$ is not invertible, 
and one simply does not expect an integral form for $\FRep(U_q(\spn))$ with a smaller base ring than $\Bbbk$.
Even our minimal set of relations involves denominators, and our category is only defined over $\Bbbk$. Since we are forced into the loophole, we may as well abuse it! 
Indeed, with a bit more effort (defining the functor $\Phi$ explicitly and checking integrality) we could extend our results from $\C(q)$ to $\Bbbk$.
However, in the interest of space and time we save this result for our follow-up work, where we take the different route provided by the double ladder basis.
\end{rem}

Next, we revisit the topic of integral forms from Remark \ref{rem:Z1}.

\begin{rem} 
To simplify notation, set $\ZA= \Z[q, q^{-1}]$. 
Let $U^{\ZA}_q(\spn)$ denote Lusztig's divided powers form of $U_q(\spn)$ and let $V^{\ZA}$ be the Weyl module with highest weight $\varpi_1$. 
If $\Bbbk$ is any field and $q\in \Bbbk^{\times}$, then $\Bbbk\ot V^{\ZA}$ is irreducible and tilting. 
The full subcategory of $U^{\ZA}_q(\spn)$ modules monoidally generated by $V^{\ZA}$ is an 
integral form of $\SRep(U_q(\spn))$ which we will call $\SRep^{\ZA}$. 
One might expect there to be a diagrammatic presentation for $\SRep^{\ZA}$ built from quadrivalent vertices,
akin to the $\C(q)$-linear category $\SWeb(\spn) = \Web^\times(\spn)$.

As evidence, note that the coefficients in the relations (\text{\ref{eq:spn}a}), \eqref{eq:R1}, \eqref{eq:preciseR2}, and \eqref{eq:RIIItrue} in $\Web^\times(\spn)$
have no poles (fractions like $\frac{[2n]}{[n]}$ are actually Laurent polynomials, as the denominator divides the numerator).
What is currently missing in a diagrammatic description of $\SWeb^{\ZA}$ is an explicit description of the ``kernel,'' as discussed following equation \eqref{eq:BMW}.
In other words, one needs an explicit version of Lemma \ref{lem:HT} below that describes the 
split-merge map $1^{\ot n+1} \to (n+1) \to 1^{\ot n+1}$ as a linear combination of quadrivalent graphs.
When $n=2$ this is given by \eqref{eq:preciseR3}, which (in the special case of $n=2$) has no poles.
After setting the $3$-labeled strand equal to zero, \eqref{eq:preciseR3} gives a way to rewrite the half twist on three strands as an $\ZA$-linear combination of diagrams with fewer than $3$ crossings. 
When $n > 2$, the equation \eqref{eq:preciseR3} has a coefficient $[n-2]/[n-1]$ with poles, though this is no longer of relevance. 
Instead, the kernel is some relation analogous to \eqref{eq:preciseR3}, which describes the half twist on $n+1$ strands as a linear combination of diagrams with fewer crossings. 
Potentially, such an equation may exist over $\ZA$. The problem of finding an explicit form of this kernel over $\ZA$ is closely related to Conjecture $5.8$ in \cite{HuXiaoTensorBMW}. 

All summands of tensor products of $\Bbbk\ot V^{\ZA}$ will be tilting modules. 
However, we expect that the idempotent completion of the category monoidally generated by $\Bbbk\ot V^{\ZA}$ will not be the entire category of tilting modules (unless the characteristic is larger than $n$). 
For example, the group $Sp_4(\overline{\mathbb{F}}_2)$ has an indecomposable tilting module $T(\varpi_2)\cong \Lambda^2(\overline{\mathbb{F}}_2 \ot V^{\ZA})$ 
that we conjecture will not appear as a direct summand of $(\overline{\mathbb{F}}_2 \ot V^{\ZA})^{\ot k}$, for any $k$. 
\end{rem}

Finally, we return to the discussion from Remark \ref{rem:LZ1}.

\begin{rem} \label{rem:LZ2} Recall that Lehrer and Zhang study $\SRep(\spn)$ using the Brauer category. 
The quantum analogue of the latter is the quantized Brauer category, which we reminder the reader is the category obtained from the BMW category 
by adjoining the cap and cup morphisms as generators. Note that, as defined above, the BMW category only contains the cup-cap composition as a morphism. 
A priori, this implies that endomorphism spaces in the quantized Brauer category might be larger than the BMW algebra.
This parallels the distinction in type $A$ between braids and tangles that is mentioned in Remark \ref{rmk:itischeatingthough} above.

Hence, an adaptation of the approach from \cite{LZbrauer} using the quantized Brauer category as a replacement for $\Web^\times(\spn)$
would require a proof similar in outline to the one presented above (plus more!), and thus is far from trivial. 
Indeed, the complications arise since the moves needed to simplify diagrams in the quantized Brauer category to a tractable spanning set 
(e.g. to deduce that the BMW algebra spans endomorphism spaces) again hold only modulo lower order terms. 
Thus, our arguments showing that certain Reidemeister-like moves\footnote{For example, while the tangle Reidemeister II move holds in the quantizer Brauer category, 
we also need a version of this relation that expresses the full twist braid on two strands in terms of tangles with fewer crossings.}
need only be applied in the direction that reduces the number of crossings are still required.
This contrasts the combinatorially-simpler (non-quantized) Brauer category, where the relevant ``graph Reidemeister moves'' hold on the nose; see \cite[Figures 2 and 3]{LZbrauer}.
Further, the quantized Brauer approach also requires the resolution of the previously mentioned open problem of finding an explicit formula for the quantum Pfaffian.

Nevertheless, we believe it would be fruitful to further explore the relation between $\Web(\spn)$ and the quantized Brauer category.
In particular, a version of Lemma \ref{lem:HT} in the BMW context could produce a formula for the quantum Pfaffian.
\end{rem}

%
\section{The proofs}\label{sec:proof}
%

\subsection{The essentially surjective functor}\label{sec:thefunctor}

We begin by proving the following.

\begin{thm}\label{thm:functor}
There is a pivotal, essentially surjective functor $\Phi \colon \Web(\spn) \to \FRep(U_q(\spn))$.
\end{thm}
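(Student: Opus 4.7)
The plan is to construct $\Phi$ using the universal property of the presentation in Definition \ref{def:webs}: it suffices to specify where $\Phi$ sends each generator and to verify that the images satisfy the relations \eqref{eq:spn}. On objects, set $\Phi(k) := V_k$; the natural self-duality of each $V_k$ under the Snyder--Tingley pivotal structure (\cite[Lemma 5.7]{ST}) ensures compatibility with the self-duality of the generating objects of $\Web(\spn)$. For the generating trivalent vertices in \eqref{eq:WebGen}, I would invoke the standard type $C$ fact that $\Hom_{U_q(\spn)}(V_1 \otimes V_k, V_{k+1})$ and $\Hom_{U_q(\spn)}(V_k \otimes V_1, V_{k+1})$ are each one-dimensional (a character computation), and choose any nonzero morphism as the image; this determines $\Phi$ on generators up to one nonzero scalar per vertex. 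Essential surjectivity is then immediate from the definition of $\FRep(U_q(\spn))$, and the remaining task is to show that the scalars can be chosen consistently so that each relation in \eqref{eq:spn} holds in the image.

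The easier relations are nearly forced. The circle value (\ref{eq:spn}a) lies in $\End_{U_q(\spn)}(V_0) = \C(q)$ and must equal $\qdim V_1 = -[n][2n+2]/[n+1]$ by \eqref{dimVk}. Relation (\ref{eq:spn}b) is automatic because $\Hom_{U_q(\spn)}(V_2, V_0) = 0$. The bigon in (\ref{eq:spn}c) lies in the one-dimensional space $\End_{U_q(\spn)}(V_k)$, so it is some scalar times $\id_{V_k}$, and the rescaling freedom on the generators can be used to normalize this scalar to $[k]$. Relation (\ref{eq:spn}d) likewise follows from the one-dimensionality of $\Hom_{U_q(\spn)}(V_1 \otimes V_k \otimes V_1, V_{k+2})$: both sides are scalar multiples of a common generator, and the ratio between them is absorbed by a coherent global choice of left- versus right-vertex scalars.

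The main obstacle is relation (\ref{eq:spn}e), whose coefficients $-[n-k]/[n-k+1]$ and $[n-k]/[n]$ are non-trivial quantum numbers that cannot be absorbed by remaining rescaling freedom. Both sides are endomorphisms of $V_k \otimes V_1$, whose isotypic decomposition for $1 \le k \le n-1$ reads $V_{k+1} \oplus V_{k-1} \oplus V_{\varpi_1 + \varpi_k}$. My approach would be to compute the scalar by which each of the three pictured morphisms acts on each isotypic component, using the pivotal structure and the coefficients already fixed in the easier relations, and then to solve the resulting linear system for the coefficients in (\ref{eq:spn}e); the predicted answer matches the stated formula. The boundary case $k = n$ requires separate treatment: the summand $V_{\varpi_1 + \varpi_n}$ behaves differently, and (\ref{eq:spn}e) reduces to the vanishing of a trivalent vertex with boundary $(n,n,2)$, which follows from $\Hom_{U_q(\spn)}(V_n \otimes V_n, V_2) = 0$. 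An alternative, analogous to the argument used for $C_3$ in \cite{RoseTatham}, is to pre- and post-compose (\ref{eq:spn}e) with carefully chosen caps, cups, and trivalent vertices to project onto one-dimensional Hom spaces where each coefficient can be extracted directly.
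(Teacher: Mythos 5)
Your overall outline matches the paper's proof: build a pivotal functor from the free pivotal category $\FWeb(n)$ by choosing $\Phi(k) = V_k$ and fixing nonzero images of the generating vertices (unique up to scalar by one-dimensionality of the relevant $\Hom$-spaces), then verify the relations \eqref{eq:spn} one by one, using the remaining rescaling freedom to normalize. Your treatment of (\ref{eq:spn}a)--(\ref{eq:spn}d) is exactly what the paper does: (a) is forced by $\qdim V_1$, (b) by the vanishing of $\Hom(V_2,V_0)$, (c) and (d) are absorbed by choices of the scalars $c_k$ and $b_k$.

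The gap is in relation (\ref{eq:spn}e), and it is a real one. For $k=1$, rotating the putative relation by $180^\circ$ and comparing with itself only yields two constraints: an unknown $\gamma$ must satisfy $\gamma^2=1$ (so $\gamma \in \{\pm1\}$), and closing the relation to a trace yields a single relation \eqref{eq:SwitchCoeff} between $\alpha$ and $\gamma$. Both $\gamma = +1$ and $\gamma = -1$ are consistent with this; the correct sign $\gamma = -1$, which gives $\alpha = -[n-1]/[n]$, is not determined by dimension counting, closures, or rotational symmetry alone. The paper devotes a substantial portion of the proof to ruling out $\gamma = +1$: it writes the braiding $\beta_{V_1,V_1}$ in the basis of \eqref{eq:VVbasis}, uses $\beta^{-1}\beta = \id$, and exploits the known eigenvalues of the ribbon element $\nu$ on $V_1$ and $V_2$ (equations \eqref{eq:Lambda}--\eqref{eq:Kappa2}) to pin down all the coefficients, and a specialization at $q=1$ finally forces $\gamma = -1$. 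Your proposed methods --- decomposing each morphism in the isotypic basis or projecting onto one-dimensional $\Hom$-spaces with caps, cups, and trivalent vertices --- would reproduce the constraint \eqref{eq:SwitchCoeff} but not break the $\pm1$ ambiguity, because those operations are all invariant under the $180^\circ$ rotation that interchanges the two candidate solutions. Moreover, the general-$k$ case of (\ref{eq:spn}e) in the paper is reached by an inductive argument for the triangle coefficient $\tau_k$ whose base case $k=1$ is precisely the sign-determination you've skipped, so the induction cannot start without it.

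You would need to add an argument along the lines of the paper's: invoke the braided (not merely pivotal) structure of $\Rep(U_q(\spn))$ and the explicit ribbon-element eigenvalues (or, equivalently, compute the Frobenius--Schur indicator data precisely enough to see which of the two rotation eigenvalues is realized). Absent this, "the predicted answer matches the stated formula" is a conclusion, not a proof. Your observation about the boundary case $k=n$ is correct and agrees with the paper.
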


\begin{proof} First, we wish to argue that $\FRep(U_q(\spn))$ can be described by some diagrammatic calculus, where strands are unoriented and
isotopic diagrams represent equal morphisms. For this purpose, we temporarily consider $\FRep^{\pm}(U_q(\spn))$, the full subcategory of $\Rep(U_q(\spn))$ monoidally generated by
the fundamental representations \emph{and} their duals. By the coherence theorem for pivotal categories (see e.g. \cite{NS} or \cite{BarWes}), $\FRep^{\pm}(U_q(\spn))$ is
equivalent, as a pivotal category, to a strict pivotal category. Strict pivotal categories have isotopy-invariant diagrammatic descriptions with \emph{oriented} strands; 
if a strand labeled $k$ oriented upwards represents the $k^{th}$ fundamental representation, then a strand labeled $k$ oriented downwards represents its dual. 
Since fundamental representations in type $C$ are isomorphic to their duals, we might hope that by fixing such isomorphisms one could remove the orientations to obtain a presentation for $\FRep(U_q(\spn))$. 
Selinger's notion of ``coherent self-duality'' \cite{Sel2} gives a precise criterion for when this can be done: when all non-zero Frobenius-Schur indicators are $+1$. 
As recalled in \S \ref{subsec:qdim} above, 
Snyder-Tingley \cite{ST} prove that there is a (non-standard) pivotal structure on $\Rep(U_q(\spn))$ for which these Frobenius-Schur indicators are $+1$, as desired.

Thus, morphisms in $\FRep(U_q(\spn))$ are afforded their own (unoriented) diagrammatic calculus, 
which we depict in \textcolor{gray}{\textbf{gray}} to distinguish them from morphisms in our ``abstract'' web categories. 
As just discussed, morphisms in this gray calculus are invariant under planar isotopy relative to the boundary. 
We will now fix a choice of certain morphisms in $\FRep(U_q(\spn))$, and define our functor relative to these chosen morphisms.

We first fix the cap and cup morphisms:
\[
\begin{tikzpicture}[scale =.5, smallnodes, anchorbase]
	\draw[very thick, gray] (1,2.25) node[above=-2pt]{$k$} to [out=270,in=0] (.5,1.5) to [out=180,in=270] (0,2.25) node[above=-2pt]{$k$};
\end{tikzpicture}
\in \Hom_{U_q(\spn)}(V_0, V_k \otimes V_k), \qquad \begin{tikzpicture}[scale =.5, smallnodes, anchorbase, yscale=-1]
	\draw[very thick, gray] (1,2.25) node[below=-1pt]{$k$} to [out=270,in=0] (.5,1.5) to [out=180,in=270] (0,2.25) node[below=-1pt]{$k$};
\end{tikzpicture} \in \Hom_{U_q(\spn)}(V_k \otimes V_k, V_0).
\]

\begin{rem}
The existence of cap/cup (co)unit morphisms is implicit in a pivotal category, 
since the latter is an autonomous category equipped with a coherent isomorphism between objects and their double duals.
An autonomous category requires the existence of (co)unit morphisms, but does not require choices of such morphisms as 
a structure on the category; see e.g. \cite[Remark 4.1]{Sel1}.
In a linear autonomous category, rescaling an object's unit (cup) by an invertible scalar $c$ gives another valid choice of unit,
provided one also rescales the corresponding counit (cap) by $c^{-1}$ for isotopy relations to hold. 
Since the representations $V_k$ are irreducible, their (co)unit morphisms live in one-dimensional morphism spaces, and rescaling is the only freedom in the choice of (co)unit morphisms.
Such rescaling will not change the value of a circle (though changing the pivotal structure itself can change the value of a circle, see \cite{ST}).
\end{rem}

Next, standard tensor product decomposition rules imply that
\begin{equation}\label{eq:TensorDecomp}
V_1 \otimes V_k \cong V_{\varpi_1+\varpi_k} \oplus V_{k+1} \oplus V_{k-1}
\end{equation} 
(here, we interpret $V_{-1} = 0 = V_{n+1}$).
Fix an arbitrary choice of non-zero morphisms
\[
\begin{tikzpicture}[scale =.5, smallnodes,anchorbase]
	\draw[very thick, gray] (0,0) node[below]{$1$} to [out=90,in=210] (.5,.75);
	\draw[very thick, gray] (1,0) node[below]{$k$} to [out=90,in=330] (.5,.75);
	\draw[very thick, gray] (.5,.75) to (.5,1.5) node[above]{$k{+}1$};
\end{tikzpicture}
\in \Hom_{U_q(\spn)}(V_1 \otimes V_k, V_{k+1})
\quad \text{and}
\begin{tikzpicture}[scale =.5, smallnodes,anchorbase]
	\draw[very thick, gray] (0,0) node[below]{$k$} to [out=90,in=210] (.5,.75);
	\draw[very thick, gray] (1,0) node[below]{$1$} to [out=90,in=330] (.5,.75);
	\draw[very thick, gray] (.5,.75) to (.5,1.5) node[above]{$k{+}1$};
\end{tikzpicture}
\in \Hom_{U_q(\spn)}(V_k \otimes V_1, V_{k+1})
\]
for $1 \leq k \leq n-1$, which therefore span these $\Hom$-spaces. We note that these maps are surjective, since $V_{k+1}$ is irreducible.

Let $\FWeb(n)$ be the strict $\C(q)$-linear pivotal category freely generated by self-dual objects $\{1,\ldots,n\}$ and the morphisms
\[
\begin{tikzpicture}[scale =.5, smallnodes,anchorbase]
	\draw[very thick] (0,0) node[below]{$1$} to [out=90,in=210] (.5,.75);
	\draw[very thick] (1,0) node[below]{$k$} to [out=90,in=330] (.5,.75);
	\draw[very thick] (.5,.75) to (.5,1.5) node[above]{$k{+}1$};
\end{tikzpicture}
\quad \text{and} \quad
\begin{tikzpicture}[scale =.5, smallnodes,anchorbase]
	\draw[very thick] (0,0) node[below]{$k$} to [out=90,in=210] (.5,.75);
	\draw[very thick] (1,0) node[below]{$1$} to [out=90,in=330] (.5,.75);
	\draw[very thick] (.5,.75) to (.5,1.5) node[above]{$k{+}1$};
\end{tikzpicture}
\]
for $1 \leq k \leq n-1$ that generate $\Web(\spn)$.
It follows (e.g. from the results in \cite{Sel1}) that the assignments $k \mapsto V_k$ and
\begin{equation}\label{eq:images}
\begin{tikzpicture}[scale =.5, smallnodes,anchorbase]
	\draw[very thick] (0,0) node[below]{$1$} to [out=90,in=210] (.5,.75);
	\draw[very thick] (1,0) node[below]{$k$} to [out=90,in=330] (.5,.75);
	\draw[very thick] (.5,.75) to (.5,1.5) node[above]{$k{+}1$};
\end{tikzpicture}
\mapsto
a_k
\begin{tikzpicture}[scale =.5, smallnodes,anchorbase]
	\draw[very thick, gray] (0,0) node[below]{$1$} to [out=90,in=210] (.5,.75);
	\draw[very thick, gray] (1,0) node[below]{$k$} to [out=90,in=330] (.5,.75);
	\draw[very thick, gray] (.5,.75) to (.5,1.5) node[above]{$k{+}1$};
\end{tikzpicture}
\quad , \quad
\begin{tikzpicture}[scale =.5, smallnodes,anchorbase]
	\draw[very thick] (0,0) node[below]{$k$} to [out=90,in=210] (.5,.75);
	\draw[very thick] (1,0) node[below]{$1$} to [out=90,in=330] (.5,.75);
	\draw[very thick] (.5,.75) to (.5,1.5) node[above]{$k{+}1$};
\end{tikzpicture}
\mapsto
b_k
\begin{tikzpicture}[scale =.5, smallnodes,anchorbase]
	\draw[very thick, gray] (0,0) node[below]{$k$} to [out=90,in=210] (.5,.75);
	\draw[very thick, gray] (1,0) node[below]{$1$} to [out=90,in=330] (.5,.75);
	\draw[very thick, gray] (.5,.75) to (.5,1.5) node[above]{$k{+}1$};
\end{tikzpicture}
\quad , \quad
\begin{tikzpicture}[scale =.5, smallnodes, anchorbase]
	\draw[very thick] (1,2.25) node[above=-2pt]{$k$} to [out=270,in=0] (.5,1.5) to [out=180,in=270] (0,2.25) node[above=-2pt]{$k$};
\end{tikzpicture}
\mapsto
c_k
\begin{tikzpicture}[scale =.5, smallnodes, anchorbase]
	\draw[very thick, gray] (1,2.25) node[above=-2pt]{$k$} to [out=270,in=0] (.5,1.5) to [out=180,in=270] (0,2.25) node[above=-2pt]{$k$};
\end{tikzpicture} \quad , \quad
\begin{tikzpicture}[scale =.5, smallnodes, anchorbase, yscale=-1]
	\draw[very thick] (1,2.25) node[below=-1pt]{$k$} to [out=270,in=0] (.5,1.5) to [out=180,in=270] (0,2.25) node[below=-1pt]{$k$};
\end{tikzpicture}
\mapsto
c_k^{-1}
\begin{tikzpicture}[scale =.5, smallnodes, anchorbase, yscale=-1]
	\draw[very thick, gray] (1,2.25) node[below=-1pt]{$k$} to [out=270,in=0] (.5,1.5) to [out=180,in=270] (0,2.25) node[below=-1pt]{$k$};
\end{tikzpicture}
\end{equation}
specify a pivotal functor $\Phi \colon \FWeb(n) \to \FRep(U_q(\spn))$ for any nonzero scalars $a_k, b_k, c_k \in \C(q)$. 
To be more precise, $\Phi$ depends on a choice of $a_1 = b_1$, a choice of $a_k$ and $b_k$ for $2 \le k \le n-1$, and a choice of $c_k$ for $1 \le k \le n$.  
Any such functor $\Phi$ is essentially surjective.
It remains to prove that one can choose scalars $a_k, b_k, c_k$ so that $\Phi$ descends to $\Web(\spn)$, 
which by definition is the quotient of $\FWeb(n)$ by the tensor-ideal generated by the relations \eqref{eq:spn}.

In any pivotal category, the value of a circle is equal to the (quantum) dimension of the corresponding object.
Thus, 
\begin{equation}\label{eq:graycircles}
\begin{tikzpicture}[scale =.75, tinynodes,anchorbase,gray]
	\draw[very thick] (0,0) node[left,xshift=-8pt]{$k$} circle (.5);
\end{tikzpicture}
= \qdim(V_k) =  (-1)^k\frac{[n{-}k{+}1]}{[n{+}1]}{2n{+}2 \brack k}
\end{equation}
by equation \eqref{dimVk}. 
This is exactly the image under $\Phi$ of the $k$-labeled black circle (being multiplied by $c_k c_k^{-1} = 1$).
Thus (\text{\ref{eq:spnOther}a}), and hence (\text{\ref{eq:spn}a}), is satisfied for any choice of scalars. 
In what follows, we will use $\qdim_k$ as a shorthand for $\qdim(V_k)$.

Next,
\[
\begin{tikzpicture}[scale=.2,anchorbase, gray, smallnodes]
	\draw [very thick] (0,-2.75) to [out=30,in=0] (0,-.25);
	\draw [very thick] (0,-2.75) to [out=150,in=180] (0,-.25);
	\draw [very thick] (0,-4.5) node[below=-2pt]{$2$} to (0,-2.75);
\end{tikzpicture}
= 0
\]
since $\Hom_{U_q(\spn)}(V_2,\C(q))$ is zero. Hence (\text{\ref{eq:spn}b}) is satisfied for any choice of scalars.

We now turn our attention to (\text{\ref{eq:spn}d}). We claim that there exists a nonzero scalar $\alpha_k \in \C(q)$ such that
\begin{equation}
\begin{tikzpicture}[scale=.2, xscale=-1,tinynodes, anchorbase,gray]
	\draw [very thick] (-1,-1) node[below,yshift=2pt]{$1$} to [out=90,in=210] (0,.75);
	\draw [very thick] (1,-1) node[below,yshift=2pt]{$k$} to [out=90,in=330] (0,.75);
	\draw [very thick] (3,-1) node[below,yshift=2pt]{$1$} to [out=90,in=330] (1,2.5);
	\draw [very thick] (0,.75) to [out=90,in=210] (1,2.5);
	\draw [very thick] (1,2.5) to (1,4.25) node[above,yshift=-3pt]{$k{+}2$};
\end{tikzpicture}
=
\alpha_k
\begin{tikzpicture}[scale=.2, tinynodes, anchorbase,gray]
	\draw [very thick] (-1,-1) node[below,yshift=2pt]{$1$} to [out=90,in=210] (0,.75);
	\draw [very thick] (1,-1) node[below,yshift=2pt]{$k$} to [out=90,in=330] (0,.75);
	\draw [very thick] (3,-1) node[below,yshift=2pt]{$1$} to [out=90,in=330] (1,2.5);
	\draw [very thick] (0,.75) to [out=90,in=210] (1,2.5);
	\draw [very thick] (1,2.5) to (1,4.25) node[above,yshift=-3pt]{$k{+}2$};
\end{tikzpicture}
\end{equation}
This is because $\Hom_{U_q(\spn)}(V_1 \ot V_k \ot V_1, V_{k+2})$ is $1$-dimensional, and both diagrams represent surjective morphisms so they are nonzero. Now, we have
\[
\begin{tikzpicture}[scale=.2, xscale=-1,tinynodes, anchorbase]
	\draw [very thick] (-1,-1) node[below,yshift=2pt]{$1$} to [out=90,in=210] (0,.75);
	\draw [very thick] (1,-1) node[below,yshift=2pt]{$k$} to [out=90,in=330] (0,.75);
	\draw [very thick] (3,-1) node[below,yshift=2pt]{$1$} to [out=90,in=330] (1,2.5);
	\draw [very thick] (0,.75) to [out=90,in=210] (1,2.5);
	\draw [very thick] (1,2.5) to (1,4.25) node[above,yshift=-3pt]{$k{+}2$};
\end{tikzpicture}
\mapsto
a_{k+1} b_k
\begin{tikzpicture}[scale=.2, xscale=-1,tinynodes, anchorbase,gray]
	\draw [very thick] (-1,-1) node[below,yshift=2pt]{$1$} to [out=90,in=210] (0,.75);
	\draw [very thick] (1,-1) node[below,yshift=2pt]{$k$} to [out=90,in=330] (0,.75);
	\draw [very thick] (3,-1) node[below,yshift=2pt]{$1$} to [out=90,in=330] (1,2.5);
	\draw [very thick] (0,.75) to [out=90,in=210] (1,2.5);
	\draw [very thick] (1,2.5) to (1,4.25) node[above,yshift=-3pt]{$k{+}2$};
\end{tikzpicture}
=
a_{k+1} b_k
\alpha_k
\begin{tikzpicture}[scale=.2, tinynodes, anchorbase,gray]
	\draw [very thick] (-1,-1) node[below,yshift=2pt]{$1$} to [out=90,in=210] (0,.75);
	\draw [very thick] (1,-1) node[below,yshift=2pt]{$k$} to [out=90,in=330] (0,.75);
	\draw [very thick] (3,-1) node[below,yshift=2pt]{$1$} to [out=90,in=330] (1,2.5);
	\draw [very thick] (0,.75) to [out=90,in=210] (1,2.5);
	\draw [very thick] (1,2.5) to (1,4.25) node[above,yshift=-3pt]{$k{+}2$};
\end{tikzpicture}
\qquad \text{and} \qquad
\begin{tikzpicture}[scale=.2, tinynodes, anchorbase]
	\draw [very thick] (-1,-1) node[below,yshift=2pt]{$1$} to [out=90,in=210] (0,.75);
	\draw [very thick] (1,-1) node[below,yshift=2pt]{$k$} to [out=90,in=330] (0,.75);
	\draw [very thick] (3,-1) node[below,yshift=2pt]{$1$} to [out=90,in=330] (1,2.5);
	\draw [very thick] (0,.75) to [out=90,in=210] (1,2.5);
	\draw [very thick] (1,2.5) to (1,4.25) node[above,yshift=-3pt]{$k{+}2$};
\end{tikzpicture}
\mapsto
a_k b_{k+1}
\begin{tikzpicture}[scale=.2, tinynodes, anchorbase,gray]
	\draw [very thick] (-1,-1) node[below,yshift=2pt]{$1$} to [out=90,in=210] (0,.75);
	\draw [very thick] (1,-1) node[below,yshift=2pt]{$k$} to [out=90,in=330] (0,.75);
	\draw [very thick] (3,-1) node[below,yshift=2pt]{$1$} to [out=90,in=330] (1,2.5);
	\draw [very thick] (0,.75) to [out=90,in=210] (1,2.5);
	\draw [very thick] (1,2.5) to (1,4.25) node[above,yshift=-3pt]{$k{+}2$};
\end{tikzpicture}.
\]It follows that if we set 
\begin{equation} \label{eq:bk} b_{k+1} = \frac{a_{k+1} b_k \alpha_k}{a_k} \quad \text{ for } 1\leq k \leq n-2, \end{equation}
then (\text{\ref{eq:spn}d}) holds in the image of $\Phi$.

Next we argue that
\begin{equation}
\begin{tikzpicture}[scale=.175,tinynodes, anchorbase,gray]
	\draw [very thick] (0,.75) to (0,2.5) node[above,yshift=-3pt]{$k$};
	\draw [very thick] (0,-2.75) to [out=30,in=330] node[right,xshift=-2pt]{$k{-}1$} (0,.75);
	\draw [very thick] (0,-2.75) to [out=150,in=210] node[left,xshift=2pt]{$1$} (0,.75);
	\draw [very thick] (0,-4.5) node[below,yshift=2pt]{$k$} to (0,-2.75);
\end{tikzpicture}
= \delta_k
\begin{tikzpicture}[scale=.175, tinynodes, anchorbase,gray]
	\draw [very thick] (0,-4.5) node[below,yshift=2pt]{$k$} to (0,2.5);
\end{tikzpicture}
\end{equation}
for some non-zero $\delta_k \in \C(q)$. Since $\End_{U_q(\spn)}(V_k)$ is $1$-dimensional and spanned by the identity map, we need only argue that the left-hand side is nonzero. 
This follows from the complete reducibility of $V_1\otimes V_{k-1}$, which has $V_k$ as a summand with multiplicity $1$. 
Thus any nonzero map $V_k \to V_1 \ot V_{k-1}$ composed with any nonzero map $V_1 \ot V_{k-1} \to V_k$ will give a nonzero endomorphism of $V_k$.

Note that
\[
\begin{tikzpicture}[scale =.5, smallnodes,anchorbase,rotate=180]
	\draw[very thick, gray] (0,0) node[above=-2pt,xshift=4pt]{$k{-}1$} to [out=90,in=210] (.5,.75);
	\draw[very thick, gray] (1,0) node[above=-2pt]{$1$} to [out=90,in=330] (.5,.75);
	\draw[very thick, gray] (.5,.75) to (.5,1.5) node[below]{$k$};
\end{tikzpicture}
=
\begin{tikzpicture}[scale =.35, smallnodes,anchorbase]
	\draw[very thick, gray] (3,2.5) node[above=-2pt,xshift=4pt]{$k{-}1$} to (3,0) to [out=270,in=0] (1.5,-1.5) to [out=180,in=270] (0,0) to [out=90,in=210] (.5,.75);
	\draw[very thick, gray] (2,2.5) node[above=-2pt]{$1$} to (2,0) to [out=270,in=0] (1.5,-.5) to [out=180,in=270] (1,0) to [out=90,in=330] (.5,.75);
	\draw[very thick, gray] (.5,.75) to (.5,1.5) to [out=90,in=0] (-.125,2) to [out=180,in=90] (-.75,1.5) to (-.75,-2) node[below]{$k$};
\end{tikzpicture},
\]
and similarly when colored black. Thus we have
\begin{equation}
\begin{tikzpicture}[scale =.5, smallnodes,anchorbase,rotate=180]
	\draw[very thick, black] (0,0) node[above=-2pt,xshift=4pt]{$k{-}1$} to [out=90,in=210] (.5,.75);
	\draw[very thick, black] (1,0) node[above=-2pt]{$1$} to [out=90,in=330] (.5,.75);
	\draw[very thick, black] (.5,.75) to (.5,1.5) node[below]{$k$};
\end{tikzpicture}
\mapsto 
c_1 c_{k-1} c_k^{-1} b_{k-1}
\begin{tikzpicture}[scale =.5, smallnodes,anchorbase,rotate=180]
	\draw[very thick, gray] (0,0) node[above=-2pt,xshift=4pt]{$k{-}1$} to [out=90,in=210] (.5,.75);
	\draw[very thick, gray] (1,0) node[above=-2pt]{$1$} to [out=90,in=330] (.5,.75);
	\draw[very thick, gray] (.5,.75) to (.5,1.5) node[below]{$k$};
\end{tikzpicture}.
\end{equation}
This implies that
\begin{equation}
\begin{tikzpicture}[scale=.175,tinynodes, anchorbase]
	\draw [very thick] (0,.75) to (0,2.5) node[above,yshift=-3pt]{$k$};
	\draw [very thick] (0,-2.75) to [out=30,in=330] node[right,xshift=-2pt]{$k{-}1$} (0,.75);
	\draw [very thick] (0,-2.75) to [out=150,in=210] node[left,xshift=2pt]{$1$} (0,.75);
	\draw [very thick] (0,-4.5) node[below,yshift=2pt]{$k$} to (0,-2.75);
\end{tikzpicture}
\mapsto
c_k^{-1} b_{k-1} a_{k-1} c_1 c_{k-1} 
\begin{tikzpicture}[scale=.175,tinynodes, anchorbase,gray]
	\draw [very thick] (0,.75) to (0,2.5) node[above,yshift=-3pt]{$k$};
	\draw [very thick] (0,-2.75) to [out=30,in=330] node[right,xshift=-2pt]{$k{-}1$} (0,.75);
	\draw [very thick] (0,-2.75) to [out=150,in=210] node[left,xshift=2pt]{$1$} (0,.75);
	\draw [very thick] (0,-4.5) node[below,yshift=2pt]{$k$} to (0,-2.75);
\end{tikzpicture} = \delta_k c_k^{-1} b_{k-1} a_{k-1} c_1 c_{k-1}
\begin{tikzpicture}[scale=.175, tinynodes, anchorbase,gray]
	\draw [very thick] (0,-4.5) node[below,yshift=2pt]{$k$} to (0,2.5);
\end{tikzpicture}.
\end{equation}
Thus (\text{\ref{eq:spn}c}) is satisfied, provided we set 
\begin{equation} \label{eq:ck} c_k = \frac{a_{k-1}b_{k-1}c_1c_{k-1}\delta_k}{[k]} \quad \text{ for } 2 \leq k \leq n. \end{equation}

It remains to verify that (\text{\ref{eq:spn}e}) is satisfied. 
In fact, the reader should verify that every diagram in (\text{\ref{eq:spn}e}) is rescaled by $c_1 c_k^{-1}$, so long as \eqref{eq:bk} and \eqref{eq:ck} hold. 
Whether (\text{\ref{eq:spn}e}) is satisfied is independent of any further choices of scalars, and the remaining scalars (namely $a_k$ for $1 \le k \le n-1$ and $c_1$) are chosen freely. 
These degrees of freedom lead to automorphisms of $\Web(\spn)$, which we record in Porism \ref{por:auto}.

To avoid writing the constant factor $c_1 c_k^{-1}$ repeatedly in the following computations,
we slightly abuse notation and denote the images in $\FRep(U_q(\spn))$ of webs in $\FWeb(n)$ by the same (\textbf{black}) diagrams
for the duration of the proof. 
Further, all unlabeled edges are assumed to be labeled by $1$.
We will freely use (\text{\ref{eq:spn}a}) -- (\text{\ref{eq:spn}d}), as well as \eqref{eq:graycircles} below, 
since we have already confirmed that they hold in $\FRep(U_q(\spn))$.
Lastly, for the duration we assume that $n>1$ since when $n=1$ this equation holds trivially.

We begin with the $k=1$ case of (\text{\ref{eq:spn}e}).
Equation \eqref{eq:TensorDecomp} implies that the morphisms
\begin{equation}\label{eq:VVbasis}
\begin{tikzpicture}[scale =.5, smallnodes, anchorbase]
	\draw[very thick] (0,0) to (0,2.25);
	\draw[very thick] (1,0) to (1,2.25);
\end{tikzpicture}
\quad , \quad
\begin{tikzpicture}[scale =.5, smallnodes,anchorbase]
	\draw[very thick] (0,0) to [out=90,in=180] (.5,.75);
	\draw[very thick] (1,0) to [out=90,in=0] (.5,.75);
	\draw[very thick] (.5,1.5) to [out=180,in=270] (0,2.25);
	\draw[very thick] (.5,1.5) to [out=0,in=270] (1,2.25);
\end{tikzpicture}
\quad , \quad
\begin{tikzpicture}[scale=.33, tinynodes, anchorbase]
	\draw[very thick] (-1,0) to (0,1);
	\draw[very thick] (1,0) to (0,1);
	\draw[very thick] (0,2.5) to (-1,3.5);
	\draw[very thick] (0,2.5) to (1,3.5);
	\draw[very thick] (0,1) to node[right=-2pt]{$2$} (0,2.5);
\end{tikzpicture}
\end{equation}
are linearly independent, and give a basis for $\End_{U_q(\spn)}(V_1 \ot V_1)$. 
Hence there must be a relation of the form
\[
\begin{tikzpicture}[scale=.35, rotate=90, tinynodes, anchorbase]
	\draw[very thick] (-1,0) to (0,1);
	\draw[very thick] (1,0) to (0,1);
	\draw[very thick] (0,2.5) to (-1,3.5);
	\draw[very thick] (0,2.5) to (1,3.5);
	\draw[very thick] (0,1) to node[below,yshift=2pt]{$2$} (0,2.5);
\end{tikzpicture}
= \alpha_1 \
\begin{tikzpicture}[scale =.5, smallnodes, anchorbase]
	\draw[very thick] (0,0) to (0,2.25);
	\draw[very thick] (1,0) to (1,2.25);
\end{tikzpicture}
+ \alpha_2
\begin{tikzpicture}[scale =.5, smallnodes,anchorbase]
	\draw[very thick] (0,0) to [out=90,in=180] (.5,.75);
	\draw[very thick] (1,0) to [out=90,in=0] (.5,.75);
	\draw[very thick] (.5,1.5) to [out=180,in=270] (0,2.25);
	\draw[very thick] (.5,1.5) to [out=0,in=270] (1,2.25);
\end{tikzpicture} - \gamma
\begin{tikzpicture}[scale=.35, tinynodes, anchorbase]
	\draw[very thick] (-1,0) to (0,1);
	\draw[very thick] (1,0) to (0,1);
	\draw[very thick] (0,2.5) to (-1,3.5);
	\draw[very thick] (0,2.5) to (1,3.5);
	\draw[very thick] (0,1) to node[right=-2pt]{$2$} (0,2.5);
\end{tikzpicture}.
\]
Using the pivotal structure, we can rotate this relation by 90 degrees to obtain another relation. 
It is an easy exercise to compare these and deduce that $\alpha_2 = \alpha_1 \gamma$ and $\gamma^2 = 1$.
So there exists $\alpha \in \C(q)$ and $\gamma \in \{-1,1\}$ so that
\begin{equation}\label{eq:Switching}
\begin{tikzpicture}[scale=.35, rotate=90, tinynodes, anchorbase]
	\draw[very thick] (-1,0) to (0,1);
	\draw[very thick] (1,0) to (0,1);
	\draw[very thick] (0,2.5) to (-1,3.5);
	\draw[very thick] (0,2.5) to (1,3.5);
	\draw[very thick] (0,1) to node[below,yshift=2pt]{$2$} (0,2.5);
\end{tikzpicture}
+\gamma
\begin{tikzpicture}[scale=.35, tinynodes, anchorbase]
	\draw[very thick] (-1,0) to (0,1);
	\draw[very thick] (1,0) to (0,1);
	\draw[very thick] (0,2.5) to (-1,3.5);
	\draw[very thick] (0,2.5) to (1,3.5);
	\draw[very thick] (0,1) to node[right=-2pt]{$2$} (0,2.5);
\end{tikzpicture}
=\alpha\left( \
\begin{tikzpicture}[scale =.5, smallnodes, anchorbase]
	\draw[very thick] (0,0) to (0,2.25);
	\draw[very thick] (1,0) to (1,2.25);
\end{tikzpicture}
+ \gamma
\begin{tikzpicture}[scale =.5, smallnodes,anchorbase]
	\draw[very thick] (0,0) to [out=90,in=180] (.5,.75);
	\draw[very thick] (1,0) to [out=90,in=0] (.5,.75);
	\draw[very thick] (.5,1.5) to [out=180,in=270] (0,2.25);
	\draw[very thick] (.5,1.5) to [out=0,in=270] (1,2.25);
\end{tikzpicture} \
\right).
\end{equation}

Taking the closure of \eqref{eq:Switching} (i.e. applying the quantum trace), we obtain
\[
\gamma [2] \qdim_2 
= \alpha \qdim_1 \left(\qdim_1 + \gamma \right).
\]
The formula for $\qdim_k$ and some quick algebra (using $\frac{[n][2n+2]}{[n+1]}=[2n+1]-1$) gives
\begin{equation}\label{eq:SwitchCoeff}
\alpha = \frac{\gamma[n-1][2n+1]}{[n]([2n+1]-1-\gamma)}.
\end{equation}
If $\gamma = -1$ then we deduce that $\alpha = - \frac{[n-1]}{[n]}$ and (\text{\ref{eq:spn}e}) holds. We thus need only prove that $\gamma = +1$ is impossible.
Our argument here is somewhat roundabout, but it will explicitly describe the braiding on $\FRep(U_q(\spn))$ as a byproduct.

Indeed, recall that $\FRep(U_q(\spn))$ inherits a braiding $\beta$ from $\Rep(U_q(\spn))$. 
As usual, we denote this morphism as a positive crossing.
It follows that
\begin{equation}\label{eq:posbraid}
\beta_{V_{1},V_{1}} =
\begin{tikzpicture}[scale=.7, anchorbase]
	\draw[very thick] (1,0) to [out=90,in=270] (0,1.5);
	\draw[overcross] (0,0) to [out=90,in=270] (1,1.5);
	\draw[very thick] (0,0) to [out=90,in=270] (1,1.5);
\end{tikzpicture}
:=
\kappa \
\begin{tikzpicture}[scale =.5, smallnodes, anchorbase]
	\draw[very thick] (0,0) to (0,2.25);
	\draw[very thick] (1,0) to (1,2.25);
\end{tikzpicture}
+ \lambda
\begin{tikzpicture}[scale =.5, smallnodes,anchorbase]
	\draw[very thick] (0,0) to [out=90,in=180] (.5,.75);
	\draw[very thick] (1,0) to [out=90,in=0] (.5,.75);
	\draw[very thick] (.5,1.5) to [out=180,in=270] (0,2.25);
	\draw[very thick] (.5,1.5) to [out=0,in=270] (1,2.25);
\end{tikzpicture}
+ \mu
\begin{tikzpicture}[scale=.33, tinynodes, anchorbase]
	\draw[very thick] (-1,0) to (0,1);
	\draw[very thick] (1,0) to (0,1);
	\draw[very thick] (0,2.5) to (-1,3.5);
	\draw[very thick] (0,2.5) to (1,3.5);
	\draw[very thick] (0,1) to node[right=-2pt]{$2$} (0,2.5);
\end{tikzpicture}
\end{equation}
for some $\kappa,\lambda,\mu \in \C(q)$. 
Since the inverse of the braiding (in a braided pivotal category with self-duality structure) can be obtained by rotating the braiding by 90 degrees, we deduce that 
\begin{subequations}
\begin{align}\label{eq:negbraid}
\beta^{-1}_{V_{1},V_{1}} =
\begin{tikzpicture} [scale=.7,anchorbase]
	\draw[very thick] (0,0) to [out=90,in=270] (1,1.5);
	\draw[overcross] (1,0) to [out=90,in=270] (0,1.5);
	\draw[very thick] (1,0) to [out=90,in=270] (0,1.5);
\end{tikzpicture}
&=
\kappa
\begin{tikzpicture}[scale =.5, smallnodes,anchorbase]
	\draw[very thick] (0,0) to [out=90,in=180] (.5,.75);
	\draw[very thick] (1,0) to [out=90,in=0] (.5,.75);
	\draw[very thick] (.5,1.5) to [out=180,in=270] (0,2.25);
	\draw[very thick] (.5,1.5) to [out=0,in=270] (1,2.25);
\end{tikzpicture}
+ \lambda \
\begin{tikzpicture}[scale =.5, smallnodes, anchorbase]
	\draw[very thick] (0,0) to (0,2.25);
	\draw[very thick] (1,0) to (1,2.25);
\end{tikzpicture}
+ \mu
\begin{tikzpicture}[scale=.33, tinynodes, anchorbase,rotate=90]
	\draw[very thick] (-1,0) to (0,1);
	\draw[very thick] (1,0) to (0,1);
	\draw[very thick] (0,2.5) to (-1,3.5);
	\draw[very thick] (0,2.5) to (1,3.5);
	\draw[very thick] (0,1) to node[above=-2pt]{$2$} (0,2.5);
\end{tikzpicture}
\\
\label{eq:negbraid2}
&\stackrel{\eqref{eq:Switching}}{=}
(\lambda+\mu \alpha) \
\begin{tikzpicture}[scale =.5, smallnodes, anchorbase]
	\draw[very thick] (0,0) to (0,2.25);
	\draw[very thick] (1,0) to (1,2.25);
\end{tikzpicture}
+ (\kappa+\gamma\alpha\mu)
\begin{tikzpicture}[scale =.5, smallnodes,anchorbase]
	\draw[very thick] (0,0) to [out=90,in=180] (.5,.75);
	\draw[very thick] (1,0) to [out=90,in=0] (.5,.75);
	\draw[very thick] (.5,1.5) to [out=180,in=270] (0,2.25);
	\draw[very thick] (.5,1.5) to [out=0,in=270] (1,2.25);
\end{tikzpicture}
- \mu \gamma
\begin{tikzpicture}[scale=.33, tinynodes, anchorbase]
	\draw[very thick] (-1,0) to (0,1);
	\draw[very thick] (1,0) to (0,1);
	\draw[very thick] (0,2.5) to (-1,3.5);
	\draw[very thick] (0,2.5) to (1,3.5);
	\draw[very thick] (0,1) to node[right=-2pt]{$2$} (0,2.5);
\end{tikzpicture}.
\end{align}
\end{subequations}
Equations \eqref{eq:posbraid} and \eqref{eq:negbraid2}, together with linear independence of the morphisms in \eqref{eq:VVbasis} and
the equality $\beta^{-1}_{V_{1},V_{1}} \circ \beta_{V_{1},V_{1}} = \id_{V\otimes V}$,
imply that
\begin{equation}\label{eq:BBinv1}
\kappa(\lambda + \mu\alpha) = 1, 
\end{equation}
\begin{equation}\label{eq:BBinv2}
\kappa(\kappa+\mu\alpha\gamma)+\lambda(\lambda+\mu\alpha) = -\qdim_1 \lambda(\kappa+\mu\alpha\gamma), 
\end{equation}
and
\begin{equation}\label{eq:BBinv3}
\mu(\lambda + \mu\alpha) = \mu\gamma(\kappa + [2]\mu).
\end{equation}
By convention, the ribbon element $\nu$ in $\Rep(U_q(\spn))$ acts as the \emph{negative} curl (see \cite[Comment 4.4]{ST}).
Equation \eqref{eq:negbraid} implies that
\begin{equation}
\nu|_{V_{1}} \
\begin{tikzpicture}[scale=.5,anchorbase]
	\draw[very thick] (0,-1) to (0,1);
\end{tikzpicture}
=
\begin{tikzpicture}[scale=.5,yscale=-1,anchorbase]
	\draw[very thick] (.8,-.4) to [out=180,in=270] (0,1);
	\draw[overcross] (0,-1) to [out=90,in=180] (.8,.4);
	\draw[very thick] (0,-1) to [out=90,in=180] (.8,.4);
	\draw[very thick] (.8,.4) to [out=0,in=90] (1.1,0) to [out=270,in=0] (.8,-.4);
\end{tikzpicture}
= \big( \kappa + \lambda \qdim_1 \big) \
\begin{tikzpicture}[scale=.5,anchorbase]
	\draw[very thick] (0,-1) to (0,1);
\end{tikzpicture}
\end{equation}
thus the equality $\nu|_{V_{1}} = -q^{-(\varpi_1,\varpi_1+2\rho)} = -q^{-(2n+1)}$ implies that
\begin{equation}\label{eq:Lambda}
\lambda = \frac{\kappa + q^{-(2n+1)}}{-\qdim_1}.
\end{equation}

Similarly, since $\nu^{-1}$ therefore acts as the positive curl,
the naturality of the braiding gives that
\begin{equation}\label{eq:twocurl}
\nu|_{V_{2}}^{-1} \
\begin{tikzpicture}[scale =.3, smallnodes,anchorbase]
	\draw[very thick] (.5,-3) node[below]{$2$} to (.5,3);
\end{tikzpicture}
=
\begin{tikzpicture}[scale=.875, smallnodes, anchorbase]
	\draw[very thick] (.8,-.4) to [out=180,in=270] (0,1);
	\draw[overcross] (0,-1) to [out=90,in=180] (.8,.4);
	\draw[very thick] (0,-1) node[below]{$2$} to [out=90,in=180] (.8,.4);
	\draw[very thick] (.8,.4) to [out=0,in=90] (1.1,0);
	\draw[very thick] (1.1,0) to [out=270,in=0] (.8,-.4);
\end{tikzpicture}
= \frac{1}{[2]}
\begin{tikzpicture}[scale =.3, smallnodes,anchorbase,yscale=-1]
	\draw[very thick] (.5,-3) to (.5,-2.25);
	\draw[very thick] (.5,-2.25) to [out=150,in=180] (2,1.25) to [out=0,in=90] (3.25,0);
	\draw[very thick] (.5,-2.25) to [out=30, in=180] (2,.5) to [out=0,in=90] (2.5,0);
	\begin{scope}
		\clip (-.5,1.25) rectangle (1.75,-2);
		\draw[overcross] (2.5,0) to [out=270,in=0] (2,-.5) to [out=180,in=330] (.5,2.25);
		\draw[overcross] (3.25,0) to [out=270,in=0] (2,-1.25) to [out=180,in=210] (.5,2.25);
	\end{scope}
	\draw[very thick] (2.5,0) to [out=270,in=0] (2,-.5) to [out=180,in=330] (.5,2.25);
	\draw[very thick] (3.25,0) to [out=270,in=0] (2,-1.25) to [out=180,in=210] (.5,2.25);
	\draw[very thick] (.5,2.25) to (.5,3) node[below]{$2$};
\end{tikzpicture}
= - \frac{q^{2n+1}}{[2]}
\begin{tikzpicture}[scale =.3, smallnodes,anchorbase,yscale=-1]
	\draw[very thick] (.5,-3) to (.5,-2.25);
	\draw[very thick] (.5,-2.25) to [out=150,in=180] (2,1.25) to [out=0,in=90] (3.25,0);
	\begin{scope}
		\clip (-.5,2) rectangle (1.75,-2);
		\draw[overcross] (1.5,0) to [out=90,in=330] (.5,2.25);
	\end{scope}
	\draw[very thick] (.5,-2.25) to [out=30,in=270] (1.5,0) to [out=90,in=330] (.5,2.25);
	\begin{scope}
		\clip (-.5,1.25) rectangle (1.75,-2);
		\draw[overcross] (3.25,0) to [out=270,in=0] (2,-1.25) to [out=180,in=210] (.5,2.25);
	\end{scope}
	\draw[very thick] (3.25,0) to [out=270,in=0] (2,-1.25) to [out=180,in=210] (.5,2.25);
	\draw[very thick] (.5,2.25) to (.5,3) node[below]{$2$};
\end{tikzpicture}
= \frac{q^{4n+2}}{[2]}
\begin{tikzpicture}[scale =.3, smallnodes,anchorbase,yscale=-1]
	\draw[very thick] (.5,-3) to (.5,-2.25);
	\draw[very thick] (.5,-2.25) to [out=150,in=270] (0,-1.5) to [out=90,in=270] (1,0) to [out=90,in=270] (0,1.5) to [out=90,in=210] (.5,2.25);
	\draw[very thick] (.5,-2.25) to [out=30,in=270] (1,-1.5) to [out=90,in=270] (0,0) to [out=90,in=270] (1,1.5) to [out=90,in=330] (.5,2.25);
	\begin{scope}
		\clip (.05,-1.2) rectangle (.95,-.3);
		\draw[overcross] (.5,-2.25) to [out=30,in=270] (1,-1.5) to [out=90,in=270] (0,0) to [out=90,in=270] (1,1.5) to [out=90,in=330] (.5,2.25);
	\end{scope}
	\begin{scope}
		\clip (0,-1.25) rectangle (1,-.25);
		\draw[very thick] (.5,-2.25) to [out=30,in=270] (1,-1.5) to [out=90,in=270] (0,0) to [out=90,in=270] (1,1.5) to [out=90,in=330] (.5,2.25);
	\end{scope}
	\begin{scope}
		\clip (.05,1.2) rectangle (.95,.3);
		\draw[overcross] (.5,-2.25) to [out=150,in=270] (0,-1.5) to [out=90,in=270] (1,0) to [out=90,in=270] (0,1.5) to [out=90,in=210] (.5,2.25);
	\end{scope}
	\begin{scope}
		\clip (0,1.25) rectangle (1,.25);
		\draw[very thick] (.5,-2.25) to [out=150,in=270] (0,-1.5) to [out=90,in=270] (1,0) to [out=90,in=270] (0,1.5) to [out=90,in=210] (.5,2.25);
	\end{scope}
	\draw[very thick] (.5,2.25) to (.5,3) node[below]{$2$};
\end{tikzpicture}
= q^{4n+2}(\kappa+[2]\mu)^2 \ 
\begin{tikzpicture}[scale =.3, smallnodes,anchorbase]
	\draw[very thick] (.5,-3) node[below]{$2$} to (.5,3);
\end{tikzpicture}.
\end{equation}
In the last step, we use that
\begin{equation}
\begin{tikzpicture}[scale =.35, smallnodes,anchorbase]
	\draw[very thick] (.5,.75) to (.5,1.5) node[above=-2pt]{$2$};
	\draw[very thick] (1,-1.5) node[below=-1pt]{$1$} to [out=90,in=270] (0,0) to [out=90,in=210] (.5,.75);
	\begin{scope}
		\clip (0,-1.25) rectangle (1,-.25);
		\draw[overcross] (0,-1.5) to [out=90,in=270] (1,0) to [out=90,in=330] (.5,.75);
	\end{scope}
	\draw[very thick] (0,-1.5) node[below=-1pt]{$1$} to [out=90,in=270] (1,0) to [out=90,in=330] (.5,.75);
\end{tikzpicture}
\stackrel{\eqref{eq:posbraid}}{=}
(\kappa+[2]\mu)
\begin{tikzpicture}[scale =.5, smallnodes,anchorbase]
	\draw[very thick] (0,0) node[below=-1pt]{$1$} to [out=90,in=210] (.5,.75);
	\draw[very thick] (1,0) node[below=-1pt]{$1$} to [out=90,in=330] (.5,.75);
	\draw[very thick] (.5,.75) to (.5,1.5) node[above=-2pt]{$2$};
\end{tikzpicture} .
\end{equation}
Since $\nu|_{V_2} = q^{-4n}$, \eqref{eq:twocurl} gives $(\kappa + [2]\mu)^2 = q^{-2}$ or
\begin{equation}\label{eq:Kappa2}
\kappa+[2]\mu = \pm q^{-1}.
\end{equation}

We now observe that $\mu \neq 0$.
Suppose otherwise, then this implies that $\kappa = \pm q^{-1}$, so \eqref{eq:BBinv1} gives that $\lambda = \pm q$.
Equation \eqref{eq:BBinv2} then gives that $q^{-2} + q^2 = -\qdim_1$, a contradiction since $n>1$.
Note further that this implies the stronger condition that $\mu|_{q=1} \neq 0$, a fact we use below.
It follows that we can multiply \eqref{eq:BBinv3} by $\kappa/\mu$ and apply \eqref{eq:BBinv1} to obtain $\kappa\gamma(\kappa + [2]\mu)=1$.
This combines with \eqref{eq:Kappa2} to give that $\kappa\gamma = \pm q$.
Since $\gamma\in\{-1,1\}$ we finally conclude that $\kappa = \pm q$.

We further claim that $\kappa \neq -q$. 
Indeed, if $\kappa = -q$, then \eqref{eq:Lambda} implies that $\lambda|_{q=1} = 0$.
Equation \eqref{eq:BBinv1} then gives that $\mu\alpha|_{q=1} = -1$.
Since $\gamma = \gamma^{-1}$, equation \eqref{eq:BBinv3} gives that $2\mu|_{q=1} = 1-\gamma$. 
This implies that $\gamma=-1$, since otherwise we would have $\mu|_{q= 1} = 0$. 
Thus $\gamma$ must be $-1$.
Equation \eqref{eq:BBinv3} then gives that $\mu|_{q=1} = 1$, 
which implies that $\alpha|_{q=1} = -1$. Equation \eqref{eq:SwitchCoeff} at $q=1$ becomes
\[
-1= \frac{-(n-1)(2n+1)}{n(2n+1)} =\frac{1-n}{n}
\]
which is a contradiction.

Thus, $\kappa=q$. Equation \eqref{eq:Lambda} then gives
\[
\lambda=\frac{(q+q^{-(2n+1)})[n+1]}{[n][2n+2]}=q^{-n}\frac{(q^{n+1}+q^{-(n+1)})[n+1]}{[n][2n+2]}=\frac{q^{-n}}{[n]}
\]
and \eqref{eq:BBinv1} then implies
\[
\mu\alpha = q^{-1}-\frac{q^{-n}}{[n]} = \frac{[n-1]}{[n]}.
\]
Again, we can set $q=1$, and \eqref{eq:BBinv3} then implies that $2\mu|_{q=1}=\gamma-1$. 
Since $\mu|_{q=1} \neq 0$, this gives $\gamma=-1$.
In turn, we deduce that $\mu=-1$ and $\alpha=-\frac{[n-1]}{[n]}$.

In summary, we have shown the following:
\begin{equation}
\begin{tikzpicture}[scale=.7, anchorbase]
	\draw[very thick] (1,0) to [out=90,in=270] (0,1.5);
	\draw[overcross] (0,0) to [out=90,in=270] (1,1.5);
	\draw[very thick] (0,0) to [out=90,in=270] (1,1.5);
\end{tikzpicture}
=
q \
\begin{tikzpicture}[scale =.5, smallnodes, anchorbase]
	\draw[very thick] (0,0) to (0,2.25);
	\draw[very thick] (1,0) to (1,2.25);
\end{tikzpicture}
+ \frac{q^{-n}}{[n]}
\begin{tikzpicture}[scale =.5, smallnodes,anchorbase]
	\draw[very thick] (0,0) to [out=90,in=180] (.5,.75);
	\draw[very thick] (1,0) to [out=90,in=0] (.5,.75);
	\draw[very thick] (.5,1.5) to [out=180,in=270] (0,2.25);
	\draw[very thick] (.5,1.5) to [out=0,in=270] (1,2.25);
\end{tikzpicture}
-
\begin{tikzpicture}[scale=.33, tinynodes, anchorbase]
	\draw[very thick] (-1,0) to (0,1);
	\draw[very thick] (1,0) to (0,1);
	\draw[very thick] (0,2.5) to (-1,3.5);
	\draw[very thick] (0,2.5) to (1,3.5);
	\draw[very thick] (0,1) to node[right=-2pt]{$2$} (0,2.5);
\end{tikzpicture}
\end{equation}
and
\begin{equation}\label{eq:switchingcoeff}
\begin{tikzpicture}[scale=.33, rotate=90, tinynodes, anchorbase]
	\draw[very thick] (-1,0) to (0,1);
	\draw[very thick] (1,0) to (0,1);
	\draw[very thick] (0,2.5) to (-1,3.5);
	\draw[very thick] (0,2.5) to (1,3.5);
	\draw[very thick] (0,1) to node[below,yshift=2pt]{$2$} (0,2.5);
\end{tikzpicture}
-
\begin{tikzpicture}[scale=.33, tinynodes, anchorbase]
	\draw[very thick] (-1,0) to (0,1);
	\draw[very thick] (1,0) to (0,1);
	\draw[very thick] (0,2.5) to (-1,3.5);
	\draw[very thick] (0,2.5) to (1,3.5);
	\draw[very thick] (0,1) to node[right=-2pt]{$2$} (0,2.5);
\end{tikzpicture}
=\frac{[n-1]}{[n]}\left( \
\begin{tikzpicture}[scale =.5, smallnodes,anchorbase]
	\draw[very thick] (0,0) to [out=90,in=180] (.5,.75);
	\draw[very thick] (1,0) to [out=90,in=0] (.5,.75);
	\draw[very thick] (.5,1.5) to [out=180,in=270] (0,2.25);
	\draw[very thick] (.5,1.5) to [out=0,in=270] (1,2.25);
\end{tikzpicture}
- \
\begin{tikzpicture}[scale =.5, smallnodes, anchorbase]
	\draw[very thick] (0,0) to (0,2.25);
	\draw[very thick] (1,0) to (1,2.25);
\end{tikzpicture}
\
\right)
\end{equation}
The latter is the $k=1$ version of (\text{\ref{eq:spn}e}), and it remains to deduce the general case.

For this, we extend \eqref{eq:VVbasis} by again observing that equation \eqref{eq:TensorDecomp} implies that:
\[
\begin{tikzpicture}[scale =.5, smallnodes, anchorbase]
	\draw[very thick] (0,0) node[below=-1pt]{$k$} to (0,2.25) node[above=-3pt]{$k$};
	\draw[very thick] (1,0) to (1,2.25);
\end{tikzpicture}
\quad , \quad
\begin{tikzpicture}[scale=.33, smallnodes, anchorbase]
	\draw[very thick] (-1,0) node[below=-2pt,xshift=-2pt]{$k$} to (0,1);
	\draw[very thick] (1,0) to (0,1);
	\draw[very thick] (0,2.5) to (-1,3.5) node[above=-4pt,xshift=-2pt]{$k$} ;
	\draw[very thick] (0,2.5) to (1,3.5);
	\draw[very thick] (0,1) to node[right=-2pt]{$k{+}1$} (0,2.5);
\end{tikzpicture}
\quad , \quad
\begin{tikzpicture}[scale=.33, smallnodes, anchorbase]
	\draw[very thick] (-1,0) node[below=-2pt,xshift=-2pt]{$k$} to (0,1);
	\draw[very thick] (1,0) to (0,1);
	\draw[very thick] (0,2.5) to (-1,3.5) node[above=-4pt,xshift=-2pt]{$k$} ;
	\draw[very thick] (0,2.5) to (1,3.5);
	\draw[very thick] (0,1) to node[right=-2pt]{$k{-}1$} (0,2.5);
\end{tikzpicture}
\]
give a basis for $\End_{U_q(\spn)}(V_k \otimes V_1)$, thus there exist $a,b,c \in \C(q)$ so that
\begin{equation}\label{eq:reduction}
\begin{tikzpicture}[scale=.4, tinynodes, anchorbase, rotate=-90]
	\draw[very thick] (-1,0) node[above=-3pt,xshift=-2pt]{$k$} to (0,1.5);
	\draw[very thick] (1,0) node[below=-3pt,xshift=-2pt]{$k$} to (0,1.5);
	\draw[very thick] (-.7,.5) to node[left=-2pt]{${k{-}1}$} (.7,.5);
	\draw[very thick] (0,2.5) to (-1,3.5) node[above=-4pt,xshift=2pt]{$1$};
	\draw[very thick] (0,2.5) to (1,3.5) node[below=-3pt,xshift=2pt]{$1$};
	\draw[very thick] (0,1.5) to node[above=-2pt]{$2$} (0,2.5);
\end{tikzpicture}
=
a \
\begin{tikzpicture}[scale =.5, smallnodes, anchorbase]
	\draw[very thick] (0,0) node[below=-1pt]{$k$} to (0,2.25) node[above=-3pt]{$k$};
	\draw[very thick] (1,0) to (1,2.25);
\end{tikzpicture}
+b
\begin{tikzpicture}[scale=.33, smallnodes, anchorbase]
	\draw[very thick] (-1,0) node[below=-2pt,xshift=-2pt]{$k$} to (0,1);
	\draw[very thick] (1,0) to (0,1);
	\draw[very thick] (0,2.5) to (-1,3.5) node[above=-4pt,xshift=-2pt]{$k$} ;
	\draw[very thick] (0,2.5) to (1,3.5);
	\draw[very thick] (0,1) to node[right=-2pt]{$k{+}1$} (0,2.5);
\end{tikzpicture}
+c
\begin{tikzpicture}[scale=.33, smallnodes, anchorbase]
	\draw[very thick] (-1,0) node[below=-2pt,xshift=-2pt]{$k$} to (0,1);
	\draw[very thick] (1,0) to (0,1);
	\draw[very thick] (0,2.5) to (-1,3.5) node[above=-4pt,xshift=-2pt]{$k$} ;
	\draw[very thick] (0,2.5) to (1,3.5);
	\draw[very thick] (0,1) to node[right=-2pt]{$k{-}1$} (0,2.5);
\end{tikzpicture}.
\end{equation}
It remains to deduce the values of $a,b,c \in \C(q)$.
To do so, we begin by deducing some auxiliary relations.

First, we have
\begin{equation}\label{eq:badbigon}
\begin{tikzpicture}[scale=.175,tinynodes, anchorbase]
	\draw [very thick] (0,.75) to (0,2.5) node[above,yshift=-3pt]{$k$};
	\draw [very thick] (0,-2.75) to [out=30,in=330] node[right,xshift=-2pt]{$k{+}1$} (0,.75);
	\draw [very thick] (0,-2.75) to [out=150,in=210] node[left,xshift=2pt]{$1$} (0,.75);
	\draw [very thick] (0,-4.5) node[below,yshift=2pt]{$k$} to (0,-2.75);
\end{tikzpicture}
=
- \frac{[n-k][2n+2-k]}{[n-k+1]}
\begin{tikzpicture}[scale=.175, tinynodes, anchorbase]
	\draw [very thick] (0,-4.5) node[below,yshift=2pt]{$k$} to (0,2.5);
\end{tikzpicture}.
\end{equation}
Indeed, since $\End_{U_q(\spn)}(V_k)$ is $1$-dimensional, the left-hand side must be a scalar multiple of the right-hand side,
and the scalar can be computed by taking the closure (trace) of both sides of this relation, evaluating the relevant bigon using (\text{\ref{eq:spn}c}), and the resulting circles using (\text{\ref{eq:spnOther}a}).

Next, we establish the equality:
\begin{equation}\label{eq:triangle}
\begin{tikzpicture}[scale=.35,smallnodes,anchorbase]
	\draw[very thick,directed=.4,rdirected=.75] (-1,0) to node[below=1pt]{$k{+}1$} (1,0);
	\draw[very thick] (-1,0) to (0,1.732);
	\draw[very thick] (1,0) to (0,1.732);
	\draw[very thick,->] (0,1.732) to (0,3.232) node[above=-2pt]{$2$};
	\draw[very thick] (-2.3,-.75) node[below=-2pt, xshift=-2pt]{$k$} to (-1,0);
	\draw[very thick] (2.3,-.75) node[below=-2pt,xshift=2pt]{$k$} to (1,0);
\end{tikzpicture}
= \frac{[n+2-k]}{[n+1-k]}
\begin{tikzpicture}[scale=.35,smallnodes,anchorbase]
	\draw[very thick] (-1,0) to node[below]{$k{-}1$} (1,0);
	\draw[very thick] (-1,0) to (0,1.732);
	\draw[very thick] (1,0) to (0,1.732);
	\draw[very thick,->] (0,1.732) to (0,3.232) node[above=-2pt]{$2$};
	\draw[very thick,<-] (-2.3,-.75) node[below=-2pt, xshift=-2pt]{$k$} to (-1,0);
	\draw[very thick,<-] (2.3,-.75) node[below=-2pt,xshift=2pt]{$k$} to (1,0);
	\end{tikzpicture}
\end{equation}
via induction on $k$. When $k=1$, this follows immediately by \eqref{eq:switchingcoeff}, and when $k=n$ this holds since $\Hom_{U_q(\spn)}(V_n \ot V_n, V_2)=0$.
For the remaining cases, we first observe that the morphism appearing on the right-hand side of \eqref{eq:triangle} is non-zero when $1 \leq k \leq n-1$.
This follows since we compute the following wherein it appears:
\begin{equation}\label{eq:nonzero}
\begin{tikzpicture}[scale=.25,tinynodes,anchorbase]
	\draw[very thick] (-2,-2) node[below=-2pt]{$k$} to [out=90,in=210] (-1,0);
	\draw[very thick] (1,0) to node[above=-2pt]{$2$} (2,-.5) to (2,-2);
	\draw[very thick] (-1,0) to (1,0);
	\draw[very thick] (-1,0) to node[left=-1pt]{$k{-}1$} (0,1.732);
	\draw[very thick] (1,0) to (0,1.732);
	\draw[very thick] (0,1.732) to [out=90,in=210] node[left=-1pt]{$k$} (1.25,3.5);
	\draw[very thick] (2,-.5) to [out=30,in=330] (1.25,3.5);
	\draw[very thick] (1.25,3.5) to (1.25,5) node[above=-2pt]{$k{+}1$};
\end{tikzpicture}
\stackrel{\eqref{eq:generalassoc}}{=}
\begin{tikzpicture}[scale=.25,tinynodes,anchorbase,xscale=-1]
	\draw[very thick] (-2,-2) to [out=90,in=210] (-1,0);
	\draw[very thick] (1,0) to (2,-.5) to (2,-2) node[below=-2pt]{$k$};
	\draw[very thick] (-1,0) to node[below=-2pt]{$2$}  (1,0);
	\draw[very thick] (-1,0) to (0,1.732);
	\draw[very thick] (1,0) to (0,1.732);
	\draw[very thick] (0,1.732) to [out=90,in=210] node[right=-1pt]{$2$} (1.25,3.5);
	\draw[very thick] (2,-.5) to [out=30,in=330] node[left=-1pt]{$k{-}1$} (1.25,3.5);
	\draw[very thick] (1.25,3.5) to (1.25,5) node[above=-2pt]{$k{+}1$};
\end{tikzpicture}
=
\frac{[n+1]}{[n]}
\begin{tikzpicture}[scale=.25,tinynodes,anchorbase,xscale=-1]
	\draw[very thick] (-1.5,-2) to [out=90,in=210] (0,.573);
	\draw[very thick] (0,.573) to (2,-.5) to (2,-2) node[below=-2pt]{$k$};
	\draw[very thick] (0,.573) to [out=90,in=210] node[right=-3pt]{$2$} (1.25,3.5);
	\draw[very thick] (2,-.5) to [out=30,in=330] node[left=-1pt]{$k{-}1$} (1.25,3.5);
	\draw[very thick] (1.25,3.5) to (1.25,5) node[above=-2pt]{$k{+}1$};
\end{tikzpicture}
\stackrel{\eqref{eq:generalassoc}}{=}
\frac{[n+1]}{[n]}
\begin{tikzpicture}[scale=.25,anchorbase,tinynodes]
	\draw [very thick] (0,-2) node[below=-2pt]{$k$} to (0,-1);
	\draw [very thick] (0,-1) to [out=150,in=210] node[left=-1pt]{$k{-}1$} (0,.75);
	\draw [very thick] (0,-1) to [out=30,in=330] (0,.75);
	\draw [very thick] (3,-2) to [out=90,in=330] (1,2.5);
	\draw [very thick] (0,.75) to [out=90,in=210] (1,2.5);
	\draw [very thick] (1,2.5) to (1,4.25) node[above=-2pt]{$k{+}1$};
\end{tikzpicture}
=
\frac{[k][n+1]}{[n]}
\begin{tikzpicture}[scale =.5, smallnodes,anchorbase]
	\draw[very thick] (0,-.25) node[below]{$k$} to [out=90,in=210] (.5,.75);
	\draw[very thick] (1,-.25) to [out=90,in=330] (.5,.75);
	\draw[very thick] (.5,.75) to (.5,1.5) node[above]{$k{+}1$};
\end{tikzpicture}
\neq 0.
\end{equation}
Here, we use \eqref{eq:generalassoc} twice. 
Note that this relation holds in $\FRep(U_q(\spn))$ by Lemma \ref{lem:extrarels}.

Thus, since $\Hom_{U_q(\spn)}(V_k \ot V_k, V_2)$ is $1$-dimensional for $1 \leq k \leq n-1$, 
there exists $\tau_k$ such that
\begin{equation}
\begin{tikzpicture}[scale=.35,smallnodes,anchorbase]
	\draw[very thick,directed=.4,rdirected=.75] (-1,0) to node[below=1pt]{$k{+}1$} (1,0);
	\draw[very thick] (-1,0) to (0,1.732);
	\draw[very thick] (1,0) to (0,1.732);
	\draw[very thick,->] (0,1.732) to (0,3.232) node[above=-2pt]{$2$};
	\draw[very thick] (-2.3,-.75) node[below=-2pt, xshift=-2pt]{$k$} to (-1,0);
	\draw[very thick] (2.3,-.75) node[below=-2pt,xshift=2pt]{$k$} to (1,0);
\end{tikzpicture}
= \tau_k
\begin{tikzpicture}[scale=.35,smallnodes,anchorbase]
	\draw[very thick] (-1,0) to node[below]{$k{-}1$} (1,0);
	\draw[very thick] (-1,0) to (0,1.732);
	\draw[very thick] (1,0) to (0,1.732);
	\draw[very thick,->] (0,1.732) to (0,3.232) node[above=-2pt]{$2$};
	\draw[very thick,<-] (-2.3,-.75) node[below=-2pt, xshift=-2pt]{$k$} to (-1,0);
	\draw[very thick,<-] (2.3,-.75) node[below=-2pt,xshift=2pt]{$k$} to (1,0);
\end{tikzpicture}.
\end{equation}
This then implies that:
\begin{equation}\label{eq:doubletriangle}
\begin{tikzpicture}[scale=.35,smallnodes,anchorbase]
	\draw[very thick] (-1,0) to node[below]{$k{+}1$} (1,0);
	\draw[very thick] (-1,0) to (0,1.732);
	\draw[very thick] (1,0) to (0,1.732);
	\draw[very thick] (-2.3,-.75) node[below=-2pt,xshift=-2pt]{$k$} to (-1,0);
	\draw[very thick] (3,0) to node[below]{$k{-}1$} (5,0);
	\draw[very thick] (3,0) to (4,1.732);
	\draw[very thick] (5,0) to (4,1.732);
	\draw[very thick] (6.3,-.75) node[below=-2pt,xshift=2pt]{$k$} to (5,0);
	\draw[very thick] (0,1.732) to [out=90,in=180] (2,3) node[above=-2pt]{$2$} to [out=0,in=90] (4,1.732);
	\draw[very thick] (1,0) to [out=-30,in=210] node[below]{$k$} (3,0);
\end{tikzpicture}
=\tau_k
\begin{tikzpicture}[scale=.35,smallnodes,anchorbase]
	\draw[very thick] (-1,0) to node[below]{$k{-}1$} (1,0);
	\draw[very thick] (-1,0) to (0,1.732);
	\draw[very thick] (1,0) to (0,1.732);
	\draw[very thick] (-2.3,-.75) node[below=-2pt,xshift=-2pt]{$k$} to (-1,0);
	\draw[very thick] (3,0) to node[below]{$k{-}1$} (5,0);
	\draw[very thick] (3,0) to (4,1.732);
	\draw[very thick] (5,0) to (4,1.732);
	\draw[very thick] (6.3,-.75) node[below=-2pt,xshift=2pt]{$k$} to (5,0);
	\draw[very thick] (0,1.732) to [out=90,in=180] (2,3) node[above]{$2$} to [out=0,in=90] (4,1.732);
	\draw[very thick] (1,0) to [out=-30,in=210] node[below]{$k$} (3,0);
\end{tikzpicture}.
\end{equation}
Simplifying the left-hand side of \eqref{eq:doubletriangle} using \eqref{eq:nonzero} and \eqref{eq:badbigon},
we obtain:
\begin{equation} \label{eq:simplify1}
\begin{tikzpicture}[scale=.35,smallnodes,anchorbase]
	\draw[very thick] (-1,0) to node[below]{$k{+}1$} (1,0);
	\draw[very thick] (-1,0) to (0,1.732);
	\draw[very thick] (1,0) to (0,1.732);
	\draw[very thick] (-2.3,-.75) node[below=-2pt,xshift=-2pt]{$k$} to (-1,0);
	\draw[very thick] (3,0) to node[below]{$k{-}1$} (5,0);
	\draw[very thick] (3,0) to (4,1.732);
	\draw[very thick] (5,0) to (4,1.732);
	\draw[very thick] (6.3,-.75) node[below=-2pt,xshift=2pt]{$k$} to (5,0);
	\draw[very thick] (0,1.732) to [out=90,in=180] (2,3) node[above=-2pt]{$2$} to [out=0,in=90] (4,1.732);
	\draw[very thick] (1,0) to [out=-30,in=210] node[below]{$k$} (3,0);
\end{tikzpicture}
=-\frac{[k][n+1][n-k][2n+2-k]}{[n][n+1-k]}
\begin{tikzpicture}[scale =.75, smallnodes, anchorbase, yscale=-1]
	\draw[very thick] (1,2.25) node[below=-1pt]{$k$} to [out=270,in=0] (.5,1.5) to [out=180,in=270] (0,2.25) node[below=-1pt]{$k$};
\end{tikzpicture}.
\end{equation}

To simplify the right-hand side of \eqref{eq:doubletriangle}, first apply \eqref{eq:switchingcoeff} to the top of the diagram to obtain a sum of three terms. 
Two terms can be easily simplified using (\text{\ref{eq:spn}c}) and \eqref{eq:badbigon}. 
The third term is as follows:
\[
\begin{tikzpicture}[scale=.75,anchorbase,tinynodes]
	\draw[very thick] (-1.5,0) node[left=-3pt]{$k$} to (-1,0);
	\draw[very thick] (-1,0) to [out=60,in=120] (1,0);
	\draw[very thick] (-1,0) node[below=5pt]{$k{-}1$} to [out=300,in=240] node[below=-1pt]{$k$} (1,0) node[below=5pt]{$k{-}1$};
	\draw[very thick] (1,0) to (1.5,0) node[right=-3pt]{$k$};
	\begin{scope}
	\clip (-1,0) to [out=60,in=120] (1,0) to [out=240,in=300] (-1,0);
	\draw[very thick] (0,0) to node[right=-3pt,yshift=-1pt]{$2$} (0,.5);
	\draw[very thick] (0,0) to (1,-1);
	\draw[very thick] (0,0) to (-1,-1);
	\end{scope}
\end{tikzpicture}
\]
To simplify it, 
first apply \eqref{eq:triangle} for $k-1$, and then associativity \eqref{eq:generalassoc}. 
The $k=1$ case of \eqref{eq:triangle} can now be applied and then (\text{\ref{eq:spnOther}c}) will 
resolve\footnote{The equation (\text{\ref{eq:spnOther}c}) holds in $\FRep(U_q(\spn))$ by Lemma \ref{lem:extrarels}.} this diagram. 
Ultimately we obtain
\begin{equation} \label{eq:simplify2}
\begin{tikzpicture}[scale=.35,smallnodes,anchorbase]
	\draw[very thick] (-1,0) to node[below]{$k{-}1$} (1,0);
	\draw[very thick] (-1,0) to (0,1.732);
	\draw[very thick] (1,0) to (0,1.732);
	\draw[very thick] (-2.3,-.75) node[below=-2pt,xshift=-2pt]{$k$} to (-1,0);
	\draw[very thick] (3,0) to node[below]{$k{-}1$} (5,0);
	\draw[very thick] (3,0) to (4,1.732);
	\draw[very thick] (5,0) to (4,1.732);
	\draw[very thick] (6.3,-.75) node[below=-2pt,xshift=2pt]{$k$} to (5,0);
	\draw[very thick] (0,1.732) to [out=90,in=180] (2,3) node[above]{$2$} to [out=0,in=90] (4,1.732);
	\draw[very thick] (1,0) to [out=-30,in=210] node[below]{$k$} (3,0);
\end{tikzpicture}
=-\frac{[k][n+1][n-k][2n+2-k]}{[n][n+2-k]}
\begin{tikzpicture}[scale =.75, smallnodes, anchorbase, yscale=-1]
	\draw[very thick] (1,2.25) node[below=-1pt]{$k$} to [out=270,in=0] (.5,1.5) to [out=180,in=270] (0,2.25) node[below=-1pt]{$k$};
\end{tikzpicture}.
\end{equation}
Dividing \eqref{eq:simplify2} by \eqref{eq:simplify1} we obtain $\tau_k=\frac{[n+2-k]}{[n+1-k]}$ as desired.

The values of $a,b,c \in \C(q)$ from \eqref{eq:reduction} are now easily deduced.
We obtain relations in $1$-dimensional $\Hom$-spaces of $\FRep(U_q(\spn))$ by composing as follows:
\begin{align}
\begin{tikzpicture}[scale =.5, smallnodes,anchorbase]
	\draw[very thick] (0,-1.5) node[below=-1pt]{$k$} to (0,.5) node[above=-2pt]{$k$};
	\draw[very thick] (1,0) to [out=90,in=180] (1.5,.5) to [out=0,in=90] (2,0) to (2,-1) to [out=270,in=0] (1.5,-1.5) to [out=180,in=270] (1,-1);
	\draw[very thick, fill=white] (-.5,0) rectangle (1.5,-1);
	\node at (.5,-.5){\eqref{eq:reduction}};
\end{tikzpicture}
&\Longrightarrow \ 0 = -\frac{[n][2n+2]}{[n+1]}a - \frac{[n-k][2n+2-k]}{[n+1-k]}b + [k]c \\
\begin{tikzpicture}[scale =.5, smallnodes,anchorbase]
	\draw[very thick] (0,-2) node[below=-1pt]{$k$} to (0,.5) node[above=-2pt]{$k$};
	\draw[very thick] (1,0) to [out=90,in=180] (1.5,.5) to [out=0,in=90] (2,0) to (2,-1) to [out=270,in=30] (1.5,-1.5);
	\draw[very thick] (1,-1) to [out=270,in=150] (1.5,-1.5);
	\draw[very thick] (1.5,-1.5) to (1.5,-2) node[below=-1pt]{$2$};
	\draw[very thick, fill=white] (-.5,0) rectangle (1.5,-1);
	\node at (.5,-.5){\eqref{eq:reduction}};
\end{tikzpicture}
&\Longrightarrow \ [2] = \frac{[n+2-k]}{[n+1-k]}b + c \\
\begin{tikzpicture}[scale =.5, smallnodes,anchorbase]
	\draw[very thick] (0,-1.5) node[below=-1pt]{$k$} to (0,0) to [out=90,in=210] (.5,.75);
	\draw[very thick] (1,-1.5) node[below=-1pt]{$1$} to (1,0)  to [out=90,in=330] (.5,.75);
	\draw[very thick] (.5,.75) to (.5,1.5) node[above=-2pt]{$k{+}1$};
	\draw[very thick, fill=white] (-.5,0) rectangle (1.5,-1);
	\node at (.5,-.5){\eqref{eq:reduction}};
\end{tikzpicture}
&\Longrightarrow \ \frac{[k][n+1]}{[n]} = a + [k+1]b.
\end{align}
Elementary linear algebra implies that
\[
a=-\frac{[n-k]}{[n]} \quad , \quad b=1 \quad , \quad c=\frac{[n-k]}{[n+1-k]}
\]
which verifies (\text{\ref{eq:spn}e}).
\end{proof}

We record the following.
(Recall that a \emph{Porism} is a direct consequence of a proof.)

\begin{por}\label{por:auto}
For any invertible scalars $a_k, b_k$ with $1 \le k \le n-1$ and $c_k$ for $1 \le k \le n$ that satisfy \eqref{eq:bk} and \eqref{eq:ck}, 
there is an automorphism of $\Web(\spn)$ which rescales the generating morphisms analogously to \eqref{eq:images} (but with all black strands). 
Moreover, all automorphisms of $\Web(\spn)$ (as a $\C(q)$-linear monoidal category) that fix the generating objects $\{1, \ldots, n\}$ are of this form.
\end{por}

\begin{por}\label{por:braiding}
The elements
\begin{equation}\label{eq:braiding}
\begin{tikzpicture}[scale=.5, anchorbase]
	\draw[very thick] (1,0) to [out=90,in=270] (0,1.5);
	\draw[overcross] (0,0) to [out=90,in=270] (1,1.5);
	\draw[very thick] (0,0) to [out=90,in=270] (1,1.5);
\end{tikzpicture}
=
q \
\begin{tikzpicture}[scale =.4, smallnodes, anchorbase]
	\draw[very thick] (0,0) to (0,2.25);
	\draw[very thick] (1,0) to (1,2.25);
\end{tikzpicture}
+ \frac{q^{-n}}{[n]}
\begin{tikzpicture}[scale =.4, smallnodes,anchorbase]
	\draw[very thick] (0,0) to [out=90,in=180] (.5,.75);
	\draw[very thick] (1,0) to [out=90,in=0] (.5,.75);
	\draw[very thick] (.5,1.5) to [out=180,in=270] (0,2.25);
	\draw[very thick] (.5,1.5) to [out=0,in=270] (1,2.25);
\end{tikzpicture}
-
\begin{tikzpicture}[scale=.25, tinynodes, anchorbase]
	\draw[very thick] (-1,0) to (0,1);
	\draw[very thick] (1,0) to (0,1);
	\draw[very thick] (0,2.5) to (-1,3.5);
	\draw[very thick] (0,2.5) to (1,3.5);
	\draw[very thick] (0,1) to node[right=-2pt]{$2$} (0,2.5);
\end{tikzpicture}
\quad \text{and} \quad
\begin{tikzpicture}[scale=.5, anchorbase,xscale=-1]
	\draw[very thick] (1,0) to [out=90,in=270] (0,1.5);
	\draw[overcross] (0,0) to [out=90,in=270] (1,1.5);
	\draw[very thick] (0,0) to [out=90,in=270] (1,1.5);
\end{tikzpicture}
=
q^{-1} \
\begin{tikzpicture}[scale =.4, smallnodes, anchorbase]
	\draw[very thick] (0,0) to (0,2.25);
	\draw[very thick] (1,0) to (1,2.25);
\end{tikzpicture}
+ \frac{q^{n}}{[n]}
\begin{tikzpicture}[scale =.4, smallnodes,anchorbase]
	\draw[very thick] (0,0) to [out=90,in=180] (.5,.75);
	\draw[very thick] (1,0) to [out=90,in=0] (.5,.75);
	\draw[very thick] (.5,1.5) to [out=180,in=270] (0,2.25);
	\draw[very thick] (.5,1.5) to [out=0,in=270] (1,2.25);
\end{tikzpicture}
-
\begin{tikzpicture}[scale=.25, tinynodes, anchorbase]
	\draw[very thick] (-1,0) to (0,1);
	\draw[very thick] (1,0) to (0,1);
	\draw[very thick] (0,2.5) to (-1,3.5);
	\draw[very thick] (0,2.5) to (1,3.5);
	\draw[very thick] (0,1) to node[right=-2pt]{$2$} (0,2.5);
\end{tikzpicture}
\end{equation}
in $\End_{\Web(\spn)}(1 \ot 1)$ are inverse isomorphisms that are sent by the functor $\Phi$ to the 
braiding $\beta_{V_1,V_1}$ and its inverse.
\end{por}

\subsection{Reduction to the ``all ones'' category}

In Theorem \ref{thm:functor},
we constructed an essentially surjective functor
\[
\Phi \colon \Web(\spn) \to \FRep(U_q(\spn)).
\] 
We now let $\SWeb(\spn)$ denote the full subcategory of $\Web(\spn)$ with objects $1^{\ot k}$ 
and let $\SRep(U_q(\spn))$ denote the full subcategory of $\FRep(U_q(\spn))$ with objects $V_1^{\ot k}$, for $k \geq 0$. 
The restriction of $\Phi$ to $\SWeb(\spn)$ is a functor to $\SRep(U_q(\spn))$ that we conclude is full and faithful in \S \ref{sec:full} and \S \ref{sec:faithful} below.

We now show that these statements will suffice to prove our main result.
Given a linear category $\CS$, we let $\Kar(\CS)$ denote its Karoubi envelope. 
Recall that any linear category embeds fully faithfully into its Karoubi envelope.

\begin{lem}\label{lem:reduction}
Let $\mathcal{D}$ and $\mathcal{R}$ be $\mathbb{C}(q)$-linear monoidal categories. 
Suppose that $\mathcal{D}'$ is a full monoidal subcategory of $\mathcal{D}$ such that
every object in $\mathcal{D}$ is a direct summand of an iterated tensor product of objects in $\mathcal{D}'$, 
after embedding both in $\Kar(\mathcal{D})$.
Let $\Psi \colon \mathcal{D}\to \mathcal{R}$ be a $\mathbb{C}(q)$-linear (monoidal) functor,
then $\Psi$ is full if and only if $\Psi|_{\mathcal{D}'}$ is full and $\Psi$ is faithful if and only if $\Psi|_{\mathcal{D}'}$ is faithful. 
\end{lem}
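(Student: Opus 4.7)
The plan is to use the hypothesis to construct, for each object $X \in \mathcal{D}$, an object $Y_X$ which is an iterated tensor product of objects of $\mathcal{D}'$, together with ``split-merge'' morphisms $\iota_X \colon X \to Y_X$ and $\pi_X \colon Y_X \to X$ in $\mathcal{D}$ satisfying $\pi_X \circ \iota_X = \id_X$. Since $\mathcal{D}'$ is a full monoidal subcategory of $\mathcal{D}$, the object $Y_X$ actually lies in $\mathcal{D}'$. The condition that $X$ is a direct summand of $Y_X$ in $\Kar(\mathcal{D})$ unpacks, using the fully faithful embedding $\mathcal{D} \hookrightarrow \Kar(\mathcal{D})$, to exactly the existence of such $\iota_X, \pi_X$ in $\mathcal{D}$; the composite $\iota_X \circ \pi_X$ is the idempotent on $Y_X$ cutting out $X$.

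The key observation is that any $f \in \Hom_{\mathcal{D}}(X_1, X_2)$ admits the factorization
\begin{equation*}
f \;=\; \pi_{X_2} \circ (\iota_{X_2} \circ f \circ \pi_{X_1}) \circ \iota_{X_1},
\end{equation*}
in which the middle factor lies in $\Hom_{\mathcal{D}'}(Y_{X_1}, Y_{X_2})$ (by fullness of $\mathcal{D}'$ in $\mathcal{D}$). Applying $\Psi$ gives the analogous factorization of $\Psi(f)$ in $\mathcal{R}$. The proof reduces to exploiting this identity in both directions.

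For fullness: given $g \in \Hom_{\mathcal{R}}(\Psi(X_1), \Psi(X_2))$, I would form $\Psi(\iota_{X_2}) \circ g \circ \Psi(\pi_{X_1})$, lift it to some $h \in \Hom_{\mathcal{D}'}(Y_{X_1}, Y_{X_2})$ using fullness of $\Psi|_{\mathcal{D}'}$, and then set $f := \pi_{X_2} \circ h \circ \iota_{X_1}$. The equality $\Psi(f)=g$ then drops out from $\pi_{X_i}\iota_{X_i} = \id_{X_i}$. For faithfulness: if $\Psi(f) = 0$, then $\Psi(\iota_{X_2} \circ f \circ \pi_{X_1}) = 0$ as well; faithfulness of $\Psi|_{\mathcal{D}'}$ forces the middle factor to vanish, and then $f = 0$ by the key identity. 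The converse directions in both equivalences are immediate from $\mathcal{D}'$ being a full subcategory of $\mathcal{D}$.

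I do not anticipate any genuine obstacle here; the only point requiring mild care is translating the Karoubi-envelope hypothesis into the explicit splittings $\pi_X \circ \iota_X = \id_X$ in $\mathcal{D}$ itself, which is routine.
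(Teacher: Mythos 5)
Your proposal is correct and matches the paper's proof essentially step for step: you extract splittings $\pi_X \circ \iota_X = \id_X$ from the Karoubi-envelope hypothesis, conjugate by $\iota$ and $\pi$ to pass between $\Hom$-spaces of $\mathcal{D}$ and $\mathcal{D}'$, and read off both fullness and faithfulness from the resulting factorization. No differences worth noting.
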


\begin{proof}
Clearly, if $\Psi$ is full (resp. faithful) then $\Psi|_{\mathcal{D}'}$ is full (resp. faithful), so it suffices to prove the converse statements.
Let $X$ and $Y$ be objects in $\mathcal{D}$. 
By hypothesis, there are objects $X_1, \ldots, X_n$ and $Y_1, \ldots , Y_m$ in $\mathcal{D}'$ along with morphisms:
\[
X\xrightarrow{\iota_X} X_1\ot \cdots \ot X_n \xrightarrow{\pi_X} X
\]
and
\[
Y\xrightarrow{\iota_Y} Y_1\ot \cdots \ot Y_m \xrightarrow{\pi_Y} Y
\]
in $\mathcal{D}$ so that $\pi_X\circ\iota_X= \id_X$ and $\pi_Y \circ \iota_Y = \id_Y$. 

Suppose now that $\Psi|_{\mathcal{D}'}$ is full, and let $T\in \Hom_{\mathcal{R}}(\Psi(X), \Psi(Y))$.
Since $\Psi|_{\mathcal{D}'}$ is full, there exists
\[
D\in \Hom_{\mathcal{D}'}(X_1\ot \cdots \ot X_n, Y_1\ot \cdots \ot Y_m)
\]
so that $\Psi(D) = \Psi(\iota_Y)\circ T\circ \Psi(\pi_X)$. 
It follows that $\pi_Y\circ D\circ \iota_X\in \Hom_{\mathcal{D}}(X, Y)$, thus
\[
\Psi(\pi_Y\circ D\circ \iota_X)=  \Psi(\pi_Y)\circ \Psi(\iota_Y)\circ T \circ \Psi(\pi_X)\circ\Psi(\iota_X)=T
\]
as desired.

Similarly, suppose that $\Psi|_{\mathcal{D}'}$ is faithful, and let $D\in \Hom_{\mathcal{D}}(X, Y)$ be such that $\Psi(D)= 0$. 
This implies that $\Psi(\iota_Y \circ D\circ \pi_X) = \Psi(\iota_Y)\circ 0 \circ \Psi(\pi_X) = 0$.
However, since $\mathcal{D}'$ is a full subcategory of $\mathcal{D}$, we have
\[
\iota_Y \circ D\circ \pi_X\in \Hom_{\mathcal{D}'}(X_1\ot \cdots \ot X_n, Y_1\ot \cdots \ot Y_m).
\] 
Since $\Psi|_{\mathcal{D}'}$ is faithful, this implies that $\iota_Y\circ D \circ \pi_X= 0$, so 
$D= \pi_Y\circ\iota_Y\circ D \circ \pi_X\circ \iota_X = 0$.
\end{proof}

It is easy to see that the hypotheses of this theorem are satisfied for $\mathcal{D}'= \SWeb(\spn)$, $\mathcal{D} = \Web(\spn)$, 
and the functor from Theorem \ref{thm:functor}.
Recursively define the \emph{full split vertex} as
\begin{equation}\label{eq:fullsplit}
\begin{tikzpicture}[scale=.45, tinynodes, anchorbase]
	\draw[very thick] (0,-2) node[below,yshift=2pt]{$k$} to (0,-1);
	\draw[very thick] (0,-1) to (1,1) node[above,yshift=-2pt]{$1$};
	\draw[very thick] (0,-1) to (-1,1) node[above,yshift=-2pt]{$1$};
	\node at (0,.5) {$\cdots$};
\end{tikzpicture}
:=
\begin{tikzpicture}[scale=.45, tinynodes, anchorbase]
	\draw[very thick,<-] (0,-2) node[below,yshift=2pt]{$k$} to (0,-1);
	\draw[very thick] (0,-1) to (1,1) node[above,yshift=-2pt]{$1$};
	\draw[very thick] (-.5,0) to (0,1) node[above,yshift=-2pt]{$1$};
	\draw[very thick] (0,-1) to (-1,1) node[above,yshift=-2pt]{$1$};
	\draw[very thick] (0,-1) to node[pos=.25,left=-2pt]{$k{-}1$} (-1,1) node[above,yshift=-2pt]{$1$};
	\node at (-.5,.75) {$\mydots$};
\end{tikzpicture}
=
\begin{tikzpicture}[scale=.45, tinynodes, anchorbase,xscale=-1]
	\draw[very thick,<-] (0,-2) node[below,yshift=2pt]{$k$} to (0,-1);
	\draw[very thick] (0,-1) to (1,1) node[above,yshift=-2pt]{$1$};
	\draw[very thick] (-.5,0) to (0,1) node[above,yshift=-2pt]{$1$};
	\draw[very thick] (0,-1) to (-1,1) node[above,yshift=-2pt]{$1$};
	\draw[very thick] (0,-1) to node[pos=.25,right=-2pt]{$k{-}1$} (-1,1) node[above,yshift=-2pt]{$1$};
	\node at (-.5,.75) {$\mydots$};
\end{tikzpicture}  .
\end{equation}
Here, the horizontal symmetry encoded in the second equality follows from iterated use of (\text{\ref{eq:spn}d}).

\begin{cor} \label{cor:reduction} If $\Phi |_{\SWeb(\spn)}$ is full, then $\Phi$ is full. If $\Phi |_{\SWeb(\spn)}$ is faithful, then $\Phi$ is faithful. 
\end{cor}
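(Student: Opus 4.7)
The plan is to invoke Lemma~\ref{lem:reduction} with $\mathcal{D}= \Web(\spn)$, $\mathcal{D}'= \SWeb(\spn)$, $\mathcal{R}= \FRep(U_q(\spn))$, and $\Psi= \Phi$. The only hypothesis to verify is that every object of $\Web(\spn)$ is a direct summand of an iterated tensor product of objects of $\SWeb(\spn)$, after passing to $\Kar(\Web(\spn))$. Since the objects of $\Web(\spn)$ are tensor products of the generators $1,\ldots,n$, and since the tensor product of splittings is a splitting, it suffices to exhibit each $k\in\{1,\ldots,n\}$ as a direct summand of $1^{\otimes k}$.

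For this, I will use the full split vertex $\iota_k\colon k\to 1^{\otimes k}$ from \eqref{eq:fullsplit} as the inclusion, and a scalar multiple of its horizontal reflection, the full merge vertex $p_k\colon 1^{\otimes k}\to k$, as the projection. Writing $s_k\colon k\to 1\otimes(k-1)$ and $m_k\colon 1\otimes(k-1)\to k$ for the generating split and merge vertices, the recursive description of $\iota_k$ and $p_k$ gives $\iota_k=(\id_1\otimes \iota_{k-1})\circ s_k$ and $p_k= m_k\circ(\id_1\otimes p_{k-1})$, whence
$$p_k\circ \iota_k\;=\;m_k\circ \bigl(\id_1\otimes (p_{k-1}\circ \iota_{k-1})\bigr)\circ s_k.$$
A short induction on $k$ using the bigon relation (\text{\ref{eq:spn}c}) to collapse $m_k\circ s_k= [k]\cdot\id_k$ then yields $p_k\circ \iota_k= [k]!\cdot\id_k$.

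Since $[k]!$ is a nonzero element of $\C(q)$ for every $k\ge 1$, setting $\pi_k := [k]!^{-1}\cdot p_k$ produces $\pi_k\circ \iota_k= \id_k$, realizing $k$ as a direct summand of $1^{\otimes k}$ in $\Web(\spn)$ itself (no Karoubi envelope is even needed). The hypothesis of Lemma~\ref{lem:reduction} is thereby satisfied and the corollary follows at once. There is no substantive obstacle here: the only genuine content is the scalar computation and the invertibility of $[k]!$, both of which are routine.
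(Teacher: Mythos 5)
Your proposal is correct and follows the paper's argument essentially verbatim: invoke Lemma~\ref{lem:reduction}, reduce to the generating objects $k$, and use $\iota_k$ from \eqref{eq:fullsplit} together with $\pi_k = \frac{1}{[k]!}\,p_k$ to split $k$ off of $1^{\otimes k}$ via repeated application of the bigon relation (\text{\ref{eq:spn}c}). Your inductive computation of $p_k\circ\iota_k=[k]!\,\id_k$ is just a fleshed-out version of the paper's one-line justification.
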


\begin{proof}
It suffices to show that each object in $\Web(\spn)$ is a direct summand of an object in 
\[
\SWeb(\spn) \subset \Kar(\Web(\spn))).
\] 
For this, it further suffices to prove this for the generating objects $k$ in $\Web(\spn)$.
Let
\begin{equation}
\iota_k := \begin{tikzpicture}[scale=.25, tinynodes, anchorbase]
	\draw[very thick] (0,-2) node[below=-2pt]{$k$} to (0,-1);
	\draw[very thick] (0,-1) to (1,1) node[above=-2pt]{$1$};
	\draw[very thick] (0,-1) to (-1,1) node[above=-2pt]{$1$};
	\node at (0,.5) {$\mydots$};
\end{tikzpicture}
\quad \text{and} \quad
\pi_k :=
\frac{1}{[k]!}
\begin{tikzpicture}[scale=.25, tinynodes, anchorbase,yscale=-1]
	\draw[very thick] (0,-2) node[above=-2pt]{$k$} to (0,-1);
	\draw[very thick] (0,-1) to (1,1) node[below=-2pt]{$1$};
	\draw[very thick] (0,-1) to (-1,1) node[below=-2pt]{$1$};
	\node at (0,.5) {$\mydots$};
\end{tikzpicture}.
\end{equation}
Repeated application of (\text{\ref{eq:spn}c}) (and the horizontal symmetry in \eqref{eq:fullsplit}) gives that $\pi_k \circ \iota_k = \id_k$, as desired. 
\end{proof}

\subsection{Fullness and the BMW algebra}\label{sec:full}

It is known that the BMW algebra (see Definition \ref{def:BMW}) is ``quantum Brauer-Schur-Weyl'' dual
to $U_q(\spn)$.
Specifically, let
\begin{equation}\label{eq:BMWmap}
\psi \colon \BMW_k(-q^{2n+1},q-q^{-1}) \to \End_{U_q(\spn)}(V_1^{\ot k})
\end{equation}
be given by the natural action that quantizes the corresponding action of the Brauer algebra.
This homomorphism is known to be surjective; see e.g. \cite[Theorem 1.5]{Hu}.
Furthermore, the image under $\psi$ of $g_i$ is the braiding, and the image of $e_i$ is the composition of the unit and the counit 
(in the graphical language for $\Rep(U_q(\spn))$, the cup above the cap).
We show the analogue of \eqref{eq:BMWmap} for $\Web(\spn)$.

\begin{prop}
There is a homomorphism 
$\rho \colon \BMW(-q^{2n+1},q-q^{-1}) \to \SWeb(\spn)$
defined by
\[
\rho(g_i)=
\begin{tikzpicture}[scale=.75,anchorbase]
	\draw[very thick] (-1,0) to (-1,1.5);
	\node at (-.625,.75) {$\dots$};
	\draw[very thick] (-.25,0) to (-.25,1.5);
	\draw[very thick] (.75,0) to [out=90,in=-90] (0,1.5);
	\draw[overcross] (0,0) to [out=90,in=-90] (.75,1.5);
	\draw[very thick] (0,0) to [out=90,in=-90] (.75,1.5);
	\draw[very thick] (1,0) to (1,1.5);
	\node at (1.375,.75) {$\dots$};
	\draw[very thick] (1.75,0) to (1.75,1.5);
\end{tikzpicture}
\;\; , \;\;
\rho(g_i^{-1})=
\begin{tikzpicture}[scale=.75,xscale=-1,anchorbase]
    \draw[very thick] (-1,0) to (-1,1.5);
    \node at (-.625,.75) {$\dots$};
    \draw[very thick] (-.25,0) to (-.25,1.5);
	\draw[very thick] (.75,0) to [out=90,in=-90] (0,1.5);
	\draw[overcross] (0,0) to [out=90,in=-90] (.75,1.5);
	\draw[very thick] (0,0) to [out=90,in=-90] (.75,1.5);
	\draw[very thick] (1,0) to (1,1.5);
	\node at (1.375,.75) {$\dots$};
	\draw[very thick] (1.75,0) to (1.75,1.5);
\end{tikzpicture}
\;\; , \;\;
\rho(e_i)=
\begin{tikzpicture}[scale=.75,anchorbase]
	\draw[very thick] (-1,0) to (-1,1.5);
	\node at (-.625,.75) {$\dots$};
	\draw[very thick] (-.25,0) to (-.25,1.5);
	\draw[very thick] (0,0) to [out=90,in=180] (.375,.5);
	\draw[very thick] (.375,.5) to [out=0,in=90] (.75,0);
	\draw[very thick] (0,1.5) to [out=-90,in=180] (.375,1);
	\draw[very thick] (.375,1) to [out=0,in=-90] (.75,1.5);
	\draw[very thick] (1,0) to (1,1.5);
	\node at (1.375,.75) {$\dots$};
	\draw[very thick] (1.75,0) to (1.75,1.5);
\end{tikzpicture}.
\]
(Here, the crossings and caps/cups 
occur between the $i^{th}$ and $(i+1)^{st}$ positions.) Moreover, $\psi = \Phi \circ \rho$.
\end{prop}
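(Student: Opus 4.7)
The plan is to establish the two assertions of the proposition separately: first, that $\rho$ is a well-defined algebra homomorphism, i.e.\ that the BMW relations (1)-(8) of Definition \ref{def:BMW} hold in $\End_{\SWeb(\spn)}(1^{\otimes k})$; and second, that $\psi = \Phi \circ \rho$ on the BMW generators. The second assertion is the quicker one. By Porism \ref{por:braiding}, the crossing diagram in $\Web(\spn)$ is sent under $\Phi$ to the braiding $\beta_{V_1,V_1}$ in $\FRep(U_q(\spn))$, so $\Phi(\rho(g_i^{\pm 1})) = \psi(g_i^{\pm 1})$ in the appropriate tensor slot. The construction of $\Phi$ in Theorem \ref{thm:functor} sends the cup morphism on strand $1$ to $c_1$ times the cup in $\FRep(U_q(\spn))$ and the cap to $c_1^{-1}$ times the cap, so the composition $\rho(e_i)$ is sent to exactly $\psi(e_i)$. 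Once $\rho$ is known to be a homomorphism, the equality $\psi = \Phi \circ \rho$ extends to all of $\BMW$.

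For the first assertion, most BMW relations reduce to short explicit calculations using the formula \eqref{eq:braiding} together with the defining relations \eqref{eq:spn}. Distant commutativity (4) is automatic from the monoidal structure. The Hecke skein relation (1) follows immediately by subtracting the formulas for $\rho(g_i)$ and $\rho(g_i^{-1})$: the $V_2$-trivalent-vertex terms cancel, the identity terms yield $(q - q^{-1}) \cdot \id$, and the cup-cap coefficients combine as $q^{-n}/[n] - q^n/[n] = -(q - q^{-1})$, giving $(q - q^{-1})(\id - e_i)$. For relation (2), $e_i^2$ produces a closed loop labeled $1$, which evaluates by (\text{\ref{eq:spn}a}) to $-[n][2n+2]/[n+1]$; a direct algebraic check using $r = -q^{2n+1}$, $z = q - q^{-1}$ shows this equals $1 + (r - r^{-1})/z = 1 - [2n+1]$.

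Relation (7) is illustrative of the remaining cases: expanding $\rho(g_i)\rho(e_i)$ via \eqref{eq:braiding}, the identity term contributes $q \cdot e_i$; the cup-cap term contributes $(q^{-n}/[n]) \cdot \qdim(V_1) \cdot e_i$ via another closed loop; and the merge-split term vanishes because a $V_2$ trivalent vertex feeding into a cup is zero by (\text{\ref{eq:spn}b}) after pivotal rotation. Summing and applying the identity $q - q^{-n}[2n+2]/[n+1] = -q^{-(2n+1)}$ gives $r^{-1}e_i$. Relations (5), (6), and (8) are handled by entirely analogous manipulations, combining \eqref{eq:braiding} with (\text{\ref{eq:spn}b}), (\text{\ref{eq:spn}c}) (with $k=2$), and pivotal isotopy.

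The main obstacle is the braid relation (3). The approach is to expand both sides using \eqref{eq:braiding} as linear combinations of quadrivalent and trivalent webs in $\End_{\Web(\spn)}(1^{\otimes 3})$, and then reduce each side to a canonical form using (\text{\ref{eq:spn}d}), (\text{\ref{eq:spn}e}), the general associativity from Lemma \ref{lem:extrarels}, and the triangle evaluation \eqref{eq:triangle} established in the proof of Theorem \ref{thm:functor}. This verification is tedious but mechanical; the main difficulty lies in bookkeeping among the resulting $3^3 = 27$ terms on each side, but the web relations are sufficient to reduce both sides to the same expression. With (3) in hand, $\rho$ is a well-defined homomorphism, completing the proof.
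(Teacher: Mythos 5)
Your overall approach matches the paper's: you prove $\rho$ is a homomorphism by checking the BMW relations, then note that $\psi = \Phi \circ \rho$ follows from Porism \ref{por:braiding} and the cancellation of $c_1 c_1^{-1}$ in $\rho(e_i)$. Your handling of relations (1), (2), (4), (7), (8) mirrors the paper's, modulo cosmetic differences (the paper folds (7) and (8) into a single framed Reidemeister I computation, whereas you spell out (7) directly; both are fine).

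The one substantive divergence is in relation (3), the braid relation. You propose a brute-force expansion of both sides via \eqref{eq:braiding} into roughly $27$ terms per side, followed by simplification using \eqref{eq:spn}. This is in principle valid, but you do not carry it out, and it would be painful: the terms involve nested $V_2$-trivalent vertices, and reducing the result requires nontrivial combinations of (\text{\ref{eq:spn}d}), (\text{\ref{eq:spn}e}), and associativity. The paper sidesteps this with a sharper idea: it first establishes that the composite of a crossing with a $1\otimes 1 \to 2$ merge-split (i.e.\ the left-hand side of \eqref{eq:cruxofR3}) is \emph{invariant under $180^{\circ}$ rotation}, because its expansion is manifestly rotationally symmetric. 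That single identity, combined with two applications of Reidemeister II, yields the braid relation in a few lines without ever touching the $27$-term expansion. If you intend to follow your brute-force route, you would need to actually exhibit the common reduced form; as written, the claim that ``the web relations are sufficient to reduce both sides to the same expression'' is an assertion, not a proof. I would recommend adopting the rotational-symmetry shortcut, which both shortens the argument and makes the conclusion transparent.

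Two smaller remarks. First, be careful with the phrase ``reduce each side to a canonical form'': at this stage of the overall argument, faithfulness has not been established, so the existence of a canonical form cannot be assumed; you only need to exhibit a chain of relations transforming one side into the other. Second, your verification of (2) should be stated as the fact that $\frac{1}{\qdim_1}$ times the cup-cap is idempotent (as the paper does), so that $e_i^2 = \qdim_1 \cdot e_i = \bigl(1 + \tfrac{r - r^{-1}}{z}\bigr) e_i$; phrasing it as ``a closed loop'' appearing is correct but slightly obscures why the scalar is $1 + (r - r^{-1})/z$.
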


\begin{proof}

Once we prove that $\rho$ is well-defined, the statement that $\psi = \Phi \circ \rho$ follows quickly. 
Indeed, Porism \ref{por:braiding} showed that $\Phi$ sends the overcrossing in \eqref{eq:braiding} to the braiding map $\beta_{V_1,V_1} \in \End_{U_q(\spn)}(V_1 \ot V_1)$. 
By \eqref{eq:images}, the cup and cap morphisms in $\Web(\spn)$ go to (any choice of) the (co)unit morphisms in $\FRep$ up to scalars $c_1$ and $c_1^{-1}$, 
and these scalars cancel each other in the composition of cup and cap. Thus $\psi$ agrees with $\Phi \circ \rho$ on all the generators of the BMW algebra.

A direct computation shows that $\rho$ is well-defined. Let us summarize the computation.
Here, the numbering corresponds to that in Definition \ref{def:BMW}
\begin{enumerate}
\item Equation \eqref{eq:braiding} implies that 
\begin{equation}
\begin{tikzpicture} [scale=.5,anchorbase]
	\draw[very thick] (1,0) to [out=90,in=270] (0,1.5);
	\draw[overcross] (0,0) to [out=90,in=270] (1,1.5);
	\draw[very thick] (0,0) to [out=90,in=270] (1,1.5);
\end{tikzpicture}
-
\begin{tikzpicture} [scale=.5,anchorbase]
	\draw[very thick] (0,0) to [out=90,in=270] (1,1.5);
	\draw[overcross] (1,0) to [out=90,in=270] (0,1.5);
	\draw[very thick] (1,0) to [out=90,in=270] (0,1.5);
\end{tikzpicture}
=
(q - q^{-1}) \;
\begin{tikzpicture}[scale =.4,anchorbase]
	\draw[very thick] (0,0) to (0,2.25);
	\draw[very thick] (1,0) to (1,2.25);
\end{tikzpicture}
+
\frac{q^{-n} - q^n}{[n]} \;
\begin{tikzpicture}[scale =.4,anchorbase]
	\draw[very thick] (0,0) to [out=90,in=180] (.5,.75);
	\draw[very thick] (1,0) to [out=90,in=0] (.5,.75);
	\draw[very thick] (.5,1.5) to [out=180,in=270] (0,2.25);
	\draw[very thick] (.5,1.5) to [out=0,in=270] (1,2.25);
\end{tikzpicture}
=
(q-q^{-1}) 
\left( \;
\begin{tikzpicture}[scale =.4,anchorbase]
	\draw[very thick] (0,0) to (0,2.25);
	\draw[very thick] (1,0) to (1,2.25);
\end{tikzpicture}
-
\begin{tikzpicture}[scale =.4,anchorbase]
	\draw[very thick] (0,0) to [out=90,in=180] (.5,.75);
	\draw[very thick] (1,0) to [out=90,in=0] (.5,.75);
	\draw[very thick] (.5,1.5) to [out=180,in=270] (0,2.25);
	\draw[very thick] (.5,1.5) to [out=0,in=270] (1,2.25);
\end{tikzpicture}
\; \right).
\end{equation}

\item Since $1+ \frac{r-r^{-1}}{z} = 1-[2n+1] = \qdim_1$, this relation follows by
observing that
$
\frac{1}{\qdim_1}
\begin{tikzpicture}[scale =.3,anchorbase]
	\draw[very thick] (0,0) to [out=90,in=180] (.5,.75);
	\draw[very thick] (1,0) to [out=90,in=0] (.5,.75);
	\draw[very thick] (.5,1.5) to [out=180,in=270] (0,2.25);
	\draw[very thick] (.5,1.5) to [out=0,in=270] (1,2.25);
\end{tikzpicture}
$
is an idempotent in $\Web(\spn)$.

\item First, a straightforward (but tedious) computation shows that

\begin{equation}
\begin{tikzpicture}[scale=.75,anchorbase,tinynodes]
	\draw[very thick] (.5,-1) to [out=90,in=210] (.25,.25);
	\draw[very thick] (1,-1) to [out=90,in=-30] (.25,.25);
	\draw[very thick] (.25,.25) to [out=90,in=-90] node[left=-2pt]{$2$} (.25,.75);
	\draw[overcross] (0,-1) to [out=90,in=-90] (1,.75);
	\draw[very thick] (0,-1) to [out=90,in=-90] (1,.75);
\end{tikzpicture}
=
-q
\begin{tikzpicture}[scale=.75,anchorbase,tinynodes]
	\draw[very thick] (0,-1) to [out=90,in=210] (.25,.25);
	\draw[very thick] (.5,-1) to [out=90,in=210] (.75,-.5);
	\draw[very thick] (1,-1) to [out=90,in=-30] (.75,-.5);
	\draw[very thick] (.75,-.5) to node[right=-2pt]{$2$} (.75,0);
	\draw[very thick] (.75,0) to [out=150,in=-30] (.25,.25);
	\draw[very thick] (.75,0) to [out=30,in=-90] (1,.75);
	\draw[very thick] (.25,.25) to node[left=-2pt]{$2$} (.25,.75);
\end{tikzpicture}
+
\begin{tikzpicture}[scale=.75,anchorbase,tinynodes]
	\draw[very thick] (0,-1) to [out=90,in=210] (.5,-.25);
	\draw[very thick] (.5,-1) to [out=90,in=210] (.75,-.625);
	\draw[very thick] (1,-1) to [out=90,in=-30] (.75,-.625);
	\draw[very thick] (.75,-.625) to [out=90,in=330] node[right=-2pt]{$2$} (.5,-.25);
	\draw[very thick] (.5,-.25) to node[left=-2pt]{$3$} (.5,.25);
	\draw[very thick] (.5,.25) to [out=30,in=270](.875,.75);
	\draw[very thick] (.5,.25) to [out=150,in=270] node[left=-2pt,pos=.8]{$2$} (.125,.75);
\end{tikzpicture}
-\frac{q^{-n+1}}{[n-1]}
\begin{tikzpicture}[scale=.75,anchorbase,tinynodes]
	\draw[very thick] (0,-1) to [out=90,in=210] (.5,-.25);
	\draw[very thick] (.5,-1) to [out=90,in=210] (.75,-.625);
	\draw[very thick] (1,-1) to [out=90,in=-30] (.75,-.625);
	\draw[very thick] (.75,-.625) to [out=90,in=330] node[right=-2pt]{$2$} (.5,-.25);
	\draw[very thick] (.5,-.25) to node[left=-3pt]{$1$} (.5,.25);
	\draw[very thick] (.5,.25) to [out=30,in=270](.875,.75);
	\draw[very thick] (.5,.25) to [out=150,in=270] node[left=-2pt,pos=.8]{$2$} (.125,.75);
\end{tikzpicture}
\end{equation}
so
\begin{equation} \label{eq:foobarrrr}
\begin{tikzpicture}[scale=.75,anchorbase,tinynodes]
    \draw[very thick] (.5,-1) to [out=90,in=210] (.25,.25);
    \draw[very thick] (1,-1) to [out=90,in=-30] (.25,.25);
    \draw[very thick] (.25,.25) to [out=90,in=-90] node[left=-2pt]{$2$} (.25,.75);
    \draw[overcross] (0,-1) to [out=90,in=-90] (1,.75);
    \draw[very thick] (0,-1) to [out=90,in=-90] (1,.75) to (1,1.25);
    \draw[very thick] (.25,.75) to [out=150,in=-90] (0,1.25);
    \draw[very thick] (.25,.75) to [out=30,in=-90] (.5,1.25);
\end{tikzpicture}
=-q
\begin{tikzpicture}[scale=.75,anchorbase,tinynodes]
    \draw[very thick] (0,-1) to [out=90,in=210] (.25,.25);
    \draw[very thick] (.5,-1) to [out=90,in=210] (.75,-.5);
    \draw[very thick] (1,-1) to [out=90,in=-30] (.75,-.5);
    \draw[very thick] (.75,-.5) to node[right]{$2$} (.75,0);
    \draw[very thick] (.75,0) to [out=150,in=-30] (.25,.25);
    \draw[very thick] (.75,0) to [out=30,in=-90] (1,1.25);
    \draw[very thick] (.25,.25) to node[left]{$2$} (.25,.75);
    \draw[very thick] (.25,.75) to [out=150,in=-90] (0,1.25);
    \draw[very thick] (.25,.75) to [out=30,in=-90] (.5,1.25);
\end{tikzpicture}
+
\begin{tikzpicture}[scale=.75,anchorbase,tinynodes]
	\draw[very thick] (0,-1) to [out=90,in=210] (.5,-.125);
	\draw[very thick] (.5,-1) to [out=90,in=210] (.75,-.625);
	\draw[very thick] (1,-1) to [out=90,in=-30] (.75,-.625);
	\draw[very thick] (.75,-.625) to [out=90,in=330] node[right=-2pt]{$2$} (.5,-.125);
	\draw[very thick] (.5,-.125) to node[left=-2pt]{$3$} (.5,.375);
	\draw[very thick] (.5,.375) to [out=150,in=270] node[left=-2pt]{$2$} (.25,.875);
	\draw[very thick] (.25,.875) to [out=150,in=-90] (0,1.25);
	\draw[very thick] (.25,.875) to [out=30,in=-90] (.5,1.25);
	\draw[very thick] (.5,.375) to [out=30,in=270] (1,1.25);
\end{tikzpicture}
-\frac{q^{-n+1}}{[n-1]}
\begin{tikzpicture}[scale=.75,anchorbase,tinynodes]
	\draw[very thick] (0,-1) to [out=90,in=210] (.5,-.125);
	\draw[very thick] (.5,-1) to [out=90,in=210] (.75,-.625);
	\draw[very thick] (1,-1) to [out=90,in=-30] (.75,-.625);
	\draw[very thick] (.75,-.625) to [out=90,in=330] node[right=-2pt]{$2$} (.5,-.125);
	\draw[very thick] (.5,-.125) to node[left=-3pt]{$1$} (.5,.375);
	\draw[very thick] (.5,.375) to [out=150,in=270] node[left=-2pt]{$2$} (.25,.875);
	\draw[very thick] (.25,.875) to [out=150,in=-90] (0,1.25);
	\draw[very thick] (.25,.875) to [out=30,in=-90] (.5,1.25);
	\draw[very thick] (.5,.375) to [out=30,in=270] (1,1.25);
\end{tikzpicture}.
\end{equation}
Because the right-hand side of \eqref{eq:foobarrrr} is rotationally symmetric, we have
\begin{equation}\label{eq:cruxofR3}
\begin{tikzpicture}[scale=.75,anchorbase,tinynodes]
    \draw[very thick] (.5,-1) to [out=90,in=210] (.25,.25);
    \draw[very thick] (1,-1) to [out=90,in=-30] (.25,.25);
    \draw[very thick] (.25,.25) to [out=90,in=-90] node[left=-2pt]{$2$} (.25,.75);
    \draw[overcross] (0,-1) to [out=90,in=-90] (1,.75);
    \draw[very thick] (0,-1) to [out=90,in=-90] (1,.75) to (1,1.25);
    \draw[very thick] (.25,.75) to [out=150,in=-90] (0,1.25);
    \draw[very thick] (.25,.75) to [out=30,in=-90] (.5,1.25);
\end{tikzpicture}
=
\begin{tikzpicture}[scale=.75,anchorbase,tinynodes,rotate=180]
    \draw[very thick] (.5,-1) to [out=90,in=210] (.25,.25);
    \draw[very thick] (1,-1) to [out=90,in=-30] (.25,.25);
    \draw[very thick] (.25,.25) to [out=90,in=-90] node[right=-2pt]{$2$} (.25,.75);
    \draw[overcross] (0,-1) to [out=90,in=-90] (1,.75);
    \draw[very thick] (0,-1) to [out=90,in=-90] (1,.75) to (1,1.25);
    \draw[very thick] (.25,.75) to [out=150,in=-90] (0,1.25);
    \draw[very thick] (.25,.75) to [out=30,in=-90] (.5,1.25);
\end{tikzpicture},
\end{equation}
which gives
\begin{equation}
\begin{tikzpicture}[scale=.55,anchorbase,tinynodes]
	\draw[very thick] (.8,-1) to [out=90,in=270] (0,.5);
	\draw[very thick] (1.6,-1) to [out=90,in=270] (0,2);
	\draw[overcross] (0,.5) to [out=90,in=270] (.8,2);
	\draw[very thick] (0,.5) to [out=90,in=270] (.8,2);
	\draw[overcross] (0,-1) to [out=90,in=270] (1.6,2);
	\draw[very thick] (0,-1) to [out=90,in=270] (1.6,2);
\end{tikzpicture}
=q
\begin{tikzpicture}[scale=.55,anchorbase,tinynodes]
	\draw[very thick] (.8,-1) to [out=90,in=270] (0,1.2) to (0,2);
	\draw[very thick] (1.6,-1) to [out=90,in=270] (.8,1.2) to (.8,2);
	\draw[overcross] (0,-1) to [out=90,in=270] (1.6,1.2);
	\draw[very thick] (0,-1) to [out=90,in=270] (1.6,1.2) to (1.6,2);
\end{tikzpicture}
+\frac{q^{-n}}{[n]}
\begin{tikzpicture}[scale=.55,anchorbase,tinynodes]
	\draw[very thick] (.8,-1) to [out=90,in=270] (0,.4);
	\draw[very thick] (1.6,-1) to [out=90,in=270] (.8,.4);
	\draw[overcross] (0,-1) to [out=90,in=270] (1.6,.8);
	\draw[very thick] (0,-1) to [out=90,in=270] (1.6,.8);
	\draw[very thick] (0,.4) to [out=90,in=180] (.4,.8);
	\draw[very thick] (.8,.4) to [out=90,in=0] (.4,.8);
	\draw[very thick] (.4,1.6) to [out=180,in=270] (0,2);
	\draw[very thick] (.4,1.6) to [out=0,in=270] (.8,2);
	\draw[very thick] (1.6,.8) to (1.6,2);
\end{tikzpicture}
-
\begin{tikzpicture}[scale=.75,anchorbase,tinynodes]
    \draw[very thick] (.5,-1) to [out=90,in=210] (.25,.25);
    \draw[very thick] (1,-1) to [out=90,in=-30] (.25,.25);
    \draw[very thick] (.25,.25) to [out=90,in=-90] node[left=-2pt]{$2$} (.25,.75);
    \draw[overcross] (0,-1) to [out=90,in=-90] (1,.75);
    \draw[very thick] (0,-1) to [out=90,in=-90] (1,.75) to (1,1.25);
    \draw[very thick] (.25,.75) to [out=150,in=-90] (0,1.25);
    \draw[very thick] (.25,.75) to [out=30,in=-90] (.5,1.25);
\end{tikzpicture}
=q
\begin{tikzpicture}[scale=.55,rotate=180,anchorbase,tinynodes]
	\draw[very thick] (.8,-1) to [out=90,in=270] (0,1.2) to (0,2);
	\draw[very thick] (1.6,-1) to [out=90,in=270] (.8,1.2) to (.8,2);
	\draw[overcross] (0,-1) to [out=90,in=270] (1.6,1.2);
	\draw[very thick] (0,-1) to [out=90,in=270] (1.6,1.2) to (1.6,2);
\end{tikzpicture}
+\frac{q^{-n}}{[n]}
\begin{tikzpicture}[scale=.55,rotate=180,anchorbase,tinynodes]
	\draw[very thick] (.8,-1) to [out=90,in=270] (0,.4);
	\draw[very thick] (1.6,-1) to [out=90,in=270] (.8,.4);
	\draw[overcross] (0,-1) to [out=90,in=270] (1.6,.8);
	\draw[very thick] (0,-1) to [out=90,in=270] (1.6,.8);
	\draw[very thick] (0,.4) to [out=90,in=180] (.4,.8);
	\draw[very thick] (.8,.4) to [out=90,in=0] (.4,.8);
	\draw[very thick] (.4,1.6) to [out=180,in=270] (0,2);
	\draw[very thick] (.4,1.6) to [out=0,in=270] (.8,2);
	\draw[very thick] (1.6,.8) to (1.6,2);
\end{tikzpicture}
-
\begin{tikzpicture}[scale=.75,anchorbase,tinynodes,rotate=180]
    \draw[very thick] (.5,-1) to [out=90,in=210] (.25,.25);
    \draw[very thick] (1,-1) to [out=90,in=-30] (.25,.25);
    \draw[very thick] (.25,.25) to [out=90,in=-90] node[right=-2pt]{$2$} (.25,.75);
    \draw[overcross] (0,-1) to [out=90,in=-90] (1,.75);
    \draw[very thick] (0,-1) to [out=90,in=-90] (1,.75) to (1,1.25);
    \draw[very thick] (.25,.75) to [out=150,in=-90] (0,1.25);
    \draw[very thick] (.25,.75) to [out=30,in=-90] (.5,1.25);
\end{tikzpicture}
=
\begin{tikzpicture}[scale=.55,anchorbase,tinynodes]
	\draw[very thick] (1.6,-1) to [out=90,in=270] (0,2);
	\draw[very thick] (1.6,.5) to [out=90,in=270] (.8,2);
	\draw[overcross] (.8,-1) to [out=90,in=270] (1.6,.5);
	\draw[very thick] (.8,-1) to [out=90,in=270] (1.6,.5);
	\draw[overcross] (0,-1) to [out=90,in=270] (1.6,2);
	\draw[very thick] (0,-1) to [out=90,in=270] (1.6,2);
\end{tikzpicture} .
\end{equation}
Here the second equality holds by using \ref{eq:cruxofR3} to rewrite the third term
and by applying the Reidemeister II (RII) move twice to the second term.
Note that the RII move holds by Porism \ref{por:braiding}.

\item This holds by a height exchange isotopy in $\Web(\spn)$.

\item This holds via planar isotopy in $\Web(\spn)$.

\item This follows from the RII move and planar isotopy.

\item This, and relation (8), follow from the framed Reidemeister I moves:
\[
\begin{tikzpicture}[scale=.5,yscale=-1,anchorbase]
	\draw[very thick] (.8,-.4) to [out=180,in=270] (0,1);
	\draw[overcross] (0,-1) to [out=90,in=180] (.8,.4);
	\draw[very thick] (0,-1) to [out=90,in=180] (.8,.4);
	\draw[very thick] (.8,.4) to [out=0,in=90] (1.1,0) to [out=270,in=0] (.8,-.4);
\end{tikzpicture}
= \left( q +\frac{q^{-n}}{[n]}\left(-\frac{[n][2n+2]}{[n+1]}\right) \right) \
\begin{tikzpicture}[scale=.5,anchorbase]
	\draw[very thick] (0,-1) to (0,1);
\end{tikzpicture}
=
-q^{-(2n+1)} \
\begin{tikzpicture}[scale=.5,anchorbase]
	\draw[very thick] (0,-1) to (0,1);
\end{tikzpicture}
\quad \text{and} \quad
\begin{tikzpicture}[scale=.5,anchorbase]
	\draw[very thick] (.8,-.4) to [out=180,in=270] (0,1);
	\draw[overcross] (0,-1) to [out=90,in=180] (.8,.4);
	\draw[very thick] (0,-1) to [out=90,in=180] (.8,.4);
	\draw[very thick] (.8,.4) to [out=0,in=90] (1.1,0) to [out=270,in=0] (.8,-.4);
\end{tikzpicture}
= 
-q^{2n+1}
\begin{tikzpicture}[scale=.5,anchorbase]
	\draw[very thick] (0,-1) to (0,1);
\end{tikzpicture} \ .
\]
(Note that the latter follows from the former, 
since Porism \ref{por:braiding} shows that the positive braiding is obtained from the negative by replacing $q$ with $q^{-1}$.)
\end{enumerate}
\end{proof}

We now prove fullness of $\Phi$.

\begin{thm}\label{thm:Full}
The functor $\Phi \colon \Web(\spn) \to \FRep(U_q(\spn))$ is full.
\end{thm}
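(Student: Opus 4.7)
The plan is to assemble the ingredients already in place: Corollary \ref{cor:reduction} reduces fullness of $\Phi$ to fullness of its restriction to $\SWeb(\spn)$, so it suffices to show that
\[
\Phi \colon \Hom_{\SWeb(\spn)}(1^{\otimes k}, 1^{\otimes l}) \to \Hom_{U_q(\spn)}(V_1^{\otimes k}, V_1^{\otimes l})
\]
is surjective for all $k, l \ge 0$. A parity check shows that both Hom spaces vanish when $k + l$ is odd (in $\SWeb(\spn)$, the total boundary label mod $2$ is preserved by all generating moves, while on the representation side $V_1^{\otimes(k+l)}$ has no zero weight space in that case), so we may assume $k + l = 2m$.

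First I would reduce the general Hom space to an endomorphism space via the pivotal structure. Both $\SWeb(\spn)$ and $\FRep(U_q(\spn))$ are pivotal with the object $1$ (respectively $V_1$) self-dual, so bending $(l-k)/2$ strands across the boundary (composing with appropriate cup/cap morphisms in the pivotal closure) furnishes a natural isomorphism
\[
\Hom_{\SWeb(\spn)}(1^{\otimes k}, 1^{\otimes l}) \;\xrightarrow{\ \sim\ }\; \End_{\SWeb(\spn)}(1^{\otimes m}),
\]
and likewise for $\SRep(U_q(\spn))$. Since $\Phi$ is a pivotal functor, these bending isomorphisms intertwine $\Phi$ on either side. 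Thus fullness of $\Phi$ on $\Hom_{\SWeb(\spn)}(1^{\otimes k}, 1^{\otimes l})$ follows from fullness on $\End_{\SWeb(\spn)}(1^{\otimes m})$.

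Next I would appeal to the preceding proposition: the map $\rho \colon \BMW_k(-q^{2n+1}, q - q^{-1}) \to \End_{\SWeb(\spn)}(1^{\otimes k})$ is well-defined and satisfies $\psi = \Phi \circ \rho$, where $\psi$ is the quantum Brauer--Schur--Weyl map. By \cite[Theorem 1.5]{Hu} (cited in \S\ref{subsec:previousC}), $\psi$ is surjective onto $\End_{U_q(\spn)}(V_1^{\otimes k})$. The factorization $\psi = \Phi \circ \rho$ immediately forces $\Phi$ to be surjective on $\End(1^{\otimes m})$, which by the previous paragraph gives surjectivity on every $\Hom_{\SWeb(\spn)}(1^{\otimes k}, 1^{\otimes l})$. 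Invoking Corollary \ref{cor:reduction} completes the proof.

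There is no real obstacle here; all the genuine work has been done either in the construction of $\rho$, in the pivotal structure on $\Web(\spn)$ coming from Theorem \ref{thm:functor}, or in the classical surjectivity of the quantum Brauer--Schur--Weyl map. The only subtlety worth double-checking is that the pivotal bending identifications really are $\Phi$-equivariant, which is immediate once one recalls from the proof of Theorem \ref{thm:functor} that the cup and cap morphisms in $\Web(\spn)$ are sent to (scalar multiples of) the chosen (co)unit morphisms in $\FRep(U_q(\spn))$, with scalars that cancel in any closed bending operation.
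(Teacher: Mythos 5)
Your proof is correct and takes essentially the same route as the paper: reduce to $\SWeb(\spn)$ via Corollary \ref{cor:reduction}, use the pivotal structure (what the paper calls ``adjunction'') to reduce to endomorphism spaces, and then appeal to the surjectivity of the quantum Brauer--Schur--Weyl map $\psi = \Phi \circ \rho$; the odd-parity case is handled by noting the codomain vanishes. The only difference is cosmetic ordering, and your extra remark that the domain also vanishes in the odd case is true but not needed for surjectivity.
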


\begin{proof}
The argument is analogous to the proof of \cite[Theorem 3.5]{RoseTatham}, 
so we argue succinctly.

As noted above, the homomorphism in \eqref{eq:BMWmap} is surjective. 
Since this factors through 
\[
\Phi \colon  \End_{\Web(\spn)}(1^{\otimes k}) \to  \End_{U_q(\spn)}(V_1^{\otimes k}),
\] 
this latter homomorphism is surjective as well. 
Using adjunction in $\SWeb(\spn)$ and $\SRep(U_q(\spn))$, this implies that 
\[
\Phi \colon  \Hom_{\Web(\spn)}(1^{\otimes k_1},1^{\otimes k_2}) \to \Hom_{U_q(\spn)}(V_1^{\otimes k_1},V_1^{\otimes k_2}),
\]
is surjective when $k_1+k_2$ is even.
(Note that this map is trivially surjective when 
$k_1+k_2$ is odd since in that case the codomain is zero.)
Thus, $\Phi|_{\SWeb(\spn)}$ is full, and the result then follows from Corollary \ref{cor:reduction}.
\end{proof}

\subsection{The quadrivalent category}\label{sec:quadcat}

We now begin the arguments necessary to show that $\Phi$ is faithful.
The first step is the following ``change of basis'' in the $\Hom$-space $\End_{\Web(\spn)} (1 \otimes 1)$.

\begin{defn}\label{def:quad}
Let $\Web^{\times}(\spn)$ be the $\C(q)$-linear pivotal subcategory of $\SWeb(\spn)$ generated by 
$\id_1$ and the rotationally symmetric quadrivalent vertex:
\begin{equation}\label{eq:quad}
\begin{tikzpicture}[scale=.35, tinynodes, anchorbase]
	\draw[very thick] (-1,-1) to (1,1);
	\draw[very thick] (-1,1) to (1,-1);
\end{tikzpicture}
=
\begin{tikzpicture}[scale=.35, tinynodes, anchorbase]
	\draw[very thick] (-1,0) node[below,yshift=2pt,xshift=-2pt]{$1$} to (0,1);
	\draw[very thick] (1,0) node[below,yshift=2pt,xshift=2pt]{$1$} to (0,1);
	\draw[very thick] (0,2.5) to (-1,3.5) node[above,yshift=-4pt,xshift=-2pt]{$1$};
	\draw[very thick] (0,2.5) to (1,3.5) node[above,yshift=-4pt,xshift=2pt]{$1$};
	\draw[very thick] (0,1) to node[right,xshift=-2pt]{$2$} (0,2.5);
\end{tikzpicture}
+
\frac{[n-1]}{[n]} \;
\begin{tikzpicture}[scale=.3, tinynodes, anchorbase]
	\draw[very thick] (-1,0) node[below,yshift=2pt]{$1$} to [out=90,in=180] (0,1) 
		to [out=0,in=90] (1,0);
	\draw[very thick] (-1,3) node[above,yshift=-3pt]{$1$} to [out=270,in=180] (0,2)
		to [out=0,in=270] (1,3);
\end{tikzpicture}
\stackrel{(\text{\ref{eq:spn}e})}{=}
\begin{tikzpicture}[scale=.35, rotate=90, tinynodes, anchorbase]
	\draw[very thick] (-1,0) node[below,yshift=2pt,xshift=2pt]{$1$} to (0,1);
	\draw[very thick] (1,0) node[above,yshift=-4pt,xshift=2pt]{$1$} to (0,1);
	\draw[very thick] (0,2.5) to (-1,3.5) node[below,yshift=2pt,xshift=-2pt]{$1$};
	\draw[very thick] (0,2.5) to (1,3.5) node[above,yshift=-4pt,xshift=-2pt]{$1$};
	\draw[very thick] (0,1) to node[below,yshift=2pt]{$2$} (0,2.5);
\end{tikzpicture}
+
\frac{[n-1]}{[n]} \;
\begin{tikzpicture}[scale=.35, tinynodes, anchorbase]
	\draw[very thick] (-1,-1) node[below,yshift=2pt]{$1$} to (-1,1) node[above,yshift=-2pt]{$1$};
	\draw[very thick] (1,-1) node[below,yshift=2pt]{$1$} to (1,1) node[above,yshift=-2pt]{$1$};
\end{tikzpicture}
\end{equation}
\end{defn}

\begin{conv}
We will omit labels on any strands involved in quadrivalent vertices, 
since they are $1$-labeled by definition.
\end{conv}

Note that morphisms in $\Web^{\times}(\spn)$ are simply given by $\C(q)$-linear combinations of 
quadrivalent graphs, modulo the relations on such graphs implied by \eqref{eq:spn}. 
We will use the shorthand
\[
\begin{tikzpicture}[scale=.35, anchorbase]
	\draw[very thick] (-1,-2) to (-1,2);
	\draw[very thick] (1,-2) to (1,2);
	\draw[very thick,fill=white] (-2,-1) rectangle (2,1);
	\node at (0,0) {$\leq \ell$};
	\node at (0,-1.5) {\scriptsize$\cdots$};
	\node at (0,1.5) {\scriptsize$\cdots$};
\end{tikzpicture}
\]
to denote any morphism in $\Hom_{\Web^{\times}(\spn)}(1^{\otimes k_1},1^{\otimes k_2})$ that is a linear combination 
of quadrivalent graphs with at most $\ell$ vertices. 

We now study $\Hom$-spaces in $\Web^{\times}(\spn)$. 
Recursively define the \emph{half twist} elements $\HT_k \in \End_{\Web^{\times}(\spn)}(1^{\otimes k})$ via
\begin{equation}\label{eq:HTdef}
\HT_1 = \id_1
\quad , \quad
\begin{tikzpicture}[scale=.35, anchorbase]
	\draw[very thick] (-1,-2) to (-1,2);
	\draw[very thick] (1,-2) to (1,2);
	\draw[very thick,fill=white] (-2,-1) rectangle (2,1);
	\node at (0,0) {$\HT_k$};
	\node at (0,-1.5) {\scriptsize$\cdots$};
	\node at (0,1.5) {\scriptsize$\cdots$};
\end{tikzpicture}
=
\begin{tikzpicture}[scale=.35, anchorbase]
	\draw[very thick] (-1,-2) to (-1,1) to [out=90,in=270] (1,4);
	\draw[very thick] (1,-2) to (1,1) to [out=90,in=270] (3,4);
	\draw[very thick,fill=white] (-2,-1) rectangle (2,1);
	\node at (0,0) {$\HT_{k{-}1}$};
	\node at (0,-1.5) {\scriptsize$\cdots$};
	\node at (0,1.25) {\scriptsize$\cdots$};
	\draw[very thick] (3,-2) to (3,1) to [out=90,in=270] (-1,4);
\end{tikzpicture}
\end{equation}
We now record a crucial identity, which will have a number of important consequences. 
We will make use of the full split vertices from \eqref{eq:fullsplit}.

\begin{lem}\label{lem:HT}
For $1 \leq k \leq n$:
\begin{equation}\label{eq:HT}
\HT_{k+1} = 
\begin{tikzpicture}[scale=.35, tinynodes, anchorbase]
	\draw[very thick] (-1,2) to (0,.5);
	\node at (0,1.5) {$\mydots$};
	\draw[very thick] (1,2) to (0,.5);
	\draw[very thick] (0,-.5) to node[right,xshift=-2pt]{$k{+}1$} (0,.5);
	\draw[very thick] (-1,-2) to (0,-.5);
	\node at (0,-1.5) {$\mydots$};
	\draw[very thick] (1,-2) to (0,-.5);
\end{tikzpicture}
+
\begin{tikzpicture}[scale=.35, smallnodes, anchorbase]
	\draw[very thick] (-1,-2) to (-1,2);
	\draw[very thick] (1,-2) to (1,2);
	\draw[very thick,fill=white] (-3,-1) rectangle (3,1);
	\node at (0,0) {$\leq \frac{1}{2}k(k{+}1){-}1$};
	\node at (0,-1.5) {\scriptsize$\cdots$};
	\node at (0,1.5) {\scriptsize$\cdots$};
\end{tikzpicture}
\end{equation}
\end{lem}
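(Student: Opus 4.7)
We proceed by induction on $k$. For the base case $k=1$, the definition \eqref{eq:quad} of the quadrivalent vertex gives $\HT_2 = \sigma_1$ equal to the split-merge through a $2$-labeled strand plus $\tfrac{[n-1]}{[n]}$ times a cup-cap; the cup-cap has zero quadrivalent vertices, matching the bound $\tfrac{1}{2}(1)(2)-1 = 0$.

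For the inductive step, use the recursion \eqref{eq:HTdef} to write
\[
\HT_{k+1} = (\sigma_1\sigma_2\cdots\sigma_k) \circ (\HT_k \otimes \id_1),
\]
and substitute the inductive formula $\HT_k = \iota_k\pi_k + L_k$, where $\iota_k\pi_k$ denotes the split-merge through $k$ and $L_k$ is supported on quadrivalent diagrams with at most $\tfrac{1}{2}k(k-1)-1$ vertices. The term $(\sigma_1\cdots\sigma_k) \circ (L_k \otimes \id_1)$ then contributes diagrams with at most $k + \tfrac{1}{2}k(k-1) - 1 = \tfrac{1}{2}k(k+1) - 1$ quadrivalent vertices, fitting inside the desired lower-order bucket. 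It therefore remains to identify
\[
(\sigma_1\sigma_2\cdots\sigma_k) \circ (\iota_k\pi_k \otimes \id_1) = \iota_{k+1}\pi_{k+1} + (\text{at most } \tfrac{1}{2}k(k+1)-1 \text{ vertices}).
\]

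The plan for the last identity is to process the crossings $\sigma_k, \sigma_{k-1}, \ldots, \sigma_1$ from right to left. At each stage, apply the quadrivalent identity \eqref{eq:quad} to the current rightmost crossing $\sigma_j$, writing it as a ``merge-split-through-$2$'' contribution plus a cup-cap contribution; the cup-cap contribution strictly decreases the quadrivalent vertex count and thus falls into the lower-order bucket. For the merge-split-through-$2$ contribution, I would use flow-vertex associativity \eqref{eq:generalassoc} (Lemma \ref{lem:extrarels}) to regroup the newly introduced $2$-labeled edge with the adjacent trivalent structure, and then apply the rewriting \eqref{eq:HIredux} of (\text{\ref{eq:spn}e}) to convert the resulting $H$-shape into a merge-split through an edge of one larger label, absorbing the new material into the growing trivalent tree. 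Iterating across all $k$ crossings progressively extends the internal merge-split from label $k$ to label $k+1$, producing $\iota_{k+1}\pi_{k+1}$ as the principal term. When $k+1 = n+1$, the leading term $\iota_{n+1}\pi_{n+1}$ is zero by convention, which will give the application to reducing $\HT_{n+1}$.

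The principal obstacle is the bookkeeping for the cumulative lower-order corrections. Each application of \eqref{eq:HIredux} introduces terms with coefficients of the form $\tfrac{[n-j]}{[n-j+1]}$ and $\tfrac{[n-j]}{[n]}$, and one must verify that all such corrections---together with the cup-cap contributions from each $\sigma_j$---have quadrivalent vertex counts bounded by $\tfrac{1}{2}k(k+1)-1$. Since cup-cap substitutions strictly decrease the count and flow-vertex rearrangements preserve it, careful accounting at each stage should confirm the bound and complete the induction.
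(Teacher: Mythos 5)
Your proposal is essentially correct and follows the same route as the paper's proof: induct on $k$ via the recursion $\HT_{k+1} = (\sigma_1\cdots\sigma_k)\circ(\HT_k\otimes\id)$, substitute the inductive decomposition of $\HT_k$, and resolve the $k$ new crossings one at a time using the definition of the quadrivalent vertex \eqref{eq:quad} together with the HI relation (\text{\ref{eq:spn}e}) (in the form \eqref{eq:HIredux}) and flow-vertex associativity \eqref{eq:generalassoc}. The paper makes the ``crossing the new strand over a full split vertex'' step into a self-contained auxiliary claim $Q(k)$, proved by its own induction; your ``process the crossings $\sigma_k,\ldots,\sigma_1$'' step is exactly the inline version of that claim, and both proofs ultimately rest on the same relations. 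One bookkeeping point you gesture at but should make explicit: the HI relation (\text{\ref{eq:spn}e}) produces additional \emph{trivalent} merge-split terms (through smaller labels like $k-1$), not just cup-caps, and these must be re-expanded into quadrivalent diagrams using the already-established cases $P(\ell)$ for $\ell < k+1$ before they can be absorbed into the ``$\le\tfrac{1}{2}k(k+1)-1$ vertices'' bucket. (Also minor: with the paper's normalization $\pi_k$ carries a factor $\tfrac{1}{[k]!}$, so the leading term in the lemma is $[k+1]!\,\iota_{k+1}\pi_{k+1}$ rather than $\iota_{k+1}\pi_{k+1}$; this is a notational slip, not a mathematical one.)
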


\begin{proof}
Let $P(k)$ be the equality in the statement of the lemma, and let
\begin{equation}
Q(k) :
\begin{tikzpicture}[scale=.35, tinynodes, anchorbase]
	\draw[very thick] (0,-2) node[below,yshift=2pt]{$k$} to (0,-1);
	\draw[very thick] (0,-1) to (1,2);
	\draw[very thick] (0,-1) to (-1,2);
	\draw[very thick] (1,-2) to (1,.5) to [out=90,in=270] (-2,2);
	\node at (0,.5) {$\mydots$};
\end{tikzpicture}
\ =
\begin{tikzpicture}[scale=.35, tinynodes, anchorbase]
	\draw[very thick] (-1,2) to (0,0);
	\node at (0,1) {$\mydots$};
	\draw[very thick] (1,2) to (0,0);
	\draw[very thick] (0,-1) to node[right,xshift=-2pt]{$k{+}1$} (0,0);
	\draw[very thick] (-.5,-2) node[below,yshift=2pt]{$k$} to (0,-1);
	\draw[very thick] (.5,-2) to (0,-1);
\end{tikzpicture}
+
\begin{tikzpicture}[scale=.35, smallnodes, anchorbase]
	\draw[very thick] (-1.5,1.5) to (-1.5,2);
	\draw[very thick] (1.5,-2) to (1.5,2);
	\draw[very thick] (-.5,-2) node[below,yshift=2pt]{$k$} to (-.5,-1.5);
	\draw[very thick] (-.5,-1.5) to (-1.5,0);
	\node at (-.5,-.5) {$\mydots$};
	\draw[very thick] (-.5,-1.5) to (.5,0);
	\draw[very thick,fill=white] (-2,0) rectangle (2,1.5);
	\node at (0,.75) {$\leq k{-}1$};
	\node at (0,1.75) {\scriptsize$\cdots$};
\end{tikzpicture}.
\end{equation}
We will prove $Q(k)$ by induction, 
and then deduce our result by showing that $P(k-1)$ and $Q(k-1)$ imply $P(k)$.
Note that $Q(k)$ implies the relation
\begin{equation}
Q'(k) :
\begin{tikzpicture}[scale=.35, tinynodes, anchorbase]
	\draw[very thick] (-1,2) to (0,0);
	\node at (0,1) {$\mydots$};
	\draw[very thick] (1,2) to (0,0);
	\draw[very thick] (0,-1) to node[right,xshift=-2pt]{$k{+}1$} (0,0);
	\draw[very thick] (-.5,-2) node[below,yshift=2pt]{$k$} to (0,-1);
	\draw[very thick] (.5,-2) to (0,-1);
\end{tikzpicture}
=
\begin{tikzpicture}[scale=.35, smallnodes, anchorbase]
	\draw[very thick] (-1.5,1.5) to (-1.5,2);
	\draw[very thick] (1.5,-2) to (1.5,2);
	\draw[very thick] (-.5,-2) node[below,yshift=2pt]{$k$} to (-.5,-1.5);
	\draw[very thick] (-.5,-1.5) to (-1.5,0);
	\node at (-.5,-.5) {$\mydots$};
	\draw[very thick] (-.5,-1.5) to (.5,0);
	\draw[very thick,fill=white] (-2,0) rectangle (2,1.5);
	\node at (0,.75) {$\leq k$};
	\node at (0,1.75) {\scriptsize$\cdots$};
\end{tikzpicture}
\end{equation}
which will be used in the induction below.

First, note that $Q(1)$ holds by the definition of the quadrivalent vertex.
Thus, we assume $Q=Q(k-1)$ and $Q'=Q'(k-1)$ and compute
\begin{equation}
\begin{aligned}
\begin{tikzpicture}[scale=.35, tinynodes, anchorbase]
	\draw[very thick] (0,-2) node[below,yshift=2pt]{$k$} to (0,-1);
	\draw[very thick] (0,-1) to (1,2);
	\draw[very thick] (0,-1) to (-1,2);
	\draw[very thick] (1,-2) to (1,.5) to [out=90,in=270] (-2,2);
	\node at (0,.5) {$\mydots$};
\end{tikzpicture} \
&\stackrel{Q}{=}
\begin{tikzpicture}[scale=.35, tinynodes, anchorbase]
	\draw[very thick] (-1,2) to (0,0);
	\node at (0,1) {$\mydots$};
	\draw[very thick] (1,2) to (0,0);
	\draw[very thick,directed=.6] (0,-1) to node[left,xshift=2pt]{$k$} (0,0);
	\draw[very thick] (0,-1) [out=330,in=90] to (1,-3);
	\draw[very thick] (0,-2) to [out=150,in=210] (0,-1);
	\draw[very thick] (0,-2) to [out=30,in=270] (2,2);
	\draw[very thick,<-] (0,-3) node[below,yshift=2pt]{$k$} to (0,-2);
\end{tikzpicture}
+
\begin{tikzpicture}[scale=.35, tinynodes, anchorbase]
	\draw[very thick] (-1.5,1.5) to (-1.5,2);
	\draw[very thick] (1.5,-3) to [out=90,in=270] (1.5,0) to (1.5,2);
	\draw[very thick] (0,-2.5) to [out=150,in=270] (-.5,-1.5);
	\draw[very thick] (0,-2.5) to [out=30,in=270] (2.5,2);
	\draw[very thick] (0,-3) node[below,yshift=2pt]{$k$} to (0,-2.5);
	\draw[very thick] (-.5,-1.5) to (-1.5,0);
	\node at (-.5,-.5) {$\mydots$};
	\draw[very thick] (-.5,-1.5) to (.5,0);
	\draw[very thick,fill=white] (-1.875,0) rectangle (1.875,1.5);
	\node at (0,.75) {\footnotesize$\leq k{-}2$};
	\node at (0,1.75) {\scriptsize$\cdots$};
\end{tikzpicture}
\stackrel{\eqref{eq:quad}}{=}
\begin{tikzpicture}[scale=.35, tinynodes, anchorbase]
	\draw[very thick] (-1,2) to (0,0);
	\node at (0,1) {$\mydots$};
	\draw[very thick] (1,2) to (0,0);
	\draw[very thick,directed=.6] (0,-1) to node[left,xshift=2pt]{$k$} (0,0);
	\draw[very thick] (0,-1) [out=330,in=120] to (.5,-1.5);
	\draw[very thick] (.5,-1.5) to node[above,yshift=-3pt]{$2$} (1.25,-1.5);
	\draw[very thick] (1.25,-1.5) to [out=60,in=270] (2,2);
	\draw[very thick] (1.25,-1.5) to [out=300,in=90] (1.5,-3);
	\draw[very thick] (0,-2) to [out=150,in=210] node[left=-3pt]{$k{-}1$} (0,-1);
	\draw[very thick] (0,-2) to [out=30,in=240] (.5,-1.5);
	\draw[very thick,<-] (0,-3) node[below,yshift=2pt]{$k$} to (0,-2);
\end{tikzpicture}
+
\frac{[n{-}1]}{[n]}
\begin{tikzpicture}[scale=.35, tinynodes, anchorbase]
	\draw[very thick] (-1,2) to (0,0);
	\node at (0,1) {$\mydots$};
	\draw[very thick] (1,2) to (0,0);
	\draw[very thick] (0,-1) to node[left,xshift=2pt]{$k$} (0,0);
	\draw[very thick] (0,-2) to [out=150,in=210] node[left=-3pt]{$k{-}1$} (0,-1);
	\draw[very thick] (0,-2) to [out=30,in=330] (0,-1);
	\draw[very thick] (0,-3) node[below,yshift=2pt]{$k$} to (0,-2);
	\draw[very thick] (1,-3) to [out=90,in=270] (2,2);
\end{tikzpicture}
+
\begin{tikzpicture}[scale=.35, tinynodes, anchorbase]
	\draw[very thick] (-1.5,1.5) to (-1.5,2);
	\draw[very thick] (1.5,-2) to (1.5,2);
	\draw[very thick] (-.5,-2) node[below,yshift=2pt]{$k$} to (-.5,-1.5);
	\draw[very thick] (-.5,-1.5) to (-1.5,0);
	\node at (-.5,-.5) {$\mydots$};
	\draw[very thick] (-.5,-1.5) to (.5,0);
	\draw[very thick,fill=white] (-2,0) rectangle (2,1.5);
	\node at (0,.75) {\footnotesize$\leq k{-}1$};
	\node at (0,1.75) {\scriptsize$\cdots$};
\end{tikzpicture} \\
&\stackrel{(\text{\ref{eq:spn}e})}{=}
\begin{tikzpicture}[scale=.35, tinynodes, anchorbase]
	\draw[very thick] (-1,2) to (0,0);
	\node at (0,1) {$\mydots$};
	\draw[very thick] (1,2) to (0,0);
	\draw[very thick] (0,-1) to node[right,xshift=-2pt]{$k{+}1$} (0,0);
	\draw[very thick] (-.5,-2) node[below,yshift=2pt]{$k$} to (0,-1);
	\draw[very thick] (.5,-2) to (0,-1);
\end{tikzpicture}
+
\frac{[n{-}k]}{[n{-}k{+}1]}
\begin{tikzpicture}[scale=.35, tinynodes, anchorbase]
	\draw[very thick] (-1.5,3) to (0,0) node[left,yshift=3pt,xshift=1pt]{$k$};
	\draw[very thick] (.5,3) to (-.5,1);
	\node at (-.5,2) {$\mydots$};
	\draw[very thick] (1.5,3) to (0,0);
	\draw[very thick] (0,-1) to node[right,xshift=-2pt]{$k{-}1$} (0,0);
	\draw[very thick] (-.5,-2) node[below,yshift=2pt]{$k$} to (0,-1);
	\draw[very thick] (.5,-2) to (0,-1);
\end{tikzpicture}
+
\frac{[n-1]-[n{-}k]}{[n]}
\begin{tikzpicture}[scale=.35, tinynodes, anchorbase]
	\draw[very thick] (-1,2) to (0,0);
	\node at (0,1) {$\mydots$};
	\draw[very thick] (1,2) to (0,0);
	\draw[very thick] (0,-1) to node[left,xshift=2pt]{$k$} (0,0);
	\draw[very thick] (0,-2) to [out=150,in=210] node[left=-3pt]{$k{-}1$} (0,-1);
	\draw[very thick] (0,-2) to [out=30,in=330] (0,-1);
	\draw[very thick] (0,-3) node[below,yshift=2pt]{$k$} to (0,-2);
	\draw[very thick] (1,-3) to [out=90,in=270] (2,2);
\end{tikzpicture}
+
\begin{tikzpicture}[scale=.35, tinynodes, anchorbase]
	\draw[very thick] (-1.5,1.5) to (-1.5,2);
	\draw[very thick] (1.5,-2) to (1.5,2);
	\draw[very thick] (-.5,-2) node[below,yshift=2pt]{$k$} to (-.5,-1.5);
	\draw[very thick] (-.5,-1.5) to (-1.5,0);
	\node at (-.5,-.5) {$\mydots$};
	\draw[very thick] (-.5,-1.5) to (.5,0);
	\draw[very thick,fill=white] (-2,0) rectangle (2,1.5);
	\node at (0,.75) {\footnotesize$\leq k{-}1$};
	\node at (0,1.75) {\scriptsize$\cdots$};
\end{tikzpicture} \\
&\stackrel{Q'}{=}
\begin{tikzpicture}[scale=.35, tinynodes, anchorbase]
	\draw[very thick] (-1,2) to (0,0);
	\node at (0,1) {$\mydots$};
	\draw[very thick] (1,2) to (0,0);
	\draw[very thick] (0,-1) to node[right,xshift=-2pt]{$k{+}1$} (0,0);
	\draw[very thick] (-.5,-2) node[below,yshift=2pt]{$k$} to (0,-1);
	\draw[very thick] (.5,-2) to (0,-1);
\end{tikzpicture}
+
\begin{tikzpicture}[scale=.35, tinynodes, anchorbase]
	\draw[very thick] (-1.5,1.5) to (-1.5,2);
	\draw[very thick] (2.5,2) to (2.5,-.5) to [out=270,in=0] (2,-1) to [out=180,in=270] (1.5,-.5) to (1.5,2);
	\draw[very thick] (-.5,-2.5) to node[left,xshift=2pt]{$k{-}1$} (-.5,-1.5);
	\draw[very thick] (-1,-3) to (-.5,-2.5);
	\draw[very thick] (0,-3) to (-.5,-2.5);
	\draw[very thick] (-.5,-1.5) to (-1.5,0);
	\node at (-.5,-.5) {$\mydots$};
	\draw[very thick] (-.5,-1.5) to (.5,0);
	\draw[very thick,fill=white] (-2,0) rectangle (2,1.5);
	\node at (0,.75) {\footnotesize$\leq k{-}1$};
	\node at (0,1.75) {\scriptsize$\cdots$};
\end{tikzpicture}
+
\begin{tikzpicture}[scale=.35, tinynodes, anchorbase]
	\draw[very thick] (-1.5,1.5) to (-1.5,2);
	\draw[very thick] (1.5,-2) to (1.5,2);
	\draw[very thick] (-.5,-2) node[below,yshift=2pt]{$k$} to (-.5,-1.5);
	\draw[very thick] (-.5,-1.5) to (-1.5,0);
	\node at (-.5,-.5) {$\mydots$};
	\draw[very thick] (-.5,-1.5) to (.5,0);
	\draw[very thick,fill=white] (-2,0) rectangle (2,1.5);
	\node at (0,.75) {\footnotesize$\leq k{-}1$};
	\node at (0,1.75) {\scriptsize$\cdots$};
\end{tikzpicture}
=
\begin{tikzpicture}[scale=.35, tinynodes, anchorbase]
	\draw[very thick] (-1,2) to (0,0);
	\node at (0,1) {$\mydots$};
	\draw[very thick] (1,2) to (0,0);
	\draw[very thick] (0,-1) to node[right,xshift=-2pt]{$k{+}1$} (0,0);
	\draw[very thick] (-.5,-2) node[below,yshift=2pt]{$k$} to (0,-1);
	\draw[very thick] (.5,-2) to (0,-1);
\end{tikzpicture}
+
\begin{tikzpicture}[scale=.35, tinynodes, anchorbase]
	\draw[very thick] (-1.5,1.5) to (-1.5,2);
	\draw[very thick] (1.5,-2) to (1.5,2);
	\draw[very thick] (-.5,-2) node[below,yshift=2pt]{$k$} to (-.5,-1.5);
	\draw[very thick] (-.5,-1.5) to (-1.5,0);
	\node at (-.5,-.5) {$\mydots$};
	\draw[very thick] (-.5,-1.5) to (.5,0);
	\draw[very thick,fill=white] (-2,0) rectangle (2,1.5);
	\node at (0,.75) {\footnotesize$\leq k{-}1$};
	\node at (0,1.75) {\scriptsize$\cdots$};
\end{tikzpicture}
\end{aligned}
\end{equation}
In passing from the first line to the second, we use a rotation of (\text{\ref{eq:spn}e}), as in \eqref{eq:reduction}, to replace the diagram with the triangle.
We use this version of (\text{\ref{eq:spn}e}) below as well.

Next, $P(1)$ again holds by the definition of the quadrivalent vertex. 
Thus, we assume $P=P(\ell)$ for $1 \leq \ell \leq k-1$. 
Note that this implies the relation
\begin{equation}\label{eq:Pprime}
P'(\ell) :
\begin{tikzpicture}[scale=.35, tinynodes, anchorbase]
	\draw[very thick] (-1,2) to (0,.5);
	\node at (0,1.5) {$\mydots$};
	\draw[very thick] (1,2) to (0,.5);
	\draw[very thick] (0,-.5) to node[right,xshift=-2pt]{$\ell{+}1$} (0,.5);
	\draw[very thick] (-1,-2) to (0,-.5);
	\node at (0,-1.5) {$\mydots$};
	\draw[very thick] (1,-2) to (0,-.5);
\end{tikzpicture}
=
\HT_{\ell+1}
-
\begin{tikzpicture}[scale=.35, smallnodes, anchorbase]
	\draw[very thick] (-1,-2) to (-1,2);
	\draw[very thick] (1,-2) to (1,2);
	\draw[very thick,fill=white] (-3,-1) rectangle (3,1);
	\node at (0,0) {$\leq \frac{1}{2}\ell(\ell{+}1){-}1$};
	\node at (0,-1.5) {\scriptsize$\cdots$};
	\node at (0,1.5) {\scriptsize$\cdots$};
\end{tikzpicture}
=
\begin{tikzpicture}[scale=.35, smallnodes, anchorbase]
	\draw[very thick] (-1,-2) to (-1,2);
	\draw[very thick] (1,-2) to (1,2);
	\draw[very thick,fill=white] (-3,-1) rectangle (3,1);
	\node at (0,0) {$\leq \frac{1}{2}\ell(\ell{+}1)$};
	\node at (0,-1.5) {\scriptsize$\cdots$};
	\node at (0,1.5) {\scriptsize$\cdots$};
\end{tikzpicture}
\end{equation}
for $1 \leq \ell \leq k-1$.
Using this, and assuming $k>1$, we now compute:
\begingroup
\allowdisplaybreaks[4]
\begin{align}
\begin{tikzpicture}[scale=.35, anchorbase]
	\draw[very thick] (-1,-2) to (-1,2);
	\draw[very thick] (1,-2) to (1,2);
	\draw[very thick,fill=white] (-2,-1) rectangle (2,1);
	\node at (0,0) {$\HT_{k+1}$};
	\node at (0,-1.5) {\scriptsize$\cdots$};
	\node at (0,1.5) {\scriptsize$\cdots$};
\end{tikzpicture}
\ &\stackrel{\eqref{eq:HTdef}}{=} \
\begin{tikzpicture}[scale=.35, anchorbase]
	\draw[very thick] (-1,-2) to (-1,1) to [out=90,in=270] (1,4);
	\draw[very thick] (1,-2) to (1,1) to [out=90,in=270] (3,4);
	\draw[very thick,fill=white] (-2,-1) rectangle (2,1);
	\node at (0,0) {$\HT_{k}$};
	\node at (0,-1.5) {\scriptsize$\cdots$};
	\node at (0,1.25) {\scriptsize$\cdots$};
	\draw[very thick] (3,-2) to (3,1) to [out=90,in=270] (-1,4);
\end{tikzpicture}
\ \stackrel{P}{=} \
\begin{tikzpicture}[scale=.35, anchorbase,tinynodes]
	\draw[very thick] (-1,-2) to (0,-1);
	\draw[very thick] (1,-2) to (0,-1);
	\draw[very thick] (0,-1) to node[right,xshift=-2pt]{$k$} (0,0);
	\draw[very thick] (0,0) to [out=150,in=270] (-1,1.5) to [out=90,in=270] (1,4);
	\draw[very thick] (0,0) to [out=30,in=270] (1,1.5) to [out=90,in=270] (3,4);
	\node at (0,-1.75) {\scriptsize$\mydots$};
	\node at (0,1.25) {\scriptsize$\cdots$};
	\draw[very thick] (3,-2) to (3,1) to [out=90,in=270] (-1,4);
\end{tikzpicture}
+
\begin{tikzpicture}[scale=.35, anchorbase,tinynodes]
	\draw[very thick] (-1,-2) to (-1,1) to [out=90,in=270] (1,4);
	\draw[very thick] (1,-2) to (1,1) to [out=90,in=270] (3,4);
	\draw[very thick,fill=white] (-2.5,-1) rectangle (2.5,1);
	\node at (0,0) {$\leq \frac{1}{2}k(k{-}1){-}1$};
	\node at (0,-1.5) {\scriptsize$\cdots$};
	\node at (0,1.25) {\scriptsize$\cdots$};
	\draw[very thick] (3,-2) to (3,1) to [out=90,in=270] (-1,4);
\end{tikzpicture}
\ \stackrel{Q}{=} \
\begin{tikzpicture}[scale=.35, tinynodes, anchorbase]
	\draw[very thick] (-1,1.5) to (0,0);
	\node at (0,1) {$\mydots$};
	\draw[very thick] (1,1.5) to (0,0);
	\draw[very thick,directed=.6] (0,-1) to node[left,xshift=2pt]{$k$} (0,0);
	\draw[very thick] (0,-1) [out=330,in=90] to (2,-4.5);
	\draw[very thick] (0,-2) to [out=150,in=210] (0,-1);
	\draw[very thick] (0,-2) to [out=30,in=270] (2,1.5);
	\draw[very thick,rdirected=.7] (0,-3) to node[left,xshift=2pt]{$k$} (0,-2);
	\draw[very thick] (-1,-4.5) to (0,-3);
	\node at (0,-4) {$\mydots$};
	\draw[very thick] (1,-4.5) to (0,-3);
\end{tikzpicture}
+
\begin{tikzpicture}[scale=.35, tinynodes, anchorbase]
	\draw[very thick] (-1.5,1) to (-1.5,1.5);
	\node at (-.5,1.25) {$\cdots$};
	\draw[very thick] (.5,1) to (.5,1.5);
	\draw[very thick,fill=white] (-2,1) rectangle (1,0);
	\node at (-.5,.5) {$\leq k{-}2$};
	\draw[very thick] (-1.5,0) to (-1,-1);
	\draw[very thick] (-.5,0) to (-1,-1);
	\node at (-1,-.25) {$\mydots$};
	\draw[very thick] (.5,0) to [out=270,in=90] (2,-4.5);
	\draw[very thick] (0,-2) to [out=150,in=270] (-1,-1);
	\draw[very thick] (0,-2) to [out=30,in=270] (2,1.5);
	\draw[very thick,rdirected=.7] (0,-3) to node[left,xshift=2pt]{$k$} (0,-2);
	\draw[very thick] (-1,-4.5) to (0,-3);
	\node at (0,-4) {$\mydots$};
	\draw[very thick] (1,-4.5) to (0,-3);
\end{tikzpicture}
+
\begin{tikzpicture}[scale=.35, smallnodes, anchorbase]
	\draw[very thick] (-1,-2) to (-1,2);
	\draw[very thick] (1,-2) to (1,2);
	\draw[very thick,fill=white] (-3,-1) rectangle (3,1);
	\node at (0,0) {$\leq \frac{1}{2}k(k{+}1){-}1$};
	\node at (0,-1.5) {\scriptsize$\cdots$};
	\node at (0,1.5) {\scriptsize$\cdots$};
\end{tikzpicture} \nonumber \\
\stackrel{P'}{=}&
\begin{tikzpicture}[scale=.35, tinynodes, anchorbase]
	\draw[very thick] (-1,1.5) to (0,0);
	\node at (0,1) {$\mydots$};
	\draw[very thick] (1,1.5) to (0,0);
	\draw[very thick,directed=.6] (0,-1) to node[left,xshift=2pt]{$k$} (0,0);
	\draw[very thick] (0,-1) [out=330,in=90] to (2,-4.5);
	\draw[very thick] (0,-2) to [out=150,in=210] (0,-1);
	\draw[very thick] (0,-2) to [out=30,in=270] (2,1.5);
	\draw[very thick,rdirected=.7] (0,-3) to node[left,xshift=2pt]{$k$} (0,-2);
	\draw[very thick] (-1,-4.5) to (0,-3);
	\node at (0,-4) {$\mydots$};
	\draw[very thick] (1,-4.5) to (0,-3);
\end{tikzpicture}
+
\begin{tikzpicture}[scale=.35, anchorbase,tinynodes]
	\draw[very thick] (-1,-2) to (-1,4);
	\draw[very thick] (1,-2) to (1,4);
	\draw[very thick,fill=white] (-2.25,1.75) rectangle (2.25,3.25);
	\node at (0,2.5) {$\leq k{-}2$};
	\draw[very thick,fill=white] (-2.25,-1.25) rectangle (2.25,.25);
	\node at (0,-.5) {$\leq \frac{1}{2}k(k{-}1)$};
	\node at (0,1) {\scriptsize$\cdots$};
	\draw[very thick] (3,4) to (3,2) to [out=270,in=90] (1.75,.25);
	\draw[very thick] (3,-2) to (3,0) to [out=90,in=270] (1.75,1.75);
\end{tikzpicture}
+
\begin{tikzpicture}[scale=.35, smallnodes, anchorbase]
	\draw[very thick] (-1,-2) to (-1,2);
	\draw[very thick] (1,-2) to (1,2);
	\draw[very thick,fill=white] (-3,-1) rectangle (3,1);
	\node at (0,0) {$\leq \frac{1}{2}k(k{+}1){-}1$};
	\node at (0,-1.5) {\scriptsize$\cdots$};
	\node at (0,1.5) {\scriptsize$\cdots$};
\end{tikzpicture} 
\stackrel{\eqref{eq:quad}}{=}
\begin{tikzpicture}[scale=.35, tinynodes, anchorbase]
	\draw[very thick] (-1,1.5) to (0,0);
	\node at (0,1) {$\mydots$};
	\draw[very thick] (1,1.5) to (0,0);
	\draw[very thick,directed=.6] (0,-1) to node[left,xshift=2pt]{$k$} (0,0);
	\draw[very thick] (0,-2) to [out=150,in=210] (0,-1);
	\draw[very thick] (0,-1) to [out=330,in=120] (.75,-1.5); 
	\draw[very thick,directed=.5,rdirected=.8] (.75,-1.5) to node[below,yshift=1pt]{$2$} (1.75,-1.5);
	\draw[very thick] (0,-2) to [out=30,in=240] (.75,-1.5); 
	\draw[very thick] (1.75,-1.5) to [out=60,in=270] (2,1.5);
	\draw[very thick] (1.75,-1.5) [out=300,in=90] to (2,-4.5);	
	\draw[very thick,rdirected=.7] (0,-3) to node[left,xshift=2pt]{$k$} (0,-2);
	\draw[very thick] (-1,-4.5) to (0,-3);
	\node at (0,-4) {$\mydots$};
	\draw[very thick] (1,-4.5) to (0,-3);
\end{tikzpicture}
+
\frac{[k][n{-}1]}{[n]}
\begin{tikzpicture}[scale=.35, tinynodes, anchorbase]
	\draw[very thick] (-1,2) to (0,.5);
	\node at (0,1.5) {$\mydots$};
	\draw[very thick] (1,2) to (0,.5);
	\draw[very thick] (0,-.5) to node[right,xshift=-2pt]{$k$} (0,.5);
	\draw[very thick] (-1,-2) to (0,-.5);
	\node at (0,-1.5) {$\mydots$};
	\draw[very thick] (1,-2) to (0,-.5);
	\draw[very thick] (2,-2) to (2,2);
\end{tikzpicture}
+
\begin{tikzpicture}[scale=.35, smallnodes, anchorbase]
	\draw[very thick] (-1,-2) to (-1,2);
	\draw[very thick] (1,-2) to (1,2);
	\draw[very thick,fill=white] (-3,-1) rectangle (3,1);
	\node at (0,0) {$\leq \frac{1}{2}k(k{+}1){-}1$};
	\node at (0,-1.5) {\scriptsize$\cdots$};
	\node at (0,1.5) {\scriptsize$\cdots$};
\end{tikzpicture} \nonumber \\
\stackrel{P'}{=}&
\begin{tikzpicture}[scale=.35, tinynodes, anchorbase]
	\draw[very thick] (-1,1.5) to (0,0);
	\node at (0,1) {$\mydots$};
	\draw[very thick] (1,1.5) to (0,0);
	\draw[very thick,directed=.6] (0,-1) to node[left,xshift=2pt]{$k$} (0,0);
	\draw[very thick] (0,-2) to [out=150,in=210] (0,-1);
	\draw[very thick] (0,-1) to [out=330,in=120] (.75,-1.5); 
	\draw[very thick,directed=.5,rdirected=.8] (.75,-1.5) to node[below,yshift=1pt]{$2$} (1.75,-1.5);
	\draw[very thick] (0,-2) to [out=30,in=240] (.75,-1.5); 	
	\draw[very thick] (1.75,-1.5) to [out=60,in=270] (2,1.5);
	\draw[very thick] (1.75,-1.5) [out=300,in=90] to (2,-4.5);	
	\draw[very thick,rdirected=.7] (0,-3) to node[left,xshift=2pt]{$k$} (0,-2);
	\draw[very thick] (-1,-4.5) to (0,-3);
	\node at (0,-4) {$\mydots$};
	\draw[very thick] (1,-4.5) to (0,-3);
\end{tikzpicture}
+
\begin{tikzpicture}[scale=.35, smallnodes, anchorbase]
	\draw[very thick] (-1,-2) to (-1,2);
	\draw[very thick] (1,-2) to (1,2);
	\draw[very thick,fill=white] (-3,-1) rectangle (3,1);
	\node at (0,0) {$\leq \frac{1}{2}k(k{+}1){-}1$};
	\node at (0,-1.5) {\scriptsize$\cdots$};
	\node at (0,1.5) {\scriptsize$\cdots$};
\end{tikzpicture}
\stackrel{(\text{\ref{eq:spn}e})}{=}
\begin{tikzpicture}[scale=.35, tinynodes, anchorbase]
	\draw[very thick] (-1,2) to (0,.5);
	\node at (0,1.5) {$\mydots$};
	\draw[very thick] (1,2) to (0,.5);
	\draw[very thick] (0,-.5) to node[right,xshift=-2pt]{$k{+}1$} (0,.5);
	\draw[very thick] (-1,-2) to (0,-.5);
	\node at (0,-1.5) {$\mydots$};
	\draw[very thick] (1,-2) to (0,-.5);
\end{tikzpicture}
+
\frac{[n{-}k]}{[n{-}k{+}1]}
\begin{tikzpicture}[scale=.35, tinynodes, anchorbase]
	\draw[very thick] (-1,2) to (0,0) node[left,yshift=3pt,xshift=1pt]{$k$};
	\draw[very thick] (.25,2) to (-.375,.75);
	\node at (-.375,1.75) {$\mydots$};
	\draw[very thick] (1,2) to (0,0);
	\draw[very thick] (0,-1) to node[right,xshift=-2pt]{$k{-}1$} (0,0);
	\draw[very thick] (0,-1) to (1,-3);
	\draw[very thick] (-1,-3) to (0,-1) node[left,yshift=-3pt,xshift=1pt]{$k$};
	\draw[very thick] (.25,-3) to (-.375,-1.75);
	\node at (-.375,-2.75) {$\mydots$};
\end{tikzpicture}
-
\frac{[k][n{-}k]}{[n]}
\begin{tikzpicture}[scale=.35, tinynodes, anchorbase]
	\draw[very thick] (-1,2) to (0,.5);
	\node at (0,1.5) {$\mydots$};
	\draw[very thick] (1,2) to (0,.5);
	\draw[very thick] (0,-.5) to node[right,xshift=-2pt]{$k$} (0,.5);
	\draw[very thick] (-1,-2) to (0,-.5);
	\node at (0,-1.5) {$\mydots$};
	\draw[very thick] (1,-2) to (0,-.5);
	\draw[very thick] (2,-2) to (2,2);
\end{tikzpicture}
+
\begin{tikzpicture}[scale=.35, smallnodes, anchorbase]
	\draw[very thick] (-1,-2) to (-1,2);
	\draw[very thick] (1,-2) to (1,2);
	\draw[very thick,fill=white] (-3,-1) rectangle (3,1);
	\node at (0,0) {$\leq \frac{1}{2}k(k{+}1){-}1$};
	\node at (0,-1.5) {\scriptsize$\cdots$};
	\node at (0,1.5) {\scriptsize$\cdots$};
\end{tikzpicture} \\
\stackrel{P'}{=}&
\begin{tikzpicture}[scale=.35, tinynodes, anchorbase]
	\draw[very thick] (-1,2) to (0,.5);
	\node at (0,1.5) {$\mydots$};
	\draw[very thick] (1,2) to (0,.5);
	\draw[very thick] (0,-.5) to node[right,xshift=-2pt]{$k{+}1$} (0,.5);
	\draw[very thick] (-1,-2) to (0,-.5);
	\node at (0,-1.5) {$\mydots$};
	\draw[very thick] (1,-2) to (0,-.5);
\end{tikzpicture}
+
\frac{[n{-}k]}{[n{-}k{+}1]}
\begin{tikzpicture}[scale=.35, tinynodes, anchorbase]
	\draw[very thick] (-1,2) to (0,0) node[left,yshift=3pt,xshift=1pt]{$k$};
	\draw[very thick] (.25,2) to (-.375,.75);
	\node at (-.375,1.75) {$\mydots$};
	\draw[very thick] (1,2) to (0,0);
	\draw[very thick] (0,-1) to node[right,xshift=-2pt]{$k{-}1$} (0,0);
	\draw[very thick] (0,-1) to (1,-3);
	\draw[very thick] (-1,-3) to (0,-1) node[left,yshift=-3pt,xshift=1pt]{$k$};
	\draw[very thick] (.25,-3) to (-.375,-1.75);
	\node at (-.375,-2.75) {$\mydots$};
\end{tikzpicture}
+
\begin{tikzpicture}[scale=.35, smallnodes, anchorbase]
	\draw[very thick] (-1,-2) to (-1,2);
	\draw[very thick] (1,-2) to (1,2);
	\draw[very thick,fill=white] (-3,-1) rectangle (3,1);
	\node at (0,0) {$\leq \frac{1}{2}k(k{+}1){-}1$};
	\node at (0,-1.5) {\scriptsize$\cdots$};
	\node at (0,1.5) {\scriptsize$\cdots$};
\end{tikzpicture}
\ \stackrel{Q'}{=}
\begin{tikzpicture}[scale=.35, tinynodes, anchorbase]
	\draw[very thick] (-1,2) to (0,.5);
	\node at (0,1.5) {$\mydots$};
	\draw[very thick] (1,2) to (0,.5);
	\draw[very thick] (0,-.5) to node[right,xshift=-2pt]{$k{+}1$} (0,.5);
	\draw[very thick] (-1,-2) to (0,-.5);
	\node at (0,-1.5) {$\mydots$};
	\draw[very thick] (1,-2) to (0,-.5);
\end{tikzpicture}
+
\begin{tikzpicture}[scale=.35, anchorbase,tinynodes]
	\draw[very thick] (-1,-2) to (-1,-1.25);
	\draw[very thick] (1,-2) to (1,-1.25);
	\draw[very thick] (-1,4) to (-1,3.25);
	\draw[very thick] (1,4) to (1,3.25);
	\draw[very thick,fill=white] (-2.25,2.5) rectangle (2.25,3.5);
	\node at (0,3) {$\leq k{-}1$};
	\draw[very thick] (0,1.5) to (-.75,2.5);
	\node at (0,2.25) {$\mydots$};
	\draw[very thick] (0,1.5) to (.75,2.5);	
	\draw[very thick] (0,.5) to node[right,xshift=-2pt]{$k{-}1$} (0,1.5);
	\draw[very thick] (0,.5) to (-.75,-.5);
	\node at (0,-.25) {$\mydots$};
	\draw[very thick] (0,.5) to (.75,-.5);	
	\draw[very thick,fill=white] (-2.25,-1.5) rectangle (2.25,-.5);
	\node at (0,-1) {$\leq k{-}1$};
	\draw[very thick] (3,4) to (3,2.5) to [out=270,in=0] (2.375,1.5) to [out=180,in=270] (1.75,2.5);
	\draw[very thick] (3,-2) to (3,-.5) to [out=90,in=0] (2.375,.5) to [out=180,in=90] (1.75,-.5);
\end{tikzpicture}
+
\begin{tikzpicture}[scale=.35, smallnodes, anchorbase]
	\draw[very thick] (-1,-2) to (-1,2);
	\draw[very thick] (1,-2) to (1,2);
	\draw[very thick,fill=white] (-3,-1) rectangle (3,1);
	\node at (0,0) {$\leq \frac{1}{2}k(k{+}1){-}1$};
	\node at (0,-1.5) {\scriptsize$\cdots$};
	\node at (0,1.5) {\scriptsize$\cdots$};
\end{tikzpicture} \nonumber \\
\stackrel{P'}{=}&
\begin{tikzpicture}[scale=.35, tinynodes, anchorbase]
	\draw[very thick] (-1,2) to (0,.5);
	\node at (0,1.5) {$\mydots$};
	\draw[very thick] (1,2) to (0,.5);
	\draw[very thick] (0,-.5) to node[right,xshift=-2pt]{$k{+}1$} (0,.5);
	\draw[very thick] (-1,-2) to (0,-.5);
	\node at (0,-1.5) {$\mydots$};
	\draw[very thick] (1,-2) to (0,-.5);
\end{tikzpicture}
+
\begin{tikzpicture}[scale=.35, anchorbase,tinynodes]
	\draw[very thick] (-1,-2) to (-1,4);
	\draw[very thick] (1,-2) to (1,4);
	\draw[very thick,fill=white] (-2.25,2.5) rectangle (2.25,3.5);
	\node at (0,3) {$\leq k{-}1$};
	\node at (0,2) {$\mydots$};
	\draw[very thick,fill=white] (-3,1.625) rectangle (3,.375);
	\node at (0,1) {$\leq \frac{1}{2}(k{-}2)(k{-}1)$};
	\node at (0,0) {$\mydots$};
	\draw[very thick,fill=white] (-2.25,-1.5) rectangle (2.25,-.5);
	\node at (0,-1) {$\leq k{-}1$};
	\draw[very thick] (3,4) to (3,2.5) to [out=270,in=0] (2.375,2) to [out=180,in=270] (1.75,2.5);
	\draw[very thick] (3,-2) to (3,-.5) to [out=90,in=0] (2.375,0) to [out=180,in=90] (1.75,-.5);
\end{tikzpicture}
+
\begin{tikzpicture}[scale=.35, smallnodes, anchorbase]
	\draw[very thick] (-1,-2) to (-1,2);
	\draw[very thick] (1,-2) to (1,2);
	\draw[very thick,fill=white] (-3,-1) rectangle (3,1);
	\node at (0,0) {$\leq \frac{1}{2}k(k{+}1){-}1$};
	\node at (0,-1.5) {\scriptsize$\cdots$};
	\node at (0,1.5) {\scriptsize$\cdots$};
\end{tikzpicture}
\ = \
\begin{tikzpicture}[scale=.35, tinynodes, anchorbase]
	\draw[very thick] (-1,2) to (0,.5);
	\node at (0,1.5) {$\mydots$};
	\draw[very thick] (1,2) to (0,.5);
	\draw[very thick] (0,-.5) to node[right,xshift=-2pt]{$k{+}1$} (0,.5);
	\draw[very thick] (-1,-2) to (0,-.5);
	\node at (0,-1.5) {$\mydots$};
	\draw[very thick] (1,-2) to (0,-.5);
\end{tikzpicture}
+
\begin{tikzpicture}[scale=.35, smallnodes, anchorbase]
	\draw[very thick] (-1,-2) to (-1,2);
	\draw[very thick] (1,-2) to (1,2);
	\draw[very thick,fill=white] (-3,-1) rectangle (3,1);
	\node at (0,0) {$\leq \frac{1}{2}k(k{+}1){-}1$};
	\node at (0,-1.5) {\scriptsize$\cdots$};
	\node at (0,1.5) {\scriptsize$\cdots$};
\end{tikzpicture} \nonumber
\end{align}
\endgroup
\end{proof}

An explicit instance of \eqref{eq:HT} is given below in equation \eqref{eq:preciseR3}.
Before proceeding, we now record our first consequence of Lemma \ref{lem:HT}.

\begin{cor}\label{cor:QuadFull}
$\Web^{\times}(\spn)$ is a full subcategory of $\Web(\spn)$.
Equivalently, $\Web^{\times}(\spn) = \SWeb(\spn)$.
\end{cor}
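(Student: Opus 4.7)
The plan is to prove the nontrivial inclusion $\SWeb(\spn) \subseteq \Web^{\times}(\spn)$; the reverse containment is immediate from the definition of $\Web^{\times}(\spn)$ as a subcategory of $\SWeb(\spn)$. The cornerstone is Lemma~\ref{lem:HT}, which expresses $\HT_{k+1}$ (a member of $\Web^{\times}(\spn)$ by construction) as the sum of the split-merge diagram depicted on the right-hand side of \eqref{eq:HT} --- call it $M_{k+1}$ --- and a quadrivalent graph with strictly fewer vertices. Solving for $M_{k+1}$ gives $M_{k+1} \in \Web^{\times}(\spn)$ for all $1 \leq k+1 \leq n$; consequently the idempotent $\iota_m \circ \pi_m = \frac{1}{[m]!} M_m$ lies in $\Web^{\times}(\spn)$ for every $1 \leq m \leq n$.

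With this in hand, I would argue by induction on the maximum label $m$ appearing on an internal edge of a web $D \colon 1^{\otimes k_1} \to 1^{\otimes k_2}$ representing a morphism in $\SWeb(\spn)$. The base case $m = 1$ is immediate, since $D$ is then a tangle of $1$-labeled strands with cups and caps. For the inductive step ($m \geq 2$), I would first verify the factorization identity
\[
v \;=\; \binom{m}{a}_q \cdot \pi_m \circ (\iota_a \otimes \iota_b)
\]
for every trivalent vertex $v \colon a \otimes b \to m$ with $a + b = m$, and analogously for splits $m \to c \otimes d$; this follows from the recursive definitions of $\iota_m, \pi_m$, the associativity of flow vertices provided by Lemma~\ref{lem:extrarels}, and repeated application of the bigon relation $(\text{\ref{eq:spn}c})$. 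Since $m$ is the maximum label in $D$, each $m$-labeled internal edge must be the largest-label edge at each of its endpoint vertices, and thus joins a merge $v_1 \colon a \otimes b \to m$ to a split $v_2 \colon m \to c \otimes d$ with $a, b, c, d \leq m - 1$. Substituting the factorizations, $v_2 \circ v_1$ rewrites as a nonzero scalar multiple of $(\pi_c \otimes \pi_d) \circ M_m \circ (\iota_a \otimes \iota_b)$, in which the central factor $M_m$ lies in $\Web^{\times}(\spn)$ and the outer factors involve only trivalent vertices with labels $\leq m - 1$.

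Performing this substitution at every $m$-labeled internal edge of $D$ produces a diagram with no internal edges of label $m$: each original such edge has been absorbed into a $\Web^{\times}(\spn)$-piece bracketed by $\iota, \pi$ apparatus of lower label, while $M_m$ (being a quadrivalent graph) contributes no trivalent vertices of any label. The new diagram has maximum internal label strictly less than $m$, and the inductive hypothesis yields $D \in \Web^{\times}(\spn)$.

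The main obstacle I anticipate is verifying the factorization identity above for general flow vertices, as opposed to only the basic generators $(1, k, k{+}1)$ of \eqref{eq:WebGen}. This boils down to tracking the scalar that arises when the full merge $1^{\otimes m} \to m$ is expressed along different binary-tree-shaped compositions of trivalent vertices; the requisite independence of shape is supplied by Lemma~\ref{lem:extrarels}, and combining it with the identities $\pi_a \circ \iota_a = \id_a$ and the bigon relation produces the quantum binomial factor $\binom{m}{a}_q$. Secondary care is needed to confirm that the local substitutions at distinct $m$-edges compose coherently within the global planar structure of $D$ and do not inadvertently create new $m$-labeled edges elsewhere; this last point is immediate because $M_m$ contains no trivalent vertices and the $\iota, \pi$ apparatus uses labels strictly less than $m$.
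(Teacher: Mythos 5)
Your proposal is correct and follows essentially the same strategy as the paper's proof: both reduce to the split-merge $\iota_m \circ \pi_m$, which Lemma~\ref{lem:HT} (in the form \eqref{eq:Pprime}) identifies with a linear combination of quadrivalent graphs. The only difference is organizational --- the paper rewrites all trivalent vertices simultaneously via \eqref{eq:applymetotri} so that every edge labeled $k>1$ is sandwiched between full split vertices and can be replaced in a single pass, while you run an induction on the maximum internal label --- and your factorization scalar ${m \brack a}$ is correct.
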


\begin{proof}
We must show that any $\spn$ web in $\SWeb(\spn)$ can be expressed as a linear combination of quadrivalent graphs.
Recall from Corollary \ref{cor:reduction} that
\begin{equation}
\begin{tikzpicture}[scale=.35, tinynodes, anchorbase]
	\draw[very thick] (0,-2) node[below,yshift=2pt]{$k$} to (0,2);
\end{tikzpicture}
=
\frac{1}{[k]!}
\begin{tikzpicture}[scale=.35, tinynodes, anchorbase]
	\draw[very thick] (0,-2) node[below,yshift=2pt]{$k$} to (0,-1);
	\draw[very thick] (0,-1) to [out=150,in=210] node[left,xshift=2pt]{$1$} (0,1);
	\node at (0,0) {$\mydots$};	
	\draw[very thick] (0,-1) to [out=30,in=330] node[right,xshift=-2pt]{$1$} (0,1);
	\draw[very thick] (0,2) node[above,yshift=-2pt]{$k$} to (0,1);
\end{tikzpicture}.
\end{equation}
We now apply the following transformation:
\begin{equation} \label{eq:applymetotri} 
\begin{tikzpicture}[scale =.75, smallnodes,anchorbase]
	\draw[very thick] (0,0) node[below=-1pt]{$1$} to [out=90,in=210] (.5,.75);
	\draw[very thick] (1,0) node[below=-1pt]{$k$} to [out=90,in=330] (.5,.75);
	\draw[very thick] (.5,.75) to (.5,1.5) node[above=-2pt]{$k{+}1$};
\end{tikzpicture}
=
\frac{1}{[k+1]![k]!}
\begin{tikzpicture}[scale=.25,anchorbase,tinynodes,xscale=-1.5]
	\draw [very thick] (0,-2) node[below=-2pt]{$k$} to (0,-1);
	\draw [very thick] (0,-1) to [out=150,in=210] node[right=-3pt]{$1$} (0,.75);
	\node at (0,-.125) {$\mydots$};
	\draw [very thick] (0,-1) to [out=30,in=330] node[left=-3pt]{$1$} (0,.75);
	\draw [very thick] (3,-2) node[below=-2pt]{$1$} to [out=90,in=330] (1,2.5);
	\draw [very thick] (0,.75) to [out=90,in=210] (1,2.5);
	\draw [very thick] (1,2.5) to (1,3); 
	\draw [very thick] (1,3) to [out=150,in=210] node[right=-3pt]{$1$} (1,4.75);
	\node at (1,3.875) {$\mydots$};
	\draw [very thick] (1,3) to [out=30,in=330] node[left=-3pt]{$1$} (1,4.75);	
	\draw [very thick] (1,4.75) to (1,5.5) node[above=-2pt]{$k{+}1$};
\end{tikzpicture}
=
\frac{1}{[k+1]![k]!}
\begin{tikzpicture}[scale=.25, tinynodes, anchorbase,yscale=-1]
	\draw[very thick] (0,-2) node[above=-2pt]{$k{+}1$} to (0,-1);
	\draw[very thick] (0,-1) to (1,1) node[below=-2pt]{$1$};
	\draw[very thick] (0,-1) to (-1,1) node[below=-2pt]{$1$};
	\node at (0,.5) {$\mydots$};
\end{tikzpicture}
\circ
\begin{tikzpicture}[scale=.25, tinynodes, anchorbase]
	\draw[very thick] (-1,2) to (0,.5);
	\node at (0,1.5) {$\mydots$};
	\draw[very thick] (1,2) to (0,.5);
	\draw[very thick] (0,-.5) to node[right,xshift=-2pt]{$k{+}1$} (0,.5);
	\draw[very thick] (-1,-2) to (0,-.5);
	\node at (0,-1.5) {$\mydots$};
	\draw[very thick] (1,-2) to (0,-.5);
\end{tikzpicture}
\circ
\begin{tikzpicture}[scale=.25, tinynodes, anchorbase]
	\draw[very thick] (-2,-2) node[below=-2pt]{$1$} to (-2,1);
	\draw[very thick] (0,-2) node[below=-2pt]{$k$} to (0,-1);
	\draw[very thick] (0,-1) to (1,1) node[above=-2pt]{$1$};
	\draw[very thick] (0,-1) to (-1,1) node[above=-2pt]{$1$};
	\node at (0,.5) {$\mydots$};
\end{tikzpicture}
\end{equation}
to each $1 \ot k \to k+1$ trivalent vertex, and the analogous transformation to $k \ot 1 \to k+1$ trivalent vertices.
The second equality of \eqref{eq:applymetotri} holds from the definition of the full split vertex \eqref{eq:fullsplit}.

The result is a planar graph all of whose vertices are full split vertices, rather than trivalent vertices. 
Each edge labeled $k > 1$ must meet two full split vertices, because in $\SWeb(\spn)$ no such edges meet the boundary. 
Now, we can use Lemma \ref{lem:HT} in the form \eqref{eq:Pprime}
to replace a neighborhood of the $k$-labeled edge with a linear combination of diagrams built entirely from quadrivalent vertices. 
Applying this transformation to each $k$-labeled edge, the resulting diagram lies in the image of $\Web^{\times}(\spn)$. 
\end{proof}

\subsection{Reduced graphs span the quadrivalent category}\label{sec:RGS}

Our next goal is to use topological arguments to reduce the number of quadrivalent graphs we need to consider to a manageable
spanning set. Let us introduce the terminology needed to discuss this spanning set. For this section, all graphs we consider will be quadrivalent graphs (with boundary) embedded 
in the closed unit disk $\D \subset \R^2$, so we sometimes refer to them merely as \emph{graphs}, and to quadrivalent vertices merely as \emph{crossings}. 
We identify two graphs if they are isotopic relative to the boundary.

\begin{rem} Note that we can identify graphs in the disk $\mathbb{D}$ with those in a planar strip by choosing two points in $\partial \mathbb{D}$ that are disjoint from the graph.
This provides an explicit identification between graphs in $\mathbb{D}$ and morphisms in $\Web^\times(\spn)$; however, for most of our arguments, the choice of these two points is irrelevant. 
As such, for the duration we tacitly identify all $\Hom$-spaces in $\Web^\times(\spn)$ with the corresponding space of quadrivalent graphs in the disk.
\end{rem}

Quadrivalent graphs (with boundary) can be viewed as generically immersed $1$-manifolds with boundary,
i.e. as the image of a generic immersion
\[
\coprod_{j=1}^c S^1\sqcup \coprod_{i=1}^\ell [0,1] \to \D
\]
that sends the boundary of the domain to $\partial \D$.
In this interpretation, a crossing is the (transverse) intersection of the $1$-manifold with itself. Conversely, any sufficiently
generic\footnote{An immersed $1$-manifold is sufficiently generic if all intersections are transverse, 
there are no triple (or higher) point intersections, and no intersections of boundary points. 
For drawing purposes, one should also assume that the $1$-manifold meets the boundary transversely.} immersed $1$-manifold is a quadrivalent graph. 
Said another way, quadrivalent graphs are simply tangle diagrams projected to the plane, where the embedding into the thickened plane is squashed to an immersion in the plane.

Using this topological interpretation of quadrivalent graphs, we can discuss the \emph{strands} in a graph, 
which are the image of the connected components ($S^1$ or $[0,1]$) of the 1-manifold. 

\begin{defn} Given a set of size $2 \ell$, a \emph{matching} is a partition of the set into $\ell$ sets of size $2$, the \emph{pairs} in the matching. Given a quadrivalent graph
with $2 \ell$ boundary points, the equivalence relation of being in the same strand will partition these boundary points, giving a matching on the set of boundary points. The graph
is said to \emph{correspond} to the matching of boundary points, and vice versa. \end{defn}

Now we introduce some terminology from the theory of the symmetric group.

\begin{defn} Consider a planar disk with $2\ell$ boundary points and a matching on the set of boundary points. 
Let $x_1 = \{a_1, b_1\}$ and $x_2 = \{a_2, b_2\}$ be two disjoint pairs occurring in the matching.  
We say that the pair of pairs $\{x_1, x_2\}$ is an \emph{inversion} if, reading counterclockwise around the boundary, we alternate between points in $x_1$ and points in $x_2$. 
For example, one might get the word $a_1 a_2 b_1 b_2$ or $a_1 b_2 b_1 a_2$. The \emph{length} of a matching is the number of inversions.

Let $\Gamma$ be a quadrivalent graph corresponding to this matching. Each non-closed strand corresponds to a pair of boundary points. 
If $C_1$ and $C_2$ are non-closed strands corresponding to pairs $x_1$ and $x_2$, we say that $\{C_1, C_2\}$ is an inversion if $\{x_1, x_2\}$ is. 
\end{defn}

Topologically, the Jordan Curve Theorem (JCT) implies that a pair of pairs $\{x_1, x_2\}$ is an inversion if the corresponding pair of strands $\{C_1, C_2\}$ must intersect (an odd number of times) 
for any graph corresponding to this matching. Strands which do not form an inversion may intersect (an even number of times), but their topological intersection number is zero, and there is
some graph representing the same matching where the strands do not cross. 

\begin{example}\label{ex:inversion}
In the graph:
\[
\begin{tikzpicture}[scale=1, smallnodes, anchorbase]
	\draw[densely dashed] (0,0) circle (1);
	\draw[very thick] (1,0) node[right,xshift=-2pt]{$b_1$} to [out=180,in=60] (-.5,-.866) node[below,yshift=1pt]{$a_1$};
	\draw[very thick] (.5,.866) node[above]{$b_2$} to [out=240,in=120] (.5,-.866) node[below,yshift=1pt]{$a_2$};
	\draw[very thick] (-.5,.866) node[above]{$a_3$} to [out=300,in=60] (.577,-.333) to [out=240,in=0] (-1,0) node[left,xshift=2pt]{$b_3$};
\end{tikzpicture}
\]
the strands $\{C_1,C_2\}$ are an inversion, while $\{C_1,C_3\}$ and $\{C_2,C_3\}$ are not. Here, $C_i$ corresponds to the boundary points $\{a_i,b_i\}$.
\end{example}

\begin{defn} A quadrivalent graph $\Gamma$ is said to be \emph{crossing-minimal} if no strands cross themselves, and no two distinct strands intersect more than once. 
A graph $\Gamma$ is said to be \emph{reduced} if it is crossing-minimal and has no closed strands. \end{defn}

The number of crossings in a crossing-minimal graph is equal to the number of inversions between strands, which is the length of the corresponding matching. Any graph corresponding
to this matching has at least one crossing for each inversion. Thus a crossing-minimal graph does, indeed, have the minimal number of crossings required to represent this matching.
Reduced graphs are the graphs corresponding to a given matching with the least crossings and the least strands.

It is easy to argue that some reduced graph corresponds to each matching of the boundary points. One can connect the paired points by straight lines (chords), 
and then perturb to make the immersion generic (removing higher-order intersections). 
Since chords in a disk cross at most once and are never tangent, a sufficiently small perturbation will never produce a diagram which is not reduced.

The following relations, which are certain analogues of the Reidemeister moves in knot theory, will allow us to argue that reduced graphs span $\Web^\times(\spn)$.

\begin{lem} \label{lem:webReidemeister} 
The following relations hold in $\Web^\times(\spn)$:
\begin{subequations} \label{subeq:Reidemeister}
\begin{equation} \label{eq:R1}
\begin{tikzpicture} [scale=.5,anchorbase]
	\draw[very thick] (1,1) to [out=270,in=0] (.75,.75) to [out=180,in=270] (0,2);
	\draw[very thick] (0,0) to [out=90,in=180] (.75,1.25) to [out=0,in=90] (1,1);
\end{tikzpicture}
=
-\frac{[n-1][2n+2]}{[n+1]} \
\begin{tikzpicture} [scale=.5,anchorbase]
	\draw[very thick] (0,0) to (0,2);
\end{tikzpicture}
\end{equation}
\begin{equation} \label{eq:R2}
\begin{tikzpicture} [scale=.35,anchorbase]
	\draw[very thick] (1,0) to [out=90,in=270] (0,1.5) to [out=90,in=270] (1,3);
	\draw[very thick] (0,0) to [out=90,in=270] (1,1.5) to [out=90,in=270] (0,3);
\end{tikzpicture}
=
\begin{tikzpicture}[scale=.35, anchorbase]
	\draw[very thick] (1,0) to (1,3);
	\draw[very thick] (0,0) to (0,3);
	\draw[very thick,fill=white] (-1,.75) rectangle (2,2.25);
	\node at (.5,1.5) {$\leq 1$};
\end{tikzpicture}
\end{equation}
\begin{equation} \label{eq:R3}
\begin{tikzpicture}[scale=.25, tinynodes, anchorbase]
	\draw[very thick] (-2,-3) to [out=90,in=270] (2,3);
	\draw[very thick] (0,-3) to [out=90,in=270] (-2,0) to [out=90,in=270] (0,3);
	\draw[very thick] (2,-3) to [out=90,in=270] (-2,3);	
\end{tikzpicture}
=
\begin{tikzpicture}[scale=.25, tinynodes, anchorbase]
	\draw[very thick] (-2,-3) to [out=90,in=270] (2,3);
	\draw[very thick] (0,-3) to [out=90,in=270] (2,0) to [out=90,in=270] (0,3);
	\draw[very thick] (2,-3) to [out=90,in=270] (-2,3);	
\end{tikzpicture}
+
\begin{tikzpicture}[scale=.25, anchorbase]
	\draw[very thick] (-2,-3) to (-2,3);
	\draw[very thick] (0,-3) to (0,3);
	\draw[very thick] (2,-3) to (2,3);
	\draw[very thick,fill=white] (-3,-1) rectangle (3,1);
	\node at (0,0) {$\leq 2$};
\end{tikzpicture}
\end{equation}
\end{subequations}
More precisely, we have
\begin{subequations}
\begin{equation}\label{eq:preciseR2}
\begin{tikzpicture} [scale=.35,anchorbase]
	\draw[very thick] (1,0) to [out=90,in=270] (0,1.5) to [out=90,in=270] (1,3);
	\draw[very thick] (0,0) to [out=90,in=270] (1,1.5) to [out=90,in=270] (0,3);
\end{tikzpicture}
=
[2]
\begin{tikzpicture} [scale=.35,anchorbase]
	\draw[very thick] (1,0) to [out=90,in=270] (0,2.5);
	\draw[very thick] (0,0) to [out=90,in=270] (1,2.5);
\end{tikzpicture}
-
\frac{[n-1][2n]}{[n]}
\begin{tikzpicture} [scale=.35,anchorbase]
	\draw[very thick] (1,0) to [out=90,in=0] (.5,1) to [out=180,in=90] (0,0);
	\draw[very thick] (1,2.5) to [out=270,in=0] (.5,1.5) to [out=180,in=270] (0,2.5);
\end{tikzpicture}
\end{equation}
\begin{equation}\label{eq:preciseR3}
\begin{tikzpicture}[scale=.225, tinynodes, anchorbase]
	\draw[very thick] (-2,-3) to [out=90,in=270] (2,3);
	\draw[very thick] (0,-3) to [out=90,in=270] (-2,0) to [out=90,in=270] (0,3);
	\draw[very thick] (2,-3) to [out=90,in=270] (-2,3);	
\end{tikzpicture}
=
\begin{tikzpicture}[scale=.15, tinynodes, anchorbase]
	\draw [very thick] (-1,-1) to [out=90,in=210] (0,.75);
	\draw [very thick] (1,-1) to [out=90,in=330] (0,.75);
	\draw [very thick] (3,-1) to [out=90,in=330] (1,2.5);
	\draw [very thick] (1,2.5) to [out=210,in=90] node[left=-2pt]{$2$} (0,.75);
	\draw [very thick] (1,2.5) to node[right=-3pt]{$3$} (1,4.5);
	\draw[very thick] (1,4.5) to [out=30,in=270] (3,8);
	\draw[very thick] (0,6.25) to [out=150,in=270] (-1,8);
	\draw[very thick] (1,4.5) to [out=150,in=270] node[left=-2pt]{$2$} (0,6.25);
	\draw[very thick] (0,6.25) to [out=30,in=270] (1,8);
\end{tikzpicture}
+
\frac{[n-2]}{[n-1]}
\begin{tikzpicture}[scale=.15, tinynodes, anchorbase]
	\draw[very thick] (-2,0) to [out=90,in=180] (0,3) to [out=0,in=90] (2,0);
	\draw[very thick] (-2,9) to [out=270,in=180] (0,6) to [out=0,in=270] (2,9);
	\draw[very thick] (0,0) to (0,9);
\end{tikzpicture}
+
\begin{tikzpicture}[scale=.15, tinynodes, anchorbase,yscale=-1]
	\draw[very thick] (-2,0) to [out=90,in=180] (-1,2) to [out=0,in=90] (0,0);
	\draw[very thick] (-2,9) to [out=270,in=180] (0,6) to [out=0,in=270] (2,9);
	\draw[very thick] (2,0) to [out=90,in=270] (0,9);
\end{tikzpicture}
+
\begin{tikzpicture}[scale=.15, tinynodes, anchorbase]
	\draw[very thick] (-2,0) to [out=90,in=180] (-1,2) to [out=0,in=90] (0,0);
	\draw[very thick] (-2,9) to [out=270,in=180] (0,6) to [out=0,in=270] (2,9);
	\draw[very thick] (2,0) to [out=90,in=270] (0,9);
\end{tikzpicture}
+
\begin{tikzpicture}[scale=.15, tinynodes, anchorbase]
	\draw[very thick] (-2,0) to [out=90,in=270] (0,9);
	\draw[very thick] (0,0) to [out=90,in=270] (-2,9);
	\draw[very thick] (2,0) to [out=90,in=270] (2,9);
\end{tikzpicture}
-
[2n-2]
\begin{tikzpicture}[scale=.15, tinynodes, anchorbase]
	\draw[very thick] (-2,0) to [out=90,in=180] (-1,2) to [out=0,in=90] (0,0);
	\draw[very thick] (-2,9) to [out=270,in=180] (-1,7) to [out=0,in=270] (0,9);
	\draw[very thick] (2,0) to [out=90,in=270] (2,9);
\end{tikzpicture}
\end{equation}
\end{subequations}
\end{lem}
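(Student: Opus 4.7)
The plan is to prove each of the three relations by expanding each quadrivalent vertex using \eqref{eq:quad}, writing $\times = T + cE$ with $c = [n-1]/[n]$, where $T$ denotes the ``tree'' term with the central 2-labeled edge and $E$ the cup-cap pair, and then simplifying the resulting webs using (\text{\ref{eq:spn}b}), (\text{\ref{eq:spn}c}), and the circle value (\text{\ref{eq:spnOther}a}).

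For \eqref{eq:R1}, expanding the single crossing produces two terms. The $T$-term yields a bigon with internal labels $(1,2)$ and external label $1$; since $\End(V_1)$ is one-dimensional this bigon is a scalar multiple of $\id_1$, and closing it off to a theta graph $\theta(1,1,2)$ and applying (\text{\ref{eq:spn}c}) with $k=2$ evaluates it to $[2] \cdot \qdim(V_2)$. Dividing by $\qdim(V_1)$ pins down the scalar as $-[n-1][2n+1]/[n]$. The $E$-term yields a smooth strand isotopic to $\id_1$ (no closed loops form), contributing $c \cdot \id_1$. Summing gives $-\frac{[n-1]([2n+1]-1)}{[n]} \id_1$, which equals the claimed $-\frac{[n-1][2n+2]}{[n+1]} \id_1$ via the quantum identity $([2n+1]-1)[n+1] = [n][2n+2]$.

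For \eqref{eq:preciseR2}, expanding both crossings in $X \circ X = (T + cE)^2$ yields four compositions: $T \circ T = [2]\,T$ by applying (\text{\ref{eq:spn}c}) with $k=2$ to the central $(1,1)$-bigon between the stacked 2-labeled edges; $T \circ E = E \circ T = 0$, since each composition produces a tadpole on a 2-labeled edge (vanishing by (\text{\ref{eq:spn}b})); and $E \circ E = \qdim(V_1) \cdot (\text{cup+cap})$, with a middle 1-labeled closed circle. Using that $E$ equals the cup-cap diagram and collecting terms, the desired equality reduces to the quantum identity $[n-1][2n+2]/[n+1] = [2n] - [2]$, pinning down the coefficient $[n-1][2n]/[n]$.

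For \eqref{eq:preciseR3}, the LHS is by inspection the half-twist $\HT_3$ from \eqref{eq:HTdef}. Applying the $k=2$ case of the already-proved Lemma \ref{lem:HT} immediately gives the coarse form $\HT_3 = (\text{merge-split through 3}) + (\text{quadrivalent graph with} \leq 2 \text{ crossings})$, which is \eqref{eq:R3}. To obtain the precise form \eqref{eq:preciseR3}, one traces through the proof of Lemma \ref{lem:HT} at $k=2$: each invocation of (\text{\ref{eq:spn}e}) along the way contributes a specific triangle correction and a specific cup-cap correction, and any intermediate bigons or curls produced are collapsed using (\text{\ref{eq:spn}c}), \eqref{eq:R1}, and \eqref{eq:preciseR2}, ultimately identifying each of the five quadrivalent summands on the RHS with its explicit coefficient. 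The main obstacle will be exactly this detailed bookkeeping for \eqref{eq:preciseR3}, since \eqref{eq:R1} and \eqref{eq:preciseR2} each reduce to a short algebraic calculation once the $T/E$ decomposition is in hand.
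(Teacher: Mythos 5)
The overall strategy — expand the quadrivalent vertex via Definition \ref{def:quad}, simplify using (\text{\ref{eq:spn}b}), (\text{\ref{eq:spn}c}), the circle value, and invoke Lemma \ref{lem:HT} for \eqref{eq:R3} — is the same as the paper's (the paper leaves \eqref{eq:R1} and \eqref{eq:preciseR2} ``to the reader'' and deduces \eqref{eq:R3} from Lemma \ref{lem:HT} together with (\text{\ref{eq:spn}d}), declining to prove \eqref{eq:preciseR3} since it is not needed). Your computation for \eqref{eq:preciseR2} is correct, and your handling of \eqref{eq:R3} matches the paper.

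There is, however, a genuine logical gap in your argument for \eqref{eq:R1}. You invoke ``since $\End(V_1)$ is one-dimensional, this bigon is a scalar multiple of $\id_1$'' and then determine the scalar via the trace. One-dimensionality of $\End(V_1)$ is a fact about $\FRep(U_q(\spn))$, not about $\Web(\spn)$. At this stage of the paper we know only that $\Phi$ is full, not faithful, so $\End_{\Web(\spn)}(1)$ could \emph{a priori} be larger than $\C(q)\cdot\id_1$; faithfulness is precisely the statement being worked toward, and Lemma \ref{lem:webReidemeister} feeds into its proof. The trace argument therefore only pins down the image of the bigon under $\Phi$, not the bigon itself as a morphism in $\Web(\spn)$. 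The fix is easy and worth knowing: use the \emph{other} expression for the quadrivalent vertex in \eqref{eq:quad}, namely $\times = T_h + \tfrac{[n-1]}{[n]}\id$ where $T_h$ is the merge--split through the horizontal $2$-labeled edge. Closing the two right-hand legs of $T_h$ produces the tadpole of (\text{\ref{eq:spn}b}), which vanishes, while closing the two right-hand legs of $\id$ produces $\id_1$ next to a circle. Then \eqref{eq:R1} reads off as $\tfrac{[n-1]}{[n]}\cdot\bigl(-\tfrac{[n][2n+2]}{[n+1]}\bigr) = -\tfrac{[n-1][2n+2]}{[n+1]}$ with no appeal to $\Hom$-space dimensions. (Equivalently, one can evaluate your $(1,2)$-bigon by first converting $T_v$ to $T_h + \tfrac{[n-1]}{[n]}(\id - E)$ via (\text{\ref{eq:spn}e}); either way, the evaluation proceeds purely from the web relations.) With this repair, your proposal is correct.
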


\begin{proof}
Equations \eqref{eq:R1}, \eqref{eq:preciseR2}, and \eqref{eq:preciseR3} are direct computations using Definition \ref{def:quad}.
We leave them to the reader, since \eqref{eq:R1} and \eqref{eq:preciseR2} are straightforward, 
and we do not explicitly use \eqref{eq:preciseR3} in this form.
In its place, we only require \eqref{eq:R3} for our arguments below. For this, Lemma \ref{lem:HT} gives that
\[
\begin{tikzpicture}[scale=.25, tinynodes, anchorbase]
	\draw[very thick] (-2,-3) to [out=90,in=270] (2,3);
	\draw[very thick] (0,-3) to [out=90,in=270] (-2,0) to [out=90,in=270] (0,3);
	\draw[very thick] (2,-3) to [out=90,in=270] (-2,3);	
\end{tikzpicture}
=
\begin{tikzpicture}[scale=.175, tinynodes, anchorbase]
	\draw [very thick] (-1,-1) to [out=90,in=210] (0,.75);
	\draw [very thick] (1,-1) to [out=90,in=330] (0,.75);
	\draw [very thick] (3,-1) to [out=90,in=330] (1,2.5);
	\draw [very thick] (1,2.5) to [out=210,in=90] node[left=-2pt]{$2$} (0,.75);
	\draw [very thick] (1,2.5) to node[right=-3pt]{$3$} (1,4.5);
	\draw[very thick] (1,4.5) to [out=30,in=270] (3,8);
	\draw[very thick] (0,6.25) to [out=150,in=270] (-1,8);
	\draw[very thick] (1,4.5) to [out=150,in=270] node[left=-2pt]{$2$} (0,6.25);
	\draw[very thick] (0,6.25) to [out=30,in=270] (1,8);
\end{tikzpicture}
+
\begin{tikzpicture}[scale=.25, anchorbase]
	\draw[very thick] (-2,-3) to (-2,3);
	\draw[very thick] (0,-3) to (0,3);
	\draw[very thick] (2,-3) to (2,3);
	\draw[very thick,fill=white] (-3,-1) rectangle (3,1);
	\node at (0,0) {$\leq 2$};
\end{tikzpicture}
\stackrel{(\text{\ref{eq:spn}d})}{=}
\begin{tikzpicture}[scale=.175, tinynodes, anchorbase,xscale=-1]
	\draw [very thick] (-1,-1) to [out=90,in=210] (0,.75);
	\draw [very thick] (1,-1) to [out=90,in=330] (0,.75);
	\draw [very thick] (3,-1) to [out=90,in=330] (1,2.5);
	\draw [very thick] (1,2.5) to [out=210,in=90] node[right=-2pt]{$2$} (0,.75);
	\draw [very thick] (1,2.5) to node[left=-3pt]{$3$} (1,4.5);
	\draw[very thick] (1,4.5) to [out=30,in=270] (3,8);
	\draw[very thick] (0,6.25) to [out=150,in=270] (-1,8);
	\draw[very thick] (1,4.5) to [out=150,in=270] node[right=-2pt]{$2$} (0,6.25);
	\draw[very thick] (0,6.25) to [out=30,in=270] (1,8);
\end{tikzpicture}
+
\begin{tikzpicture}[scale=.25, anchorbase]
	\draw[very thick] (-2,-3) to (-2,3);
	\draw[very thick] (0,-3) to (0,3);
	\draw[very thick] (2,-3) to (2,3);
	\draw[very thick,fill=white] (-3,-1) rectangle (3,1);
	\node at (0,0) {$\leq 2$};
\end{tikzpicture} \ .
\]
Rotating this relation by $180^\circ$ then gives the result.
\end{proof}

These relations will combine with several skein-theoretic arguments to give the following.

\begin{thm} \label{thm:reducedgraphsspan} 
For $\ell \ge 0$ let $\mathcal{S}(2\ell)$ be any set of reduced graphs that contains exactly one graph corresponding to each matching of $2 \ell$ points on the boundary of the disk. 
Then each $\Hom$-space in $\Web^{\times}(\spn)$ with $2 \ell$ points on the boundary is spanned by $\mathcal{S}(2\ell)$.
Moreover, if $\Gamma$ is any quadrivalent graph with $N$ crossings, then it is in the span of graphs in $\mathcal{S}(2\ell)$ with $\le N$ crossings. 
\end{thm}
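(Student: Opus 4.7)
The plan is to argue by strong induction on the number of crossings $N$ of a quadrivalent graph $\Gamma$, with Lemma \ref{lem:webReidemeister} as the main algebraic input. Let $m$ denote the matching on the $2\ell$ boundary points determined by $\Gamma$, and let $\ell(m)$ be its number of inversions, so that any reduced graph for $m$ has exactly $\ell(m)$ crossings. The base case $N=0$ is straightforward: $\Gamma$ consists of a non-crossing matching together with possibly disjoint simple closed curves, each of which evaluates to a scalar via the circle relation (\text{\ref{eq:spn}a}) and \eqref{dimVk}, leaving a reduced graph for $m$ that (up to planar isotopy) is identified with the chosen representative in $\mathcal{S}(2\ell)$.

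For the inductive step with $N>0$, the goal is to show that $\Gamma$ equals, modulo quadrivalent graphs with strictly fewer crossings (which the inductive hypothesis handles), a crossing-minimal graph for $m$. If $\Gamma$ is not already crossing-minimal, then it contains either a closed component, a self-intersection of a strand (a monogon), or a pair of strands crossing more than once (a bigon); in each case a standard Jordan curve argument in the disk produces an \emph{innermost} such feature. If the interior of this innermost feature contains no other strand, I would apply (\text{\ref{eq:spn}a}), \eqref{eq:R1}, or \eqref{eq:preciseR2} directly, strictly reducing the crossing count modulo even lower-crossing terms. If some other strand passes through the interior, I would clear such strands one at a time via \eqref{eq:R3}, which trades a three-crossing configuration for the ``swapped'' three-crossing configuration plus graphs with at most two crossings in that local region (and hence strictly fewer crossings globally, handled by induction). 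Each R3 clearing strictly decreases the number of interior strand passings, so after finitely many applications the innermost feature is empty and one of R1, R2, or the circle relation strictly reduces $N$.

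Finally, having reduced to a crossing-minimal graph $\Gamma'$ for $m$ with exactly $\ell(m)$ crossings, I must show that $\Gamma'$ equals the chosen representative $\Gamma_0 \in \mathcal{S}(2\ell)$ modulo lower-crossing terms. This is a planar analogue of Matsumoto's theorem for the symmetric group: any two crossing-minimal graphs for the same matching are related by a sequence of \emph{triangle moves}, each sliding an arc across the intersection of two other arcs. Each triangle move is an instance of the leading term of \eqref{eq:R3}, exchanging one crossing-minimal diagram for another plus error terms with at most two crossings in the affected region, which are absorbed by the inductive hypothesis. The main obstacle will be the bigon-clearing step of the inductive reduction: although intuitively clear, rigorously controlling it requires a carefully chosen secondary complexity, such as the lexicographic pair consisting of the global crossing count and the number of interior strand passings through the innermost feature, which must be shown to decrease strictly under each R3 clearing. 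A closely related technical point is establishing the triangle-move connectivity of crossing-minimal representatives as a standalone topological statement, which may warrant its own proposition.
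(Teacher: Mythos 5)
Your outline aligns closely with the paper's decomposition into three steps: remove circles, reduce to crossing-minimal, and use a Matsumoto-type connectivity result. However, there is a genuine gap in your treatment of the middle step, and you flag it yourself without resolving it. You propose to find an \emph{innermost} monogon or bigon and, if its interior is non-empty, to ``clear such strands one at a time via \eqref{eq:R3}'' while tracking a secondary complexity (``number of interior strand passings''). This quantity does not obviously decrease under an individual graph Reidemeister~III move: sliding a crossing across the boundary of the feature need not change how many times other strands cross that boundary, and the R3-applicable triangles that present themselves are not generally adjacent to the boundary of an innermost feature in the way your sketch suggests. Moreover, ``innermost'' is not preserved under R3, so the nesting structure one is trying to exploit can change mid-argument.

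The paper circumvents this entirely with a different idea: instead of an innermost feature with a bespoke secondary complexity, it uses a ``big'' curl or bigon (whose interior may contain arbitrary subgraph $\tilde{\Gamma}$), excises a neighborhood of the crossing(s) to obtain a subgraph $\Gamma'$ with strictly fewer crossings, and invokes the inductive hypothesis on $\Gamma'$ itself. If $\Gamma'$ is not crossing-minimal, induction already replaces it by lower-crossing graphs. If $\Gamma'$ is reduced, then topology \emph{forces} the interior $\tilde{\Gamma}$ to be empty (big curl: the distinguished strand has adjacent boundary points, so nothing crosses it; every strand of $\tilde{\Gamma}$ would have to) or to be a braid-like string diagram (big bigon: no strand meets either side twice, no closed components), which can be slid out by R3 and then collapsed with R2. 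This sidesteps the need to choose or verify a secondary complexity and is the key technical idea missing from your sketch. Your Matsumoto step (Proposition \ref{prop:Matsumoto} in the paper) is correctly anticipated, but you would need to supply the ``distinguished strand'' argument the paper uses to prove it rather than cite it as folklore.
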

	
The theorem is an immediate consequence of the following three lemmas. 
All three begin with the statement ``Let $\Gamma$ be a quadrivalent graph with $N$ crossings, viewed as a morphism in $\Web^\times(\spn)$.''

\begin{lem} \label{lem:removecircles} If $\Gamma$ is a crossing-minimal graph, then it is equal to an invertible scalar times a reduced graph with $N$ crossings. \end{lem}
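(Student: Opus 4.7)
The strategy is to identify the closed strands of $\Gamma$ (those whose parametrizing domain is a copy of $S^1$, so their image is a simple closed curve in $\D$) and remove them one at a time using the circle relation (\text{\ref{eq:spn}a}), each removal contributing the invertible scalar $-\frac{[n][2n+2]}{[n+1]} \in \C(q)^\times$.

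The geometric heart of the argument is the claim that in any crossing-minimal graph $\Gamma$, every closed strand $C$ bounds a disk $D_C \subset \D$ that is disjoint from all other strands. Since $\Gamma$ is crossing-minimal, $C$ does not self-intersect, so it is a Jordan curve in $\D$ and bounds a disk $D_C$ (the inside component; $\partial\D$ lies in the outside). Now let $C'$ be any other strand. If $C'$ is non-closed, both endpoints of $C'$ lie on $\partial\D$ and hence outside $D_C$, so a transversality/parity argument shows that $|C' \cap C|$ is even; combined with crossing-minimality $|C' \cap C| \leq 1$, we conclude $C' \cap C = \emptyset$. If $C'$ is itself closed, the same parity-versus-bound dichotomy forces $C' \cap C = \emptyset$, so $C'$ lies entirely inside or entirely outside $D_C$. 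Picking $C$ to be \emph{innermost} among closed strands (i.e. with $D_C$ minimal in the containment partial order on inside-disks) ensures $D_C$ contains no other strand of $\Gamma$ at all.

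The proof is then an induction on the number $k$ of closed strands of $\Gamma$. If $k=0$ then $\Gamma$ is already reduced and we take the scalar to be $1$. If $k \geq 1$, let $C$ be an innermost closed strand, so $C$ sits alone in $D_C$. Isotope $C$ to a small circle and apply (\text{\ref{eq:spn}a}) inside $D_C$ to write $\Gamma = -\tfrac{[n][2n+2]}{[n+1]} \cdot \Gamma'$, where $\Gamma'$ is obtained from $\Gamma$ by erasing $C$. Since $C$ was disjoint from every other strand, erasing it changes neither the number of crossings of $\Gamma$ nor any pairwise intersection count among the remaining strands, so $\Gamma'$ is still crossing-minimal with the same $N$ crossings and has exactly $k-1$ closed strands. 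By induction, $\Gamma' = c' \cdot \Gamma_{\mathrm{red}}$ for an invertible $c' \in \C(q)$ and a reduced graph $\Gamma_{\mathrm{red}}$ with $N$ crossings; multiplying the scalars gives the conclusion.

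The only real subtlety is ensuring the topological setup is clean enough to run the Jordan Curve Theorem argument on each closed strand of a generically immersed quadrivalent graph; once that is in place, the inductive removal step is immediate from (\text{\ref{eq:spn}a}) and the observation that deleting an isolated closed strand preserves crossing-minimality and the crossing count.
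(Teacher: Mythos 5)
Your proof is correct and follows essentially the same route as the paper's: use the Jordan Curve Theorem together with crossing-minimality to show that each closed strand is an embedded circle disjoint from every other strand, pick an innermost one whose interior disk is empty, evaluate it via (\ref{eq:spn}a), and induct on the number of closed components. You simply spell out the parity argument (both endpoints of a non-closed strand lie on $\partial\D$, hence outside $D_C$, so the geometric intersection number with $C$ is even, and crossing-minimality caps it at one) that the paper leaves implicit under the phrase ``the JCT, together with crossing-minimality.'' One cosmetic remark: your opening claim that ``every closed strand $C$ bounds a disk $D_C$ disjoint from all other strands'' overstates what you actually prove at that point, since $D_C$ may well contain other closed strands nested inside it; you correct this in the next sentence by passing to an innermost $C$, so the argument is sound, but the lead-in sentence should be weakened to ``disjoint from the boundary curve of any other strand'' or similar.
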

	
\begin{lem} \label{lem:toomanycrossings} If $\Gamma$ is not a crossing-minimal graph, then $\Gamma$ is in the span of graphs with $< N$ crossings. \end{lem}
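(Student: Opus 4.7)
The plan is to locate an \emph{empty} monogon or bigon inside $\Gamma$ and to apply the local relations \eqref{eq:R1} or \eqref{eq:R2} from Lemma~\ref{lem:webReidemeister} there. By an empty monogon I mean an embedded disk $D \subset \D$ bounded by a sub-arc of a single strand meeting itself at a crossing, with the interior of $D$ disjoint from all of $\Gamma$; empty bigons are defined analogously, with boundary consisting of two sub-arcs meeting at two crossings. Given such an empty $D$, substituting \eqref{eq:R1} (monogon) or \eqref{eq:R2} (bigon) locally in $D$ expresses $\Gamma$ as a linear combination of quadrivalent graphs that agree with $\Gamma$ outside $D$ and contain strictly fewer crossings inside $D$. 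Consequently the total crossing count drops below $N$, proving the lemma.

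To produce such an empty disk, I would argue as follows. The non-minimality hypothesis furnishes at least one candidate: either a strand self-crossing, whose self-loop bounds a monogon disk by the Jordan curve theorem applied in $\D$, or two distinct strands with at least two mutual crossings, whose sub-arcs between a pair of these crossings similarly bound a bigon disk. Among all such candidate monogon/bigon disks, pick one $D$ whose interior contains the minimum number $n_D$ of crossings of $\Gamma$; I claim $n_D = 0$. Otherwise some arc $\beta$ of $\Gamma$ passes through the interior of $D$. In the simpler case, $\beta$ enters and exits $\partial D$ through the same boundary arc; the innermost such $\beta$ cobounds a strictly smaller empty sub-bigon with the enclosed sub-arc of $\partial D$, contradicting the minimality of $n_D$. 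In the harder case, $\beta$ traverses $D$ from one boundary arc to the other, producing only triangular sub-regions; here I would apply the R3-type relation \eqref{eq:R3} to slide $\beta$ past one corner of $D$, strictly reducing the number of arcs passing through $D$ modulo lower-order error terms, which have strictly fewer than $N$ crossings and are absorbed by strong induction on $N$.

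The main obstacle will be this triangular sub-case: one must verify that each R3 slide is applicable (for which the relevant triangle must be empty, achievable by choosing $\beta$ innermost), and that the iterative simplification of $D$ terminates. Termination follows from the strict decrease in the finite, non-negative count of arcs through $D$ at each R3 slide. Handling closed-loop components of $\Gamma$ inside $D$ is straightforward: a closed loop with any self-crossing yields a smaller monogon directly, and a closed loop with no crossings can be separated from $\Gamma$ and evaluated via the circle relation $(\text{\ref{eq:spn}a})$, producing a scalar multiple of a graph with strictly fewer strands and no more crossings than $\Gamma$.
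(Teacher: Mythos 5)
Your proposal shares the core toolkit with the paper---the graph Reidemeister moves \eqref{eq:R1}, \eqref{eq:R2}, \eqref{eq:R3}, circle removal, and strong induction on the crossing count---but the structural argument is genuinely different, and the place where you deviate is also where the gap lies.

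The paper does not attempt to locate an \emph{empty} monogon or bigon in $\Gamma$. Instead it shows that any non-crossing-minimal graph contains a \emph{big curl} or \emph{big bigon} (a curl/bigon whose interior is allowed to contain arbitrary arcs, subject only to the mild condition that no edges at the corner crossings point inward). The key structural move is to excise a closed region $K$ containing the big curl or bigon minus its corner crossing(s), set $\Gamma' = K \cap \Gamma$, and observe that $\Gamma'$ has strictly fewer than $N$ crossings. Strong induction on crossing count then lets one \emph{assume} $\Gamma'$ is crossing-minimal (and, via Lemma \ref{lem:removecircles}, reduced), which forces the interior of the curl to be empty and the interior of the bigon to be a reduced string diagram. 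Only at that point are \eqref{eq:R1}, \eqref{eq:R2}, \eqref{eq:R3} applied, and they apply cleanly. The emptiness you need is \emph{deduced} from reducedness rather than sought directly.

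Your minimality argument over candidate disks does not establish emptiness. If $D$ has minimal interior crossing count $n_D$ among all candidate monogon/bigon disks, and $\beta$ is an arc entering and exiting through the same boundary arc of $D$, then the sub-bigon $D'$ cut off by an innermost $\beta$ has interior $\subsetneq$ interior of $D$, which gives only $n_{D'} \le n_D$, not $n_{D'} < n_D$; moreover an innermost $\beta$ can easily cobound a sub-bigon that still contains closed loops, or arcs whose endpoints lie only on $\beta$, so the claim that the innermost sub-bigon is ``empty'' is unjustified as stated. The same difficulty recurs in your R3 step: sliding $\beta$ past a corner of $D$ requires the corner triangle to be empty of crossings, and ``choosing $\beta$ innermost'' is exactly the same unresolved assertion one level down. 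You would need either a more refined measure (e.g.\ a lexicographic ordering on interior crossings, then interior arcs, then closed components), or---more in the paper's spirit---a secondary strong induction on the crossing count of the sub-configuration near the curl/bigon, which is precisely what the paper's extraction of $\Gamma'$ accomplishes. Finally, your closing remark that a crossingless closed loop inside $D$ may be removed ``producing a graph with strictly fewer strands and no more crossings'' is true, but removing such a loop does not reduce $N$, so on its own it does not feed your induction on $N$; in the paper this is handled by Lemma \ref{lem:removecircles}, which is invoked only after the crossing count has already been pushed down by induction.
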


\begin{lem} \label{lem:changerex} If $\Gamma$ is reduced and $\Lambda$ is any other reduced graph corresponding to the same matching, 
then $\Gamma$ and $\Lambda$ are equal in $\Web^\times(\spn)$ modulo the span of graphs with $< N$ crossings. \end{lem}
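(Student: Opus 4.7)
The plan is to induct on $N$, the common number of crossings in $\Gamma$ and $\Lambda$, which for reduced graphs equals the length of the underlying matching. The base cases $N\in\{0,1\}$ are immediate: up to planar isotopy rel boundary, there is at most one reduced representative of a matching with $\leq 1$ crossings.

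For the inductive step, the key ingredient will be a purely topological statement: any two reduced quadrivalent graphs representing the same matching of $2\ell$ boundary points are connected by a finite sequence of planar isotopies and ``triangle moves,'' i.e.\ local replacements of the subgraph on the left-hand side of \eqref{eq:R3} by the subgraph on the right-hand side (or vice versa). This is the quadrivalent-graph analogue of Matsumoto's theorem, or equivalently of the Ringel / Goodman--Pollack statement that any two wiring diagrams for the same permutation are linked by triangle flips. The plan is to prove it by an inner induction in which we fix a boundary point $p_0$ and let $C_\Gamma,C_\Lambda$ be the strands through $p_0$ in $\Gamma,\Lambda$. Since both graphs realize the same matching, $C_\Gamma$ and $C_\Lambda$ share endpoints and each crosses the same set of other strands (namely the strands with which the pair $\{p_0,\text{partner}\}$ is in inversion) exactly once. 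A generic sweep deforming $C_\Gamma$ onto $C_\Lambda$ in the disk encounters only finitely many elementary events, each of which is precisely the passage of one strand across a transverse crossing of two other strands, and hence is exactly a triangle move. After $C_\Gamma$ has been swept onto $C_\Lambda$, the remaining strands form reduced diagrams on a disk with two fewer boundary points, to which the outer induction applies.

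Once the topological claim is in hand, the algebraic content of the lemma is a short consequence of \eqref{eq:R3}. Indeed, each triangle move replaces a local three-strand subgraph with $3$ crossings by another such subgraph with $3$ crossings, modulo a linear combination of local diagrams with at most $2$ crossings. Re-inserting these local error terms into the ambient graph (which outside the triangle region contributes $N-3$ crossings) shows that the move changes $\Gamma$ modulo the span of quadrivalent graphs with at most $N-1$ crossings. Chaining together finitely many triangle moves, $\Gamma$ and $\Lambda$ differ by a linear combination of such lower-crossing graphs.

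The hard part will be the topological claim; with it in hand, the algebraic step is a one-line application of \eqref{eq:R3}. The delicate points in the sweep argument are verifying genericity (no simultaneous events, no tangencies at the boundary, no event involving $C_\Gamma$ crossing a boundary of the complement of a closed region), checking that each elementary event really is a triangle configuration rather than a more complicated combinatorial change, and ensuring that the complement really restricts to a reduced diagram on fewer points so the induction can proceed. I expect most of the effort to go into writing this carefully; conceptually it is standard for matchings on a disk and reduces, after cutting the disk open between two adjacent boundary points, to the known Matsumoto-style result for reduced wiring diagrams on a strip.
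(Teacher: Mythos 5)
Your high-level strategy is the same as the paper's: prove a quadrivalent-graph Matsumoto theorem (any two reduced graphs for the same matching are connected by $\mathrm{gRIII}$ moves), then observe that each $\mathrm{gRIII}$ move perturbs the graph only modulo quadrivalent diagrams with fewer crossings via \eqref{eq:R3}. The algebraic half of your argument is correct and matches the paper's proof exactly.

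The gap is in the topological step, and it is genuine rather than cosmetic. You claim that a generic isotopy carrying $C_\Gamma$ onto $C_\Lambda$ rel endpoints ``encounters only finitely many elementary events, each of which is precisely the passage of one strand across a transverse crossing of two other strands.'' This is false: a generic ambient isotopy of an arc through a fixed configuration of other arcs also encounters \emph{tangency} events (Reidemeister~II type) and, in principle, cusp events on $C$ itself (Reidemeister~I type). For reduced diagrams a tangency event raises the crossing number, so the intermediate graphs in the sweep need not be reduced, and once the crossing count exceeds $N$ your error terms are no longer controlled by $N-1$ crossings. Fixing this requires showing the isotopy can be chosen to avoid tangencies entirely, which is substantially more work than a generic-position argument and is, in essence, the content that has to be proved. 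The paper's proof of Proposition~\ref{prop:Matsumoto} avoids the problem by a sharper choice of distinguished strand: it picks the strand $S$ whose two endpoints are \emph{adjacent} on $\partial\D$, so the region $U$ bounded by $S$ and the short boundary arc meets every other strand in at most one arc, and the restriction $\tilde\Gamma = \Gamma \cap U$ is literally a string diagram for a permutation (no strand enters and leaves through the same side of $U$ — one direction follows from adjacency of the endpoints, the other from reducedness). Within such a region one can push crossings across $S$ using only $\mathrm{gRIII}$ moves, never creating a tangency, and then remove $S$ and induct on the number of strands. Your plan to induct on strands after the sweep is right, but without the adjacency trick the sweep itself does not stay within triangle moves, which is the assertion that does the real work.
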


Let us first prove the theorem given the lemmas.

\begin{proof}[Proof of Theorem \ref{thm:reducedgraphsspan}] Let $\Gamma$ be a quadrivalent graph with $N$ crossings, viewed as a morphism in $\Web^\times(\spn)$. We first prove
that $\Gamma$ is in the span of reduced graphs with $\le N$ crossings, by induction on $N$. If $N$ is zero then $\Gamma$ is necessarily crossing-minimal, and the result follows
immediately from Lemma \ref{lem:removecircles}. Assume $N > 0$, and assume that any graph with $M$ crossings for $M < N$ is in the span of reduced graphs with $\le M$ crossings. If
$\Gamma$ is not crossing minimal, it is in the span of graphs with $< N$ crossings by Lemma \ref{lem:toomanycrossings}, and hence by induction in the span of reduced graphs with $<
N$ crossings. If $\Gamma$ is crossing-minimal, then it is in the span of reduced graphs with $N$ crossings by Lemma \ref{lem:removecircles}. Either way, $\Gamma$ is in the span of
reduced graphs with $\le N$ crossings.

Pick one ``special'' reduced graph for each matching, and let $\mathcal{S}(2\ell)$ be the set of these graphs. 
It follows from an induction using Lemma \ref{lem:changerex} that the span of reduced graphs with $\le N$ crossings 
is equal to the span of the set of special reduced graphs with $\le N$ crossings.
\end{proof} 

\begin{proof}[Proof of Lemma \ref{lem:removecircles}] 
Suppose that $\Gamma$ is a crossing-minimal graph. 
This implies that any closed strand in $\Gamma$ has no self-intersections, thus is an embedded closed $1$-manifold, i.e. an embedded circle. 
The JCT, together with crossing-minimality, implies that no other strands intersect such circles.
Further, the JCT implies the existence of an innermost circle, which contains no strands in its interior.
Up to a scalar, we can replace this circle with the empty diagram using the first relation in \eqref{eq:spn}. 
The result now follows by induction on the number of closed strands.
\end{proof}

The other two lemmas are related to an analogue for quadrivalent graphs of Reidemeister's Theorem \cite{Reidemeister,AlexanderBriggs} on knot diagrams. 
Informally, Lemma \ref{lem:webReidemeister} shows that the \emph{graph Reidemeister moves}:
\begin{equation}\label{eq:graphR}
\mathrm{gRI} :
\begin{tikzpicture} [scale=.5,anchorbase]
	\draw[very thick] (1,1) to [out=270,in=0] (.75,.75) to [out=180,in=270] (0,2);
	\draw[very thick] (0,0) to [out=90,in=180] (.75,1.25) to [out=0,in=90] (1,1);
\end{tikzpicture}
\sim
\begin{tikzpicture} [scale=.5,anchorbase]
	\draw[very thick] (0,0) to (0,2);
\end{tikzpicture}
\quad , \quad 
\mathrm{gRII} :
\begin{tikzpicture} [scale=.35,anchorbase]
	\draw[very thick] (1,0) to [out=90,in=270] (0,1.5) to [out=90,in=270] (1,3);
	\draw[very thick] (0,0) to [out=90,in=270] (1,1.5) to [out=90,in=270] (0,3);
\end{tikzpicture}
\sim
\begin{tikzpicture}[scale=.35, anchorbase]
	\draw[very thick] (1,0) to (1,3);
	\draw[very thick] (0,0) to (0,3);
\end{tikzpicture}
\quad , \quad
\mathrm{gRIII} :
\begin{tikzpicture}[scale=.2, tinynodes, anchorbase]
	\draw[very thick] (-2,-3) to [out=90,in=270] (2,3);
	\draw[very thick] (0,-3) to [out=90,in=270] (-2,0) to [out=90,in=270] (0,3);
	\draw[very thick] (2,-3) to [out=90,in=270] (-2,3);	
\end{tikzpicture}
\sim
\begin{tikzpicture}[scale=.2, tinynodes, anchorbase]
	\draw[very thick] (-2,-3) to [out=90,in=270] (2,3);
	\draw[very thick] (0,-3) to [out=90,in=270] (2,0) to [out=90,in=270] (0,3);
	\draw[very thick] (2,-3) to [out=90,in=270] (-2,3);	
\end{tikzpicture}
\end{equation}
are satisfied in $\Web^\times(\spn)$, up to lower-order terms and multiplication by invertible scalars.

Given two graphs that correspond to the same matching and have the same number of components, 
there exists a sequence of graph Reidemeister moves relating them. 
(This can be proved by ``lifting'' both to diagrams of the corresponding trivial tangle, 
and applying the classical Reidemeister theorem.)
We would like to deduce Lemmas \ref{lem:toomanycrossings} and \ref{lem:changerex} 
by applying this result to a given graph and a chosen crossing-minimal (or reduced) graph for the corresponding matching.
However, since the graph Reidemeister moves only hold up to lower order terms, 
we must guarantee that the moves gRI and gRII need only be used in the direction that does not introduce
additional crossings:
\begin{equation} \label{eq:GRdirectional}
\mathrm{gRI'} :
\begin{tikzpicture} [scale=.5,anchorbase]
	\draw[very thick] (1,1) to [out=270,in=0] (.75,.75) to [out=180,in=270] (0,2);
	\draw[very thick] (0,0) to [out=90,in=180] (.75,1.25) to [out=0,in=90] (1,1);
\end{tikzpicture}
\mapsto
\begin{tikzpicture} [scale=.5,anchorbase]
	\draw[very thick] (0,0) to (0,2);
\end{tikzpicture}
\quad , \quad 
\mathrm{gRII'} :
\begin{tikzpicture} [scale=.35,anchorbase]
	\draw[very thick] (1,0) to [out=90,in=270] (0,1.5) to [out=90,in=270] (1,3);
	\draw[very thick] (0,0) to [out=90,in=270] (1,1.5) to [out=90,in=270] (0,3);
\end{tikzpicture}
\mapsto
\begin{tikzpicture}[scale=.35, anchorbase]
	\draw[very thick] (1,0) to (1,3);
	\draw[very thick] (0,0) to (0,3);
\end{tikzpicture}
\end{equation}
It is not true that two graphs related by graph Reidemeister moves are necessarily related when gRI and gRII can be used in only one direction. 
For an example, compare two nested circles to two non-nested circles. 
However, we claim that this counterexample captures the only obstructions: circles appearing as subdiagrams. 
In $\Web^\times(\spn)$ these are handled by the circle removal relation (\text{\ref{eq:spn}a}).

In the next section we make these arguments precise, and prove Lemma \ref{lem:toomanycrossings} and Lemma \ref{lem:changerex} rigorously. 
As we learned in the final stages of preparation of this manuscript, similar results have previously been established; see \cite[Lemma 2]{Carpentier}.
Nevertheless, we include our arguments to keep the exposition thorough and self-contained.
The reader who is content with this informal justification for Lemmas \ref{lem:toomanycrossings} and \ref{lem:changerex}
will not lose anything by skipping the next section.

\subsection{Arguments with graph Reidemeister moves} \label{sec:topologicalstuff}

To prove Lemma \ref{lem:toomanycrossings}, we will show that a non-crossing-minimal graph must contain one of two configurations that we call \emph{big curls} and \emph{big bigons}. 
Informally, a big curl is a portion of $\Gamma$ that (up to isotopy) resembles the left-hand side of the
graph Reidemeister move gRI, with additional vertices along the curl and edges/vertices in its interior:
\begin{equation} \label{eq:bigcurl}
\begin{tikzpicture} [scale=.75,anchorbase]
	\draw[very thick, red] (.25,-.25) to (0,0) to [out=45,in=270] (1,1.25) to [out=90,in=0] (0,2)
		to [out=180,in=90] (-1,1.25) to [out=270,in=135] (0,0) to (-.25,-.25);
	\draw[very thick] (-1.5,1.375) to (1.5,1.375);
	\draw[very thick] (-1.5,.875) to (1.5,.875);
	\draw[very thick,fill=white] (-.5,.625) rectangle (.5,1.625);
	\node at (0,1.125) {$\tilde{\Gamma}$};
	\node[rotate=90] at (.75,1.125) {$\mydots$};
	\node[rotate=90] at (-.75,1.125) {$\mydots$};
\end{tikzpicture}
\end{equation}
Here, the big curl is depicted in red. Similarly, a big bigon resembles the left-hand side of gRII. 
\begin{equation}\label{eq:bigbigon}
\begin{tikzpicture} [scale=.75,anchorbase]
	\draw[very thick, red] (-.25,-.25) to (0,0) to [out=45,in=270] (1,1.25) to [out=90,in=315] (0,2.5) to (-.25,2.75);
	\draw[very thick, red] (.25,-.25) to (0,0) to [out=135,in=270] (-1,1.25) to [out=90,in=225] (0,2.5) to (.25,2.75);
	\draw[very thick] (-1.5,1.5) to (1.5,1.5);
	\draw[very thick] (-1.5,1) to (1.5,1);
	\draw[very thick,fill=white] (-.5,.75) rectangle (.5,1.75);
	\node at (0,1.25) {$\tilde{\Gamma}$};
	\node[rotate=90] at (.75,1.25) {$\mydots$};
	\node[rotate=90] at (-.75,1.25) {$\mydots$};
\end{tikzpicture}
\end{equation}
A consequence of our proofs below is that any graph containing a big curl or big bigon can be transformed into a diagram with fewer crossings, 
using circle removal and graph Reidemeister moves, where gRI and gRII are only applied in the (simplifying) direction indicated in \eqref{eq:GRdirectional}.
See Porism \ref{por:tangle} and the discussion preceding it.

Let us now give a rigorous definition of a big curl. 

\begin{defn}
Let $\Gamma$ be a graph, which we view as the image of a generic immersion
\[
\coprod_{j=1}^c S^1\sqcup \coprod_{i=1}^\ell [0,1] \xrightarrow{\gamma} \D.
\]
Given a crossing $x \in \Gamma$, we say that there is a \emph{big curl at $x$} provided that we can find a topological 
interval\footnote{The notation $[a,b]$ for a topological interval inside $[0,1]$ or $S^1$ is just useful shorthand. It does not indicate that $a < b$, 
only that $\{a,b\}$ is the boundary of the interval. We use $(a,b)$ to indicate the interior of the interval.} 
$I = [a,b]$ inside one component of the domain ($S^1 \text{ or } [0,1]$) such that 
\begin{enumerate}
\item $\gamma^{-1}(x) = \{a,b\}$. 
\item $\gamma$ is injective on $(a,b)$.
\item The above conditions imply that $\gamma(I)$ is a simple closed curve, whose complement has an interior (bounded component) and an exterior (unbounded component) by the JCT. 
We require that the preimage under $\gamma$ of the interior does not meet a neighborhood of $I$.
\end{enumerate}
We refer to the simple closed curve $\gamma(I)$ as a \emph{big curl}.
\end{defn}

Here, $x$ represents the red crossing in \eqref{eq:bigcurl}, and $I$ is the segment of the red strand which forms the curl. 
Note that our definition rules out unwanted behavior. Specifically, we do not want the red strand to have any complicating features along the curl $I$, such as additional self-intersections. 
Additionally, we want the two other edges coming out of $x$ to be on the ``outside'' of the curl rather than the ``inside.'' 
We do, however, want to permit some of the strands in $\tilde{\Gamma}$ to be in the same (red) component in the whole diagram $\Gamma$, 
even if they are not part of the same component in the pictured subdiagram (after all, the whole graph might be the projection of a complicated knot, with only one component).

\begin{example}
The following contrasts a big curl (in red) with an interval that is not a big curl (in blue):
\[
\begin{tikzpicture} [scale=.75,anchorbase]
	\draw[very thick,gray] (0,1) to [out=180,in=120] (-.289,-.167) to [out=300,in=240] (.866,-.5) to [out=60,in=0] (0,.333)
		to [out=180,in=120] (-.866,-.5) to [out=300,in=240] (.289,-.167) to [out=60,in=0] (0,1);
	\begin{scope}
	\clip (-.5,1.05) rectangle (.5,-1);
	\draw[very thick, red] (-.866,-.5) to [out=300,in=240] (.289,-.167) to [out=60,in=0] (0,1) to [out=180,in=120] 
		(-.289,-.167) to [out=300,in=240] (.866,-.5);
	\end{scope}
\end{tikzpicture}
\quad \text{vs.} \quad
\begin{tikzpicture} [scale=.75,anchorbase]
	\draw[very thick,gray] (0,1) to [out=180,in=120] (-.289,-.167) to [out=300,in=240] (.866,-.5) to [out=60,in=0] (0,.333)
		to [out=180,in=120] (-.866,-.5) to [out=300,in=240] (.289,-.167) to [out=60,in=0] (0,1);
	\begin{scope}
	\draw[very thick,blue] (-.289,-.167) to [out=300,in=240] (.866,-.5) to [out=60,in=0] (0,.333)
		to [out=180,in=120] (-.866,-.5) to [out=300,in=240] (.289,-.167);
	\end{scope}
\end{tikzpicture}.
\]
\end{example}

\begin{lem} Let $C$ be a strand in $\Gamma$, and view $C$ as a quadrivalent graph in its own right. If $C$ has a big curl, then so does $\Gamma$. \end{lem}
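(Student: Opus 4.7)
The plan is to show that each of the three defining conditions for a big curl is local to the immersion of the single strand containing $I$, and hence passes from $C$ to $\Gamma$ unchanged. Let $\gamma_\Gamma : D_\Gamma \to \D$ denote the generic immersion realizing $\Gamma$, and let $D_C \subseteq D_\Gamma$ be the connected component of the domain parameterizing the strand $C$, with $\gamma_C := \gamma_\Gamma|_{D_C}$. Suppose $C$ has a big curl at a crossing $x$ witnessed by an interval $I = [a,b] \subset D_C$; I claim that the same $I$, viewed now as a subset of $D_\Gamma$, witnesses a big curl of $\Gamma$ at $x$.

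First I would verify condition (1): since $x$ is a quadrivalent vertex of $\Gamma$, it is a transverse double point of $\gamma_\Gamma$, so $\gamma_\Gamma^{-1}(x)$ consists of exactly two points; the hypothesis that $C$ self-intersects at $x$ forces these two points to lie in $D_C$ and to equal $\gamma_C^{-1}(x) = \{a,b\}$. Condition (2) is immediate because $\gamma_\Gamma$ and $\gamma_C$ agree on $(a,b)$, which is already known to be mapped injectively. In particular, these two conditions together guarantee that $\gamma_\Gamma(I)$ is the same simple closed curve as $\gamma_C(I)$, so the interior and exterior regions supplied by the Jordan curve theorem are the same whether we work with $C$ alone or with $\Gamma$.

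For condition (3), the key observation is that $D_C$ is a connected component of the disjoint union $D_\Gamma$, so any sufficiently small neighborhood $N$ of $I$ inside $D_\Gamma$ is automatically contained in $D_C$. On such an $N$ the two immersions coincide, hence
\[
\gamma_\Gamma^{-1}\bigl(\mathrm{int}\,\gamma_\Gamma(I)\bigr) \cap N \;=\; \gamma_C^{-1}\bigl(\mathrm{int}\,\gamma_C(I)\bigr) \cap N,
\]
and the right-hand side is empty for $N$ small by the big-curl hypothesis on $C$. Combining the three verifications, $I$ witnesses a big curl of $\Gamma$ at $x$.

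The only potential pitfall is psychological rather than technical: one might worry that strands of $\Gamma$ other than $C$ entering the bounded region $\mathrm{int}\,\gamma_\Gamma(I)$ could invalidate the big-curl property in $\Gamma$. The resolution, and the point worth emphasizing in the write-up, is that condition (3) constrains only those points of the domain lying in a neighborhood of $I$, and such a neighborhood lies entirely in $D_C$; the other components of $D_\Gamma$ contribute preimages that are \emph{far} from $I$ in the domain, so they are irrelevant to the condition. In particular, extra strands threading through the interior of the curl are allowed, which is exactly what one expects since the big-curl configuration should be pattern-matched on $C$ alone.
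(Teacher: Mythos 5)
Your proposal is correct and takes exactly the same approach as the paper, which simply observes that the definition of a big curl depends only on the restriction of $\gamma$ to one component of the domain and therefore passes from $C$ to $\Gamma$; you have expanded this observation into a careful verification of the three conditions, with the key point (that a small neighborhood of $I$ in the domain lies entirely in $D_C$) correctly identified.
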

	
\begin{proof} Since the definition of a big curl only depends on the restriction of $\gamma$ to one component of its domain, this is immediate. \end{proof}

\begin{lem}\label{lem:bigcurl}
Let $\Gamma$ be a graph in which a strand intersects itself. 
Then $\Gamma$ contains a big curl.
\end{lem}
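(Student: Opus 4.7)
The plan is to reduce, via the preceding lemma, to finding a big curl in a single self-intersecting strand; so I work with a generic immersion $\gamma$ whose image has at least one transverse self-intersection. The key construction is the collection $\mathcal{I}$ of sub-intervals $I=[a,b]$ of the domain for which $\gamma(a)=\gamma(b)$ is a self-intersection and $\gamma|_{(a,b)}$ is injective; each such $I$ yields a simple closed curve $\gamma(I)\subset\mathring{\D}$ bounding an interior region $R(I)$ by the Jordan Curve Theorem. First I verify that $\mathcal{I}$ is nonempty: starting from any interval between the two preimages of some self-intersection, one iteratively shrinks the interval whenever $\gamma$ fails to be injective on the interior (any new self-intersection inside must have both preimages inside). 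This process terminates because there are finitely many self-intersections.

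Since $\mathcal{I}$ is finite, I pick $I_0=[a,b]\in\mathcal{I}$ with $R(I_0)$ minimal under inclusion, and claim that $\gamma(I_0)$ is a big curl. Conditions (1) and (2) of the definition hold by construction, so only condition (3) requires proof. Setting $x=\gamma(a)=\gamma(b)$ and letting $v_1,v_2$ be the tangent vectors of $\gamma$ at $a,b$, an analysis of the four tangent rays at the transverse crossing shows that (3) is equivalent to the two ``outgoing'' directions $-v_1,+v_2$, belonging to $\gamma$ just outside $I_0$, lying in the exterior of $C:=\gamma(I_0)$ near $x$. I will argue by contradiction, assuming these instead point into $R(I_0)$.

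Under this assumption, $\gamma$ just outside $I_0$ immediately enters $R(I_0)$. I then examine the connected components of $\gamma^{-1}(R(I_0))$, which are open sub-intervals of the domain disjoint from $I_0$ whose domain-endpoints must map to $C$ (they cannot be endpoints of the domain itself, since $\gamma$ sends such endpoints into $\partial\D$, which is disjoint from $\overline{R(I_0)}$). A component abutting $a$ or $b$ yields another endpoint $p$ with $\gamma(p)\in C$, and hence $\gamma(p)=\gamma(p')$ for a unique $p'\in I_0$. In the generic case $p'\notin\{a,b\}$, the sub-interval of the domain between $p$ and $p'$ on the appropriate side contains an element $I_1\in\mathcal{I}$ whose simple-closed-curve image lies entirely in $\overline{R(I_0)}$; the edge case $p'\in\{a,b\}$, which can only occur for closed-loop $\gamma$ with the entire complementary arc mapping into $R(I_0)$, is handled by extracting an innermost simple sub-loop of that complementary arc.

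The main obstacle is the last step: showing that $R(I_1)\subsetneq R(I_0)$, thereby contradicting the minimality of $R(I_0)$. This is a standard JCT argument -- a simple closed curve contained in the closed disk region $\overline{R(I_0)}$ and meeting $\partial R(I_0)=C$ only at isolated self-intersection points of $\gamma$ bounds a region inside $R(I_0)$ -- but some care is needed to track which side of the new sub-loop is the interior, to ensure strict containment rather than equality, and to rule out the degenerate possibility $I_1=I_0$ (which is ruled out by construction since $I_1$ contains points of the domain outside $I_0$).
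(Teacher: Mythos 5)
Your approach is correct in outline, but it differs substantively from the paper's, which is shorter. You minimize over the collection $\mathcal{I}$ of candidate curls, choosing $I_0$ with $R(I_0)$ inclusion-minimal, and argue by contradiction that condition (3) must hold, using a secondary JCT argument to exhibit a strictly smaller region. The paper instead parameterizes the strand as $\gamma \colon [0,1] \to \D$ (cutting a closed strand at a point $p$ joined to $\partial\D$ by a ray missing $\Gamma$), takes $b = \sup\{t : \gamma|_{(0,t)} \text{ injective}\}$, and lets $I = [a,b]$ with $\gamma(a)=\gamma(b)$. Condition (3) is then automatic: $\gamma([0,a))$ is connected, disjoint from $\gamma(I)$, and reaches $\partial\D$, so by JCT it lies in the exterior of $\gamma(I)$; transversality at the crossing then forces the other outgoing half-edge into the exterior as well. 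Taking the \emph{first} self-intersection along the parameter thus obviates both your minimal-region construction and the contradiction argument.

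The step you flag as your ``main obstacle'' is indeed where the work lies, and it does go through, though your sketch of it has a minor inaccuracy: you say $\gamma(I_1)$ meets $C=\partial R(I_0)$ only at isolated self-intersection points, but $\gamma(I_1)$ may overlap $C$ along a whole subarc (the image of $I_1\cap I_0$). The cleaner route is: since $\gamma(I_1)\subset\overline{R(I_0)}$, the unbounded component of $\R^2\setminus\gamma(I_1)$ contains $\R^2\setminus\overline{R(I_0)}$, so $R(I_1)\subset\overline{R(I_0)}$; since $R(I_1)$ is open, a point of $R(I_1)\cap C$ would have a neighborhood inside $R(I_1)$ meeting $\R^2\setminus\overline{R(I_0)}$, a contradiction, so $R(I_1)\subset R(I_0)$; and $\gamma(I_1)\cap C$ is a proper subarc of $C$, so $\gamma(I_1)\neq C$ and the two simple closed curves bound distinct regions, giving strictness. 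Also note your edge case $p'\in\{a,b\}$ cannot occur when the domain is $[0,1]$: the preimages of $x$ are exactly $\{a,b\}$ and $p$ equals neither, so $\gamma(p)\neq x$. It arises only for closed strands, which the paper avoids by cutting first.
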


\begin{proof}
By the previous lemma, it suffices to consider the case when $\Gamma \subset \D$ has a single strand.
First, suppose that $\Gamma$ is not a closed strand, and view it as the image of a generic immersion $[0,1] \xrightarrow{\gamma} \D$. 
Let 
\begin{equation}
b = \sup \{ t \in (0,1) \mid \gamma|_{(0,t)} \text{ is injective} \}.
\end{equation}
Then $b < 1$ since $\Gamma$ intersects itself.
It follows that $\gamma(b)$ is a crossing $x \in \Gamma$ and $\gamma^{-1}(x) = \{ a, b \}$ for some $a < b$.
We claim that the interval $I = [a,b]$ is a big curl at $x$. The first two conditions are satisfied by construction.
One of the other edges incident at $x$ is a subset of $\gamma\big([0,a)\big)$, 
which runs to the boundary and does not intersect $\gamma(I)$. By the JCT, this edge must lie on the exterior of $\gamma(I)$. 
The transversality of the intersection at $x$ implies that the remaining edge lies in the exterior as well.

An analogous argument works in the case that $\Gamma$ is a closed strand.
Choose a point $p \in \Gamma$ that is not on a crossing, and such that there exists a ray from $p$ to the boundary that does not meet $\Gamma$.
Now view $\Gamma$ as the image of a generic immersion $S^1 \xrightarrow{\gamma} \D$ such that $\gamma(0) = \gamma(1) = p$. 
Here, we identify $S^1 \cong [0,1] \big/ 0 \sim 1$.
Repeating the above argument then produces the big curl. Note that $p$ must be in the exterior of $\gamma(I)$ by assumption\footnote{Alternatively, 
one can excise a neighborhood of the ray from $p$ to the boundary, to obtain a subgraph where the strand is not closed, but instead has two boundary points near $p$.}.
\end{proof}

Next we wish to prove that any non-crossing-minimal graph without self-intersecting strands (e.g. Example \ref{ex:inversion}) will have a big bigon. First we must rigorously define a big bigon. 
Here is a reminder; the big bigon is in red.
\[
\begin{tikzpicture} [scale=.75,anchorbase]
	\draw[very thick, red] (-.25,-.25) to (0,0) to [out=45,in=270] (1,1.25) to [out=90,in=315] (0,2.5) to (-.25,2.75);
	\draw[very thick, red] (.25,-.25) to (0,0) to [out=135,in=270] (-1,1.25) to [out=90,in=225] (0,2.5) to (.25,2.75);
	\draw[very thick] (-1.5,1.5) to (1.5,1.5);
	\draw[very thick] (-1.5,1) to (1.5,1);
	\draw[very thick,fill=white] (-.5,.75) rectangle (.5,1.75);
	\node at (0,1.25) {$\tilde{\Gamma}$};
	\node[rotate=90] at (.75,1.25) {$\mydots$};
	\node[rotate=90] at (-.75,1.25) {$\mydots$};
\end{tikzpicture}
\]

\begin{defn}
Let $\Gamma$ be a graph, which we view as the image of a generic immersion
\[
\coprod_{j=1}^c S^1\sqcup \coprod_{i=1}^\ell [0,1] \xrightarrow{\gamma} \D.
\]
Given distinct crossings $x, y \in \Gamma$, we say that there is a 
\emph{big bigon between $x$ and $y$} provided that we can find disjoint topological intervals\footnote{Again, there is no requirement that $x_i < y_i$. 
This is important here because e.g. it is permitted that $I_1$ and $I_2$ live inside $[0,1]$ with $x_1 < y_1 < y_2 < x_2$. 
We do not wish our notation  to imply that we care about the ``orientation'' of $I_1$ relative to $I_2$.} 
$I_1 = [x_1, y_1]$ and $I_2 = [x_2, y_2]$ in the domain (possibly on the same component) such that
\begin{enumerate}
\item $\gamma^{-1}(x) = \{x_1, x_2\}$ and $\gamma^{-1}(y) = \{y_1, y_2\}$,
\item $\gamma$ is injective on $(x_1, y_1) \cup (x_2, y_2)$,
\item The above conditions imply that $\gamma(I_1 \cup I_2)$ is a simple closed curve, whose complement has an interior (bounded component) and an exterior (unbounded component) by the JCT. We require that the preimage under $\gamma$ of the interior does not meet a neighborhood of $I_1 \cup I_2$.
\end{enumerate}
We will refer to the simple closed curve $\gamma(I_1 \cup I_2)$ as a big bigon.
\end{defn}

\begin{lem} Let $C_1$ and $C_2$ be two strands inside $\Gamma$ (possibly equal), and view $C_1 \cup C_2$ as a quadrivalent graph of its own right. If $C_1 \cup C_2$ has a big bigon, then so does $\Gamma$. \end{lem}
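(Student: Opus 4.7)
The plan is to show that the same pair of preimage intervals $I_1, I_2$ witnessing the big bigon in $C_1\cup C_2$ also witnesses one in $\Gamma$. Writing $\gamma_{\Gamma}$ and $\gamma_{C_1\cup C_2}$ for the respective generic immersions, observe that the latter is literally the restriction of the former to the subdomain consisting of the one or two components corresponding to $C_1$ and $C_2$. Since $I_1\cup I_2$ lies inside this subdomain, the image $\gamma_\Gamma(I_1\cup I_2)=\gamma_{C_1\cup C_2}(I_1\cup I_2)$ is the same simple closed curve, with the same interior and exterior.

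Next, I would verify the three defining conditions of a big bigon one at a time. Condition (1) transfers because every crossing of $\Gamma$ is quadrivalent, hence has exactly two preimages under $\gamma_\Gamma$; since $x$ and $y$ already have two preimages inside the subdomain (namely $\{x_1,x_2\}$ and $\{y_1,y_2\}$), no further preimages exist in $\Gamma$. Condition (2), injectivity of $\gamma$ on the open intervals $(x_1,y_1)\cup(x_2,y_2)$, depends only on the restriction of $\gamma$ to these intervals and hence is identical for $\Gamma$ and $C_1\cup C_2$.

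The only condition requiring a moment of thought is (3): the preimage of the interior avoids a neighborhood of $I_1\cup I_2$. The key observation is that a sufficiently small open neighborhood of $I_1\cup I_2$ inside the full domain of $\gamma_\Gamma$ is entirely contained in the subdomain of $C_1\cup C_2$, because the components making up that subdomain are topologically disjoint from the remaining components. Therefore condition (3) for $\Gamma$ reduces exactly to condition (3) for $C_1\cup C_2$, which holds by hypothesis. Note that other strands of $\Gamma$ are allowed to pass through the interior of the bigon; the definition only controls the preimage near $I_1\cup I_2$, not the interior itself.

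There is no real obstacle here: the lemma is a bookkeeping statement expressing that ``big bigon'' is a local condition on the preimage intervals, and the proof is just a careful unpacking of the definition after restriction. The substantive use of big bigons will come in the next step, where Lemma \ref{lem:bigcurl} and this lemma together reduce Lemma \ref{lem:toomanycrossings} to the case of a single strand or a pair of strands, to which the graph Reidemeister moves from Lemma \ref{lem:webReidemeister} can then be applied.
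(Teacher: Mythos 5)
Your proof is correct and takes essentially the same approach as the paper, which dispatches the lemma in one sentence by noting that the definition of a big bigon depends only on the restriction of $\gamma$ to (at most) two components of the domain. Your unpacking of why each condition transfers — in particular that genericity of $\Gamma$ forces $\gamma_\Gamma^{-1}(x)=\{x_1,x_2\}$ exactly, and that a sufficiently small neighborhood of $I_1\cup I_2$ lies entirely within the subdomain — is precisely the content behind the paper's ``this is immediate.''
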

	
\begin{proof} Since the definition of a big bigon only depends on the restriction of $\gamma$ to (up to) two components of its domain, this is immediate. \end{proof}

\begin{lem}\label{lem:bigbigon}
Let $\Gamma$ be a graph that is not crossing-minimal and contains no big curls. 
Then $\Gamma$ contains a big bigon.
\end{lem}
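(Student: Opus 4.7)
The plan is to exhibit a big bigon via a minimality argument. First I would note that since $\Gamma$ has no big curls, the contrapositive of Lemma \ref{lem:bigcurl} says that every strand of $\Gamma$ is embedded (no self-intersections). Since $\Gamma$ is not crossing-minimal and no strand crosses itself, there must exist two distinct strands $C_1, C_2$ sharing at least two crossings. Having ruled out self-intersections globally, I have the freedom to argue entirely in terms of pairs of distinct embedded strands.

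I would then introduce the set $\mathcal{B}$ consisting of all triples $(C_1', C_2', D)$ where $C_1', C_2'$ are distinct (embedded) strands of $\Gamma$ and $D \subset \D$ is a closed topological disk whose boundary decomposes as $I_1 \cup I_2$ with $I_j \subset C_j'$ an embedded arc joining two common crossings $x, y \in C_1' \cap C_2'$, and with the interiors of $I_1, I_2$ disjoint. To see $\mathcal{B}$ is nonempty, I would start from two shared crossings of $C_1, C_2$, walk along $C_1$ between consecutive members of $C_1 \cap C_2$ to obtain an arc $I_1$ meeting $C_2$ only at its endpoints $x, y$, and pair it with either of the two arcs of $C_2$ from $x$ to $y$; the Jordan Curve Theorem then produces the bounded disk $D$. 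Since $\Gamma$ has finitely many strands and crossings, $\mathcal{B}$ is finite, so an element minimal under inclusion exists.

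Next I would pick $(C_1, C_2, D) \in \mathcal{B}$ with $D$ inclusion-minimal and claim it satisfies the definition of a big bigon. Conditions (1) and (2) are immediate from embeddedness, so the core of the argument is condition (3): that none of the four ``outgoing'' edges at the corners $x$ and $y$ locally enters the interior of $D$. Suppose for contradiction that the continuation of $C_1$ past $x$ not following $I_1$ does enter $D$. Since this continuation is part of an embedded strand, it must eventually exit $\partial D = I_1 \cup I_2$ at some first point $z$; as $C_1$ is embedded, $z$ cannot lie on $I_1$ (except at $y$, which would force $C_1$ into configurations discussed below), so $z \in I_2$ is a crossing of $C_1$ with $C_2$. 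The sub-arc of $C_1$ from $x$ to $z$ (lying in $D$) together with the sub-arc of $I_2$ from $x$ to $z$ (lying on $\partial D$) meets only at $\{x, z\}$ and bounds a strictly smaller disk $D' \subsetneq D$, witnessing a strictly smaller element of $\mathcal{B}$ and contradicting minimality. The case where the other direction of $C_2$ at $x$ enters $D$ splits into two subcases: either this continuation exits through $I_1$ at some crossing $z'$ (in which case the same construction applies with the roles of $C_1, C_2$ swapped), or it remains inside $D$ all the way to $y$, in which case it forms a chord of $D$ cutting $D$ into two subdisks, and the one bounded by $I_1$ together with this new arc of $C_2$ is a strictly smaller element of $\mathcal{B}$. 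The degenerate subcase where the continuation loops back to $y$ along $I_1$ is ruled out by embeddedness of $C_1$. The corresponding analyses at the corner $y$ are identical by symmetry.

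The main obstacle will be bookkeeping in step three: in each subcase one has to verify that the candidate smaller region $D'$ is genuinely a topological disk whose boundary consists of two embedded arcs of distinct strands meeting transversely at exactly two crossings, so that $D'$ actually belongs to $\mathcal{B}$. This requires being careful with the distinction between the case that $C_1$ (or $C_2$) is a closed strand and the case that it is an arc with endpoints on $\partial \D$, and with the possibility that the ``other arc'' exits through the same boundary piece multiple times before first returning to $\partial D$. None of these involve new ideas, but each must be handled so that the minimality argument produces an honest element of $\mathcal{B}$ strictly contained in $D$.
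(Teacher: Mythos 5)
Your proof is correct, but it takes a genuinely different route from the paper's. The paper first applies the (unnamed) lemma immediately preceding Lemma \ref{lem:bigbigon}---if a big bigon occurs in the subgraph $C_1\cup C_2$, it is a big bigon of $\Gamma$---to reduce to the case where $\Gamma$ consists of just two embedded strands $C_1$ and $C_2$. Then $C_1$ separates $\D$ into two regions; taking two crossings $x,y$ that are \emph{consecutive along} $C_2$, the arc $I_2\subset C_2$ between them lies in the closure $\overline U$ of one region, and the disk $D$ it bounds together with an arc $I_1\subset C_1$ is contained in $\overline U$. Since $\overline U$ is locally a half-plane at $x$ and at $y$, the disk $D$ occupies exactly one of the four local quadrants at each corner, so condition (3) holds automatically. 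You instead work globally: you collect all candidate disks into $\mathcal{B}$, take one minimal under inclusion, and derive a contradiction from any failure of condition (3). This is a legitimate alternative that dispenses with the reduction lemma (and with noticing that consecutive crossings make (3) automatic), at the cost of the case analysis you flag. One small correction in that analysis: the ``degenerate subcase'' where the continuation of $C_1$ past $x$ stays inside $D$ all the way to $y$ is \emph{not} ruled out by embeddedness when $C_1$ is a closed strand, since reaching $y$ as the far endpoint of the complementary arc is not a self-intersection of $C_1$. That case is harmless, however: one simply takes $z=y$ in your construction, and the disk bounded by this arc of $C_1$ together with $I_2$ is again a strictly smaller element of $\mathcal{B}$. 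With that repaired (and the bookkeeping you already anticipate), your minimality argument closes cleanly.
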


\begin{proof}
Since $\Gamma$ contains no big curls, Lemma \ref{lem:bigcurl} implies that it has no self-intersecting strands. 
Since $\Gamma$ is not crossing-minimal, it must contain two distinct strands that intersect more than once. 
By the previous lemma it suffices to consider the case when $\Gamma$ consists of these two strands.
However, in this case the result is a trivial consequence of the JCT\footnote{In fact, in this case the JCT can be used to prove the existence of a \emph{small bigon}, 
i.e. a subgraph as in the left-hand side of the graph Reidemeister move gRII, but the argument is more complex.}. 
One strand of $\Gamma$ will divide the disk into two regions. The other strand must cross the first at least twice, and two consecutive crossings will yield a big bigon. \end{proof}

\begin{proof}[Proof of Lemma \ref{lem:toomanycrossings}] 
Let $\Gamma$ be a graph that is not crossing-minimal, and suppose that $\Gamma$ has $N$ crossings. 
We prove that $\Gamma \in \Web^\times(\spn)$ is in the span of diagrams with $< N$ crossings, by induction on $N$. 
Lemma \ref{lem:bigbigon} implies that $\Gamma$ contains either a big curl or a big bigon.

Suppose that $\Gamma$ contains a big curl with crossing $x$. Consider the closed simply-connected region $K$ enclosed by the dotted lines in \eqref{eq:curlnbhd}.
\begin{equation} \label{eq:curlnbhd}
\begin{tikzpicture} [scale=.75,anchorbase]
	\draw[very thick, red] (.25,-.25) to (0,0) to [out=45,in=270] (1,1.25) to [out=90,in=0] (0,2)
		to [out=180,in=90] (-1,1.25) to [out=270,in=135] (0,0) to (-.25,-.25);
	\draw[very thick] (-1.5,1.375) to (1.5,1.375);
	\draw[very thick] (-1.5,.875) to (1.5,.875);
	\draw[very thick,fill=white] (-.5,.625) rectangle (.5,1.625);
	\node at (0,1.125) {$\tilde{\Gamma}$};
	\node[rotate=90] at (.75,1.125) {$\mydots$};
	\node[rotate=90] at (-.75,1.125) {$\mydots$};
	\path[fill=gray,opacity=.75] (0,0) circle (.3);
	\draw[densely dashed] (.25,0) to [out=45,in=270] (1.25,1.25) to [out=90,in=0] (0,2.25)
		to [out=180,in=90] (-1.25,1.25) to [out=270,in=135] (-.25,0) to [out=45,in=180] (0,.25) to [out=0,in=135] (.25,0);
\end{tikzpicture}
\Longrightarrow
\Gamma' =
\begin{tikzpicture} [scale=.75,anchorbase]
\begin{scope} 
	\clip (0,1.125) circle (1.25);
	\draw[very thick, red] (-.875,-2) to (-.875,1.125) to [out=90,in=180] (0,2) to [out=0,in=90] (.875,1.125) to (.875,-2);
	\draw[very thick] (-1.5,1.375) to (1.5,1.375);
	\draw[very thick] (-1.5,.875) to (1.5,.875);
	\draw[very thick,fill=white] (-.5,.625) rectangle (.5,1.625);
	\node at (0,1.125) {$\tilde{\Gamma}$};
	\node[rotate=90] at (.65,1.125) {$\mydots$};
	\node[rotate=90] at (-.65,1.125) {$\mydots$};
 \end{scope}
	 \draw[densely dashed] (0,1.125) circle (1.25);
\end{tikzpicture}
\end{equation}
To be precise, we can let $B$ be the union of the big curl and its interior, excise from $B$ a neighborhood of the crossing $x$, take an open neighborhood of the result, and let $K$ be its closure. 
Set $\Gamma' = K \cap \Gamma$, which we can view as a graph in the planar disk; its boundary is $\partial K \cap \Gamma$. 
Set $\tilde{\Gamma}$ to be the subgraph in the interior\footnote{Technically, we should look at the interior, excise a neighborhood of the big curl, take the closure, 
and intersect with $\Gamma$. The concept is clear from the picture, so we will no longer stress such technicalities.} of the big curl.

Note that $\Gamma'$ has strictly fewer than $N$ crossings, because it is missing the crossing $x$. Suppose $\Gamma'$ has $M$ crossings. 
If $\Gamma'$ is not crossing-minimal, then by induction we can replace $\Gamma'$ in $\Web^\times(\spn)$ with a linear combination of diagrams with $< M$ crossings. 
Applying this local move to $\Gamma$, we can replace $\Gamma$ by a linear combination of diagrams with $< N$ crossings, as desired. 
Thus, we can assume that $\Gamma'$ is crossing minimal. By Lemma \ref{lem:removecircles}, we can take $\Gamma'$ to be reduced. By the same argument, we can assume $\tilde{\Gamma}$ is reduced.

Further, $\Gamma'$ has a distinguished (red) strand which was part of the big curl. 
The boundary points of the distinguished strand are adjacent, so it is not involved in any inversions. Since $\Gamma'$ is reduced, no other strands cross the distinguished strand. 
By construction, every strand on the boundary of $\tilde{\Gamma}$ will cross the distinguished strand in $\Gamma'$, so the boundary of $\tilde{\Gamma}$ is empty. 
Since $\tilde{\Gamma}$ is reduced, it must be the empty diagram. Consequently, the big curl is simply an ``ordinary curl,'' i.e. the left-hand side of \eqref{eq:R1}, 
and this relation expresses $\Gamma'$ as a linear combination of graphs with fewer crossings. This concludes the case when $\Gamma$ has a big curl.

Suppose that $\Gamma$ does not contain a big curl. 
Lemma \ref{lem:bigbigon} then implies that $\Gamma$ contains a big bigon, with crossings $x$ and $y$. 
Once again, consider the closed region $K$ enclosed by the dotted lines in \eqref{eq:bigonnbhd}, and the corresponding subgraph $\Gamma' = K \cap \Gamma$.
\begin{equation} \label{eq:bigonnbhd}
\begin{tikzpicture} [scale=.75,anchorbase]
	\draw[very thick, red] (-.25,-.25) to (0,0) to [out=45,in=270] (1,1.25) to [out=90,in=315] (0,2.5) to (-.25,2.75);
	\draw[very thick, red] (.25,-.25) to (0,0) to [out=135,in=270] (-1,1.25) to [out=90,in=225] (0,2.5) to (.25,2.75);
	\draw[very thick] (-1.5,1.5) to (1.5,1.5);
	\draw[very thick] (-1.5,1) to (1.5,1);
	\draw[very thick,fill=white] (-.5,.75) rectangle (.5,1.75);
	\node at (0,1.25) {$\tilde{\Gamma}$};
	\node[rotate=90] at (.75,1.25) {$\mydots$};
	\node[rotate=90] at (-.75,1.25) {$\mydots$};
	\path[fill=gray,opacity=.75] (0,0) circle (.3);
	\path[fill=gray,opacity=.75] (0,2.5) circle (.3);
	\draw[densely dashed] (.25,0) to [out=45,in=270] (1.25,1.25) to [out=90,in=315] (.25,2.5) to [out=225,in=0] (0,2.25) 
		to [out=180,in=315] (-.25,2.5) to [out=225,in=90] (-1.25,1.25) to [out=270,in=135] (-.25,0) to [out=45,in=180] (0,.25) 
			to[out=0,in=135] (.25,0);
\end{tikzpicture}
\Longrightarrow
\Gamma' =
\begin{tikzpicture} [scale=.75,anchorbase]
\begin{scope} 
	\clip (0,1.125) circle (1.25);
	\draw[very thick, red] (-.875,-2) to (-.875,2);
	\draw[very thick, red] (.875,-2) to (.875,2);
	\draw[very thick] (-1.5,1.375) to (1.5,1.375);
	\draw[very thick] (-1.5,.875) to (1.5,.875);
	\draw[very thick,fill=white] (-.5,.625) rectangle (.5,1.625);
	\node at (0,1.125) {$\tilde{\Gamma}$};
	\node[rotate=90] at (.65,1.125) {$\mydots$};
	\node[rotate=90] at (-.65,1.125) {$\mydots$};
 \end{scope}
	 \draw[densely dashed] (0,1.125) circle (1.25);
\end{tikzpicture}
\end{equation}
We can define $K$ formally by letting $B$ be the union of the big bigon and its interior, excising neighborhoods of $x$ and $y$ from $B$, taking an open neighborhood of the result, and letting $K$ be its closure. 
We let $\tilde{\Gamma}$ be the subgraph in the interior of the big bigon. 
We observe, as before, that $\Gamma'$ and $\tilde{\Gamma}$ have $<N$ crossings, and thus can be assumed reduced.

The graph $\Gamma'$ has a \emph{left} and \emph{right} distinguished strand, which formed the two edges of the original bigon. 
Because $\Gamma'$ is reduced, no strand in $\tilde{\Gamma}$ will meet either distinguished strand twice. 
Since $\tilde{\Gamma}$ is reduced it has no closed strands, so any strand must have two endpoints on the boundary, one on the left and one on the right. 
Thus, the matching induced by $\tilde{\Gamma}$ is a permutation between the left boundary points and the right boundary points.
Since $\tilde{\Gamma}$ is reduced, it is a string diagram for a reduced expression of an element in the symmetric group.

Suppose $\Gamma'$ has $M$ crossings. 
Relation \eqref{eq:R3} now allows us to slide all crossings in $\tilde{\Gamma}$ across the left distinguished strand, modulo graphs with strictly fewer crossings.
\begin{equation}
\Gamma' = \begin{tikzpicture} [scale=.75,anchorbase]
\begin{scope} 
	\clip (0,1.125) circle (1.25);
	\draw[very thick, red] (-.875,-2) to (-.875,2);
	\draw[very thick, red] (.875,-2) to (.875,2);
	\draw[very thick] (-1.5,1.375) to (1.5,1.375);
	\draw[very thick] (-1.5,.875) to (1.5,.875);
	\draw[very thick,fill=white] (-.5,.625) rectangle (.5,1.625);
	\node at (0,1.125) {$\tilde{\Gamma}$};
	\node[rotate=90] at (.65,1.125) {$\mydots$};
	\node[rotate=90] at (-.65,1.125) {$\mydots$};
 \end{scope}
	 \draw[densely dashed] (0,1.125) circle (1.25);
\end{tikzpicture} = 
\begin{tikzpicture} [scale=.75,anchorbase]
\begin{scope} 
	\clip (.375,1.125) circle (1.25);
	\draw[very thick, red] (1.25,-2) to (1.25,3);
	\draw[very thick, red] (.875,-2) to (.875,3);
	\draw[very thick] (-1.75,1.375) to (1.75,1.375);
	\draw[very thick] (-1.75,.875) to (1.75,.875);
	\draw[very thick,fill=white] (-.5,.625) rectangle (.5,1.625);
	\node at (0,1.125) {$\tilde{\Gamma}$};
	\node[rotate=90] at (.65,1.125) {$\mydots$};
	\node[rotate=90] at (-.65,1.125) {$\mydots$};
 \end{scope}
	 \draw[densely dashed] (.375,1.125) circle (1.25);
\end{tikzpicture}
+
\begin{tikzpicture} [scale=.75,anchorbase]
\begin{scope} 
	\clip (0,1.125) circle (1.25);
	\draw[very thick, red] (-.375,-2) to (-.375,3);
	\draw[very thick, red] (.375,-2) to (.375,3);
	\draw[very thick] (-1.5,1.375) to (1.5,1.375);
	\draw[very thick] (-1.5,.875) to (1.5,.875);
	\draw[very thick,fill=white] (-.75,.625) rectangle (.75,1.625);
	\node at (0,1.125) {$<M$};
	\node[rotate=90] at (1,1.125) {$\mydots$};
	\node[rotate=90] at (-1,1.125) {$\mydots$};
 \end{scope}
	 \draw[densely dashed] (0,1.125) circle (1.25);
\end{tikzpicture} \quad \text{in } \Web^\times(\spn).
\end{equation}
Considering the first diagram on the right-hand side, 
we have essentially reduced to the case where $\tilde{\Gamma}$ is the identity permutation on $k$ elements, whence $\Gamma'$ has $2k$ crossings.
Returning to our big bigon, we then compute
\begin{equation}
\begin{tikzpicture} [scale=.5,anchorbase]
	\draw[very thick, red] (-.25,-.25) to (0,0) to [out=45,in=270] (1,1.25) to [out=90,in=315] (0,2.5) to (-.25,2.75);
	\draw[very thick, red] (.25,-.25) to (0,0) to [out=135,in=270] (-1,1.25) to [out=90,in=225] (0,2.5) to (.25,2.75);
	\draw[very thick] (-1.5,1.75) to (1.5,1.75);
	\draw[very thick] (-1.5,.75) to (1.5,.75);
	\node[rotate=90] at (0,1.25) {$\mydots$};
\end{tikzpicture}
\stackrel{\eqref{eq:R3}}{=}
\begin{tikzpicture} [scale=.5,anchorbase]
	\draw[very thick, red] (-.25,-.25) to (0,0) to [out=45,in=270] (.25,.375) to [out=90,in=315] (0,.75) to [out=135,in=270] (-.25,2.75);
	\draw[very thick, red] (.25,-.25) to (0,0) to [out=135,in=270] (-.25,.375) to [out=90,in=225] (0,.75) to [out=45,in=270] (.25,2.75);
	\draw[very thick] (-1.5,2.25) to (1.5,2.25);
	\draw[very thick] (-1.5,1.25) to (1.5,1.25);
	\node[rotate=90] at (0,1.75) {$\mydots$};
\end{tikzpicture}
+ 
\begin{tikzpicture} [scale=.5,anchorbase,smallnodes]
	\draw[very thick,red] (-.25,-.625) to (-.25,2.125);
	\draw[very thick,red] (.25,-.625) to (.25,2.125);
	\draw[very thick] (-1.625,0.625) to (1.625,0.625);
	\draw[very thick] (-1.625,1.375) to (1.625,1.375);
	\draw[very thick,fill=white] (-1.125,.375) rectangle (1.125,1.625);
	\node at (0,1) {$< \! 2k{+}2$};
	\node[rotate=90] at (1.375,1) {$\mydots$};
	\node[rotate=90] at (-1.3,1) {$\mydots$};
\end{tikzpicture}
\stackrel{\eqref{eq:R2}}{=}
\begin{tikzpicture} [scale=.5,anchorbase,smallnodes]
	\draw[very thick,red] (-.25,-.625) to (-.25,2.125);
	\draw[very thick,red] (.25,-.625) to (.25,2.125);
	\draw[very thick] (-1.625,0.625) to (1.625,0.625);
	\draw[very thick] (-1.625,1.375) to (1.625,1.375);
	\draw[very thick,fill=white] (-1.125,.375) rectangle (1.125,1.625);
	\node at (0,1) {$< \! 2k{+}2$};
	\node[rotate=90] at (1.375,1) {$\mydots$};
	\node[rotate=90] at (-1.3,1) {$\mydots$};
\end{tikzpicture}.
\end{equation}
Applying this local relation inside $\Gamma$ will express it as a linear combination of graphs with strictly fewer than $N$ crossings.
\end{proof}

If we repeat the argument from the proof of Lemma \ref{lem:toomanycrossings} in the context of graphs modulo the 
graph Reidemeister moves \eqref{eq:graphR} and the ``circle removal'' relation
$
\begin{tikzpicture}[scale =.35, tinynodes,anchorbase]
	\draw[very thick] (0,0) circle (.5);
\end{tikzpicture} \mapsto \varnothing,
$
it shows that any graph can be taken to a reduced graph corresponding to the same matching 
using circle removal, $\mathrm{gRIII}$, and the simplifying moves $\mathrm{gRI'}$ and $\mathrm{gRII'}$ from \eqref{eq:GRdirectional}.
Since quadrivalent graphs can be interpreted as tangle diagrams modulo the relation of crossing change (over $\leftrightarrow$ under), 
we pause to record the following (which presumably is known to experts):

\begin{por}\label{por:tangle}
Any tangle diagram can be taken to a diagram for a corresponding untangle with all closed components removed
using a sequence of the following operations:
\begin{itemize}
	\item Reidemeister moves that do not increase the number of crossings,
	\item crossing change moves, and
	\item the circle removal move.
\end{itemize}
As a consequence, any knot diagram can be taken to the standard crossingless diagram of the unknot using only 
Reidemeister moves that do not increase the number of crossings and crossing change moves.
\end{por}

Note that the ``hard unknots'' discussed in \cite{HardU} show that, even for unknots, 
crossing change is a necessary move in the latter statement.

Finally, we turn our attention to Lemma \ref{lem:changerex}. 
This is an immediate consequence of the following result, 
which is a quadrivalent graph analogue of Matsumoto's theorem \cite{Matsumoto} for reduced expressions in Coxeter groups.

\begin{prop}\label{prop:Matsumoto}
Let $\Gamma_1,\Gamma_2 \subset \D$ be reduced graphs that correspond to the same matching, 
then there is a sequence of graph Reidemeister moves gRIII that take $\Gamma_1$ to $\Gamma_2$.
\end{prop}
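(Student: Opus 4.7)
The plan is to induct on $2\ell$, the number of boundary points, with the base cases $2\ell \le 2$ trivial (matchings of at most one pair admit a unique reduced graph up to isotopy). For the inductive step, label the boundary points $1, 2, \ldots, 2\ell$ cyclically; let $C \subset \Gamma_1$ be the strand incident to boundary point $1$, with its other endpoint at some boundary point $j$, and let $C' \subset \Gamma_2$ be the analogous strand. Both graphs being reduced and realizing the same matching, the strands crossing $C$ in $\Gamma_1$ biject with those crossing $C'$ in $\Gamma_2$—precisely the strands whose boundary pairs form an inversion with $\{1,j\}$.

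The heart of the argument is the following Key Claim: one can transform $\Gamma_1$ via gRIII moves into a reduced graph $\tilde{\Gamma}_1$ whose strand incident to boundary point $1$ coincides, up to planar isotopy rel boundary, with $C'$. Granting this, I would cut $\D$ along the common strand $C' \subset \tilde{\Gamma}_1 \cap \Gamma_2$ to produce two closed sub-disks $\D_1, \D_2$. A straightforward count shows that each sub-disk contains strictly fewer than $2\ell$ boundary points and carries a reduced graph for a smaller matching. Applying the inductive hypothesis to each side and reassembling produces the desired sequence of gRIII moves relating $\Gamma_1$ to $\Gamma_2$.

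To establish the Key Claim, I would first apply a planar isotopy putting $C$ and $C'$ in general position, so that they intersect transversely in finitely many points and meet the remaining strands of $\Gamma_1$ in the same cyclic order along the arc from $1$ to $j$. Consider the bigon regions of $\D$ bounded by consecutive intersection points of $C$ and $C'$; these are subdivided by the other strands of $\Gamma_1$ into smaller regions. Since $\Gamma_1$ is reduced, no two of its strands cross twice, so each interior region is a topological triangle with one edge on $C$, one on $C'$, and one on a third strand. An innermost-triangle argument locates a triangle meeting no other strands in its interior, and applying gRIII to this configuration pushes $C$ across the triangle, strictly decreasing the number of intersection points between $C$ and $C'$. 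Iterating yields the Key Claim.

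The main obstacle is ensuring consistency throughout the process: that an innermost triangle always exists while $C \ne C'$, and that the graph remains reduced after each gRIII move so that subsequent moves are legitimate. The former follows from a standard compactness and minimality argument inside the bounded region between $C$ and $C'$, noting that the finite collection of crossings admits a natural partial order with minimal elements. The latter is automatic, because gRIII preserves the matching and the pairwise crossing count between any two strands; hence if the initial graph is reduced then so is every intermediate graph.
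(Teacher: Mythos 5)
Your approach differs from the paper's: you try to make the strand through boundary point $1$ coincide in $\Gamma_1$ and $\Gamma_2$, whereas the paper normalizes each graph independently relative to a well-chosen strand. Unfortunately the cutting step has a gap. After the Key Claim, $\tilde{\Gamma}_1$ and $\Gamma_2$ both contain the arc $C'$, and the same set of strands cross it (those inverting $\{1,j\}$), but the \emph{order} in which they cross $C'$ is not determined by the matching: if $A$ and $B$ each cross $C'$ and also cross each other, then a gRIII move on the triangle formed by $A$, $B$, $C'$ swaps the order of their intersections with $C'$, and both orders occur in reduced graphs. So after cutting along $C'$, the subgraphs $\tilde{\Gamma}_1 \cap \D_i$ and $\Gamma_2 \cap \D_i$ generally induce \emph{different} matchings on $\partial\D_i$ (whose new boundary points are the crossings on $C'$), and the inductive hypothesis cannot be applied. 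There is also a secondary gap in the Key Claim: you assert that reducedness forces each region inside a $(C,C')$-bigon to be a triangle with one edge on each of $C$, $C'$, and a third strand. But $C'$ is not a strand of $\Gamma_1$, so a strand of $\Gamma_1$ may cross $C'$ arbitrarily often, producing regions with two edges on $C'$ and none on $C$; an innermost triangle adjacent to $C$ need not exist at every stage.

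The paper avoids both issues. Rather than matching a strand of $\Gamma_1$ to a strand of $\Gamma_2$, it chooses in each $\Gamma_i$ separately a strand $S$ whose endpoints cut off a boundary arc $I$ containing no matched pair. Then every strand of $\Gamma_i$ meeting the lune $U$ between $S$ and $I$ runs exactly once from $I$ to $S$ (no strand has both ends on $I$, and none crosses $S$ twice by reducedness), so $\Gamma_i\cap U$ is a braid diagram for a permutation. Pushing all crossings out of $U$ by gRIII leaves $U$ crossing-free, and this forces the crossings along $S$ into the unique order determined by the boundary points on $I$. After this normalization the exterior subgraphs of $\Gamma_1$ and $\Gamma_2$ genuinely have the same boundary and matching, and induction on the number of strands closes the argument. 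Your version lacks a mechanism to pin down the crossing order along the shared strand, and that is precisely what the induction needs.
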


\begin{proof}
Let $\Gamma$ be a reduced graph. We begin by identifying a distinguished strand in $\Gamma$, and use gRIII moves to simplify $\Gamma$ with respect to this strand.
There exists an ordered pair $(a,b)$ of matched points so that, along the counterclockwise segment $I = [a,b] \subset \partial \D$ from $a$ to $b$, 
there are no matched points between $a$ and $b$. 
Denote this distinguished strand between $a$ and $b$ by $S$ (in red below), and let $U$ denote the interior of the simple closed curve $S \cup I$. Let $\tilde{\Gamma}$ be the intersection of $\Gamma$ with $U$.
\begin{equation}
\Gamma =
\begin{tikzpicture} [scale=.75,anchorbase,smallnodes]
\begin{scope} 
	\clip (0,2) circle (1.25);
	\draw[very thick, red] (-.875,-2) to (-.875,1.125) to [out=90,in=180] (0,2) to [out=0,in=90] (.875,1.125) to (.875,-2);
	\draw[very thick] (-.25,-5) to (-.25,5);
	\draw[very thick] (.25,-5) to (.25,5);
	\draw[very thick,fill=white] (-.375,2.25) rectangle (.375,3);
	\node at (0,2.625) {$\overline{\Gamma}$};
	\draw[very thick,fill=white] (-.375,1) rectangle (.375,1.75);
	\node at (0,1.375) {$\tilde{\Gamma}$};
	\node at (0,.875) {\scriptsize$\mydots$};
	\node at (0,1.875) {\scriptsize$\mydots$};
	\node at (0,3.125) {\scriptsize$\mydots$};
 \end{scope}
	 \draw[densely dashed] (0,2) circle (1.25);
	 \node[red] at (-.875,.875) {$a$};
	 \node[red] at (.875,.875) {$b$};
\end{tikzpicture}
\end{equation}
No strand in $\tilde{\Gamma}$ will match two points on its bottom boundary (by our choice of interval $[a,b]$), nor will it match two points on the top boundary 
(since this strand would intersect $S$ twice, violating the reducedness of $\Gamma$). 
Thus, as in the big bigon case of the proof of Lemma \ref{lem:toomanycrossings}, 
$\tilde{\Gamma}$ is a string diagram for a (reduced) expression of an element in the symmetric group.

Hence, we can apply a sequence of gRIII moves to slide all crossings in $\tilde{\Gamma}$ 
across $S$ into $\D \smallsetminus U$. 
This produces a graph $\Gamma'$ corresponding to the same matching with the same distinguished strand, wherein the region $U$ contains no crossings:
\begin{equation}
\Gamma= 
\begin{tikzpicture} [scale=.75,anchorbase]
\begin{scope} 
	\clip (0,2) circle (1.25);
	\draw[very thick, red] (-.875,-2) to (-.875,1.125) to [out=90,in=180] (0,2) to [out=0,in=90] (.875,1.125) to (.875,-2);
	\draw[very thick] (-.25,-5) to (-.25,5);
	\draw[very thick] (.25,-5) to (.25,5);
	\draw[very thick,fill=white] (-.375,2.25) rectangle (.375,3);
	\node at (0,2.625) {$\overline{\Gamma}$};
	\draw[very thick,fill=white] (-.375,1) rectangle (.375,1.75);
	\node at (0,1.375) {$\tilde{\Gamma}$};
	\node at (0,.875) {\scriptsize$\mydots$};
	\node at (0,1.875) {\scriptsize$\mydots$};
	\node at (0,3.125) {\scriptsize$\mydots$};
 \end{scope}
	 \draw[densely dashed] (0,2) circle (1.25);
\end{tikzpicture}
\xleftrightarrow{\mathrm{gRIII}}
\begin{tikzpicture} [scale=.75,anchorbase]
\begin{scope} 
	\clip (0,2) circle (1.25);
	\draw[very thick, red] (-.875,-3) to (-.875,0.375) to [out=90,in=180] (0,1.25) to [out=0,in=90] (.875,0.375) to (.875,-3);
	\draw[very thick] (-.25,-5) to (-.25,5);
	\draw[very thick] (.25,-5) to (.25,5);
	\draw[very thick,fill=white] (-.375,2.3125) rectangle (.375,3.0625);
	\node at (0,2.6875) {$\overline{\Gamma}$};
	\draw[very thick,fill=white] (-.375,1.375) rectangle (.375,2.125);
	\node at (0,1.75) {$\tilde{\Gamma}$};
	\node at (0,1) {\scriptsize$\mydots$};
	\node at (0,2.225) {\scriptsize$\mydots$};
	\node at (0,3.125) {\scriptsize$\mydots$};
 \end{scope}
	 \draw[densely dashed] (0,2) circle (1.25);
\end{tikzpicture}
=: \Gamma'.
\end{equation}

We now deduce our result by inducting on the number of strands $\ell$ in $\Gamma$, noting that the base case ($\ell=0$) is trivial. 
Let $\Gamma_1$ and $\Gamma_2$ be two graphs with $\ell$ strands that correspond to the same matching (we identify their boundary). 
Let $S_1$ be the distinguished strand in $\Gamma_1$. Since $\Gamma_2$ corresponds to the same matching, this determines a distinguished strand $S_2$ in $\Gamma_2$.
Apply the above procedure to $\Gamma_i$ for $i=1,2$ to obtain graphs $\Gamma'_i$ that are related to $\Gamma_i$ by a sequence of gRIII moves, and such that 
$\Gamma'_i \cap U_i$ contains no crossings.
Now, considering the subgraph in the exterior of the simple closed curves $S_i \cup I$, we have two reduced graphs with $\ell - 1$ strands which represent the same matchings. 
By induction, there is a sequence of gRIII moves relating them, which produces a sequence of gRIII moves relating $\Gamma'_1$ and $\Gamma'_2$.
We thus have
\[
\Gamma_1 \xleftrightarrow{\mathrm{gRIII}} \Gamma'_1  \xleftrightarrow{\mathrm{gRIII}} \Gamma'_2  \xleftrightarrow{\mathrm{gRIII}} \Gamma_2
\]
as desired.
\end{proof}

\begin{proof}[Proof of Lemma \ref{lem:changerex}]
Let $\Gamma$ be a reduced graph, viewed as a morphism in $\Web^\times(\spn)$ and let $\Lambda$ be another reduced graph corresponding to the same matching.
Proposition \ref{prop:Matsumoto} gives a sequence of gRIII moves that take $\Gamma$ to $\Lambda$. Replacing each such move with \eqref{eq:R3} shows that $\Gamma$ and $\Lambda$ 
are equal modulo graphs with strictly fewer crossings.
\end{proof}

\subsection{Pattern avoidance}\label{sec:PA}

In the study of the symmetric group, it is common to study patterns and pattern avoidance. 
For example, $(3 \ 2 \ 1)$ is the longest permutation in $\SG_3$ (written in word notation, not cycle notation). 
A permutation $w \in \SG_\ell$ is said to fit the pattern $(3 \ 2 \ 1)$ if one can find $i < j < k$ such that $w(i) > w(j) > w(k)$.
In other words, $w$ fits the pattern $(3 \ 2 \ 1)$ if ``restricting to three particular strands'' (in the string diagram description of the symmetric group) gives the permutation $(3 \ 2 \ 1)$ in $\SG_3$. 
A permutation is called \emph{$(3 \ 2 \ 1)$-avoiding} if it does not fit the pattern $(3 \ 2 \ 1)$, i.e. if one cannot find such a triple $i < j < k$. 
More generally, we will be interested in permutations that are $(n \ n{-}1 \ \cdots \ 3 \ 2 \ 1)$-avoiding, 
i.e. permutations in $\SG_\ell$ such that we never obtain the longest element of $\SG_n$ upon restricting to a collection of $n$ strands.
We call such permutations \emph{$n$-avoiding}.
The same ideas easily extend to quadrivalent graphs.

\begin{defn}
Given a matching of $2 \ell$ points in $\partial \D$, 
an \emph{$n$-pattern} is a set of $n$ pairs $x_1 = \{a_1,b_1\} , \ldots , x_n = \{a_n,b_n\}$ of distinct points $\{a_i,b_i\}_{i=1}^n \subset \partial \D$
such that any two distinct pairs $\{x_i, x_j\}$ form an inversion. The matching is said to be \emph{$n$-avoiding} if it has no $n$-pattern. 
\end{defn}

Suppose that a matching of $2 \ell$ points in $\partial \D$ has an $n$-pattern.
Ignoring the points that are not involved in the $n$-pattern, we obtain a matching of $2n$ points.
Without loss of generality, we may assume that these points are equally-spaced around $\partial \D$.
If $x_i = \{a_i, b_i\}$ then, by counting inversions, there must be $n-1$ points between $a_i$ and $b_i$ in both directions (clockwise and counterclockwise).
Thus, all matched pairs are antipodal, and we can obtain a reduced graph corresponding to this matching as a small perturbation of the following $2n$-valent graph:
\begin{equation}\label{eq:DK}
\begin{tikzpicture}[scale=.75, smallnodes, anchorbase]
	\draw[densely dashed] (0,0) circle (1);
	\draw[very thick] (0,-1) node[below=-2pt]{$a_1$} to (0,1) node[above=-2pt]{$b_1$};
	\draw[very thick] (.707,-.707) node[below=-2pt,xshift=3pt]{$a_2$} to (-.707,.707) node[above=-2pt,xshift=-3pt]{$b_2$};
	\draw[very thick] (-.707,-.707) node[below=-2pt,xshift=-3pt]{$b_n$} to (.707,.707) node[above=-2pt,xshift=3pt]{$a_n$};
	\node at (.5,-.125){$\vdots$};
	\node at (-.5,-.125){$\vdots$};	
\end{tikzpicture}
\end{equation}
Note that, up to renaming the points and reordering the pairs, we can assume that a counterclockwise reading of boundary points is $a_1 a_2 \cdots a_n b_1 b_2 \cdots b_n$, 
as in \eqref{eq:DK}.
One such reduced graph corresponding to this matching is the half twist $\HT_n$, placed in the disk so that the bottom boundary points in \eqref{eq:HTdef}
when read left-to-right correspond to the points $a_1,\ldots,a_n$ (and the top boundary points when read left-to-right therefore correspond to $b_n,\ldots,b_1$.

We return briefly to the language of trivalent graphs.
Recall that one of the defining relations in $\Web(\spn)$ is that any strand with label $>n$ is zero. 
By Lemma \ref{lem:HT}, $\HT_{n+1}$ is equal to a (non-quadrivalent) diagram that has a strand with label $n+1$, 
modulo quadrivalent graphs with fewer than $\binom{n+1}{2}$ crossings. 
Consequently, in $\Web(\spn)$, $\HT_{n+1}$ is actually in the span of quadrivalent graphs with fewer crossings! 
Similarly, we might hope that any quadrivalent graph with an $(n+1)$-pattern 
should be in the span of graphs with fewer crossings, 
as this would imply that graphs whose corresponding matchings are $(n+1)$-avoiding form a spanning set for $\Hom$-spaces in $\Web(\spn)^{\times}$. 
To prove this, we must express every graph whose corresponding matching contains an $(n+1)$-pattern in terms of a graph containing $\HT_{n+1}$ 
(modulo terms not containing an $(n+1)$-pattern).
We now prove this. To avoid cumbersome indexing, we consider $n$-patterns, rather than $(n+1)$-patterns.

\begin{prop} \label{prop:patterngivesHT} 
Let $x_1, \ldots, x_n$ be an $n$-pattern in a matching of $2 \ell$ points in $\partial \D$, 
then there exists a reduced graph $\Gamma$ corresponding to this matching, and a sub-disk $\D' \subset \D$, 
such that $\D'$ only intersects the strands involved in the $n$-pattern, and moreover $\Gamma \cap \D' = \HT_n$.
\end{prop}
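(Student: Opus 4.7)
The plan is to construct the reduced graph $\Gamma$ and the sub-disk $\D'$ directly. I would begin by choosing a small open disk $\D'$ in the interior of $\D$, disjoint from $\partial\D$ and from all matching endpoints, and placing a copy of $\HT_n$ inside $\D'$ so that its $2n$ boundary points lie on $\partial\D'$ in the counterclockwise cyclic order $a_1, a_2, \ldots, a_n, b_1, \ldots, b_n$, matching the cyclic order of the pattern points on $\partial\D$. Since the two cyclic orders agree, I can then draw $2n$ pairwise disjoint simple arcs (``rays'') in the annulus $\D \setminus \D'$ connecting each pattern point on $\partial\D$ to its corresponding entry point on $\partial\D'$. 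Together with the interior of $\HT_n$ these arcs complete the $n$ pattern strands of $\Gamma$, and by construction $\D'$ only meets these pattern strands and $\Gamma \cap \D' = \HT_n$.

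The remaining task is to draw the non-pattern strands in $\D \setminus \D'$ so that the full graph $\Gamma$ is reduced. I would proceed by induction on the number of non-pattern pairs $\ell - n$, with the base case $\ell = n$ immediate from the construction above. For the inductive step, take a non-pattern pair $y = \{c,d\}$; by the inductive hypothesis applied to the smaller matching $M \setminus \{y\}$ (which still contains the $n$-pattern), there is a reduced graph $\Gamma'$ with the desired sub-disk structure. I would then add a simple arc $\gamma$ in $\D \setminus \D'$ from $c$ to $d$, chosen to cross each existing strand of $\Gamma'$ exactly once if the corresponding pair of $M$ is in inversion with $y$, and not at all otherwise; the resulting graph $\Gamma = \Gamma' \cup \gamma$ is then reduced.

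To justify the existence of $\gamma$, I would exploit the ``straight chord model.'' Concretely, the homotopy class of an arc from $c$ to $d$ in the annulus $\D \setminus \D'$ rel endpoints is determined by its signed intersection numbers with the $2n$ pattern rays, which cut $\D \setminus \D'$ into sectors. The prescribed crossing count of $\gamma$ with the ray descending from $a_i$ (resp.\ $b_i$) is the inversion indicator of $y$ with $\{a_i, b_i\}$, which is also the generic intersection number of the straight chord $\overline{cd}$ with the straight chord $\overline{a_i b_i}$ in $\D$. Choosing the homotopy class of $\gamma$ to realize these intersection numbers and then drawing $\gamma$ as a simple arc within this class produces the correct crossings with all pattern rays. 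The crossings between $\gamma$ and the non-pattern strands inherited from $\Gamma'$ are likewise dictated by inversion data, and can be realized by a further small isotopy of $\gamma$ inside its homotopy class, using the fact that in the straight chord diagram for $M$ any two chords cross at most once and exactly when they form an inversion.

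The main obstacle is the compatibility between the abstract straight-chord model and our concrete rays-plus-$\HT_n$ configuration: these two configurations of the pattern strands are related only by gRIII moves, not by an ambient isotopy of $\D$ fixing $\partial\D$, so I cannot simply transport the perturbed $\overline{cd}$ directly. The resolution is that only the homotopy class of $\gamma$ in $\D \setminus \D'$ matters for counting its crossings with the rays, and this class is completely determined by the inversion data; an appeal to Proposition \ref{prop:Matsumoto} applied inside $\D'$ handles the fact that any two reduced configurations of the pattern strands inside $\D'$ are equivalent. Thus the existence of $\gamma$ reduces to a standard topological realization problem in an annulus with prescribed intersection numbers, completing the induction.
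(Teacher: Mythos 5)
Your overall strategy — put $\HT_n$ in a small interior disk, route the pattern strands radially out to $\partial\D$, and then try to thread in the non-pattern strands one at a time — is workable in spirit but the inductive step is not actually carried out, and the paper's proof avoids all these difficulties by a simpler device.

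The gap is in the existence of $\gamma$. You need to add a single simple arc $\gamma$ from $c$ to $d$ in $\D \setminus \D'$ that crosses each pattern ray and each non-pattern strand of $\Gamma'$ exactly once or zero times according to the inversion data. But the homotopy class of $\gamma$ in the annulus $\D \setminus \D'$ rel endpoints is a $\Z$-torsor (winding number), so it does not ``independently prescribe'' intersection numbers with $2n$ rays plus the $\ell - n - 1$ already-placed non-pattern strands; those strands were themselves built inductively and need not be in any canonical position, so achieving minimal geometric intersection with all of them simultaneously requires a genuine argument (e.g.\ iterated bigon removal for curves on a surface), which is not supplied. The appeal to Proposition \ref{prop:Matsumoto} does not close this gap: that proposition relates two reduced graphs that already exist, whereas here you are trying to construct one.

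The paper's proof sidesteps all of this by not inducting. Since a matching is only defined up to the cyclic order of boundary points, one may first isotope $\partial\D$ so that the $2n$ pattern points are equally spaced (hence pairwise antipodal) and the remaining $2(\ell-n)$ points are in generic position. Then all $\ell$ straight chords are drawn at once: the pattern chords all pass through the center, forming a single $2n$-valent vertex, and the non-pattern chords, being generic, cross the pattern chords transversely, cross each other transversely, and miss the center. A small disk $\D'$ around the center therefore meets only the pattern chords, and replacing the central $2n$-valent vertex by $\HT_n$ yields a graph in which any two chords still cross at most once and exactly when they form an inversion. Reducedness is then automatic, with no inductive step, no homotopy-class bookkeeping, and no appeal to Matsumoto-type results. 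I would recommend rewriting your proof along these lines.
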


\begin{proof}
Without loss of generality, we may assume that the points involved in the $n$-pattern are equally spaced around $\partial \D$.
Connect each pair of antipodal points via chords as in \eqref{eq:DK} that necessarily pass through the center of $\D$.
Without loss of generality, we may assume that the remaining points in the matching are in sufficiently generic position
(in particular, no pair of them is antipodal).
Draw chords between these remaining points in the matching and note that, since the points were chosen sufficiently generically, 
all of the points of intersection involving these added chords are generic (quadrivalent).
In particular, none of these chords pass through the center of $\D$.
Hence, there is a sufficiently small disk $\D'$ centered at the origin that only meets the chords involved in the $n$-pattern.
Replace the $2n$-valent vertex (i.e. the graph \eqref{eq:DK}) at the center of $\D'$ with $\HT_n$. 
By construction, the resulting graph $\Gamma$ is reduced.
\end{proof}

\begin{rem}
The analogous statement holds in $\SG_k$: any permutation fitting the pattern $(n \ \cdots \ 3 \ 2 \ 1)$ possesses a reduced expression 
wherein all the $\binom{n}{2}$ crossings between the relevant $n$ strands occur consecutively. 
This can be deduced using essentially the same proof as Proposition \ref{prop:patterngivesHT}. 
This fact also could be derived from the structure of packet flips in the higher Bruhat orders of Manin-Schechtmann \cite{MS}. \end{rem}

\begin{thm} \label{thm:avoidingspans} 
Any reduced graph $\Gamma$ in $\Web^\times(\spn)$ whose matching contains an $(n+1)$-pattern is in the span of graphs with fewer crossings.
Thus, $\Hom$-spaces in $\Web^\times(\spn)$ are spanned by reduced graphs, one for each $(n+1)$-avoiding matching. 
\end{thm}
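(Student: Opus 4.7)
The plan is to combine the three main tools already established: the half-twist identity (Lemma~\ref{lem:HT}), the pattern-realization result (Proposition~\ref{prop:patterngivesHT}), and the reduced-graph spanning result (Theorem~\ref{thm:reducedgraphsspan}). The strategy is an induction on the number $N$ of crossings in $\Gamma$.

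First I would observe the key consequence of Lemma~\ref{lem:HT} in $\Web(\spn)$: applied with $k=n$, it writes $\HT_{n+1}$ as a merge-split web factoring through the object $n+1$, plus a linear combination of quadrivalent graphs with at most $\tfrac12 n(n+1)-1 < \binom{n+1}{2}$ crossings. Since any strand labeled $n+1$ is the zero morphism by our conventions, the merge-split term vanishes, and so in $\Web^\times(\spn)$ we have
\[
\HT_{n+1} \;=\; \sum_i c_i\, \Delta_i,
\]
where each $\Delta_i$ is a quadrivalent graph (on $n+1$ strands) with strictly fewer than $\binom{n+1}{2}$ crossings.

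Now suppose $\Gamma$ is a reduced graph with $N$ crossings whose associated matching contains an $(n+1)$-pattern. By Proposition~\ref{prop:patterngivesHT}, there exists a reduced graph $\Gamma'$ representing the same matching, together with a sub-disk $\D' \subset \D$, such that $\Gamma' \cap \D' = \HT_{n+1}$ and $\D'$ meets only the $n+1$ strands taking part in the pattern. By Lemma~\ref{lem:changerex}, $\Gamma$ and $\Gamma'$ are equal in $\Web^\times(\spn)$ modulo graphs with strictly fewer than $N$ crossings. Applying the identity above locally inside $\D'$ to $\Gamma'$ replaces the $\binom{n+1}{2}$ crossings of $\HT_{n+1}$ by configurations with strictly fewer crossings, and therefore expresses $\Gamma'$ (and hence $\Gamma$) as a linear combination of quadrivalent graphs each with strictly fewer than $N$ crossings. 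This proves the first statement.

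For the second statement, start from Theorem~\ref{thm:reducedgraphsspan}, which says that any chosen set of reduced graphs, one for each matching, spans the relevant $\Hom$-space, and moreover that a graph with $N$ crossings lies in the span of such reduced graphs with $\le N$ crossings. Induct on $N$: for each matching that contains an $(n+1)$-pattern, replace the chosen reduced representative by its expression as a combination of graphs with fewer crossings (using the first statement), and then re-apply Theorem~\ref{thm:reducedgraphsspan} to rewrite those in terms of the chosen reduced graphs. The only new ingredient needed beyond the lemmas just cited is that this induction terminates, which is clear since the number of crossings strictly decreases. The main subtlety — and the step I expect to require the most care — is ensuring that the graph $\Gamma'$ produced by Proposition~\ref{prop:patterngivesHT} is genuinely reduced so that Lemma~\ref{lem:changerex} applies, rather than merely crossing-minimal; but the proof of Proposition~\ref{prop:patterngivesHT} guarantees exactly this, so the argument goes through.
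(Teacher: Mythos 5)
Your proposal is correct and follows essentially the same route as the paper: both use Lemma~\ref{lem:changerex} to replace $\Gamma$ by the special reduced representative $\Gamma'$ from Proposition~\ref{prop:patterngivesHT} that contains $\HT_{n+1}$ as a subgraph, then expand $\HT_{n+1}$ via Lemma~\ref{lem:HT} and the vanishing of the $(n+1)$-labeled edge, and the second statement is the same induction on crossing number using Theorem~\ref{thm:reducedgraphsspan}. The subtlety you flag (that $\Gamma'$ must be reduced, not merely crossing-minimal) is indeed discharged exactly as you say, by the last sentence of the proof of Proposition~\ref{prop:patterngivesHT}.
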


\begin{proof} 
By Lemma \ref{lem:changerex}, $\Gamma$ is equal to any other reduced graph $\Gamma'$ corresponding to the same matching, 
modulo the span of graphs with fewer crossings.
By Proposition \ref{prop:patterngivesHT}, we can assume that $\Gamma'$ has $\HT_{n+1}$ as a subgraph. 
By Lemma \ref{lem:HT}, and the fact that any web containing a strand labeled $n+1$ is zero in $\Web(\spn)$, 
$\HT_{n+1}$ is in the span of quadrivalent graphs with strictly fewer crossings.
It follows that the same is true for $\Gamma'$, which proves the first statement of the theorem.

For the second statement, choose a set $\mathcal{S}$ containing one reduced graph for each matching, 
and let $\mathcal{A}\subset \mathcal{S}$ be the subset consisting of graphs whose matching is $(n+1)$-avoiding.
Let $\mathcal{S}_{\le N}$ and $\mathcal{A}_{\le N}$ denote the span in $\Web^{\times}(\spn)$ of graphs 
in $\mathcal{S}$ and $\mathcal{A}$ with $\le N$ crossings, respectively.
By Theorem \ref{thm:reducedgraphsspan}, it suffices to show that $\mathcal{S}_{\le N} = \mathcal{A}_{\le N}$.
We prove this by induction on $N$.
The base case $N=0$ is trivial. 
Let $\Gamma \in \mathcal{S}_{\le N}$ contain an $(n+1)$ pattern. 
By the first statement in the theorem, and Theorem \ref{thm:reducedgraphsspan}, $\Gamma \in \mathcal{S}_{< N}$. 
Thus, by induction, we have $\Gamma \in \mathcal{A}_{< N}\subset \mathcal{A}_{\le N}$. 
Repeating this for all $\Gamma \in \mathcal{S}$ with $N$ crossings containing an $(n+1)$ pattern, 
we deduce that $\mathcal{S}_{\le N} = \mathcal{A}_{\le N}$. 

\end{proof}

\subsection{Faithfulness of the functor}\label{sec:faithful}

The last major result needed for our proof of Theorem \ref{thm:main} is the following.

\begin{thm}\label{thm:faithful} 
The functor $\Phi \colon \Web(\spn)\rightarrow \FRep(U_q(\spn))$ is faithful.
\end{thm}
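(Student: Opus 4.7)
The plan is to combine fullness with a matching upper and lower bound on the dimensions of $\Hom$-spaces in $\SWeb(\spn) = \Web^{\times}(\spn)$. By Lemma \ref{lem:reduction} (together with Corollary \ref{cor:reduction}) and Corollary \ref{cor:QuadFull}, it suffices to show that
\[
\Phi \colon \Hom_{\Web^{\times}(\spn)}(1^{\otimes k}, 1^{\otimes \ell}) \to \Hom_{U_q(\spn)}(V_1^{\otimes k}, V_1^{\otimes \ell})
\]
is injective for all $k,\ell \geq 0$. Both sides vanish when $k+\ell$ is odd, so we may assume $k+\ell = 2m$.

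First I would assemble the upper and lower bounds. On the source side, Theorem \ref{thm:avoidingspans} gives that $\Hom_{\Web^{\times}(\spn)}(1^{\otimes k}, 1^{\otimes \ell})$ is spanned by a set of reduced graphs indexed by the $(n+1)$-avoiding matchings of $2m$ boundary points; call this number $A_{n+1}(2m)$. Thus
\[
\dim \Hom_{\Web^{\times}(\spn)}(1^{\otimes k}, 1^{\otimes \ell}) \leq A_{n+1}(2m).
\]
On the target side, Theorem \ref{thm:Full} ensures $\Phi$ is surjective on this $\Hom$-space, so
\[
\dim \Hom_{\Web^{\times}(\spn)}(1^{\otimes k}, 1^{\otimes \ell}) \geq \dim \Hom_{U_q(\spn)}(V_1^{\otimes k}, V_1^{\otimes \ell}).
\]
Consequently, the functor is faithful on this piece as soon as we establish the combinatorial identity
\begin{equation}\label{eq:combidentityplan}
A_{n+1}(2m) = \dim \Hom_{U_q(\spn)}(V_1^{\otimes k}, V_1^{\otimes \ell}),
\end{equation}
at which point all three quantities coincide and injectivity follows.

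To prove \eqref{eq:combidentityplan} I would invoke Sundaram's work \cite{SundaramThesis,SundaramTableaux}. Since $V_1$ is self-dual, Frobenius reciprocity identifies the right-hand side with $\dim \End_{U_q(\spn)}(V_1^{\otimes m}) = \dim \Hom_{U_q(\spn)}(\C(q), V_1^{\otimes 2m})$, and this is the multiplicity of the trivial representation in $V_1^{\otimes 2m}$. Sundaram expresses this multiplicity as the number of \emph{$n$-symplectic up-down tableaux} of length $2m$ ending at the empty partition, equivalently sequences $\emptyset = \lambda^{(0)}, \lambda^{(1)}, \ldots, \lambda^{(2m)} = \emptyset$ of partitions with at most $n$ rows in which consecutive partitions differ by a single box. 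A classical Robinson--Schensted-style bijection (due to Sundaram, and closely related to the Brauer-algebra insertion procedure) sends each matching of $2m$ points to such a pair of oscillating tableaux, and one checks that the matching is $(n+1)$-avoiding precisely when the associated tableau never has more than $n$ rows; this is the type $C$ analogue of the familiar $(n+1)$-avoidance characterization in type $A$ via RSK. Feeding this bijection into the count yields \eqref{eq:combidentityplan}.

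The main obstacle is the bijective step: one must verify that the pattern-avoidance condition on the matching corresponds exactly to the row-bound on Sundaram's oscillating tableaux. I would handle this by recalling Sundaram's insertion algorithm explicitly and tracking how an $(n+1)$-pattern forces a shape with $n+1$ rows to appear (an argument entirely parallel to the classical fact that the longest decreasing subsequence of a permutation equals the length of the first column of its RSK shape), and conversely that any tableau reaching $n+1$ rows produces such a pattern in the matching. Once this is in place, faithfulness of $\Phi|_{\Web^{\times}(\spn)}$, and hence of $\Phi$, follows from the sandwich of inequalities above.
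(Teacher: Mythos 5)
Your proof is essentially the same as the paper's: reduce to $\SWeb(\spn)=\Web^{\times}(\spn)$ via Corollary \ref{cor:reduction} and Corollary \ref{cor:QuadFull}, cap the dimension above by the $(n+1)$-avoiding spanning set of Theorem \ref{thm:avoidingspans}, use fullness (Theorem \ref{thm:Full}) for the lower bound, and close the gap with Sundaram's count of $n$-symplectic up-down tableaux.

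One caution on the step you flag as the ``main obstacle.'' You assert that under Sundaram's insertion bijection, $(n+1)$-avoidance of the matching corresponds exactly to the partitions in the up-down tableau having at most $n$ rows. The paper explicitly points out that this is \emph{false} for Sundaram's original bijection: \cite[Example 8.4]{SundaramThesis} exhibits a matching with no $3$-pattern whose tableau nonetheless reaches $3$ rows. What is true is that a modified version of the bijection (re-label the ``right'' endpoints $\bar 1,\ldots,\bar\ell$ from right to left instead of left to right) has the desired property and, restricted to permutations, reduces to ordinary RSK where Schensted's theorem gives the classical longest-decreasing-subsequence characterization. So the sandwich argument is sound, but if you carry out your plan by ``recalling Sundaram's insertion algorithm explicitly,'' you must use the modified algorithm, not the one in \cite[\S 8]{SundaramThesis} verbatim, or the row-bound/pattern-avoidance correspondence will break.
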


\begin{proof} 
By Corollary \ref{cor:reduction}, it suffices to show that the functor
\[
\Phi|_{\SWeb(\spn)} \colon \SWeb(\spn) \rightarrow \SRep(U_q(\spn))
\]
is faithful, i.e. that for any $k_1,k_2 \in \N$ the $\C(q)$-linear map
\begin{equation}\label{eq:linmap}
\Hom_{\SWeb(\spn)}(1^{\otimes k_1},1^{\otimes k_2}) \to \Hom_{U_q(\spn)}(V_1^{\otimes k_1},V_1^{\otimes k_2})
\end{equation}
is injective. By Theorem \ref{thm:Full}, this map is surjective. 
To show \eqref{eq:linmap} is injective, it thus suffices to prove that 
\[
\dim\big( \Hom_{\SWeb(\spn)}(1^{\otimes k_1},1^{\otimes k_2}) \big)
\leq
\dim\big( \Hom_{U_q(\spn)}(V_1^{\otimes k_1},V_1^{\otimes k_2}) \big).
\]

Recall from Corollary \ref{cor:QuadFull} that $\SWeb(\spn) = \Web^\times(\spn)$. 
Since every $1$-manifold has an even number of boundary points, 
it immediately follows that the domain of \eqref{eq:linmap} is zero when $k_1+k_2$ is odd.
Now suppose that $k_1+k_2$ is even.
In this case, Theorem \ref{thm:avoidingspans} shows that $\Hom_{\SWeb(\spn)}(1^{\otimes k_1},1^{\otimes k_2})$ is 
spanned by a set of quadrivalent graphs in bijection with the set of $(n+1)$-avoiding matchings of $k_1+k_2$ points.
Results of Sundaram \cite{SundaramThesis,SundaramTableaux}, which we summarize in Proposition \ref{prop:dimcount} below, 
show that the dimension of $\Hom_{U_q(\spn)}(V_1^{\otimes k_1},V_1^{\otimes k_2})$ exactly equals the cardinality of this latter set.
\end{proof}

We now record the requisite result concerning the dimension of $\Hom_{U_q(\spn)}(V_1^{\otimes k_1},V_1^{\otimes k_2})$. 
Using adjunction, this space is identified with $\Hom_{U_q(\spn)}(V_0, V_1^{\ot (k_1+k_2)})$.

\begin{prop} \label{prop:dimcount} 
The dimension of $\Hom_{U_q(\spn)}(V_0, V_1^{\ot 2\ell})$ is equal to the number of $(n+1)$-avoiding matchings on $2 \ell$ points. 
\end{prop}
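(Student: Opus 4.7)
The plan is to reduce the statement to a classical count and then invoke Sundaram's symplectic analogue of the Robinson--Schensted--Knuth correspondence. First, since $V_1 = V_{\varpi_1}$ and $L_1 = L_{\varpi_1}$ have identical formal characters (as recalled in \S\ref{subsec:QGtypeC}), and the characters of the $V_\lambda$ are linearly independent, the tensor product $V_1^{\otimes 2\ell}$ decomposes into irreducibles with the same multiplicities as $L_1^{\otimes 2\ell}$. In particular,
\[
\dim \Hom_{U_q(\spn)}(V_0, V_1^{\otimes 2\ell})
= \dim \Hom_{\spn}(L_0, L_1^{\otimes 2\ell}),
\]
which equals the multiplicity $[L_1^{\otimes 2\ell} : L_0]$ in the classical sense.

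Next, I would compute this multiplicity combinatorially using a Pieri-type decomposition rule. For any dominant integral weight $\lambda$ of $\spn$, the tensor product $L_\lambda \otimes L_1$ decomposes as a sum over $L_\mu$ where $\mu$ is obtained from $\lambda$ by adding or removing a single box in its Young-diagram interpretation, subject to $\mu$ having at most $n$ rows (and omitting the summand if it would). Iterating this rule, $[L_1^{\otimes 2\ell} : L_0]$ counts the number of \emph{symplectic oscillating tableaux} of length $2\ell$ beginning and ending at $\emptyset$, that is, sequences
\[
\emptyset = \lambda^{(0)}, \lambda^{(1)}, \ldots, \lambda^{(2\ell)} = \emptyset
\]
of Young diagrams with at most $n$ rows, each obtained from the previous by adding or removing a single box.

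Finally, I would appeal to Sundaram's bijection \cite{SundaramThesis, SundaramTableaux}, which associates to each such oscillating tableau a perfect matching of $\{1,\ldots,2\ell\}$; the key property is that the maximum over all $i$ of the number of rows of $\lambda^{(i)}$ equals the maximal size of a set of pairwise-crossing chords in the associated matching. Hence oscillating tableaux with all shapes in at most $n$ rows correspond bijectively to perfect matchings on $2\ell$ points with no $(n+1)$ pairwise-crossing chords, i.e. to matchings containing no $(n+1)$-pattern in the sense of \S\ref{sec:PA}. Combining these three steps yields the desired equality.

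The main obstacle, and the reason we lean on Sundaram's machinery, is the last step: establishing the precise correspondence between the row-height bound on oscillating tableaux and the pattern-avoidance condition on matchings. The height-versus-crossing translation is the nontrivial combinatorial content (and is where the $Sp$-specific Berele insertion differs from ordinary RSK); all other steps are either standard representation theory or routine unwinding of definitions.
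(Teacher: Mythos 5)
Your overall outline matches the paper's approach: reduce to the classical decomposition via characters, apply the type $C$ Pieri rule to express the multiplicity as a count of $n$-symplectic up-down (oscillating) tableaux of length $2\ell$ with final shape $\emptyset$, and then pass to planar matchings via a Sundaram-style bijection. The gap is in the last step: you assert as the ``key property'' that Sundaram's bijection sends the maximum number of rows appearing in the oscillating tableau to the maximal size of a pairwise-crossing set of chords. That claim is \textbf{false} for Sundaram's bijection as stated in \cite{SundaramThesis}. The paper explicitly flags this: in Example~8.4 of Sundaram's thesis there is an up-down tableau whose shapes reach three rows, yet the corresponding matching contains no $3$-pattern, so height does not translate to crossing number under that map.

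What the paper does instead is note that Sundaram's $n$-symplectic condition has a clean description on a \emph{different} combinatorial encoding (two-row arrays, via the Burge correspondence), and that to obtain the height-equals-max-crossing property on the matching side one must use a \emph{modified} bijection, obtained by relabeling the ``right'' boundary points of the matching with $\bar 1,\dots,\bar \ell$ read right-to-left rather than left-to-right. Under that modification the construction becomes a restriction of ordinary Robinson--Schensted on the permutation-like subset, and Schensted's theorem on longest decreasing subsequences gives exactly the height-versus-pattern correspondence you want. So your argument needs to either (a) replace Sundaram's bijection with this modified one and justify the height property via Schensted, or (b) route through two-row arrays and the explicit $n$-symplectic constraint as Sundaram does. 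As written, the ``key property'' you cite is an unproven assertion that is in fact false for the map you name, so the proof does not go through.
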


For all practical purposes, this is a theorem due to Sundaram.
Indeed, it is cited as such in various places in the literature (see e.g. \cite[first paragraph of \S 8.4]{Kup} and \cite[bottom of p. 2]{RubWes}) 
with citations given either to Sundaram's paper \cite{SundaramTableaux} or her thesis \cite{SundaramThesis}. 
As far as we can tell, the result is not given in this particular form in either work, though it follows (with some interpretation) from the results in Sundaram's thesis. 
Reproducing all of the technology used by Sundaram to give a thorough re-proof of Proposition \ref{prop:dimcount} would add considerable length to this paper, and seems unnecessary. 
In its place, we will give a summary of (and commentary on) the relevant parts of Sundaram's thesis.

To motivate the discussion, first consider the analogous problem in type $A$. 
Let $V$ denote the standard representation of $\gln$. 
When $n \ge m$, the Pieri rule implies that the direct summands of $V^{\ot m}$ correspond to standard Young tableaux with $m$ boxes.
Here, we view a standard tableau as a sequence of partitions of length $m$, each one obtained from the previous by adding one box. 
However, when $n < m$, one must place a further restriction on these tableaux to obtain the appropriate decomposition of $V^{\ot m}$: 
that they have at most $n$ rows.
In either case, if $V_\lambda$ is the irreducible representation of $\gln$ corresponding to the partition $\lambda$, 
we can compute the dimension $\Hom_{\gln}(V_\lambda,V^{\ot m})$ by counting the number of standard Young tableaux of shape $\lambda$.

Now consider type $C$. 
Again, we let $V$ denote the standard representation, now of $\spn$.
The type $C$ Pieri rule implies that the direct summands of $V^{\ot m}$ correspond to \emph{up-down tableaux} of length $m$. 
Here, an up-down tableau of length $m$ is a sequence of $m$ partitions, each of which is obtained from the previous partition by adding or removing one box. 
Again, when $n < m$, one must impose the restriction that the partitions appearing in the up-down tableau never have more than $n$ rows; 
such an up-down tableau is called \emph{$n$-symplectic}. 
As above, the dimension of $\Hom_{U_q(\spn)}(V_0, V_1^{\ot 2\ell})$ is thus equal to the number of $n$-symplectic up-down tableaux
of length $2\ell$ whose final shape is the empty partition.

In \cite[\S 8]{SundaramThesis}, 
Sundaram introduces up-down tableaux, and then proceeds to study many different combinatorial encodings of the same objects. 
For the sake of simplicity, we focus on the special cases of interest:
\begin{itemize}
\item up-down tableaux of length $2 \ell$ whose final shape is the empty partition,
\item matchings of $2 \ell$ points on the planar disk\footnote{Sundaram calls these \emph{arc diagrams}.}, and
\item certain two-row arrays that appear in the Burge correspondence (see \cite[Definition 3.29]{SundaramThesis}).
\end{itemize}
In \cite[Proof of Lemma 8.3]{SundaramThesis} Sundaram provides an explicit bijection, analogous to the Schensted bumping algorithm in type $A$, 
between up-down tableaux and matchings, 
and in \cite[Lemma 8.7]{SundaramThesis} she gives an explicit bijection between up-down tableaux and two-row 
arrays\footnote{In \cite[Lemma 8.7]{SundaramThesis} the bijection also appears to involve a standard tableau $Q_{\mu}$ of shape $\mu$, 
but we have specialized to the case when $\mu$ is the empty partition.}.

In \cite[\S 9]{SundaramThesis} Sundaram introduces $n$-symplectic up-down tableaux, 
and in \cite[Lemma 9.3]{SundaramThesis} she gives the explicit constraint on two-row arrays that corresponds to the partitions appearing in the tableaux having at most $n$ rows.
This is called the \emph{$n$-symplectic condition}.
Sundaram does not provide an explicit constraint on planar matchings corresponding to the $n$-symplectic condition, however. 
Had it been stated that an up-down tableau contains a partition with $k$ rows if and only if the corresponding planar matching has a $(k)$-pattern, 
then the $n$-symplectic condition would correspond to $(n+1)$-avoidance, and Proposition \ref{prop:dimcount} would be an immediate corollary.

In fact, under Sundaram's bijection, it is false that having $k$ rows in an up-down tableau corresponds to the existence of a $(k)$-pattern in the planar matching! 
In \cite[Example 8.4]{SundaramThesis}, one can see an up-down tableau containing a partition with $3$ rows, but the matching has no $3$-pattern. 
Instead, there is a modification of Sundaram's bijection\footnote{We plan to write a short note about this variant of Sundaram's bijection, for posterity.} with the desired property. 
After \cite[Example 8.4]{SundaramThesis}, Sundaram labels the ``right'' boundary points of a planar matching (on $2\ell$ points) with the symbols $\bar{1}$ through $\bar{\ell}$, 
reading from left to right. If instead one labels these points with $\bar{1}$ through $\bar{\ell}$ reading from right to left, 
one can repeat the rest of the construction to obtain the desired up-down tableau. 
This alternate bijection is even easier, as it does not require the use of jeu-de-taquin. 

In \cite[\S 10]{SundaramThesis} and \cite[Theorem 3.11]{SundaramTableaux} 
Sundaram provides an explicit link between her bijections involving $n$-symplectic up-down tableaux and the usual Schensted bumping algorithm.
For the alternate bijection, the link to Robinson-Schensted is even more direct: it is simply the restriction of this bijection to the 
subset of matchings that correspond to permutations.
Schensted's original work \cite{Schensted} on the bumping algorithm shows that the number of rows in the partition corresponding to a permutation $w \in \SG_m$ is equal
to the length of the longest decreasing subsequence in the one-row notation for $w$. 
In particular, the partition corresponding to $w$ has at most $k$ rows if and only if $w$ is $(k+1)$-avoiding.
A direct extension of this shows that the $n$-symplectic condition on up-down tableaux corresponds to $(n+1)$-avoidance 
for this alternate bijection.

\subsection{Proof of our main result}

Finally, we assemble our various results to prove Theorem \ref{thm:main}.

\begin{proof}[Proof of Theorem \ref{thm:main}.]
Theorem \ref{thm:functor} gives a $\C(q)$-linear, pivotal, essentially surjective functor 
\[
\Phi \colon \Web(\spn) \to \FRep(U_q(\spn)).
\] 
Theorems \ref{thm:Full} and \ref{thm:faithful} show that this functor is fully faithful, 
hence an equivalence of $\C(q)$-linear pivotal categories.

Now, recall that $\FRep(U_q(\spn))$ is a ribbon category, 
i.e. a braided pivotal category wherein the twist satisfies a certain equation.
See e.g. \cite[Lemma 4.27]{Sel1}.
We can use the equivalence $\Phi$ to define a braiding on $\Web(\spn)$. 
The braiding 
$
\beta_{1,1} \in \End_{\Web}(1\otimes 1) 
$
and its inverse $\beta_{1,1}^{-1}$
are given by the morphisms in \eqref{eq:braiding}, respectively.
Naturality of the braiding in $\FRep(U_q(\spn))$ shows that if we set
\[
\beta_{k,l} := 
\frac{1}{[k]![l]!}
\begin{tikzpicture}[scale=.5,smallnodes,anchorbase]
	\draw[very thick] (2.5,-1) node[below=-1pt]{$l$} to (2.5,-.5) to [out=150,in=270] (2,0) to [out=90,in=270] (0,3) node[right=-2pt]{$\cdots$} to [out=90,in=210] (.5,3.5);
	\draw[very thick] (2.5,-.5) to [out=30,in=270] (3,0) node[left=-3pt]{$\cdots$} to [out=90,in=270] (1,3) to [out=90,in=330] (.5,3.5) to (.5,4) node[above=-3pt]{$l$};
	\draw[overcross] (0,0) to [out=90,in=270] (2,3);
	\draw[overcross] (1,0) to [out=90,in=270] (3,3);
	\draw[very thick] (.5,-1) node[below=-1pt]{$k$} to (.5,-.5) to [out=150,in=270] (0,0) to [out=90,in=270] (2,3) node[right=-2pt]{$\cdots$} to [out=90,in=210] (2.5,3.5);
	\draw[very thick] (.5,-.5) to [out=30,in=270] (1,0) node[left=-3pt]{$\cdots$} to [out=90,in=270] (3,3) to [out=90,in=330] (2.5,3.5) to (2.5,4) node[above=-3pt]{$k$};
\end{tikzpicture}
\quad \text{and} \quad
\beta_{\vec{k},\vec{l}} :=
\begin{tikzpicture}[scale=.5,smallnodes,anchorbase]
	\draw[very thick] (3,0) node[below=-1pt]{$l_1$} to [out=90,in=270] (0,3) node[above=-2pt]{$l_1$};
	\node at (4,-.375) {$\cdots$}; 
	\node at (4,3.375) {$\cdots$};
	\draw[very thick] (5,0) node[below=-1pt]{$l_s$} to [out=90,in=270] (2,3) node[above=-2pt]{$l_s$};
	\draw[overcross] (0,0) to [out=90,in=270] (3,3);
	\draw[very thick] (0,0) node[below=-1pt]{$k_1$} to [out=90,in=270] (3,3) node[above=-2pt]{$k_1$};
	\node at (1,-.375) {$\cdots$};
	\node at (1,3.375) {$\cdots$};
	\draw[overcross] (2,0) to [out=90,in=270] (5,3);
	\draw[very thick] (2,0) node[below=-1pt]{$k_r$} to [out=90,in=270] (5,3) node[above=-2pt]{$k_r$};
\end{tikzpicture}
\]
for $k,l \geq 1$ and $\vec{k}= (k_1,\ldots,k_r)$ and $\vec{l}=(l_1,\ldots,l_s)$,
then $\Phi$ is an equivalence of braided pivotal (hence ribbon) categories.
\end{proof}

\bibliographystyle{plain}

%

%
\end{document}